\theoremstyle{plain}
\newtheorem{theorem}{Theorem}[section]
\newtheorem{proposition}[theorem]{Proposition}
\newtheorem{lemma}[theorem]{Lemma}
\newtheorem{corollary}[theorem]{Corollary}
\newtheorem{remark}[theorem]{Remark}
\newtheorem{definition}[theorem]{Definition}
\newtheorem{notation}[theorem]{Notation}
\newtheorem{definition-notation}[theorem]{Definition and Notation}
\newtheorem{main theorem}[theorem]{Main Theorem}
\newtheorem{claim}[theorem]{Claim}
\newcommand{\interior}{\operatorname{int}}
\newcommand{\closure}{\operatorname{cl}}
\newcommand{\length}{\operatorname{length}}
\newcommand{\Lk}{\operatorname{Lk}}
\newcommand{\ZZ}{\mathbb{Z}}
\newcommand{\QQ}{\mathbb{Q}}
\newcommand{\RR}{\mathbb{R}}
\newcommand{\CC}{\mathbb{C}}
\newcommand{\HH}{\mathbb{H}}
\newcommand{\EE}{\mathbb{E}}
\newcommand{\QQQ}{\hat{\mathbb{Q}}}
\newcommand{\RRR}{\hat{\mathbb{R}}}
\newcommand{\CCC}{\hat{\mathbb{C}}}
\newcommand{\HHH}{\bar{\mathbb{H}}}
\newcommand{\Diagram}{D}
\newcommand{\Poly}{\mathcal{P}}
\newcommand{\PPoly}{\operatorname{P}}
\newcommand{\tPoly}{\tilde{\mathcal{P}}}
\newcommand{\hPoly}{\hat{\mathcal{P}}}
\newcommand{\Cubing}{\mathcal{M}}
\newcommand{\tCubing}{\tilde{\mathcal{M}}}
\newcommand{\XCubing}{\mathcal{X}}
\newcommand{\tXCubing}{\tilde{\mathcal{X}}}
\newcommand{\Half}{\mathcal{B}}
\newcommand{\bHalf}{\bar{\mathcal{B}}}
\newcommand{\bSigma}{\bar{\Sigma}}
\newcommand{\Bplane}{H}
\newcommand{\Sbw}{\mathcal{S}_{bw}}
\newcommand{\tSbw}{\tilde{\mathcal{S}}_{bw}}
\newcommand{\Sw}{\mathcal{S}_{w}}
\newcommand{\tSw}{\tilde{\mathcal{S}}_{w}}
\newcommand{\Sb}{\mathcal{S}_{b}}
\newcommand{\tSb}{\tilde{\mathcal{S}}_{b}}
\newcommand{\BF}{\mbox{$\operatorname{BF}$}}
\newcommand{\crossing}{\mbox{$\mathcal{C}$}}
\newcommand{\Graph}{\mathcal{G}}
\newcommand{\DD}{\mathcal{D}}
\newcommand{\pu}{p_u}
\newcommand{\PSL}{\mbox{$\operatorname{PSL}$}}
\newcommand{\SL}{\mbox{$\operatorname{SL}$}}
\newcommand{\Isom}{\mbox{$\operatorname{Isom}$}}
\newcommand{\Fix}{\mbox{$\operatorname{Fix}$}}
\newcommand{\PFix}{\mbox{$\operatorname{PFix}$}}
\newcommand{\Aut}{\mbox{$\operatorname{Aut}$}}
\newcommand{\Stab}{\mbox{$\operatorname{Stab}$}}
\newcommand{\Ball}{\mbox{\boldmath$B$}^3}
\newcommand{\pNE}{\mbox{$\mathrm{NE}$}}
\newcommand{\pNW}{\mbox{$\mathrm{NW}$}}
\newcommand{\pSW}{\mbox{$\mathrm{SW}$}}
\newcommand{\pSE}{\mbox{$\mathrm{SE}$}}
\renewcommand\subsection{\@startsection{subsection}{2}{0mm}
    {-10.5dd plus-8pt minus-4pt}{10.5dd}
    {\normalsize\upshape}}
\begin{document}
\title[Two-parabolic-generator subgroups]
{Two-parabolic-generator subgroups of hyperbolic $3$-manifold groups}
\author{Shunsuke Sakai}
\address{Gifu Higashi High School\\
4-17-1, noisshiki, Gifu City
500-8765, Japan}
\email{shunsuke463@gmail.com}

\author{Makoto Sakuma}
\address{Advanced Mathematical Institute\\
Osaka Metropolitan University\\
3-3-138, Sugimoto, Sumiyoshi, Osaka City
558-8585, Japan}
\address{Department of Mathematics\\
Faculty of Science\\
Hiroshima University\\
Higashi-Hiroshima, 739-8526, Japan}
\email{sakuma@hiroshima-u.ac.jp}
\subjclass[2010]{Primary 57M50, Secondary 57K10}
\keywords{two-bridge link, alternating link, parabolic transformation, cubed complex}

%\date{}

\begin{abstract}
We give a detailed account of Agol's theorem and his proof
concerning two-meridional-generator subgroups
of hyperbolic $2$-bridge link groups,
which is included in the slide of his talk 
at the Bolyai conference 2001. %held in Budapest in 2001.
We also give a generalization of the theorem
to two-parabolic-generator subgroups
of hyperbolic $3$-manifold groups,
which gives a refinement of a result due to Boileau-Weidmann.
\end{abstract}

\maketitle

\section{Introduction}
\label{sec:intro}

Adams proved in~\cite[Theorem~4.3]{Adams96} that 
the fundamental group of a finite volume hyperbolic $3$-manifold is
generated by two parabolic elements
if and only if the $3$-manifold is homeomorphic to the complement of a $2$-bridge link 
which is not a torus link.
Moreover, he also proved that the pair consists of meridians.
This refines
the result of Boileau-Zimmermann~\cite[Corollary~3.3]{Boileau-Zimmermann}
that a link in $S^3$ is a $2$-bridge link if and only if its link group is generated by two meridians.
Adams also proved that
(i) each hyperbolic $2$-bridge link group admits
only finitely many distinct parabolic generating pairs up to 
equivalence %conjugacy
\cite[Corollary~4.1]{Adams96} and (ii)
for the figure-eight knot group,
the upper and lower meridian pairs are the only parabolic generating pairs
up to equivalence~\cite[Corollary~4.6]{Adams96}.
Here, a {\it parabolic generating pair} of a non-elementary Kleinian group
$\Gamma$ is an unordered pair 
of two parabolic transformations 
that generates $\Gamma$.
Two parabolic generating pairs $\{\alpha,\beta\}$
and $\{\alpha',\beta'\}$ of $\Gamma$ are {\it equivalent}
if $\{\alpha',\beta'\}$ is equal to $\{\alpha^{\epsilon_1},\beta^{\epsilon_2} \}$
for some $\epsilon_1, \epsilon_2 \in \{\pm1\}$
up to simultaneous conjugation.

Agol~\cite{Agol2001} announced the following theorem
which generalizes and refines these results to all 
non-free Kleinian groups generated by two parabolic transformations.

\begin{theorem}[Agol~\cite{Agol2001}]
\label{thm:Agol}
Let $\Gamma$ be a non-free Kleinian group generated by two non-commuting  
parabolic elements.
Then one of the following holds.
\begin{enumerate}[\rm (1)]
\item
$\Gamma$ is conjugate to a hyperbolic $2$-bridge link group.
Moreover, every hyperbolic $2$-bridge link group 
has precisely two parabolic generating pairs up to equivalence.

\item
$\Gamma$ is conjugate to a Heckoid group.
Moreover, every Heckoid group has a unique parabolic generating pair up to equivalence.
\end{enumerate}
\end{theorem}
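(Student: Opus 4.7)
The plan is to follow Agol's strategy, in which the non-free hypothesis is exploited by building a canonical $\Gamma$-invariant cubed complex from the two parabolic generators and then identifying its quotient with either a model $2$-bridge link complement or a Heckoid orbifold. Throughout, I would work in the upper half-space model, with $\alpha$ and $\beta$ fixing distinct points $p_\alpha, p_\beta \in \partial \HH^3$.

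First, I would choose a $\Gamma$-invariant horoball system centered at the $\Gamma$-orbit of $\{p_\alpha, p_\beta\}$, normalized so that mutually closest horoballs are tangent. The equidistant planes between pairs of tangent horoballs cut $\HH^3$ into convex chambers, giving a $\Gamma$-invariant polyhedral decomposition. I would then show that this decomposition descends to a finite $2$-complex inside $\HH^3/\Gamma$ onto which the cusped orbifold deformation retracts, and that this $2$-complex is a non-positively curved square (cube) complex. This amounts to checking flatness of vertex links, which in turn forces each $2$-cell to be a square and each edge to lie in exactly two squares.

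The non-freeness of $\Gamma$ is the essential input here. If $\Gamma$ were free, the Klein combination theorem would arrange the bisector pattern in a tree-like manner carrying no $2$-cells; conversely, a nontrivial relation among $\alpha$ and $\beta$ forces squares to close up into genuine $2$-cells. I expect this local rigidity analysis to be the central technical difficulty, since one must show that every local configuration of horoballs meeting along a bisector edge is combinatorially rigid in exactly the way needed to produce a cube complex, as opposed to a more exotic polyhedral object.

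Once the cube complex is in hand, I would classify it: as a $\Gamma$-complex it is generated by two orbits of vertices (the two cusp orbits) and a single orbit of squares, so it is determined by a rational combinatorial invariant. Matching this invariant to known models recovers the slope classifying $2$-bridge links and the slope-plus-index classifying Heckoid groups, giving the dichotomy (1) versus (2). To count parabolic generating pairs up to equivalence, I would then classify the ordered pairs of horoball orbits that could serve as seeds regenerating the given cubed spine: for a $2$-bridge link complement there are exactly two such configurations, corresponding to the upper and lower bridge meridian pairs, while for a Heckoid orbifold an additional orbifold involution identifies these two configurations into a single equivalence class.
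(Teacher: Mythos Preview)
The paper does not prove Theorem~\ref{thm:Agol} in full. It explicitly defers the classification of non-free two-parabolic-generator Kleinian groups (the first sentence of each of (1) and (2)) to \cite{AOPSY}, and the counting of parabolic generating pairs for Heckoid groups to \cite{ALSS}. What the paper \emph{does} prove is the second sentence of (1) --- that a hyperbolic $2$-bridge link group has exactly two parabolic generating pairs up to equivalence --- and it does so by establishing Theorem~\ref{Theorem1-0}: any meridian pair other than the upper or lower one generates a geometrically finite \emph{free} group. Combined with Adams' result that any parabolic generating pair of such a group consists of meridians, this yields the count.

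Your proposal is aimed at the full statement and takes a route unrelated to the paper's. The paper never builds a cubed complex from the abstract Kleinian group $\Gamma$; instead it starts from the combinatorics of a prime alternating diagram, uses the Aitchison--Thurston--Yokota cubed decomposition of the link exterior (Proposition~\ref{prop:cubing}), shows the checkerboard surfaces are quasi-fuchsian hyperplanes, and then for any non-upper/lower meridian pair constructs explicit ``butterflies'' in $\CCC$ (Definition~\ref{def:butterfly}) to run a ping-pong argument (Proposition~\ref{prop:ping-pong}). No horoball bisectors, no Ford-domain combinatorics, no classification of cubed spines.

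As for your proposed argument on its own terms: the main gap is the passage from ``equidistant planes between tangent horoballs'' to a non-positively curved \emph{square} complex. Bisector decompositions (Ford/Dirichlet domains, Epstein--Penner) generically produce polyhedra with high-valence vertices and non-square $2$-cells; there is no mechanism offered that forces all faces to be squares or all vertex links to be flag. You acknowledge this as ``the central technical difficulty'' but give no indication of how to resolve it, and in fact the published proof in \cite{AOPSY} does not proceed this way --- it uses the geometry of the Riley slice and deformation theory rather than a canonical cubing of $\HH^3/\Gamma$. The final classification and counting steps are likewise schematic: ``matching a rational invariant to known models'' and ``an additional orbifold involution identifies the two configurations'' are assertions, not arguments.
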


For an explicit description of the theorem, including 
the definition of a Heckoid group,
see Akiyoshi-Ohshika-Parker-Sakuma-Yoshida~\cite{AOPSY} 
and Aimi-Lee-Sakai-Sakuma~\cite{ALSS}
(cf. Lee-Sakuma \cite{Lee-Sakuma16}),
which give a full proof of the classification of 
non-free, two-parabolic-generator Kleinian groups
and an alternative proof of the classification
of parabolic generating pairs, respectively.
In the recent interesting articles~\cite{EMS} and~\cite{Parker-Tan} 
by Parker-Tan and
Elzenaar-Martin-Schillewaert, respectively,
we can find very beautiful pictures, produced by Yasushi Yamashita 
upon request of Caroline Series,
that nicely illustrate Theorem~\ref{thm:Agol} 
(see also Figure 0.2b in Akiyoshi-Sakuma-Wada-Yamashita~\cite{ASWY}).

The two parabolic generating pairs 
of a hyperbolic $2$-bridge link group 
in the second statement of Theorem~\ref{thm:Agol}(1)
are the {\it upper} and {\it lower meridian pairs} 
illustrated in 
Figure~\ref{fig:upper-lower-tunnel}
(cf.~Section~\ref{sec:results}). 
The assertion was obtained 
in~\cite{Agol2001}
as a consequence of the following more detailed result,
together with Adams' result~\cite[Theorem~4.3]{Adams96}
that every parabolic generating pair
of a hyperbolic $2$-bridge link group consists of meridians.

\begin{theorem}[Agol~\cite{Agol2001}]
\label{Theorem1-0}
Let $L\subset S^3$ be a hyperbolic $2$-bridge link.
Then any non-commuting meridian pair in the link group $G(L)$ which is not equivalent to the upper nor lower meridian pair
generates a free Kleinian group which is geometrically finite. 
\end{theorem}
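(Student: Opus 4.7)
The strategy is to exploit the cubed complex structure on the hyperbolic $2$-bridge link complement $M = S^3\setminus L$, as hinted by the keywords, and set up a ping-pong argument in $\HH^3$. A cubed complex $\Cubing$ of $M$ can be built from a $4$-plat presentation of $L$: its ideal vertices correspond to the cusps of $M$, and each meridian of $G(L)$ is a parabolic element stabilizing a unique ideal vertex of some lift to the universal cover $\tCubing\subset\HH^3$. The upper and lower meridian pairs are then distinguished by the fact that their two parabolic fixed points lie on a common ``bridge cube'' of $\Cubing$ (equivalently, on a common bridge sphere in $M$).

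In the universal cover $\tCubing$, the hyperplanes of the cubed structure form an equivariant family of planes whose boundary circles carve $\partial\HH^3\cong S^2$ into a disk pattern, and each parabolic fixed point $p\in\partial\HH^3$ acquires a combinatorial ``fan'' of dual disks on which its stabilizer acts cocompactly. The crux is to show that for a non-commuting meridian pair $\{\alpha,\beta\}$ not equivalent to the upper or lower pair, the parabolic fixed points $p_\alpha,p_\beta$ admit disjoint closed disks $D_\alpha\ni p_\alpha$ and $D_\beta\ni p_\beta$ in $S^2$ that are precisely invariant under $\langle\alpha\rangle$ and $\langle\beta\rangle$ respectively. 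The hypothesis is used precisely to exclude the case in which $p_\alpha$ and $p_\beta$ lie in the link of a common cube of $\tCubing$; when this case is ruled out, a single hyperplane of $\tCubing$ separates the two fixed points and bounds the desired disks.

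With disjoint precisely invariant disks in hand, the classical Klein--Maskit combination theorem yields $\langle\alpha,\beta\rangle\cong\ZZ\ast\ZZ$ and implies that this Kleinian group is a geometrically finite cusped Schottky group of rank two, completing both conclusions of the theorem. The principal obstacle is the preceding combinatorial step, namely converting the algebraic hypothesis ``not equivalent to the upper nor lower meridian pair'' into the geometric statement that a hyperplane of $\tCubing$ separates $p_\alpha$ from $p_\beta$. This is where the detailed combinatorics of $2$-bridge links and the specific structure of $\Cubing$ enter in an essential way, and will require a careful analysis of how conjugates of meridians are distributed among the ideal vertices of $\tCubing$ and of how the dual tree of bridge spheres separates them.
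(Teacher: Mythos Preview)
Your outline captures the right flavor---cubed complex, checkerboard hyperplanes, limit-circle disk pattern, combination theorem---but there is a genuine gap in the combinatorial reduction, and the geometric setup for ping-pong is not quite correct either.

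First, the ping-pong data for a parabolic $\mu_p$ is not a single precisely invariant disk but a \emph{butterfly}: a pair of disks $\{\Delta^-,\Delta^+\}$ at $p$, cut out by two consecutive lifts of the same checkerboard surface through $p$, with $\mu_p(\CCC\setminus\interior\Delta^-)=\Delta^+$. The combination argument then requires the four disks $\Delta_1^-,\Delta_1^+,\Delta_2^-,\Delta_2^+$ to have pairwise disjoint interiors and nonempty complement in $\CCC$. Your formulation with one disk per meridian does not set this up, and the paper in fact remarks that it could not verify the hypotheses of the Klein--Maskit combination theorem in this setting; instead it uses a simple ping-pong plus the Maskit--Swarup theorem that any two-parabolic-generator Kleinian group with nonempty domain of discontinuity is geometrically finite.

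More seriously, your dichotomy ``either $p_\alpha,p_\beta$ share a cube (and then $\{\alpha,\beta\}$ is the upper or lower pair), or a single hyperplane separates them'' is false. In the checkerboard ideal polyhedral picture, two parabolic fixed points can be \emph{adjacent} ideal vertices of a common polyhedron---this happens exactly when the representing arc is a \emph{crossing arc}---and then no checkerboard hyperplane separates them. For a $2$-bridge diagram with twist regions $A_1,\dots,A_n$, the upper and lower tunnels are the crossing arcs at $A_1$ and $A_n$, but the crossing arcs at any intermediate $A_i$ ($2\le i\le n-1$) are \emph{also} arcs with adjacent parabolic fixed points, are \emph{not} equivalent to the upper or lower tunnel, and hence must be handled by the theorem. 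Your separating-hyperplane argument breaks down precisely here. The paper's resolution is a genuinely additional idea: apply a \emph{flype} to the diagram so that, in the new prime alternating diagram $D'$, the same arc is no longer a crossing arc (its endpoints become non-adjacent vertices of a region of $D'$). One then reruns the butterfly/ping-pong argument with respect to the cubing built from $D'$. Without this flype trick (or an equivalent device), the proof cannot close on the intermediate crossing arcs.
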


\begin{figure}
  \centering
  \begin{overpic}[width=0.25\columnwidth]{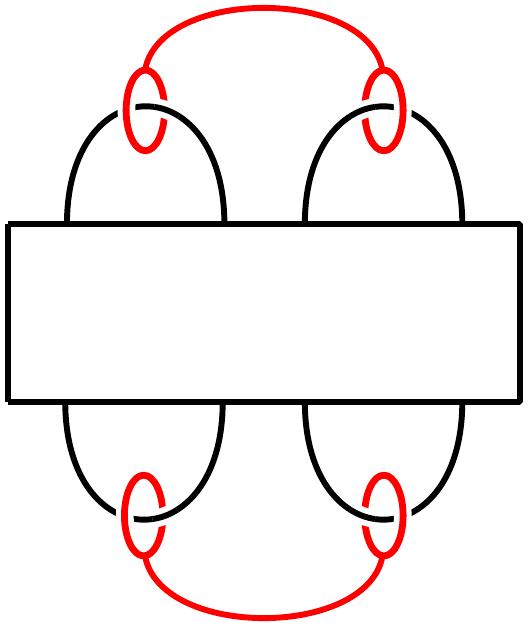}
    \put(46,121){\textcolor{red}{$\tau_+$}}
    \put(46,-6){\textcolor{red}{$\tau_-$}}
    \put(38,55){Braid}
  \end{overpic}
  \caption{The upper and lower meridian pairs of a $2$-bridge link group.
  The proper arcs $\tau_+$ and $\tau_-$ in the exterior $M(L)$ of a $2$-bridge link $L\subset S^3$ are the upper and lower tunnels, respectively.
  Each of the meridian pairs represented by $\tau_+$ and $\tau_-$ generates the link group $G(L)=\pi_1(S^3\setminus L)$.}
  \label{fig:upper-lower-tunnel}
\end{figure}

The main purpose of this paper is to give a detailed account of
Agol's beautiful proof of Theorem~\ref{Theorem1-0} included in the slide~\cite{Agol2001}.
A key ingredient of the proof is non-positively curved cubed decompositions 
of alternating link exteriors 
in which the checkerboard surfaces are hyperplanes (Proposition \ref{prop:cubing}). 
According to Rubinstein~\cite[p.3177]{Sakuma-Yokota},
such cubed decompositions were first 
found by Aitchison, though he did not publish the result.
They were rediscovered by D. Thurston~\cite{DThurston} 
and described in detail by Yokota~\cite{Yokota2011}
(cf.~\cite{ALR, Sakuma-Yokota, Sakai}).
The cubed decompositions play essential roles in the proofs of
(i) Proposition~\ref{prop:qf} which says that
the checkerboard surfaces for hyperbolic alternating links 
are quasi-fuchsian and 
(ii) Propositions~\ref{prop:disjoint-half-space}
and~\ref{prop:disjoint-open-disk}
concerning the disks bounded by the limit circles 
associated with checkerboard surfaces
in the ideal boundary $\CCC$ of 
the universal covering $\HH^3$ of the 
hyperbolic alternating link complement.
The proof of Theorem \ref{Theorem1-0} is
completed by applying Proposition~\ref{prop:ping-pong}
(a variant of Klein-Maskit combination theorem 
proved by using Maskit-Swarup~\cite{Maskit-Swarup})
to the action
of meridian pairs on $\CCC$ 
by using Proposition~\ref{prop:disjoint-open-disk}.
(See Figures~\ref{fig:omega-positive} and~\ref{fig:omega-zero-noncrossing},
which are copied from~\cite{Agol2001}.)
\medskip

%以下の部分の改善を下のtextcolor環境で提案
\begin{comment}
Building on Theorems~\ref{thm:Agol} and~\ref{Theorem1-0},
and by using (i) the covering theorem of Canary~\cite{Canary} 
together with the tameness theorem
established by Agol~\cite{Agol2004} 
and Calegari-Gabai~\cite{Calegari-Gabai}
(see also Soma~\cite{Soma} and Bowditch~\cite{Bowditch}),
and (ii) the result of 
Millichap and Worden~\cite{Millichap-Worden} concerning the commensurable classes of $2$-bridge link groups,
we also prove the following generalization of
Theorem~\ref{Theorem1-0}.
\end{comment}

Building on Theorems~\ref{thm:Agol} and~\ref{Theorem1-0},
we also prove the following generalization of Theorem~\ref{Theorem1-0}.

\begin{theorem}\label{thm:generalization} 
Let $X = \HH^3/G$ be an orientable, complete, hyperbolic $3$-manifold, 
$\{ \mu_1, \mu_2 \}$ a pair of non-commuting parabolic elements 
of $G$,
and $\Gamma=\langle \mu_1, \mu_2 \rangle$ the subgroup of $G$
generated by $\{ \mu_1, \mu_2 \}$.
  Then one of the following holds.
\begin{enumerate}
\item[{\rm (1)}] 
$\Gamma$ is a rank $2$ free group.
        
\item[{\rm (2)}] 
$\Gamma$ is equal to $G$, 
and it is a hyperbolic $2$-bridge link group.
Moreover, $\{ \mu_1, \mu_2 \}$ is equivalent to the upper or lower meridian pair.

\item[{\rm (3)}] 
$\Gamma$ is an index $2$ subgroup of $G$, 
where $\Gamma$ is 
the link group of 
a $2$-component hyperbolic $2$-bridge link,
and $G$ is the link group of 
a rational link in the projective $3$-space $P^3$.
Moreover,
$\{ \mu_1, \mu_2 \}$, as a subset of $\Gamma$, is 
equivalent to the upper or lower meridian pair
in the $2$-bridge link group, 
and $\{ \mu_1, \mu_2 \}$, as a subset of $G$, 
consists of meridians of the rational link.
\end{enumerate}
Moreover, if $X$ has finite volume, then
the conclusion (1) is replaced with the following 
finer conclusion.
\begin{enumerate}
\item[{\rm (1')}] 
$\Gamma$ is a rank $2$ free Kleinian group which is geometrically finite.
\end{enumerate}
\end{theorem}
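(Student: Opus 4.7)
The plan is to apply Theorem~\ref{thm:Agol} directly to $\Gamma$, then, case by case, to analyze how $\Gamma$ sits inside $G$. Since $X$ is an orientable hyperbolic $3$-manifold, its fundamental group $G$ is torsion-free, and so is $\Gamma \leq G$. Because $\Gamma$ is a non-elementary Kleinian group generated by two non-commuting parabolic elements, Theorem~\ref{thm:Agol} presents three alternatives: $\Gamma$ is a free group, $\Gamma$ is a hyperbolic $2$-bridge link group, or $\Gamma$ is a Heckoid group. The Heckoid alternative is immediately ruled out, because Heckoid groups uniformize hyperbolic $3$-orbifolds and hence contain torsion.

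If $\Gamma$ is free of rank $2$, we obtain conclusion~(1). To upgrade to~(1') when $X$ has finite volume, I would combine the tameness theorem of Agol and Calegari-Gabai with Canary's covering theorem. Tameness ensures that $\HH^3/\Gamma$ is the interior of a compact $3$-manifold, and the covering theorem then forces $\Gamma$ to be geometrically finite unless it is a virtual fiber subgroup. In the virtual fiber case, $\Gamma$ is the fundamental group of an orientable finite-type surface; the only such surfaces with $\pi_1=F_2$ are the once-punctured torus and the thrice-punctured sphere. The once-punctured torus is excluded because its unique peripheral conjugacy class cannot supply two non-commuting parabolic generators (two such parabolic elements would lie in the commutator subgroup of $F_2$ and fail to generate). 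The thrice-punctured sphere is rigid in $\PSL(2,\CC)$, so $\Gamma$ would then be Fuchsian of the first kind with finite area, and hence already geometrically finite.

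If $\Gamma$ is a hyperbolic $2$-bridge link group, Adams' theorem~\cite[Theorem~4.3]{Adams96} forces $\{\mu_1,\mu_2\}$ to consist of meridians of $\Gamma$, and Theorem~\ref{Theorem1-0}, applied in the contrapositive (since $\Gamma$ is non-free), forces this meridian pair to be equivalent to the upper or lower meridian pair. Because $\Gamma$ has finite covolume and $G\supseteq\Gamma$ is discrete, $G$ is itself a lattice and $[G:\Gamma]<\infty$. If $[G:\Gamma]=1$ we are in case~(2); otherwise it remains to classify the proper finite extensions.

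This last step is the main obstacle. The plan is to use the commensurator classification of $2$-bridge link groups due to Millichap-Worden, together with the symmetry analysis of $2$-bridge link exteriors. A proper inclusion $\Gamma\subsetneq G$ expresses $X$ as a finite-sheeted quotient $(\HH^3/\Gamma)/H$ by a group $H$ of orientation-preserving isometries of the $2$-bridge link exterior, and the requirement that $\mu_1,\mu_2$ remain non-commuting parabolic elements of $G$ and descend to meridians of the quotient link sharply restricts the possibilities. The hyperelliptic involution of the $2$-bridge link exterior acts with fixed points and is therefore forbidden, and the known finite symmetry groups of $2$-bridge link exteriors show that the only remaining free involution preserving a meridian pair is induced by the antipodal map on $S^3$; this can act freely only when the $2$-bridge link has two components (the antipodal map interchanging them), and the quotient is precisely a rational link in $P^3$. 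This forces $[G:\Gamma]=2$ and yields conclusion~(3). The delicate point is to use the Millichap-Worden commensurability analysis to rule out any larger index and any non-manifold quotient compatible with the meridian-preservation hypothesis.
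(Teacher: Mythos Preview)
Your overall strategy matches the paper's: apply Theorem~\ref{thm:Agol}, rule out the Heckoid alternative by torsion-freeness, and in the $2$-bridge case invoke Millichap--Worden together with the symmetry classification of hyperbolic $2$-bridge link exteriors to identify the only possible proper manifold quotient as a rational link complement in $P^3$. Your sketch of that last step is informal but names the right ingredients; the paper packages them as Proposition~\ref{prop:free-peridic-2bridge}, where the key constraint is simply that $G$ is torsion-free, so the deck group $H$ must act freely, and the isometry computation then leaves a unique free involution.

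There is one genuine gap in your argument for~(1'). Canary's covering theorem does not say ``geometrically finite or virtual fiber'': the full conclusion is that either $\hat X=\HH^3/\Gamma$ is geometrically finite, or $X$ has a finite cover $X'$ fibering over $S^1$ with fiber $S$ such that \emph{either} $\hat X$ equals the fiber cover $X_S$ \emph{or} $\hat X$ is a twisted $I$-bundle doubly covered by $X_S$. You treat only the first alternative, and your assertion that ``$\Gamma$ is the fundamental group of an orientable finite-type surface'' fails in the second, where $\Gamma\cong\pi_1(F)$ for the non-orientable base $F$ of the twisted $I$-bundle. The paper disposes of this case with the observation that peripheral loops around punctures are orientation-preserving and hence lie in the kernel of the orientation homomorphism $\pi_1(F)\to\ZZ/2\ZZ$; two parabolic generators therefore cannot generate $\pi_1(F)$ when $F$ is non-orientable.

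Your treatment of the thrice-punctured sphere in the fiber case differs from the paper's but is equally valid: the paper argues that the thrice-punctured sphere admits no pseudo-Anosov and hence cannot be the fiber of a hyperbolic bundle, while you invoke rigidity to conclude $\Gamma$ would be Fuchsian and hence geometrically finite, contradicting the hypothesis. Your explicit exclusion of the once-punctured torus via the commutator subgroup is a point the paper leaves implicit.
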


See Definition~\ref{def:rational-link-P}
for the definition of a rational link in $P^3$,
and see
Remark~\ref{rem:statement-thm-general}
for a detailed description of the statement (3) in the above theorem.
This theorem gives a refinement of
the result by Boileau-Weidmann
\cite[Proposition 2]{Boileau-Weidmann}
concerning subgroups generated by two parabolic primitive elements
of the fundamental group of an orientable, complete, hyperbolic $3$-manifold of finite volume.
 The proof of Theorem~\ref{thm:generalization} is based on 
 (i) the result of Millichap-Worden~\cite{Millichap-Worden} concerning the commensurable classes of $2$-bridge link groups
 and (ii) the covering theorem of Canary~\cite{Canary}
  together with the tameness theorem established by Agol~\cite{Agol2004} and Calegari-Gabai~\cite{Calegari-Gabai} (see also Soma~\cite{Soma} and Bowditch~\cite{Bowditch}).

\medskip
This paper is organized as follows.
In Section~\ref{sec:results}, we reformulate
the main Theorem~\ref{Theorem1-0} into Theorem~\ref{Theorem1},
by using the correspondence between the meridian pairs up to equivalence and
the proper arcs in the link exterior up to proper homotopy.
We also state Theorem~\ref{Theorem1-g}
concerning general alternating links which 
is implicitly included in~\cite{Agol2001}.
In Section~\ref{sec:checkerboard-surface},
we recall the key fact that 
the checkerboard surfaces associated with prime alternating link diagrams
of hyperbolic alternating links are quasi-fuchsian
(Proposition~\ref{prop:qf}).
In Section~\ref{sec:meridian},
we describe the actions of meridians on
the ideal boundary
$\CCC$ of the hyperbolic space $\HH^3$,
and give a sufficient condition
for a meridian pair to generate a free Kleinian group
which is geometrically finite (Proposition~\ref{prop:ping-pong}).
The proposition is a basic tool for the proof of 
Theorems~\ref{Theorem1} and~\ref{Theorem1-g}. 
In Section~\ref{sec:cat0},
we quickly recall fundamental facts 
concerning non-positively curved spaces,
which is used in Sections~\ref{sec:NPC} and~\ref{sec:chekerboard-decomposition}.
In Section~\ref{sec:NPC},
we describe
non-positively curved cubed decompositions of 
alternating link exteriors (Proposition~\ref{prop:cubing}),
and study relative positions
of \lq\lq checkerboard hyperplanes'' and 
\lq\lq peripheral hyperplanes'',
the components of 
the inverse images of checkerboard surfaces  
and peripheral tori, respectively, 
in the universal cover $\tilde X$ of 
a hyperbolic alternating link complement $X$
(Proposition~\ref{pro:connected-intersection}).
In Section~\ref{sec:chekerboard-decomposition},
we review the ideal polyhedral decomposition of $X$
from the view point of the non-positively curved cubed decompositions.
Then 
we prove Proposition~\ref{prop:disjoint-half-space}
concerning relative positions of closed half-spaces in $\tilde X$
bounded by checkerboard hyperplanes.
In Section~\ref{sec:Butterflies-checkerboard},
we use Proposition~\ref{prop:disjoint-half-space} to prove 
the key proposition,
Proposition~\ref{prop:disjoint-open-disk},
concerning discs, 
in the ideal boundary $\CCC$
of $\tilde X=\HH^3$,
bounded by 
the limit circles of checkerboard hyperplanes.
In Section~\ref{sec:proof-maintheorem}, we prove
Theorems~\ref{Theorem1} and~\ref{Theorem1-g}
(and so  Theorem~\ref{Theorem1-0}),
by using Propositions~\ref{prop:ping-pong} and
\ref{prop:disjoint-open-disk}.
In Section~\ref{sec:generalization},
we prove Theorem~\ref{thm:generalization}
after introducing and studying rational links in $P^3$.

\medskip
{\bf Acknowledgement.}
The second author would like to thank Ian Agol 
for sending the slide of his talk~\cite{Agol2001},
encouraging him (and any of his collaborators) to write up the proof,
and describing key ideas of the proof.
He would also like to thank Michel Boileau for enlightening conversations.
Both authors would like to thank Iain Aitchison
for teaching them the key ideas of polyhedral decompositions 
and cubical decompositions of alternating link exteriors
and for numerous invaluable discussions.
They would also like to thank Takuya Katayama and Ian Leary 
for invaluable discussions and informations concerning 
Proposition~\ref{prop:cubing2X}. 
Their thanks also go to 
Hirotaka Akiyoshi, Tetsuya Ito, 
Yuya Koda, Yohei Komori, Hideki Miyachi,
Makoto Ozawa,
and Yuta Taniguchi
for enlightening conversations and encouragements.
The second author is supported by JSPS KAKENHI Grant Number JP20K03614
and by Osaka Central Advanced Mathematical Institute 
(MEXT Joint Usage/Research Center on Mathematics and Theoretical Physics JPMXP0619217849).

\section{Reformulation of Theorem~\ref{Theorem1-0}}
\label{sec:results}

Let $L$ be a link in $S^3$,
$X=X(L):=S^3\setminus L$ the {\it link complement},
and $M=M(L):=S^3\setminus \interior N(L)$, the {\it link exterior},
where $N=N(L)$ is a regular neighborhood of $L$.
The {\it link group} $G=G(L)$ of $L$ is the fundamental group $\pi_1(M)=\pi_1(X)$.
A {\it meridian} of $L$ is an element $\mu$ of $G$
which is represented by a based loop freely homotopic to
a {\it meridional loop} in $\partial N$,
i.e., a simple loop that bounds an essential disk in $N$.

A {\it meridian pair} is an unordered pair $\{\mu_1,\mu_2\}$ of meridians of $L$.
Two meridian pairs $\{\mu_1,\mu_2\}$ and $\{\mu_1',\mu_2'\}$ are {\it equivalent}
if $\{\mu_1',\mu_2'\}$ is equal to 
$\{g\mu_1^{\varepsilon_1}g^{-1}, g\mu_2^{\varepsilon_2}g^{-1}\}$
for some $\varepsilon_1, \varepsilon_2\in\{\pm 1\}$
and $g\in G$.

Note that there is a bijective correspondence between
the set of meridian pairs of $L$ up to equivalence
and the set of proper paths in $M$
up to proper homotopy (cf.~\cite{Adams96},~\cite[Section 2]{Lee-Sakuma16}
and Lemma~\ref{lem:meridian-parabolic2}(2)).
Here a {\it proper path} in $M$
is a path (a continuous image of a closed interval)
which intersects $\partial M$ precisely at the endpoints.
Two proper paths in $M$ are {\it properly homotopic} in $M$
if they are homotopic keeping the condition that 
the endpoints are contained in $\partial M$. 

Assume that $L$ is {\it hyperbolic}, i.e.,
the complement $X$ admits a complete hyperbolic structure of finite volume.
Then the meridian pair $\{\mu_1,\mu_2\}$ is {\it commuting}
(i.e., $\mu_1\mu_2=\mu_2\mu_1$)
if and only if 
the corresponding proper path is {\it inessential},
i.e., properly homotopic to an arc in $\partial M$
(cf.~Lemma~\ref{lem:non-commutative}).
In other words, $\{\mu_1,\mu_2\}$ is non-commuting
if and only if the proper path is {\it essential},
i.e., not inessential.
If $L$ is a $2$-bridge link and if the arc is properly homotopic to
the upper or lower tunnel of $L$,
then $\{\mu_1,\mu_2\}$ generates the link group $G$
(see Figure~\ref{fig:upper-lower-tunnel}). 
Thus Theorem~\ref{Theorem1-0} is reformulated as follows.

\begin{theorem}
\label{Theorem1}
Let $L\subset S^3$ be a hyperbolic $2$-bridge link.
Let $\gamma$ be an essential proper path in the link exterior $M(L)$, 
and 
let $\{\mu_1,\mu_2\}$ be the meridian pair 
in the link group $G(L)$ represented by  $\gamma$.
Assume that $\gamma$ is not 
properly homotopic to the upper nor lower tunnel of $L$.
Then $\{\mu_1,\mu_2\}$ generates a rank $2$ free Kleinian group which is geometrically finite. 
\end{theorem}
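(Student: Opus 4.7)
The strategy is to apply the Klein-Maskit type statement of Proposition~\ref{prop:ping-pong} to the action of $\{\mu_1,\mu_2\}$ on the ideal boundary $\CCC$ of $\tilde X=\HH^3$. My plan is to produce two closed round disks $D_1,D_2 \subset \CCC$, each bounded by the limit circle of a suitable checkerboard hyperplane, whose interiors are disjoint, and whose configuration relative to the parabolic fixed points of $\mu_1,\mu_2$ satisfies the hypotheses of Proposition~\ref{prop:ping-pong}. Once this configuration is in place, the proposition immediately yields that $\langle\mu_1,\mu_2\rangle$ is a rank~$2$ free, geometrically finite Kleinian group.

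First I would fix the non-positively curved cubed decomposition of $M(L)$ given by Proposition~\ref{prop:cubing}, so that the two checkerboard surfaces of the standard $4$-plat alternating diagram of $L$ are embedded hyperplanes; by Proposition~\ref{prop:qf} they are quasi-Fuchsian, and each component $\tilde\Sigma\subset\tilde X$ of the preimage of a checkerboard surface has a Jordan limit circle in $\CCC$ bounding two closed round disks. I would then place $\gamma$ into a combinatorial normal form with respect to the cubing and lift it to an arc $\tilde\gamma\subset\tilde X$ joining two horoballs centered at distinct points $z_1,z_2\in\CCC$; these are the parabolic fixed points of fixed lifts of $\mu_1,\mu_2$ and are distinct because $\gamma$ is essential (cf.~Lemma~\ref{lem:non-commutative}). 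Reading off the two ends of $\tilde\gamma$ inside the cubing, I would associate to each endpoint a distinguished checkerboard hyperplane $\tilde\Sigma_i$ through the corresponding cusp, and take $D_i$ to be the component of $\CCC$ bounded by the limit circle of $\tilde\Sigma_i$ on the side into which $\tilde\gamma$ exits that cusp.

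The crucial step is then to verify that the interiors of $D_1$ and $D_2$ are disjoint exactly under the hypothesis that $\gamma$ is not properly homotopic to the upper or lower tunnel of $L$. This is the content of Proposition~\ref{prop:disjoint-open-disk}, which is proved via Proposition~\ref{prop:disjoint-half-space} on the relative positions of the corresponding closed half-spaces in $\tilde X$. The two exceptional tunnel cases are characterized precisely as those configurations in which the half-spaces are forced to overlap in a way that carries over to an overlap of the limit-set disks in $\CCC$; any other essential $\gamma$ yields honest disjointness. Once this is established, Proposition~\ref{prop:ping-pong} applies to conclude the proof.

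The main technical obstacle, I expect, will be the combinatorial matching step in the second paragraph: canonically selecting the hyperplanes $\tilde\Sigma_1,\tilde\Sigma_2$ from $\tilde\gamma$ so that (i) the resulting disks satisfy the invariance and positioning hypotheses of Proposition~\ref{prop:ping-pong}, and (ii) the combinatorial characterization of the upper and lower tunnels coincides exactly with the failure of disjointness. This matching relies on both the cubed-complex structure developed in Sections~\ref{sec:NPC}--\ref{sec:chekerboard-decomposition} and the half-space analysis of Section~\ref{sec:Butterflies-checkerboard}, with the $4$-plat structure of the $2$-bridge diagram playing an essential role in restricting the exceptional cases to just the upper and lower tunnel pairs.
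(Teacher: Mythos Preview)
Your outline has the right architecture—cubed decomposition, lift of $\gamma$, checkerboard hyperplanes at the two cusps, then Proposition~\ref{prop:ping-pong}—but two concrete pieces are missing or mis-stated.

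First, Proposition~\ref{prop:ping-pong} does not consume a single disk $D_i$ per parabolic fixed point; it requires a \emph{butterfly} $\BF(p_i)=\{\Delta_i^-,\Delta_i^+\}$, i.e.\ a \emph{pair} of disks coming from two consecutive checkerboard hyperplanes through $p_i$ and satisfying $\mu_i(\CCC\setminus\interior\Delta_i^-)=\Delta_i^+$ (Definition~\ref{def:butterfly}, Lemma~\ref{lem:butterfly}, Lemma~\ref{lem:charaterizing-butterfly}). So four disks are in play, not two, and one must verify both that all four have pairwise disjoint interiors \emph{and} that the complementary open set $O=\CCC\setminus(|\BF(p_1)|\cup|\BF(p_2)|)$ is nonempty. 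The latter is not automatic; the paper treats it separately (Lemmas~\ref{lem:non-empty-openset} and~\ref{lem:non-empty-openset2}) via a count of black regions and, when that count is too small, passage to an adjacent checkerboard ideal polyhedron. Your single hyperplane $\tilde\Sigma_i$ per endpoint cannot produce the ping-pong dynamics needed for a parabolic generator.

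Second, your claim that the upper and lower tunnels are characterized exactly by failure of disjointness in Proposition~\ref{prop:disjoint-open-disk} is not correct. That proposition concerns two regions $R_1,R_2$ of a \emph{single} checkerboard ideal polyhedron and gives disjointness only when $R_1$ and $R_2$ are non-adjacent. When $\gamma$ is \emph{any} crossing arc, the relevant ideal vertices are adjacent in $\partial R$, and the argument of Subcase~II-2 in the proof of Theorem~\ref{Theorem1-g} breaks down for every crossing arc, not just the two tunnels. The paper's proof of Theorem~\ref{Theorem1} supplies an extra step you have not mentioned: for a crossing in an interior twist region $A_i$ ($2\le i\le n-1$) of the standard $4$-plat diagram, one performs a \emph{flype} to a new prime alternating diagram $\Diagram'$ in which the image of $\gamma$ is no longer a crossing arc (Figures~\ref{fig:flype1} and~\ref{fig:flype2}), and then reruns the Subcase~II-2 argument in $\Diagram'$. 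Without this flype trick there is no mechanism in your proposal to distinguish the non-tunnel crossing arcs from the tunnels.
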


Agol's proof Theorem~\ref{Theorem1} in~\cite{Agol2001} 
actually includes a proof of 
the following result concerning
hyperbolic alternating links.

\begin{theorem}
\label{Theorem1-g}
Let $L\subset S^3$ be a hyperbolic alternating link
and $\Diagram$ a prime alternating diagram of $L$.
Let $\{\mu_1,\mu_2\}$ be a non-commuting meridian pair
and 
$\gamma$ an essential proper path in the link exterior $M(L)$
that represents the pair $\{\mu_1,\mu_2\}$.
If $\gamma$ is not properly homotopic to a crossing arc 
(with respect to the diagram $\Diagram$),
then $\{\mu_1,\mu_2\}$ generates a rank $2$ free Kleinian group which is geometrically finite.
\end{theorem}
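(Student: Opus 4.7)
The plan is to reduce the statement to the ping-pong criterion provided by Proposition~\ref{prop:ping-pong} by using the disjoint-disk property furnished by Proposition~\ref{prop:disjoint-open-disk}. Lift $\gamma$ to an arc $\tilde\gamma$ in the universal cover $\tilde X = \HH^3$ of $X(L)$ joining two horoballs based at parabolic fixed points $p_1, p_2 \in \CCC$; after a suitable choice of lift one may arrange that $\mu_i$ fixes $p_i$. Since $\Diagram$ is a prime alternating diagram, Proposition~\ref{prop:qf} tells us that each checkerboard surface is quasi-fuchsian, so each of their lifts is a topologically embedded disk in $\HH^3$ whose ideal boundary is a Jordan curve cutting $\CCC$ into two complementary open disks.

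The heart of the argument is to associate to each endpoint $p_i$ a specific checkerboard disk $B_i \subset \CCC$ whose boundary circle passes through $p_i$. At each cusp the cubed decomposition of Proposition~\ref{prop:cubing}, together with the local analysis of Proposition~\ref{pro:connected-intersection}, cuts the horospherical cross-section into finitely many sectors, and the meridian $\mu_i$ (interpreted as a proper arc endpoint coming out of the cusp in a definite sector) selects a unique checkerboard hyperplane $\tilde\Sigma_i$ through the horoball based at $p_i$; take $B_i$ to be the closed disk bounded by $\partial \tilde\Sigma_i$ on the side \emph{opposite} to the sector containing the outgoing direction of $\tilde\gamma$. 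The hypothesis that $\gamma$ is not properly homotopic to a crossing arc of $\Diagram$ translates, via the cubed decomposition, into the statement that the two hyperplanes $\tilde\Sigma_1$ and $\tilde\Sigma_2$ do not share a common crossing edge, which is precisely the configuration governed by Proposition~\ref{prop:disjoint-open-disk}. That proposition then gives $\interior B_1 \cap \interior B_2 = \emptyset$, so the two closed disks meet, if at all, only at a single parabolic fixed point.

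With these disks in hand, I would feed the data $(B_1, B_2, \mu_1, \mu_2)$ into Proposition~\ref{prop:ping-pong}: each $\mu_i$ fixes $p_i \in \partial B_i$, preserves $B_i$ through its action in the peripheral subgroup of the checkerboard surface stabilizer at that cusp, and maps $\CCC \setminus B_i$ into $B_i$ in the standard parabolic ping-pong fashion; combined with the disjointness of the interiors, this forces $\langle \mu_1, \mu_2\rangle$ to be a rank-$2$ free Kleinian group which is geometrically finite. The step I expect to be the main obstacle is the second one: verifying that the recipe \emph{meridian $\mapsto$ (checkerboard hyperplane, side)} is canonical and compatible in the sense that (a) the non-crossing-arc hypothesis on $\gamma$ really does translate into the disjointness hypothesis of Proposition~\ref{prop:disjoint-open-disk}, and simultaneously (b) the peripheral dynamics of $\mu_i$ preserves and pulls into the chosen side $B_i$. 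Both clauses rest on a careful reading of the local picture at the peripheral tori where the checkerboard surfaces meet the cusp, and an inconsistent choice of sides would wreck either the disjointness or the invariance required by the ping-pong lemma.
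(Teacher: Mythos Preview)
Your single-disk setup cannot drive the ping-pong. A parabolic $\mu_i$ fixing $p_i\in\partial B_i$ does \emph{not} map $\CCC\setminus B_i$ into $B_i$, nor does it preserve $B_i$: in the upper-half-plane model with $p_i=\infty$ and $\mu_i(z)=z+1$, a disk through $\infty$ is a half-plane such as $\{\operatorname{Re} z\le 0\}$, and $\mu_i$ sends its complement to $\{\operatorname{Re} z>1\}$, which is again in the complement. This is exactly why the paper uses a \emph{butterfly} $\BF(p_i)=\{\Delta_i^-,\Delta_i^+\}$ at each $p_i$ (Definition~\ref{def:butterfly}, Lemma~\ref{lem:butterfly}): two disks with disjoint interiors such that $\mu_i(\CCC\setminus\interior\Delta_i^-)=\Delta_i^+$. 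Proposition~\ref{prop:ping-pong} then requires that all \emph{four} disks $\Delta_1^{\pm},\Delta_2^{\pm}$ have pairwise disjoint interiors, and in addition that the open set $O=\CCC\setminus(|\BF(p_1)|\cup|\BF(p_2)|)$ be nonempty.

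Both verifications are more delicate than your sketch allows. Proposition~\ref{prop:disjoint-open-disk} only gives disjointness for disks $\Delta_R(\tPoly)$ coming from non-adjacent regions of a \emph{single} checkerboard ideal polyhedron $\tPoly$. When the lift $\tilde\gamma$ crosses checkerboard hyperplanes (Case~I in the paper), the two butterflies live in different polyhedra $\tPoly_1,\tPoly_2$, and one needs the separating Lemmas~\ref{lem:replacing}, \ref{lem:sliding} and \ref{lem:separating-butterfly} to bring Proposition~\ref{prop:disjoint-open-disk} to bear; the ``not a crossing arc'' hypothesis plays no role here. When $\tilde\gamma$ stays in one polyhedron (Case~II), one must still check that the four black regions $R_i^{\pm}$ are distinct (Claims~\ref{claim:distict-region} and~\ref{claim:distict-region2}); only in Subcase~II-2 does ``not a crossing arc'' enter, to prevent $c_1,c_2$ from being adjacent in $\partial R$. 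Finally, the nonemptiness of $O$ is a separate argument (Lemmas~\ref{lem:non-empty-openset} and~\ref{lem:non-empty-openset2}), which in the extremal cases of few black regions (Lemma~\ref{lem:link-diagram}) requires passing to a neighboring polyhedron $\tPoly_1'$.
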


\section{Checkerboard surfaces for alternating links}
\label{sec:checkerboard-surface}
{\it In the remainder of this paper,
$L\subset S^3$ denotes a hyperbolic alternating link and $\Diagram\subset S^2$ denotes
a prime alternating diagram of $L$, 
except in Sections~\ref{sec:NPC}
and~\ref{sec:chekerboard-decomposition},
where we assume only that $L$ is a prime alternating link.} 
Here a link diagram is {\it prime} if
(i) it contains at least one crossing and (ii)
for every simple loop $\alpha$ in the projection plane,
if $\alpha$ meets the diagram transversely in exactly two points,
then $\alpha$ bounds a disk that contains no crossings 
of the diagram.
It should be noted that a prime alternating diagram of a prime link
is connected (as a plane graph) and reduced (i.e., contains
no nugatory crossings.)

We pick two points $v_+$ and $v_-$ in $S^3$, identify $S^3 \setminus \{v_+,v_-\}$ with $S^2 \times \RR$ so that $\lim_{t\to\pm\infty}(x,t)=v_{\pm}$
for $x\in S^2$. 
The diagram $\Diagram$ is regarded as a $4$-valent graph in $S^2 \times \{0\}$, 
and we assume $L \subset \Diagram \times [-1,1]$.
For each crossing $c$ of $\Diagram$, 
we assume $L \cap \left(c \times [-1,1]\right) = c \times \{-1,1\}$. 
We call the point $c_+ := c \times 1$ 
(resp.~$c_- := c \times (-1)$) the {\it over} 
(resp.~{\it under}) {\it crossing point} of $L$ at $c$,
and call $c \times [-1,1]$
the \textit{crossing arc of $L$} at $c$.
The intersection of $c \times [-1,1]$ with the link exterior $M$
(resp.~the link complement $X$)
is called the {\it crossing arc in $M$} 
(resp.~the {\it open crossing arc in $X$})
at $c$.
We assume that the crossing arc $c \times [-1,1]$ is oriented so that $c_-$ and $c_+$, respectively, are the initial and terminal points.

We also assume that $L$ coincides with $\Diagram$
outside crossing balls,
regular neighborhoods in $S^3$ of the crossing
arcs at the crossings of $\Diagram$.
We  color the complementary regions of $\Diagram$ in $S^2$
alternatively black and white.
Then there is a compact, connected surface $S_b$ (resp.~$S_w$) 
bounded by $L$
that coincides with the black (resp.~white) regions outside the crossing balls
and intersects each crossing ball in a twisted rectangle:
it is called the {\it black} (resp.~{\it white}) {\it surface} for $L$.
It should be noted that $S_b$ and $S_w$ intersect transversely 
along the crossing arcs (cf.~Figure~\ref{fig:Aitchison2}(a)
in Section~\ref{sec:NPC}).
Moreover, there is a natural bijective correspondence 
between the components of
$(S_b\cup S_w)\setminus (S_b\cap S_w)$
and the regions of $\Diagram$.
We occasionally refer to each of $S_b$ and $S_w$ as
a {\it checkerboard surface}
and denote it by $S$.

For each checkerboard surface $S$,
we assume that $S$ intersects 
the regular neighborhood $N$ of $L$ in a collar neighborhood of $\partial S$
and so $S\cap M$ is properly embedded in $M$.
We refer to $S\cap M\subset M$ (resp.~$S\cap X\subset X$)
a {\it checkerboard  surface in $M$}
(resp.~an {\it open checkerboard  surface in $X$}),
and continue to denote it by $S$.

The following key proposition is implicitly included 
in the slide~\cite{Agol2001},
and its proof following Agol's suggestion is given by Adams
\cite[Theorem 1.9]{Adams07}.
The proof depends on the fact that
every hyperbolic alternating link complement admits  
a non-positively curved cubed decomposition
in which checkerboard surfaces are hyperplanes 
(see Section~\ref{sec:NPC}).
Except for the existence of such a decomposition,
essentially the same arguments had been given by
Aitchison-Rubinstein~\cite[Lemma and its proof in p.146]{Aitchson-Rubinstein_90a}
in a more general setting.
See
Futer-Kalfagianni-Purcell 
\cite[Theorem 1.6]{FKP2014}
for a generalization.

\begin{proposition}
\label{prop:qf}
Let $L\subset S^3$ be a hyperbolic alternating link,
and $S$ a checkerboard surface obtained from a prime alternating diagram
$\Diagram$ of $L$.
Then $S$ is quasi-fuchsian. 
\end{proposition}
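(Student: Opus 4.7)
The plan is to exploit the non-positively curved cubed decomposition of the link exterior (Proposition~\ref{prop:cubing}) in which $S$ appears as a two-sided hyperplane. Passing to the universal cover $\tilde X$, equipped simultaneously with the CAT(0) cubed metric and the hyperbolic metric, a lift $\tilde S$ of $S$ is a hyperplane of the cubing and hence is CAT(0)-convex; it separates $\tilde X$ into two convex closed half-spaces $\Half^{+}$ and $\Half^{-}$. Its stabilizer in $\pi_1(X)$ is a conjugate of $\pi_1(S)$, it acts cocompactly on $\tilde S$ modulo peripheral cusps, and it preserves each $\Half^{\pm}$.

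From this I would first read off the classical properties of $S$. Incompressibility, $\partial$-incompressibility, and $\pi_1$-injectivity follow from the standard fact that hyperplanes in a CAT(0) cube complex are convex and two-sided. The absence of accidental parabolics follows from the compatibility between the peripheral subcomplexes of the cubing and the parabolic subgroups of $\pi_1(X)$: any accidentally parabolic element of $\pi_1(S)$ would fix two distinct peripheral subcomplexes of the cubing and hence contradict the cocompactness of the action on $\tilde S$ modulo its cusps.

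The main step is then to upgrade these classical properties to quasi-fuchsianness. My plan is to use the fact that the CAT(0) cubed metric and the hyperbolic metric on $\tilde X$ are $\pi_1(X)$-equivariantly quasi-isometric outside invariant horoball neighbourhoods of the cusps. Together with the CAT(0)-convexity of $\tilde S$, this yields that the orbit of $\pi_1(S)$ on $\tilde S$ is relatively quasiconvex in the relatively hyperbolic pair $(\pi_1(X),\{\text{peripheral subgroups}\})$; by Hruska's characterisation of relative geometric finiteness, $\pi_1(S)$ is then geometrically finite as a Kleinian subgroup. To identify its limit set as a Jordan curve, I would use the separation $\tilde X = \Half^{+}\cup \tilde S\cup \Half^{-}$ together with convexity of each $\Half^{\pm}$ to show that each half-space closes up to a closed topological disk in $\CCC = \partial_\infty\HH^{3}$, with common boundary the limit set of $\pi_1(S)$. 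Alternatively, one may appeal to Bonahon's tameness dichotomy and rule out the virtual fibre alternative using the explicit topology of checkerboard surfaces.

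The main obstacle is precisely this last passage from the combinatorial CAT(0) picture to the analytic behaviour at infinity. Since the cubed and hyperbolic metrics on $\tilde X$ agree only up to quasi-isometry away from the cusps, one must carefully match peripheral structures on $S$ and $X$ and ensure that each CAT(0) half-space $\Half^{\pm}$ genuinely limits to a topological disk, rather than to a more complicated closed set in $\CCC$. Once this boundary control is in hand, quasi-fuchsianness follows formally. This is essentially the strategy of Aitchison-Rubinstein in the closed setting, adapted by Adams to the cusped alternating case.
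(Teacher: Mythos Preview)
The paper does not supply its own proof here; it cites Adams~\cite[Theorem~1.9]{Adams07} and Aitchison--Rubinstein~\cite{Aitchson-Rubinstein_90a}, remarking only that the argument uses the cubed decomposition of Proposition~\ref{prop:cubing} in which $S$ is a hyperplane. Your outline is aligned with that cited approach, and your closing sentence correctly identifies it as such.

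That said, one step is not right as written. Your argument against accidental parabolics --- that such an element ``would fix two distinct peripheral subcomplexes of the cubing and hence contradict the cocompactness of the action'' --- is garbled: a parabolic of $\pi_1(X)$ fixes exactly one peripheral plane, and cocompactness is not the relevant obstruction. The correct argument is that a parabolic $\gamma\in\pi_1(S)$ stabilizes both the lift $\Sigma$ and the unique peripheral plane $H_p$ at its parabolic fixed point, hence stabilizes $\Sigma\cap H_p$; the specific combinatorics of this cubing force that intersection to be a \emph{single} geodesic line (see Proposition~\ref{pro:connected-intersection}(2) and the proof of Lemma~\ref{lem:distinct-plane}), which projects to a boundary curve of $S$, so $\gamma$ is peripheral in $S$ after all. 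Without that connectedness fact the step does not go through. Your Bonahon-dichotomy alternative is also only a promissory note: ``the explicit topology of checkerboard surfaces'' is not by itself a reason $S$ cannot be a virtual fibre (checkerboard surfaces can be non-orientable and non-separating), and you would need a concrete argument there. The relative-quasiconvexity route you sketch first --- convexity of $\Sigma$ in the CAT(0) metric, transferred via the equivariant quasi-isometry on the truncated manifold --- is the one that actually carries the proof in the cited references.
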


To explain the meaning of the proposition,
let $\pu:\tilde X\to X$ be the universal covering,
and identify the link group $G=\pi_1(X)$ with the covering transformation group 
$\Aut(\tilde X)$.
Since $L$ is hyperbolic,
$\tilde X$ is identified with the hyperbolic space $\HH^3$
and $G=\Aut(\tilde X)$ is regarded as a Kleinian group.
Then $S$ being {\it quasi-fuchsian} means that 
$\pi_1(S)$ injects into $\pi_1(X)=G$ and 
the Kleinian group $\pi_1(S) < G < \PSL(2,\CC)$
satisfies the following condition:
if $S$ is orientable then $\pi_1(S)$ is a quasi-fuchsian group (cf.~\cite[p.120, Definition]{Matsuzaki-Taniguchi}), and
if $S$ is non-orientable then the index $2$ subgroup of $\pi_1(S)$
corresponding to the orientation double cover is a quasi-fuchsian group.  
Since
the action of a quasi-fuchsian group on 
the $3$-ball $\HHH^3=\HH^3\cup \CCC$ is topologically conjugate to the action of a fuchsian group
(see~\cite[Theorem 5.31]{Matsuzaki-Taniguchi}),
we obtain the following corollary.

\begin{corollary}
\label{cor:qf}
Let $L\subset S^3$ be a hyperbolic alternating link,
and $S$ a checkerboard surface obtained from a prime alternating diagram
$\Diagram$ of $L$.
Let $\Sigma$ be a component of the inverse image $\pu^{-1}(S) \subset \tilde X=\HH^3$.
Then $\Sigma$ is an open disk properly embedded in $\HH^3$,
and it divides $\HH^3$ into two half-spaces, $B^-$ and $B^+$,
which satisfy the following conditions.
\begin{enumerate}
\item[\rm(1)]
$\HH^3=B^-\cup B^+$ and $\Sigma=B^-\cap B^+$.
\item[\rm(2)]
The closure $\bSigma$ of $\Sigma$ in $\HHH^3=\HH^3\cup\CCC$
is a disk properly embedded in $\HHH^3$, and 
$(\HHH^3,\bSigma)$ is homeomorphic to the standard ball pair
$(B^3,B^2)$, where $B^3$ is the unit $3$-ball in $\RR^3$
and $B^2$ is the intersection of $B^3$ with the $x$-$y$ plane.
\item[\rm(3)]
The closures $\bar B^{\pm}$ of $B^{\pm}$ in $\HHH^3$
are $3$-balls, such that
\[
\HHH^3=\bar B^-\cup\bar B^+, \quad \bSigma=\bar B^-\cap\bar B^+.
\]
\item[\rm(4)]
$\partial\bSigma$ is a circle in $\CCC$ which divides
$\CCC$ into two disks $\Delta^-:=\bar B^-\cap \CCC$ and 
$\Delta^+:=\bar B^+\cap \CCC$,
such that
$\CCC=\Delta^-\cup \Delta^+$ and 
$\partial\bSigma=\Delta^-\cap \Delta^+$.
\end{enumerate}
\end{corollary}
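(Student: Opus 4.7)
My plan is to reduce the corollary to the corresponding transparent statement for a totally geodesic plane, via Proposition~\ref{prop:qf} and the topological conjugacy theorem for quasi-fuchsian groups.

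First I would identify the stabilizer $H<G$ of $\Sigma$. Since $S$ is $\pi_1$-injective (being quasi-fuchsian by Proposition~\ref{prop:qf}), the restriction $\pu|_\Sigma\colon\Sigma\to S$ is the universal covering of $S$ (or, in the non-orientable case, of its orientation double cover), with deck transformation group $H$; in particular $H$ is quasi-fuchsian. A $\pi_1$-injectivity argument then shows $\Sigma$ is simply connected: any loop in $\Sigma$ is nullhomotopic in $\HH^3$, so its projection to $S$ is nullhomotopic in $X$ and hence in $S$, and this nullhomotopy lifts to $\Sigma$. Being a connected open $2$-manifold, $\Sigma$ is thus homeomorphic to $\RR^2$, and it is properly embedded in $\HH^3$ because $S$ is properly embedded in $X$. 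Because $\HH^3$ is simply connected, the connected codimension-one submanifold $\Sigma$ must separate $\HH^3$ into exactly two components $B^{-},B^{+}$; this settles assertion~(1).

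For the remaining assertions, I would invoke the topological conjugacy theorem~\cite[Theorem~5.31]{Matsuzaki-Taniguchi}, which supplies a self-homeomorphism $\phi$ of the closed ball $\HHH^3$ with $\phi H \phi^{-1}=H_0$ a fuchsian group preserving a totally geodesic plane $P_0\subset\HH^3$. For $P_0$, all assertions of the corollary are transparent: $\bar{P_0}$ is a properly embedded closed disk in $\HHH^3$, the pair $(\HHH^3,\bar{P_0})$ is the standard ball pair, the two closed half-spaces bounded by $P_0$ are closed $3$-balls, and $\partial\bar{P_0}$ is a round circle splitting $\CCC$ into two round disks. The plan is to arrange that $\phi(\Sigma)=P_0$, so that assertions~(2)--(4) for $\bSigma$ follow by transport of structure through $\phi^{-1}$.

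The main obstacle is precisely the step of arranging $\phi(\Sigma)=P_0$. Both $\phi(\Sigma)$ and $P_0$ are $H_0$-invariant properly embedded disks in $\HH^3$ whose topological boundaries in $\CCC$ coincide with the limit set $\Lambda(H_0)$, and I would argue that any two such disks are $H_0$-equivariantly ambient isotopic in $\HHH^3$. In the quotient $\HH^3/H_0$, which by the quasi-fuchsian structure is homeomorphic to a product with $P_0/H_0$ sitting as the canonical level surface, the surface $\phi(\Sigma)/H_0$ is a two-sided incompressible properly embedded surface homotopic to that level surface, hence properly isotopic to it by standard Waldhausen-type uniqueness results for incompressible surfaces in product $3$-manifolds. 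Lifting this isotopy $H_0$-equivariantly to $\HH^3$, extending continuously to $\HHH^3$, and composing with $\phi$ produces the desired conjugacy carrying $\Sigma$ onto $P_0$; the four assertions of the corollary follow at once.
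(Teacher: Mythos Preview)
Your proposal is correct and follows the same approach as the paper: the paper's entire proof is the single remark preceding the corollary, invoking the topological conjugacy of quasi-fuchsian actions to fuchsian ones via \cite[Theorem~5.31]{Matsuzaki-Taniguchi}. Your version is in fact more thorough, since you make explicit the Waldhausen-type isotopy step needed to carry $\phi(\Sigma)$ onto the totally geodesic plane $P_0$, a detail the paper leaves to the reader.
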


We call $\Sigma\subset\HH^3$ and
$\bSigma\subset\HHH^3$, respectively,
a {\it checkerboard plane} and
a {\it checkerboard disk}.
The {\it color} of $\Sigma$ (or $\bSigma$) is
defined to be black or white according
the color of 
the corresponding checkerboard surface $S$.
We call each of $B^{\pm}$ a {\it checkerboard half-space}
bounded by $\Sigma$.

\section{The action of meridian pairs on the ideal boundary of the hyperbolic space}
\label{sec:meridian}

In the reminder of the paper, 
we assume for convenience that the hyperbolic alternating link $L$ is {\it oriented},
and we use the terminology \lq\lq meridian'' and \lq\lq meridian pair''
in the following restricted sense:
A {\it meridian} of $L$ is an element of
the link group $G$
which is represented by an oriented closed path freely 
homotopic to
a meridional loop in $\partial N$ that has linking number $+1$ with $L$.
A {\it meridian pair} is an unordered pair $\{\mu_1,\mu_2\}$ of 
meridians of $L$ in the restricted sense.
Then two meridian pairs are equivalent
in the sense defined in Section~\ref{sec:results}
if and only if they are simultaneously conjugate. 
Of course, this does not affect the contents of 
Theorems~\ref{Theorem1} and~\ref{Theorem1-g}.

Recall that $\tilde X$ is identified with the hyperbolic space $\HH^3$
and $G=\Aut(\tilde X)$ is identified with a Kleinian group.
Thus a meridian $\mu\in G < \PSL(2,\CC)$ is parabolic,
and its action on $\HHH^3=\HH^3\cup \CCC$
has a unique fixed point,  which lies in $\CCC$.
The point is called the {\it parabolic fixed point} of $\mu$ and 
denoted by $\Fix(\mu)$.

Let $\PFix(G)\subset\CCC$ be the set of the parabolic 
fixed points of $G$,
i.e., the set of the parabolic fixed points of 
the parabolic elements of $G$.
For each $p\in \PFix(G)$,
the stabilizer $\Stab_G(p)$ of $p$ in $G$ is a rank $2$ free abelian group
which belongs to the conjugacy class of the fundamental group
of a component of $\partial M$.
Since every component of $\partial M$ contains a unique
meridian loop (which has linking number $+1$ with $L$) up to isotopy,
$\Stab_G(p)$ contains a unique meridian $\mu_p$ of the oriented link $L$.
We call $\mu_p$ the {\it meridian of $L$ at 
the parabolic fixed point $p$}.

\begin{lemma}
\label{lem:meridian-parabolic}
The maps $\mu\mapsto \Fix(\mu)$ and $p \mapsto \mu_p$, respectively,
determine the following bijective correspondence and its inverse:
\[
\mbox{\{meridians of $L$}\} \to \PFix(G).
\]
\end{lemma}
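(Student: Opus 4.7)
The plan is to verify that both maps are well-defined and that they are mutually inverse; because the lemma is essentially a repackaging of standard cusp data for a finite-volume hyperbolic $3$-manifold, I do not expect a substantial obstacle, the only subtle point being the sign convention for meridians adopted at the start of Section~\ref{sec:meridian}.

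First I would check that $\mu \mapsto \Fix(\mu)$ lands in $\PFix(G)$. By definition, any meridian $\mu$ is conjugate in $G$ into $\pi_1(T)$ for some component $T$ of $\partial M$. Since $L$ is hyperbolic of finite volume, every non-trivial element of such a peripheral subgroup is parabolic; hence $\mu$ is parabolic and $\Fix(\mu)$ is well-defined and lies in $\PFix(G)$.

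Next I would verify that $p \mapsto \mu_p$ is well-defined. The paragraph preceding the lemma already records that $\Stab_G(p)$ is a rank-$2$ free abelian group conjugate to $\pi_1(T)$ for some component $T$ of $\partial M$, and that $T$ contains a unique meridional loop of $L$ up to isotopy: indeed $T$ bounds a regular neighborhood of one link component, hence carries a unique meridional slope, and the restricted notion of \emph{meridian} adopted in Section~\ref{sec:meridian} fixes the $+1$ linking-number orientation. Thus $\Stab_G(p)$ contains a unique meridian, namely $\mu_p$.

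Finally I would argue that the two maps are mutually inverse. On one hand $\mu_p \in \Stab_G(p)$ is parabolic, so its unique fixed point in $\CCC$ equals $p$, giving $\Fix(\mu_p)=p$. On the other hand, given a meridian $\mu$ with $p := \Fix(\mu)\in\PFix(G)$, the element $\mu$ lies in $\Stab_G(p)$, and the uniqueness established in the previous step forces $\mu = \mu_p$. The one point that deserves care — and which I would flag as the \emph{main obstacle}, modest though it is — is ensuring that \lq\lq unique meridian in $\Stab_G(p)$\rq\rq\ really holds and not merely \lq\lq unique up to inversion\rq\rq; this is exactly the role of the restricted definition of meridian fixed at the beginning of Section~\ref{sec:meridian}.
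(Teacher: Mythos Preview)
Your proposal is correct and is exactly the natural argument; the paper itself states this lemma without proof, treating it as immediate from the setup in the preceding paragraph, so there is nothing to compare against. Your attention to the sign convention for meridians is precisely the point that makes the bijection genuine rather than two-to-one.
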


The following lemma is easily proved.

\begin{lemma}
\label{lem:non-commutative}
Let $\{\mu_1,\mu_2\}$ be a meridian pair represented by
a proper path $\gamma$ in the link exterior $M$.
Then the following conditions are equivalent.
\begin{enumerate}
\item[\rm(1)]
$\{\mu_1,\mu_2\}$ is commuting.
\item[\rm(2)]
$\Fix(\mu_1)=\Fix(\mu_2)$.
\item[\rm(3)]
$\gamma$ is inessential.
\end{enumerate}
\end{lemma}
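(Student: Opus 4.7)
The plan is to prove (1) $\Leftrightarrow$ (2) via a standard property of parabolic elements of $\PSL(2,\CC)$, and then to prove (2) $\Leftrightarrow$ (3) by lifting $\gamma$ to the universal cover and identifying its two endpoints with horoballs centered at parabolic fixed points.

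For (1) $\Leftrightarrow$ (2), I will use the classical fact that two non-trivial parabolic elements $\alpha,\beta$ of $\PSL(2,\CC)$ commute if and only if they share their unique fixed point on $\CCC$. One direction is immediate: after conjugating the common fixed point to $\infty$, both elements become translations $z \mapsto z+a$ and $z \mapsto z+b$, which obviously commute. For the converse, if $\alpha\beta=\beta\alpha$, then $\alpha$ permutes the fixed set of $\beta$, which consists of a single point, and so must fix it.

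For (2) $\Leftrightarrow$ (3), fix the regular neighborhood $N=N(L)$ small enough that $\pu^{-1}(\interior N)$ is a disjoint union of open horoballs, one centered at each point of $\PFix(G)$; then $\tilde M := \pu^{-1}(M)$ is $\HH^3$ with these open horoballs removed. Lift $\gamma$ to a proper path $\tilde\gamma$ in $\tilde M$ with endpoints $\tilde x_1, \tilde x_2$ lying on horospheres $H_1, H_2$ that bound horoballs centered at parabolic fixed points $p_1, p_2$. Unwinding the construction assigning a meridian pair to $\gamma$, together with Lemma~\ref{lem:meridian-parabolic}, one obtains $p_i = \Fix(\mu_i)$. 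Now if (3) holds and $\gamma$ is properly homotopic to an arc $\gamma'$ in a single component $T$ of $\partial M$, then lifting the proper homotopy gives a homotopy of $\tilde\gamma$ to a lift of $\gamma'$ contained in a single component of $\pu^{-1}(T)$, i.e., a single horosphere; hence $H_1 = H_2$ and $p_1 = p_2$, giving (2). Conversely, if $p_1 = p_2$ then $H_1 = H_2$, so both endpoints of $\tilde\gamma$ lie on a common horosphere $H$; since $\tilde M$ is simply connected (being $\HH^3$ with a disjoint family of open convex sets removed), $\tilde\gamma$ is homotopic rel endpoints to any path from $\tilde x_1$ to $\tilde x_2$ in $\tilde M$, in particular to an arc contained in $H$. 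Projecting back down yields the desired proper homotopy of $\gamma$ into $\partial M$, giving (3).

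The main point requiring care is verifying that the recipe \lq\lq lift $\gamma$, read off the horoballs at the endpoints'' is compatible with the correspondence $p \mapsto \mu_p$ of Lemma~\ref{lem:meridian-parabolic}; this is essentially bookkeeping but must be done consistently with the chosen orientations, basepoints, and the definition of the meridian pair represented by a proper path. Everything else reduces to standard covering-space theory together with the convexity of horoballs.
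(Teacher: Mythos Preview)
Your proof is correct. The paper itself gives no proof of this lemma, stating only that it ``is easily proved''; your argument is precisely the straightforward verification the authors have in mind, and the covering-space picture you set up is exactly the one made explicit later around Lemma~\ref{lem:meridian-parabolic2}(2). One small remark: your parenthetical justification that $\tilde M$ is simply connected (``being $\HH^3$ with a disjoint family of open convex sets removed'') is a bit casual as stated; the clean reason is that the inclusion $M\hookrightarrow X$ is a homotopy equivalence, so $\tilde M=\pu^{-1}(M)$ is homotopy equivalent to $\tilde X=\HH^3$.
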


We now describe the action of the meridian $\mu_p$ on $\HHH^3$.
To this end, we assume that 
$X\setminus M$ consists of open cusp neighborhoods, and therefore
$\tilde M:=\pu^{-1}(M)$
is a submanifold of $\tilde X=\HH^3$ bounded by 
disjoint horospheres $\{\Bplane_p\}_{p\in \PFix(G)}$.
Note that the Euclidean torus $\Bplane_p/\Stab_G(p)$ is a component of
$\partial M$ and every component of $\partial M$ is of this form.

Now, let $S\subset X$ be an open checkerboard surface for $L$.
We may assume that $S$ intersects transversely each component of $\partial M$ 
in a closed Euclidean geodesic.
For each $p\in \PFix(G)$, $\pu^{-1}(S)\cap \Bplane_p$ is a disjoint union of Euclidean lines 
$\{\ell_j\}_{j\in\ZZ}=\{\ell_j(p)\}_{j\in\ZZ}$ 
such that $\mu_p(\ell_j)=\ell_{j+1}$.
Let $\Sigma_j=\Sigma_j(p)$ be the checkerboard plane 
that is the component of $\pu^{-1}(S)$ such that
$\ell_j \subset \Sigma_j \cap \Bplane_p$.

\begin{lemma}
\label{lem:distinct-plane}
Under the above setting,
$\Sigma_j \cap \Bplane_p=\ell_j$
for each $p\in\PFix(G)$ and $j\in\ZZ$.
In other words, 
the checkerboard planes $\Sigma_j$ ($j\in \ZZ$) are all different.
\end{lemma}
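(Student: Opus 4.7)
The plan is to argue by contradiction. Suppose $\Sigma_j\cap\Bplane_p$ contains some line $\ell_k$ with $k\ne j$; then the very definition of $\Sigma_k$ forces $\Sigma_j=\Sigma_k$. Since $\mu_p\in G$ permutes the components of $\pu^{-1}(S)$ and carries $\ell_j$ to $\ell_{j+1}$, we get $\mu_p(\Sigma_j)=\Sigma_{j+1}$, and iterating yields $\mu_p^{k-j}(\Sigma_j)=\Sigma_k=\Sigma_j$. Thus the nontrivial element $\mu_p^{k-j}$ lies in $\Stab_G(\Sigma_j)\cap\Stab_G(p)$.

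Next I would identify this intersection. By Proposition~\ref{prop:qf} (via Corollary~\ref{cor:qf}), $S$ is $\pi_1$-injective in $X$, so $\pu\colon\Sigma_j\to S$ is the universal covering and $\Stab_G(\Sigma_j)$ is (a conjugate of) the image of $\pi_1(S)$ in $G$. The point $p$ lies on the limit circle $\partial\bSigma_j$, because the Euclidean line $\ell_j\subset\Sigma_j\cap\Bplane_p$ tends to $p$ at both of its ends. The quasi-fuchsian theory---applied directly when $S$ is orientable, or via the index-$2$ orientation-double-cover subgroup when $S$ is non-orientable---then guarantees that the maximal parabolic subgroups of $\pi_1(S)<G$ are infinite cyclic, each generated by a conjugate of a boundary loop of $S$. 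Consequently $\Stab_G(\Sigma_j)\cap\Stab_G(p)=\langle\alpha\rangle$, where $\alpha\in\Stab_G(p)$ is the slope on $T_p:=\Bplane_p/\Stab_G(p)$ of the boundary curve of $S$ covered by $\{\ell_j\}_{j\in\ZZ}$; in particular $\alpha$ is a primitive element of $\Stab_G(p)$, being the slope of a simple closed curve on a torus.

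Combining the two steps gives $\mu_p^{k-j}=\alpha^m$ in the free abelian cusp group $\Stab_G(p)\cong\ZZ^2$, with $k-j\ne 0$ and, since $\mu_p$ has infinite order, $m\ne 0$. Hence $\mu_p$ and $\alpha$ are proportional, and primitivity of $\alpha$ forces $\alpha=\pm\mu_p$. But the hypothesis $\mu_p(\ell_j)=\ell_{j+1}\ne\ell_j$ precisely asserts that $\mu_p$ descends to a generator of the quotient $\Stab_G(p)/\langle\alpha\rangle\cong\ZZ$, i.e., that $\{\mu_p,\alpha\}$ is a $\ZZ$-basis of $\Stab_G(p)$; this is incompatible with $\alpha=\pm\mu_p$, yielding the desired contradiction.

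The main obstacle I anticipate is rigorously pinning down the cyclicity of $\Stab_G(\Sigma_j)\cap\Stab_G(p)$, and identifying its generator with the boundary slope $\alpha$, in the non-orientable case, where Proposition~\ref{prop:qf} supplies the quasi-fuchsian property only for an index-$2$ subgroup of $\pi_1(S)$. However, since each boundary component of $S$ is an embedded circle on $T_p$ with cyclic fundamental group injecting into $\pi_1(S)$, the maximal parabolic subgroups of $\pi_1(S)<G$ remain cyclic with the expected generators, and the argument above carries over unchanged.
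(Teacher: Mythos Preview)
Your argument is correct, but it proceeds quite differently from the paper's. The paper argues directly from boundary $\pi_1$-injectivity of the checkerboard surface $\check S\subset M$ (citing \cite[Theorem~11.31]{Purcell}): assuming $\Sigma_j=\Sigma_{j'}$, one joins $\ell_j$ and $\ell_{j'}$ by an arc in $\check\Sigma=\Sigma_j\cap\tilde M$, observes that in the simply connected $\tilde M$ this arc is homotopic rel endpoints into $\Bplane_p\subset\partial\tilde M$, and projects to obtain an arc in $\check S$ that is boundary-compressible in $M$ but not in $\check S$, a contradiction. Your route instead extracts from $\Sigma_j=\Sigma_k$ a nontrivial power of $\mu_p$ lying in $\Stab_G(\Sigma_j)$, and then invokes the quasi-fuchsian property (Proposition~\ref{prop:qf}) to force $\Stab_G(\Sigma_j)\cap\Stab_G(p)$ to be the cyclic group generated by the boundary slope~$\alpha$; primitivity of both $\mu_p$ and $\alpha$ then gives $\mu_p=\pm\alpha$, contradicting $\mu_p(\ell_j)\ne\ell_j$. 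Both arguments ultimately rest on the essentiality of $S$, but the paper's is a short topological homotopy argument while yours is algebraic in the cusp group and leans more heavily on the hyperbolic structure already established in Proposition~\ref{prop:qf}. One small remark: in your last step you only need (and only have) that $\mu_p\notin\langle\alpha\rangle$, which follows immediately from $\mu_p(\ell_j)\ne\ell_j$; the stronger claim that $\{\mu_p,\alpha\}$ is a $\ZZ$-basis is not quite what the hypothesis gives you, though it is not needed for the contradiction.
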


\begin{proof}
Suppose to the contrary that $\Sigma_j=\Sigma_{j'}$
for some distinct integers $j$ and $j'$.
Let $\check\Sigma$ be the intersection of $\Sigma_j=\Sigma_{j'}$ and $\tilde M$.
Then $\check\Sigma$ is properly embedded in $\tilde M$,
and the image $\check S:=\pu(\check\Sigma)=S\cap M$ is a checkerboard surface in $M$.
Let $\tilde\alpha$ be a path in $\check\Sigma$
joining the boundary components $\ell_j$ and $\ell_{j'}$ of 
$\check\Sigma$.
Since $\tilde M$ is simply connected,
$\tilde\alpha$ is homotopic rel endpoints to a path in $H_p\subset \partial \tilde M$.
Thus the path $\alpha:=\pu\circ\tilde\alpha$ in $\check S$ is homotopic rel endpoints 
to a path in $\partial M$ inside $M$.
On the other hand, since $\ell_j\ne \ell_{j'}$,
$\alpha$ is not homotopic rel endpoints to 
$\partial \check S$ in $\check S$.
This contradicts \cite[Theorem 11.31]{Purcell}
which says that $\check S$ is $\pi_1$-essential, in particular, 
boundary $\pi_1$-injective (see \cite[Definition 11.30]{Purcell}).
\end{proof}

\begin{remark}
\label{rem:incompressible}
{\rm
See Proposition~\ref{pro:connected-intersection}(2)
for a direct geometric proof of the above lemma.
The $\pi_1$-essentiality of checkerboard surfaces
associated with prime alternating diagrams
had been proved by Aumann~\cite{Aumann}
(cf. Menasco-Thistlethwaite~\cite[Proposition 2.3]{Menasco-Thistlethwaite}).
See  
Ozawa~\cite[Theorem~3]{Ozawa1} and
\cite[Theorem~2.8]{Ozawa2} for generalizations.
}
\end{remark}

\begin{lemma}
\label{lem:balls}
{\rm (1)} There are checkerboard half-spaces 
$B_j^{\pm}=B_j^{\pm}(p)$ ($j\in\ZZ$) 
bounded by $\Sigma_j$
which satisfy the following conditions:
\begin{enumerate}
\item[\rm(a)] 
$\HHH^3=\bar B_j^-\cup \bar B_j^+$ and 
$\bSigma_j=\bar B_j^- \cap \bar B_j^+$.
\item[\rm(b)]
$\bar B_j^-\subset \bar B_{j+1}^{-}$ and
$\bar B_j^+\supset \bar B_{j+1}^{+}$. 
\item[\rm(c)]
$\mu_p(\bar B_j^{\pm})=\bar B_{j+1}^{\pm}$.
\end{enumerate}

{\rm (2)} Set $\Delta_j^{\pm}=\Delta_j^{\pm}(p):=\bar B_j^{\pm}(p)\cap \CCC$.
Then $\Delta_j^{\pm}$ are disks in $\CCC$
which satisfy the following conditions.
\begin{enumerate}
\item[\rm(a)] 
$\CCC=\Delta_j^-\cup \Delta_j^+$ and 
$\partial \bSigma_j=\Delta_j^- \cap \Delta_j^+$.
\item[\rm(b)]
$\Delta_j^-\subset \Delta_{j+1}^{-}$ and
$\Delta_j^+\supset \Delta_{j+1}^{+}$.
\item[\rm(c)]
$\mu_p(\Delta_j^{\pm})=\Delta_{j+1}^{\pm}$.
\end{enumerate}
\end{lemma}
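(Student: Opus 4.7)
My approach is to use the action of $\mu_p$ on the horosphere $\Bplane_p$ to produce a coherent labeling of the two half-spaces bounded by each checkerboard plane $\Sigma_j$. First, since $\mu_p$ is a covering transformation of $\pu$, it permutes the components of $\pu^{-1}(S)$, and the assumption $\mu_p(\ell_j)=\ell_{j+1}$ combined with Lemma~\ref{lem:distinct-plane} gives $\mu_p(\Sigma_j)=\Sigma_{j+1}$. Moreover, $\mu_p \in \Stab_G(p)$ acts on $\Bplane_p$ as a non-trivial Euclidean translation; its direction is necessarily transverse to $\ell_0$, for otherwise $\mu_p$ would preserve $\ell_0$ setwise, forcing $\ell_0=\ell_1$ and contradicting Lemma~\ref{lem:distinct-plane}. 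Hence the lines $\{\ell_j\}_{j\in\ZZ}$ are parallel and linearly ordered on the Euclidean plane $\Bplane_p$ along the translation direction. I then label the half-spaces $B_0^\pm$ bounded by $\Sigma_0$ (which exist by Corollary~\ref{cor:qf}) so that $B_0^-\cap \Bplane_p$ is the component of $\Bplane_p\setminus\ell_0$ not containing $\ell_1$, and I define $B_j^\pm:=\mu_p^j(B_0^\pm)$. Condition (a) of part~(1) is then just Corollary~\ref{cor:qf}(3) applied to $\Sigma_j$, and (c) is immediate from the definition (take closures in $\HHH^3$).

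For the nesting condition (b), observe that by construction $B_j^-\cap \Bplane_p$ is the open half-plane of $\Bplane_p$ bounded by $\ell_j$ on the side opposite to $\ell_{j+1}$; in particular $\ell_j\subset B_{j+1}^-\cap \Bplane_p$ and $\ell_{j+1}\subset B_j^+\cap \Bplane_p$. Since $\Sigma_j$ is connected in $\HH^3$, disjoint from $\Sigma_{j+1}$ (as different components of $\pu^{-1}(S)$), and meets $B_{j+1}^-$ along $\ell_j$, it must lie entirely in $B_{j+1}^-$; symmetrically $\Sigma_{j+1}\subset B_j^+$. Then $B_j^-$ is connected, disjoint from $\Sigma_{j+1}$, and intersects $B_{j+1}^-$ (for instance along $B_j^-\cap \Bplane_p\subset B_{j+1}^-\cap \Bplane_p$), so $B_j^-\subset B_{j+1}^-$. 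Taking closures in $\HHH^3$ yields $\bar B_j^-\subset \bar B_{j+1}^-$, and the complementary inclusion $\bar B_j^+\supset \bar B_{j+1}^+$ follows by taking complements in $\HHH^3$ together with $\bSigma_{j+1}\subset \bar B_j^+$.

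Finally, part~(2) is obtained by intersecting each of the relations in (a), (b), (c) of part~(1) with $\CCC$; the identity $\partial\bSigma_j=\bSigma_j\cap\CCC$ from Corollary~\ref{cor:qf}(4) together with the $\CCC$-invariance of $\mu_p$ translates everything into the required statements for the disks $\Delta_j^\pm$. I expect the only real obstacle to be the nesting step (b): the preferred labeling and linear order of the $\ell_j$'s are visible on the horosphere, but they must be propagated globally to $\HHH^3$. This propagation is exactly what the connectedness of the checkerboard planes, combined with their pairwise disjointness ensured by Lemma~\ref{lem:distinct-plane}, accomplishes via the elementary cross-cut argument sketched above.
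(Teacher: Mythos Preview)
Your proof is correct and follows essentially the same approach as the paper's: both use Lemma~\ref{lem:distinct-plane} to see that $\Sigma_j\cap\Bplane_p=\ell_j$, label the two half-spaces bounded by $\Sigma_j$ according to which half-plane of $\Bplane_p\setminus\ell_j$ they contain, and read off the nesting and $\mu_p$-equivariance from the linear ordering of the parallel lines $\{\ell_j\}$ on the horosphere. The only organisational difference is that you make condition~(c) definitional by setting $B_j^{\pm}:=\mu_p^j(B_0^{\pm})$, whereas the paper labels each $B_j^{\pm}$ independently via the half-planes $H_{p,j}^{\pm}$ and then checks~(c); and you spell out the nesting~(b) via the connectedness/disjointness ``cross-cut'' argument (using that $\Sigma_j$ and $\Sigma_{j+1}$ are distinct components of $\pu^{-1}(S)$), where the paper passes directly from $H_{p,j}^-\subset H_{p,j+1}^-$ to $\bar B_j^-\subset\bar B_{j+1}^-$ without comment. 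Your added detail is exactly what justifies that passage.
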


In the above lemma, the symbol $\pm$ is used in the following way:
for example, $\mu_p(\bar B_j^{\pm})=\bar B_{j+1}^{\pm}$ means that
$\mu_p(\bar B_j^{\epsilon})=\bar B_{j+1}^{\epsilon}$ for each $\epsilon\in\{-,+\}$.
We apply this convention throughout the paper.

\begin{proof}
By Lemma~\ref{lem:distinct-plane}, $H_p\cap\Sigma_j$ 
is equal to the line $\ell_j$.
Observe that the line $\ell_j$
divides $H_p$ into two closed half-spaces $H_{p,j}^-$ and $H_{p,j}^+$,
where $\ell_{j\pm1}\subset H_{p,j}^{\pm}$
(see Figure~\ref{fig:Butterfly}(a)).
By Corollary~\ref{cor:qf},
$(\HHH^3,\bSigma_j)$ is a standard ball pair
and there are checkerboard half-spaces   
$B_j^{\epsilon}$ 
($\epsilon\in\{-,+\}$) bounded by $\Sigma_j$ 
which satisfy the condition (1-a),
such that $H_{p,j}^{\epsilon}\subset \bar B_j^{\epsilon}$.  
Since $H_{p,j}^{-}\subset H_{p,j+1}^{-}$ and 
$H_{p,j}^{+}\supset H_{p,j+1}^{+}$,
the condition (1-b) are satisfied.
Since $\mu_p(H_{p,j}^{\pm})=H_{p,j+1}^{\pm}$,
the condition (1-c) is also satisfied,
completing the proof of (1).

The assertion (2) follows from (1) and
the fact that 
$(\HHH^3,\bSigma_j)$ is a standard ball pair.
\end{proof}

\begin{figure}
  \centering
  \begin{subfigure}{0.3\columnwidth}
    \begin{overpic}[width=\columnwidth]{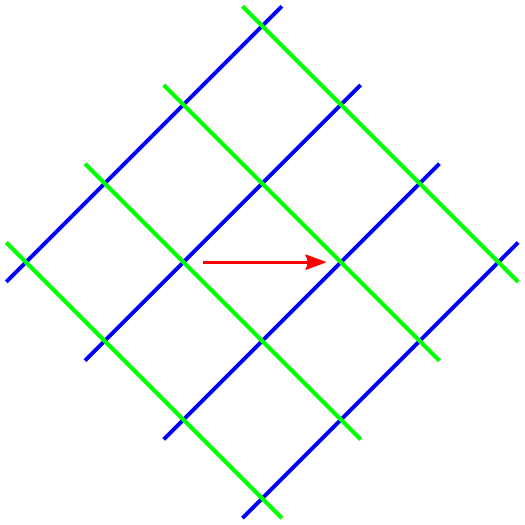}
      \put(56,52){\textcolor{red}{$\mu_p$}}
      \put(12,29){\textcolor{blue}{$\ell_j$}}
      \put(26,12){\textcolor{blue}{$\ell_{j+1}$}}
    \end{overpic}
    \subcaption{}
    \label{fig:Butterfly_a}
  \end{subfigure}
  \hspace{10mm}
  \begin{subfigure}{0.28\columnwidth}
    \begin{overpic}[width=\columnwidth]{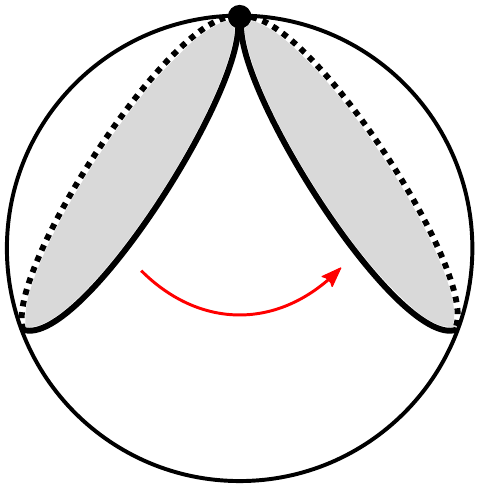}
      \put(-5,90){$\Delta_j^-$}
      \put(101,90){$\Delta_{j+1}^+$}
      \put(25,70){$\bSigma_j$}
      \put(74,70){\scriptsize$\bSigma_{j+1}$}
      \put(53,116){$p$}
      \put(52,33){\textcolor{red}{$\mu_p$}}
    \end{overpic}
    \subcaption{}
    \label{fig:Butterfly_b}
  \end{subfigure}
  \caption{
  (a) The action of $\mu_p$ on 
  $(H_p, H_p\cap\pu^{-1}(S_b),H_p\cap\pu^{-1}(S_w))$. 
  (b) A rough model of the action of $\mu_p$
  on $(\HHH^3,\{\bSigma_j\}_j)$.
  This figure is not precise. In fact,
  $\Delta_j^-\cap \Delta_{j+1}^+=\partial\Delta_j^-\cap \partial\Delta_{j+1}^+$
  is generically strictly bigger than $\{p\}$
  (cf. Remark~\ref{rem:intersection-limit-circle}).
  }
  \label{fig:Butterfly}
\end{figure}

\begin{definition}
\label{def:butterfly}
{\rm
Under the above setting, 
a {\it butterfly $\BF(p)$ at $p\in \PFix(G)$} 
is a pair of disks $\{\Delta_j^-, \Delta_{j+1}^+\}=\{\Delta_j^-(p), \Delta_{j+1}^+(p)\}$ in $\CCC$
for some $j\in\ZZ$.
The {\it color} of the butterfly is defined to be black or white
according to the color of the checkerboard surface $S$.
The underlying space $|\BF(p)|$ of $\BF(p)$
is defined by $|\BF(p)|:=\Delta_j^-\cup \Delta_{j+1}^+\subset \CCC$
(see Figure~\ref{fig:Butterfly_b}).

It should be noted that a butterfly $\BF(p)$ is determined by
the parabolic fixed point $p$, the color
(equivalently, the choice of a checkerboard surface $S$), and the choice of 
a component $\Sigma_j$ of $\pu^{-1}(S)$ such that $p\in \bSigma_j$.
}
\end{definition}

\begin{lemma}
\label{lem:butterfly}
For a butterfly $\BF(p)=\{\Delta_j^-, \Delta_{j+1}^+\}$ at $p\in \PFix(G)$,
the following hold.
\begin{enumerate}
\item[{\rm (1)}]
$\Delta_j^-$ and $\Delta_{j+1}^+$ are disks in $\CCC$ which have disjoint interiors.
\item[{\rm (2)}]
$\mu_p(\CCC\setminus \interior \Delta_j^-)=\Delta_{j+1}^+$.
\end{enumerate}
\end{lemma}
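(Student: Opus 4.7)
The plan is to deduce Lemma~\ref{lem:butterfly} directly from Lemma~\ref{lem:balls}(2). The key observation is that, because $(\HHH^3,\bSigma_j)$ is a standard ball pair by Corollary~\ref{cor:qf}(2), the set $\partial\bSigma_j = \Delta_j^-\cap \Delta_j^+$ is a tame simple closed curve in $\CCC$ bounding the two disks $\Delta_j^-$ and $\Delta_j^+$ whose union is $\CCC$. From this Jordan-curve picture one reads off the identities
\[
\interior \Delta_j^- = \CCC \setminus \Delta_j^+, \qquad \interior \Delta_j^+ = \CCC \setminus \Delta_j^-,
\]
which reduce the whole lemma to bookkeeping with the nested inclusions and the $\mu_p$-equivariance recorded in Lemma~\ref{lem:balls}(2).

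For assertion (1), I would combine $\interior \Delta_j^- = \CCC \setminus \Delta_j^+$ with the inclusion $\Delta_{j+1}^+ \subset \Delta_j^+$ from Lemma~\ref{lem:balls}(2-b) to obtain
\[
\interior \Delta_j^- \cap \Delta_{j+1}^+ \subset (\CCC \setminus \Delta_j^+) \cap \Delta_j^+ = \emptyset,
\]
which in particular yields $\interior \Delta_j^- \cap \interior \Delta_{j+1}^+ = \emptyset$, as required. For assertion (2), I would use the identity $\CCC \setminus \interior \Delta_j^- = \Delta_j^+$ together with the equivariance $\mu_p(\Delta_j^+) = \Delta_{j+1}^+$ from Lemma~\ref{lem:balls}(2-c) to conclude
\[
\mu_p(\CCC \setminus \interior \Delta_j^-) = \mu_p(\Delta_j^+) = \Delta_{j+1}^+.
\]

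Since both statements are essentially rearrangements of facts already established in Lemma~\ref{lem:balls}, I do not anticipate any genuine difficulty. The only point that must be mentioned with care is that the complement of the open disk $\interior \Delta_j^-$ in $\CCC$ is the closed disk $\Delta_j^+$; this is legitimate precisely because $\partial \bSigma_j$ is an unknotted (tame) circle in $\CCC=S^2$, which is exactly the content of Corollary~\ref{cor:qf}(4) applied to the checkerboard plane $\Sigma_j$.
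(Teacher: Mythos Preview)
Your proof is correct and essentially the same as the paper's. Both parts deduce the lemma from Lemma~\ref{lem:balls}(2) via the identity $\CCC\setminus\interior\Delta_j^-=\Delta_j^+$; the only cosmetic difference is that for part (1) the paper uses the inclusion $\interior\Delta_j^-\subset\interior\Delta_{j+1}^-=\CCC\setminus\Delta_{j+1}^+$ instead of your dual inclusion $\Delta_{j+1}^+\subset\Delta_j^+$, and part (2) is verbatim identical.
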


\begin{proof}
(1) 
By Lemma~\ref{lem:balls}(2-a, b),
$\interior \Delta_j^-\subset
\interior \Delta_{j+1}^- = \CCC\setminus \Delta_{j+1}^+$.
Hence 
\[\interior \Delta_j^- \cap \interior \Delta_{j+1}^+
\subset (\interior \Delta_j^-) \cap \Delta_{j+1}^+
=\emptyset.\]

(2) 
By Lemma~\ref{lem:balls}(2-c),
$\mu_p(\CCC\setminus \interior \Delta_j^-)
=\mu_p(\Delta_j^+)
=\Delta_{j+1}^+$.
\end{proof}

\begin{remark}
\label{rem:intersection-limit-circle}
{\rm
The parabolic fixed point $p$ is contained 
the intersection 
$\Delta_j^- \cap \Delta_{j+1}^+=
\partial\Delta_j^-\cap \partial\Delta_{j+1}^+$.
However, in general, the intersection is 
strictly bigger than the singleton $\{p\}$;
it is generically a Cantor set
(cf.~\cite[Theorem 3.13]{Matsuzaki-Taniguchi}).
We hope to give a more detailed description of the intersection
in a subsequent paper.
}
\end{remark}

Now, let $\{\mu_1,\mu_2\}$ be a non-commuting meridian pair,
and set $p_i:=\Fix(\mu_i)\in \PFix(G)$ ($i=1,2$).
Note that $p_1\ne p_2$ and $\mu_i=\mu_{p_i}$  ($i=1,2$)
by Lemmas~\ref{lem:meridian-parabolic} and~\ref{lem:non-commutative}.
Then the following lemma follows immediately from Lemma~\ref{lem:meridian-parabolic}. 

\begin{lemma}
\label{lem:meridian-parabolic2}
{\rm (1)} The correspondence $\{\mu_1,\mu_2\} \mapsto \{p_1, p_2\}$
gives a bijective correspondence
from the set of the non-commuting meridian pairs of $L$
up to equivalence 
to the set of the unordered pairs of distinct points in $\PFix(G)$
up to the action of $G$.

{\rm (2)} Let $\{\mu_1,\mu_2\}$ and $\{p_1, p_2\}$ be as in the above, and
let $\gamma$ be a proper path in $M$
that represents the pair
$\{\mu_1,\mu_2\}$.
Then $\gamma$ lifts to a proper path $\tilde \gamma$ in the universal cover 
$\tilde M \subset \tilde X=\HH^3$
that joins the horospheres $\Bplane_{p_1}$ and $\Bplane_{p_2}$.
Conversely, if $\gamma$ is a proper path in $M$ 
which is the image of a proper path $\tilde\gamma$ in $\tilde M$ 
joining $\Bplane_{p_1}$ and $\Bplane_{p_2}$,
then $\gamma$ represents the pair $\{\mu_1,\mu_2\}$.
\end{lemma}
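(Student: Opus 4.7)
The plan is to derive both assertions from Lemma~\ref{lem:meridian-parabolic} together with Lemma~\ref{lem:non-commutative}, by exploiting the fact that the bijection $\mu\mapsto\Fix(\mu)$ between meridians of $L$ and points of $\PFix(G)$ is equivariant with respect to the natural $G$-actions on the two sides. The key observation that makes everything go is $\Fix(g\mu g^{-1})=g\cdot\Fix(\mu)$ for every $g\in G$ and every parabolic $\mu$, so conjugation of meridians corresponds to the standard $G$-action on $\CCC$.

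For (1), I would first pass to unordered pairs: $\{\mu_1,\mu_2\}\mapsto\{\Fix(\mu_1),\Fix(\mu_2)\}$ is a bijection from unordered meridian pairs to unordered pairs of points of $\PFix(G)$. By Lemma~\ref{lem:non-commutative}, noncommutativity of $\{\mu_1,\mu_2\}$ is equivalent to $\Fix(\mu_1)\neq\Fix(\mu_2)$, so the bijection restricts to noncommuting pairs on the left and pairs of distinct points on the right. The equivalence relation (simultaneous conjugation by $g\in G$) on meridian pairs then matches, via the equivariance, the diagonal $G$-action on pairs of distinct points, so the bijection descends to the asserted bijection of quotient sets.

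For (2), I would recall that the meridian pair $\{\mu_1,\mu_2\}$ associated to a proper path $\gamma$ from $x_1\in\partial M$ to $x_2\in\partial M$ is (up to simultaneous conjugation) determined by $\mu_i$ being the conjugate, along the subpath of $\gamma$ from a chosen basepoint to $x_i$, of the oriented meridional loop at $x_i$. Fixing a lift $\tilde\gamma$ of $\gamma$ to $\tilde M$, its endpoints lie on two horospheres $\Bplane_{q_1}$ and $\Bplane_{q_2}$ with $q_i\in\PFix(G)$, and the lift of the meridional loop at $x_i$ starting at $\tilde\gamma$'s endpoint is a line on $\Bplane_{q_i}$ whose corresponding deck transformation is exactly $\mu_{q_i}$. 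Therefore $\mu_i=\mu_{q_i}$ once the ambiguity coming from the choice of basepoint has been absorbed into the simultaneous-conjugation freedom, so Lemma~\ref{lem:meridian-parabolic} forces $q_i=p_i$. The converse is then immediate: given a proper $\tilde\gamma$ in $\tilde M$ joining $\Bplane_{p_1}$ and $\Bplane_{p_2}$, the forward direction applied to $\gamma:=\pu\circ\tilde\gamma$ shows that $\gamma$ represents $\{\mu_{p_1},\mu_{p_2}\}=\{\mu_1,\mu_2\}$.

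The argument is essentially formal, so I do not expect a real obstacle; the one point that deserves momentary care is matching bookkeeping on the two sides, namely that the simultaneous-conjugation ambiguity in the definition of $\{\mu_1,\mu_2\}$ from $\gamma$ corresponds precisely to the choices of lift $\tilde\gamma$ and of basepoint used in the construction, so that the identification of $\mu_i$ with $\mu_{q_i}$ is well-defined in the equivalence quotient.
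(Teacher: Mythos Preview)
Your proposal is correct and matches the paper's approach: the paper does not give a separate proof of this lemma but simply states that it ``follows immediately from Lemma~\ref{lem:meridian-parabolic}.'' Your write-up is precisely the natural unpacking of that remark---using the $G$-equivariance of $\mu\mapsto\Fix(\mu)$ together with Lemma~\ref{lem:non-commutative} for part~(1), and the deck-transformation description of the meridians at the endpoints of a lift for part~(2)---so there is nothing to add.
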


\begin{notation}
\label{notation:butterfly}
{\rm
Under the above setting,
when we consider two butterflies $\BF(p_1)$ and $\BF(p_2)$ simultaneously,
we denote the butterfly $\BF(p_i)$ by $\{\Delta_i^-,\Delta_i^+\}$ 
for $i=1,2$,
where $\Delta_i^-$ and $\Delta_i^+$ correspond to
$\Delta_j^-(p_i)$ and $\Delta_{j+1}^+(p_i)$, respectively, %for some $j\in\ZZ$
in Definition~\ref{def:butterfly}.
Thus $\mu_i(\CCC\setminus \interior \Delta_i^-)=\Delta_i^+$ ($i=1,2$).
In this sense, we regard $\BF(p_i)$ as the ordered pair
$(\Delta_i^-,\Delta_i^+)$ of closed disks in $\CCC$,
though we continue to denote it by $\{\Delta_i^-,\Delta_i^+\}$.
}
\end{notation}

The proof of Theorem~\ref{Theorem1}
is based on the following proposition.

\begin{proposition}
\label{prop:ping-pong}
Let $L\subset S^3$ be a hyperbolic alternating link,
$\{\mu_1,\mu_2\}$ a non-commuting meridian pair in the link group $G(L)$,
and $\{p_1,p_2\}$ the corresponding pair
of parabolic fixed points. 
Then the subgroup $\Gamma=\langle \mu_1,\mu_2\rangle$
generated by $\{\mu_1,\mu_2\}$ is a rank $2$ free Kleinian group which is geometrically finite,
provided that there are butterflies 
$\BF(p_i)=\{\Delta_i^-,\Delta_i^+\}$ at $p_i$ ($i=1,2$) satisfying the following conditions.
\begin{enumerate}
\item[{\rm (i)}]
The underlying spaces $|\BF(p_1)|$ and $|\BF(p_2)|$ have disjoint interiors in $\CCC$, equivalently,
the four disks 
$\Delta_1^-$, $\Delta_1^+$, $\Delta_2^-$ and $\Delta_2^+$
have disjoint interiors.
\item[{\rm (ii)}]
The complementary open set
$O:=\CCC\setminus (|\BF(p_1)|\cup |\BF(p_2)|)$
is non-empty.
\end{enumerate}
\end{proposition}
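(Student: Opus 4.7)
The plan is to apply a Klein--Maskit combination-type argument to the action of $\Gamma$ on $\CCC$: a ping-pong lemma first shows $\Gamma$ is free of rank $2$, and then Maskit--Swarup~\cite{Maskit-Swarup} upgrades this to geometric finiteness.

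For the ping-pong setup, write $\Delta_i^-=\Delta_{j_i}^-(p_i)$ and $\Delta_i^+=\Delta_{j_i+1}^+(p_i)$. By Lemma~\ref{lem:balls}(2), $\mu_i$ acts as $\Delta_k^\pm(p_i)\mapsto\Delta_{k+1}^\pm(p_i)$ with nesting $\Delta_k^-\subset\Delta_{k+1}^-$ and $\Delta_k^+\supset\Delta_{k+1}^+$; combined with the identity $\CCC\setminus\Delta_k^-=\interior\Delta_k^+$ this yields
\[
\mu_i^{\,n}(\CCC\setminus\Delta_i^-)\subset\interior\Delta_i^+\ \text{ for }n\ge 1,\qquad
\mu_i^{\,n}(\CCC\setminus\Delta_i^+)\subset\interior\Delta_i^-\ \text{ for }n\le -1.
\]
Set $E_i:=\interior\Delta_i^-\cup\interior\Delta_i^+$. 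Hypothesis (i) forces $E_1\cap E_2=\emptyset$, and the elementary fact that an open set in $\CCC$ cannot meet the boundary of a disjoint open set upgrades this to $E_k\subset\CCC\setminus(\Delta_i^-\cup\Delta_i^+)$ for $i\ne k$. Hypothesis (ii) provides a basepoint $x_0\in O\subset\CCC\setminus(\Delta_i^-\cup\Delta_i^+)$ for each $i$. The ping-pong inequalities above then give $\mu_i^{\,n}(E_k)\subset E_i$ and $\mu_i^{\,n}(x_0)\in E_i$ for all $n\ne 0$ and all $i\ne k\in\{1,2\}$, and the Klein ping-pong lemma concludes that every nontrivial alternating word in $\mu_1^{\pm 1}$ and $\mu_2^{\pm 1}$ sends $x_0$ into $E_1\cup E_2$; hence $\Gamma=\langle\mu_1\rangle *\langle\mu_2\rangle$ is free of rank $2$.

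For geometric finiteness I would interpret the same data as fulfilling the hypotheses of the second Klein--Maskit combination theorem in the refined form of Maskit--Swarup~\cite{Maskit-Swarup}. Each cyclic parabolic subgroup $\langle\mu_i\rangle$ is trivially geometrically finite; by Lemmas~\ref{lem:balls} and~\ref{lem:butterfly} the butterfly $|\BF(p_i)|$ is a closed region bounded by the two Jordan curves $\partial\Delta_i^-$ and $\partial\Delta_i^+$ that $\mu_i$ identifies, and its complement is a precisely $\langle\mu_i\rangle$-invariant set on $\CCC$. Condition (i) provides the disjointness of the two precisely invariant systems needed to combine them, and condition (ii) ensures that the combined group has non-empty domain of discontinuity. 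The combination theorem then declares $\Gamma$ geometrically finite, with exactly one parabolic conjugacy class inherited from each factor. The main obstacle is carrying out this combination step in the generality allowed by (i): as Remark~\ref{rem:intersection-limit-circle} warns, the paired Jordan curves $\partial\Delta_i^-$ and $\partial\Delta_i^+$ may intersect in more than the singleton $\{p_i\}$, and analogously the two butterflies may touch along their boundaries even though their interiors are disjoint, so one must verify that Maskit--Swarup's formulation accommodates these incidental boundary tangencies rather than the classically clean picture where pairing curves meet only at parabolic fixed points.
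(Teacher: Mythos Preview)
Your ping-pong argument for freeness is fine and matches the paper's approach. The gap is in your treatment of geometric finiteness.

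You try to interpret Maskit--Swarup~\cite{Maskit-Swarup} as a refined Klein--Maskit combination theorem and then worry (correctly) that the boundary tangencies of Remark~\ref{rem:intersection-limit-circle} may prevent you from verifying its hypotheses. In fact the paper explicitly notes (in the Remark following the proposition) that the authors could \emph{not} verify the conditions of the combination theorem~\cite[Theorem~C.2]{Maskit} in this setting. So the route you sketch in your second paragraph is the one that does not close.

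The fix is that Maskit--Swarup's Theorem~1 is not a combination theorem at all: it is the blanket statement that \emph{any} Kleinian group generated by two parabolics which has non-empty domain of discontinuity is geometrically finite. So what you actually need is only that $\Omega(\Gamma)\ne\emptyset$. But your ping-pong already gives this: the same inclusions $\mu_i^{\,n}(\CCC\setminus\Delta_i^\mp)\subset\interior\Delta_i^\pm$ applied to $O$ (rather than to a single basepoint $x_0$) show $w(O)\subset E_1\cup E_2$ for every nontrivial reduced word $w$, hence $w(O)\cap O=\emptyset$. Since $\Gamma<G(L)$ is discrete, the non-empty open set $O$ therefore lies in $\Omega(\Gamma)$, and Maskit--Swarup applies directly. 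This is exactly the paper's argument; the combination-theorem detour is unnecessary and, as you yourself observe, obstructed.
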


\begin{proof}
By a standard ping-pong argument
(see~\cite[Chapter 4]{MSW} for a nice exposition with beautiful illustrations),
we have $w(O)\cap O=\emptyset$
for any non-trivial reduced word $w$
in $\{\mu_1,\mu_2\}$.
Hence, the subgroup $\Gamma=\langle \mu_1,\mu_2\rangle$ of $G(L)$
is a rank $2$ free group
and it has a non-empty domain of discontinuity.
Since a two-parabolic-generator Kleinian group
which has a non-empty domain of discontinuity
is geometrically finite by 
Maskit-Swarup~\cite[Theorem 1]{Maskit-Swarup},
$\Gamma$ is geometrically finite.
\end{proof}

\begin{remark}
{\rm
Though~\cite{Agol2001} appeals to the Klein-Maskit combination theorem,
we could not verify that the conditions in~\cite[Theorem C.2]{Maskit}
is satisfied in the setting of Proposition~\ref{prop:ping-pong}.
This is the reason why we use 
the result of Maskit and Swarup~\cite{Maskit-Swarup}.
We thank Yohei Komori and Hideki Miyachi
for suggesting this idea to us.
}
\end{remark}

\section{Basic facts concerning non-positively curved spaces}
\label{sec:cat0}

In this section, we recall fundamental facts 
concerning non-positively curved spaces, basically
 following Bridson-Haefliger~\cite{BH}.

Let $X=(X,d)$ be a metric space.
In this paper, we mean by
a {\it geodesic} in $X$ an isometric embedding $g:J\to X$
where $J$ is a connected subset of $\RR$.
If $J$ is the whole $\RR$ (resp.~a closed interval),
$g$ is called a {\it geodesic line} (resp.~a {\it geodesic segment}).
We do not distinguish between a geodesic and its image.
$X$ is said to be a {\it geodesic space}
if every pair of points can be joined by a geodesic in $X$.
For points $a$ and $b$ in a geodesic space $X$,
we denote by $[a,b]$ a geodesic segment joining $a$ and $b$.
The symbols $(a,b)$, $[a,b)$ and $(a,b]$
represent open or half-open geodesic segments, respectively.  
Then the distance $d(a,b)$ is equal to the length, $\length([a,b])$,
of the geodesic segment $[a,b]$. 
(See~\cite[Definition I.1.18]{BH} for the definition of the length of a curve.)

A geodesic space $X$ is called a {\it CAT(0) space}
if any geodesic triangle is thinner than a comparison triangle in 
the Euclidean plane $\EE^2$,
that is, the distance between any points on a geodesic triangle is
less than or equal to the corresponding points on a comparison triangle.
A CAT(0) space is 
{\it uniquely geodesic}, i.e., 
for every pair of points, there is a unique geodesic joining them (\cite[Proposition~II.1.4(1)]{BH}).
A geodesic space $X$ is said to be {\it non-positively curved}
if it is locally a CAT(0) space 
(cf.~\cite[Definitions II.1.1 and II.1.2]{BH}).
The following (special case of) the Cartan-Hadamard theorem 
is fundamental.

\begin{proposition}
\label{prop:CH0}
{\rm \cite[Special case of Theorem II.4.1(2)]{BH}}
Let $X$ be a complete, connected, metric space.
If $X$ is non-positively curved, then
the universal covering $\tilde X$ (with the induced length metric)
is a CAT(0) space.
\end{proposition}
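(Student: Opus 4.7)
The plan is to deduce the global CAT(0) property of $\tilde X$ from the local non-positive curvature of $X$ by exploiting simple-connectedness. First I would equip $\tilde X$ with the length metric induced through the universal covering $p:\tilde X\to X$, so that $p$ becomes a local isometry. Because $X$ is complete and the covering is a local isometry of length spaces, $\tilde X$ is complete; because $X$ is locally CAT(0), so is $\tilde X$; and by construction $\tilde X$ is simply connected. This reduces the statement to the pure metric Cartan--Hadamard theorem: a complete, simply connected, locally CAT(0) space is CAT(0).

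Second, I would establish existence and uniqueness of global geodesics in $\tilde X$. In a complete, locally CAT(0) space, local geodesics issuing from a base point $\tilde x_0\in\tilde X$ can be extended indefinitely and depend continuously on direction and length. To produce a geodesic between two arbitrary points $\tilde x,\tilde y$, I would take any rectifiable path joining them and iteratively shorten it by replacing small sub-arcs with local geodesics inside CAT(0) neighborhoods; completeness guarantees convergence to a local geodesic. Uniqueness then follows from simple-connectedness: two local geodesics with common endpoints are homotopic rel endpoints, and the local CAT(0) condition forces a length-preserving homotopy between local geodesics to be trivial, so the two coincide. Hence $\tilde X$ is uniquely geodesic and every local geodesic is a global geodesic.

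Third, I would verify the CAT(0) inequality on an arbitrary geodesic triangle $\Delta(\tilde a,\tilde b,\tilde c)$ in $\tilde X$. The idea is to subdivide $\Delta$ by introducing geodesic segments into a fine grid of sub-triangles, each lying in a convex CAT(0) neighborhood of some point, and to apply the local CAT(0) inequality to each sub-triangle. Then I would invoke Alexandrov's patching lemma (equivalently, Reshetnyak's gluing theorem at the level of comparison triangles) to assemble the local comparison inequalities into the CAT(0) inequality for the full triangle $\Delta$, thereby completing the proof.

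The hard part will be the last step: one must arrange the subdivision to be uniformly fine enough (using compactness of $\Delta$ and the local CAT(0) radius function) and then control how the comparison triangles of the pieces fit together in $\EE^2$ so that convexity of the comparison assembly yields the desired inequality on $\Delta$. This patching step is precisely the technical core of Bridson--Haefliger's treatment in Chapter II.4, and everything else in the proof is set-up to make it applicable.
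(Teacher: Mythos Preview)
The paper does not give a proof of this proposition at all: it is stated with the attribution ``\cite[Special case of Theorem~II.4.1(2)]{BH}'' and used as a black box, so there is no argument in the paper to compare your proposal against.

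Your outline is a reasonable sketch of the classical local-to-global argument and is broadly in the spirit of Bridson--Haefliger's treatment, though their actual proof in II.4 is organized differently: rather than curve-shortening plus Alexandrov patchwork on triangles, they build an ``exponential'' map from the space of local geodesics issuing from a basepoint, show it is a local isometry onto a CAT(0) target, and then use simple-connectedness to conclude it is a bijection. Either route works; for the purposes of this paper, a citation suffices and no proof is expected.
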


A subset $W$ of a uniquely geodesic space $X$ is said to be {\it convex}
if, for any distinct points $a$ and $b$ in $W$,
the geodesic segment $[a,b]$ is contained in $W$.
For a closed convex set $W$ of a complete CAT(0) space $X$,
let $\pi_W: X\to W$ be the \textit{projection},
namely 
$\pi_W(x)$ for every $x\in X$ is the unique point in $W$ 
such that 
$d(x,\pi_W(x))= d(x,W):=\inf \{d(x,y) \mid y \in W\}$ 
(see~\cite[Proposition II.2.4]{BH}).
For points $x\in X\setminus W$ and $w\in W$
define
\[
\angle_w(x,W):=
\inf \left\{ \angle_w(x,y) \mid y \in W \setminus \{w\} \right\},
\]
where $\angle_w(x,y)$ is the Alexandrov angle
$\angle_w([w,x],[w,y])$
between the geodesic segments $[w,x]$ and $[w,y]$ at $w$ 
(see~\cite[Definition I.1.12 and Notation II.3.2]{BH}).

\begin{remark}
\label{rem:localness-angle}
{\rm
The angle $\angle_w(x,W)$ is determined by the local shape of $(X,W)$ around $w$
in the following sense.
For any neighborhood $U$ of $w$,
 and for any $x'\in (w,x]\cap U$, we have
 \[
\angle_w(x,W)=\angle_w(x',W\cap U):=
\inf \left\{ \angle_w(x',y') \mid y' \in (W\cap U) \setminus \{w\} \right\},
\]
because for any $x'\in (w,x]$, $y\in W \setminus \{w\}$ 
and $y'\in (w,y]$,
we have $\angle_w(x,y)=\angle_w(x',y')$.
}
\end{remark}

\begin{lemma}\label{lem:GR2b}
Let $X$ be a complete CAT(0) space and $W$ a closed convex subset of $X$.
Then for any $x\in X$ and $w\in W$, we have $w=\pi_W(x)$
if and only if $\angle_w(x,W)\ge \frac{\pi}{2}$.
\end{lemma}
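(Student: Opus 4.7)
The plan is to prove both implications by the standard Euclidean comparison technique in CAT(0) geometry: applying the Euclidean law of cosines to the comparison triangle for $\triangle(x, w, y_s)$ where $y_s$ is a suitably chosen point of $W$, while exploiting the convexity of $W$ and the CAT(0) inequality between Alexandrov and Euclidean comparison angles.

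For the forward implication, I would fix $y \in W \setminus \{w\}$, use convexity of $W$ to ensure that every point $y_s$ on $[w, y]$ at distance $s$ from $w$ lies in $W$, and conclude from $w = \pi_W(x)$ that $d(x, y_s) \ge d(x, w)$. Substituting this into the Euclidean law of cosines applied to the Euclidean comparison triangle for $\triangle(x, w, y_s)$ gives
\[
d(x, y_s)^2 = d(x, w)^2 + s^2 - 2\, s\, d(x, w) \cos \bar\angle_w(x, y_s) \ge d(x, w)^2,
\]
whence
\[
\cos \bar\angle_w(x, y_s) \le \frac{s}{2\, d(x, w)}.
\]
In a CAT(0) space the comparison angle $\bar\angle_w(x, y_s)$ is monotone non-increasing in $s$ and converges to the Alexandrov angle $\angle_w(x, y)$ as $s \to 0^+$ (a standard consequence of the CAT(0) definition, cf.~\cite[Proposition II.1.7]{BH}). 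Letting $s \to 0^+$ therefore yields $\cos \angle_w(x, y) \le 0$, i.e.\ $\angle_w(x, y) \ge \pi/2$; taking the infimum over $y \in W \setminus \{w\}$ gives $\angle_w(x, W) \ge \pi/2$.

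For the converse I would argue by contradiction. Suppose $\angle_w(x, W) \ge \pi/2$ and set $w^* := \pi_W(x)$; if $w^* \ne w$, then $w^* \in W \setminus \{w\}$, and the hypothesis yields $\angle_w(x, w^*) \ge \pi/2$. The CAT(0) angle comparison $\angle_w(x, w^*) \le \bar\angle_w(x, w^*)$ then forces $\cos \bar\angle_w(x, w^*) \le 0$, so the Euclidean law of cosines applied to the comparison triangle of $\triangle(x, w, w^*)$ gives
\[
d(x, w^*)^2 = d(x, w)^2 + d(w, w^*)^2 - 2\, d(x, w)\, d(w, w^*) \cos \bar\angle_w(x, w^*) \ge d(x, w)^2 + d(w, w^*)^2 > d(x, w)^2,
\]
contradicting the defining property $d(x, w^*) = d(x, W) \le d(x, w)$ of the projection. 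Hence $w^* = w$.

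The only delicate step is the monotone convergence $\bar\angle_w(x, y_s) \searrow \angle_w(x, y)$ used in the forward direction; although this is a textbook feature of the CAT(0) setting, it is the step that genuinely requires more than the bare definition and calls for a careful appeal to \cite{BH}. Everything else reduces to the Euclidean law of cosines and the distance-minimising property of $\pi_W$.
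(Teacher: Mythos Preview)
Your proof is correct. Both directions are handled by standard CAT(0) comparison arguments, and the delicate step you flag (the monotone convergence of comparison angles to the Alexandrov angle) is indeed a textbook fact in this setting.

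Compared to the paper: for the forward implication the paper simply cites \cite[Proposition~II.2.4(3)]{BH}, whereas you reprove it via the law of cosines and the limit $\bar\angle_w(x,y_s)\to\angle_w(x,y)$; this is essentially the proof of that proposition in \cite{BH}. For the converse the paper takes a slightly different tack: instead of bounding $d(x,w^*)$ directly by the law of cosines at $w$, it observes that the comparison triangle $\Delta(\bar x,\bar w,\bar w_0)$ would have \emph{two} angles $\ge\pi/2$ (the one at $\bar w$ from the hypothesis, and the one at $\bar w_0$ from the already-established forward direction applied at $w_0=\pi_W(x)$), which is impossible in $\EE^2$. Your version is more self-contained for the ``if'' direction since it uses only the single angle at $w$; the paper's version is a touch slicker but relies on invoking the ``only if'' part at a second vertex.
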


\begin{proof}
The only if part is nothing other than 
\cite[Proposition II.2.4(3)]{BH}.
To see the if part, suppose that  the inequality
$\angle_w(x,W)\ge \frac{\pi}{2}$ holds, and
assume to the contrary that $w$ is
different from the point $w_0:=\pi_W(x)$.
Let $\Delta(\bar x, \bar w, \bar w_0)\subset \EE^2$
be the comparison triangle of the
geodesic triangle $\Delta(x,w,w_0)\subset X$. 
Then $\angle_{\bar w}(\bar x,\bar w_0)\ge \angle_w(x,w_0)\ge \angle_w(x,W)\ge \frac{\pi}{2}$
by~\cite[Propositions II.1.7(4)]{BH} and the assumption.
We also have 
$\angle_{\bar w_0}(\bar x,\bar w)\ge \angle_{w_0}(x,w)\ge \frac{\pi}{2}$
by~\cite[Propositions II.1.7(4) and II.2.4(3)]{BH}. 
Thus the Euclidean triangle $\Delta(\bar x, \bar w, \bar w_0)$
has two angles $\ge \frac{\pi}{2}$, a contradiction.
\end{proof}

Let $W_1$ and $W_2$ be closed convex subsets of 
a complete CAT(0) space $X$. 
The distance $d(W_1, W_2)$ between $W_1$ and $W_2$ is defined by
 \[ d(W_1, W_2) := \inf \{d(x_1,x_2) \mid x_1 \in W_1, x_2 \in W_2\}.\]
For a pair of distinct points $(x_1,x_2)\in W_1\times W_2$,
the geodesic segment $[x_1,x_2]$ is a {\it shortest path} 
 between $W_1$ and $W_2$ if $d(W_1,W_2)=\length([x_1,x_2])$.
The geodesic segment $[x_1,x_2]$ is a 
 \textit{common perpendicular} to $W_1$ and $W_2$
 if $\angle_{x_1}(x_2, W_1)\ge \frac{\pi}{2}$
 and $\angle_{x_2}(x_1, W_2)\ge \frac{\pi}{2}$.

\begin{lemma}\label{lem:GR2} 
  Let $X$ be a complete CAT(0) space, and let $W_1$ and $W_2$ be 
  closed convex subsets of $X$.
  Then, for a pair of distinct points $(x_1,x_2)\in W_1\times W_2$,
  the geodesic segment $[x_1,x_2]$ is a shortest path 
  between $W_1$ and $W_2$
  if and only if it is a common perpendicular to $W_1$ and $W_2$.
  In particular, if a common perpendicular to $W_1$ and $W_2$ exists,
  then $d(W_1,W_2)>0$ and so $W_1\cap W_2=\emptyset$.
\end{lemma}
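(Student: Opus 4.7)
The plan is to handle the two directions of the equivalence separately; both lean on Lemma~\ref{lem:GR2b} and on the convexity of distance along geodesics in a CAT(0) space. The concluding statement about positive distance will be immediate from the equivalence.

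For the only-if direction, if $[x_1,x_2]$ realizes $d(W_1,W_2)$, then $d(x_2,y)\ge d(x_1,x_2)$ for every $y\in W_1$, so $x_1=\pi_{W_1}(x_2)$; Lemma~\ref{lem:GR2b} then gives $\angle_{x_1}(x_2,W_1)\ge\pi/2$, and symmetrically $\angle_{x_2}(x_1,W_2)\ge\pi/2$. Hence $[x_1,x_2]$ is a common perpendicular.

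For the if direction, assume $[x_1,x_2]$ is a common perpendicular and fix any $(y_1,y_2)\in W_1\times W_2$. By convexity of $W_i$, choose constant-speed geodesics $c\colon[0,1]\to W_1$ from $x_1$ to $y_1$ and $c'\colon[0,1]\to W_2$ from $x_2$ to $y_2$, and set $F(t):=d(c(t),c'(t))$. By the CAT(0) convexity inequality for distances between geodesics (\cite[Proposition~II.2.2]{BH}), $F$ is convex on $[0,1]$. The standard first-variation formula for the distance function in a CAT(0) space gives
\[
F'(0^+)=-\length(c)\cos\angle_{x_1}(y_1,x_2)-\length(c')\cos\angle_{x_2}(y_2,x_1).
\]
By the definition of $\angle_w(x,W)$ as an infimum, the common-perpendicular hypothesis forces each of $\angle_{x_1}(y_1,x_2)$ and $\angle_{x_2}(y_2,x_1)$ to be $\ge\pi/2$, so each cosine is nonpositive and $F'(0^+)\ge 0$. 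Convexity of $F$ then yields $d(y_1,y_2)=F(1)\ge F(0)=d(x_1,x_2)$, as required.

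The final assertion follows because any common perpendicular has $x_1\ne x_2$, so $d(W_1,W_2)=d(x_1,x_2)>0$ and hence $W_1\cap W_2=\emptyset$. I expect the main technical obstacle to be invoking the first-variation formula cleanly: one must either quote it as a standard fact about CAT(0) spaces or derive it from the comparison triangle for $\Delta(x_1,c(t),c'(t))\subset X$ together with Euclidean trigonometry, taking care that the angles entering the formula are precisely the outgoing directions of $c$ and $c'$ at $x_1$ and $x_2$. Once this formula and the convexity of $F$ are in hand, the remainder is direct unpacking of definitions and an application of Lemma~\ref{lem:GR2b}.
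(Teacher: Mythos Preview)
Your only-if direction matches the paper's exactly. For the if direction, however, the paper takes a more elementary route: it projects onto the geodesic segment $[x_1,x_2]$ itself. Since $\angle_{x_1}(y_1,[x_1,x_2])=\angle_{x_1}(x_2,y_1)\ge\angle_{x_1}(x_2,W_1)\ge\pi/2$, Lemma~\ref{lem:GR2b} gives $\pi_{[x_1,x_2]}(y_1)=x_1$, and symmetrically $\pi_{[x_1,x_2]}(y_2)=x_2$; then the $1$-Lipschitz property of the projection (\cite[Proposition~II.2.4(4)]{BH}) yields $d(x_1,x_2)=d(\pi(y_1),\pi(y_2))\le d(y_1,y_2)$ in one line. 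This avoids first variation entirely and stays within the tools already established in the section.

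Your route via convexity of $t\mapsto d(c(t),c'(t))$ and first variation is sound in principle, but the two-endpoint formula you invoke is not stated in \cite{BH}; Corollary~II.3.6 there treats only one moving endpoint, and passing to the diagonal $F(t)=d(c(t),c'(t))$ requires an extra uniformity or angle-continuity argument (and Alexandrov angles need not vary continuously in CAT(0) spaces). You flagged this yourself, and it can indeed be carried out, but the paper's projection trick sidesteps the issue and is considerably shorter.
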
 

\begin{proof}
Assume that $[x_1,x_2]$ is a common perpendicular to $W_1$ and $W_2$.
Consider the projection 
$\pi:=\pi_{[x_1,x_2]}$ from $X$ to the closed convex set $[x_1,x_2]$.
Then for any pair of points $(y_1,y_2)\in W_1\times W_2$,
we see $x_1=\pi(y_1)$ by the if part of Lemma~\ref{lem:GR2b},
because $\angle_{x_1}(y_1,[x_1,x_2])=\angle_{x_1}(x_2,y_1)\ge \angle_{x_1}(x_2,W_1)\ge \frac{\pi}{2}$.
Similarly $x_2=\pi(y_2)$.
Since the projection does not increase distances 
by~\cite[Proposition II.2.4(4)]{BH},
we have 
\[
d(x_1,x_2)=d(\pi(y_1),\pi(y_2))\le d(y_1,y_2).
\]
Hence $\length([x_1,x_2])=d(x_1,x_2)= d(W_1,W_2)$.
This completes the proof of the if part.
The only if part immediately 
follows from (the if part of) Lemma~\ref{lem:GR2b}.
\end{proof}

A {\it cubed complex} is  
a metric space $X=(X,d)$ obtained from a disjoint union of unit cubes
$\hat X=\bigsqcup_{\lambda\in\Lambda} (I^{n_\lambda}\times\{\lambda\})$
by gluing their faces through isometries.
To be precise, it is an $M_{\kappa}$-polyhedral complex with $\kappa=0$
in the sense of \cite[Definition I.7.37]{BH}
that is made up of Euclidean unit cubes, i.e.,
the set $\mathrm{Shapes}(X)$ in the definition 
consists of Euclidean unit cubes.
(See \cite[Example (I.7.40)(4)]{BH}.)
The metric $d$ on $X$ is the length metric induced from the Euclidean metrics of the unit cubes
(see \cite[I.7.38]{BH} for a precise definition).
We recall the following basic fact
(cf.~\cite[Theorem in p.97 or I.7.33]{BH}).

\begin{proposition}
\label{prop:complete-metric}
Every finite dimensional cubed complex 
is a complete geodesic space.
\end{proposition}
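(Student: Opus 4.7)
The plan is to deduce the statement from the general theorem of Bridson on $M_\kappa$-polyhedral complexes. By the definition given in the excerpt, a cubed complex $X$ is precisely an $M_0$-polyhedral complex in the sense of~\cite[Chapter I.7]{BH} whose set $\mathrm{Shapes}(X)$ of isometry types of cells is contained in $\{[I^n] : n\ge 0\}$, and carries the induced length metric.

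The main input is the general result cited in the excerpt (the theorem on p.~97 of~\cite{BH}, and~\cite[I.7.33]{BH}): an $M_\kappa$-polyhedral complex $K$ whose set $\mathrm{Shapes}(K)$ is finite is automatically a complete geodesic metric space. Thus the only thing to verify is that our cubed complex has finitely many shapes. But if $\dim X = N < \infty$, then every cube in the defining decomposition has dimension at most $N$, so
\[
\mathrm{Shapes}(X) \subseteq \bigl\{[I^0],[I^1],\dots,[I^N]\bigr\},
\]
which is a finite set. Combining these two observations yields the conclusion.

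Since the proof is essentially a direct citation plus a one-line observation, there is no serious obstacle at this level. The real work is hidden inside the general theorem of Bridson, whose proof uses the finiteness of $\mathrm{Shapes}$ to produce a uniform positive lower bound, at every point $x\in X$, on the distance from $x$ to any closed cell not containing $x$. This bound is what simultaneously ensures (i) that the length pseudo-metric is nondegenerate (so that $d$ is a genuine metric), (ii) that Cauchy sequences remain in finitely many cells and thereby converge, and (iii) that the infimum of lengths of paths joining two points is realized by a local geodesic path obtained by a compactness argument. All three ingredients are packaged into the cited theorem, so once finiteness of shapes is in hand no further argument is needed here.
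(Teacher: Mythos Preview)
Your proof is correct and matches the paper's approach exactly: the paper does not give an independent proof but simply cites \cite[Theorem in p.~97 or I.7.33]{BH}, and your argument is precisely the unpacking of why that citation applies (finite dimension forces $\mathrm{Shapes}(X)$ to be finite, so Bridson's theorem kicks in). There is nothing to add.
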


When we need to consider the combinatorial structure of the cubed complex $X$
in addition to its metric,
we denote it by using the corresponding calligraphic letter 
$\mathcal{X}$ and regard the metric space $X$
as the underlying space $|\mathcal{X}|$ of $\mathcal{X}$.
Otherwise, we do not distinguish symbolically 
among $X$, $\mathcal{X}$ and $|\mathcal{X}|$,
and use a symbol which we think fit to the setting.
We also call $\mathcal{X}$ 
a {\it cubed decomposition}
of the metric space $X$.

For a point $x\in X=|\mathcal{X}|$,
two non-trivial geodesics issuing from $x$
are said to define the {\it same direction}
if the Alexandrov angle between them is zero.
This determines an equivalence relation on the set of 
non-trivial geodesics issuing from $x$,
and the Alexandrov angle induces a metric 
on the set of the equivalence classes.
The resulting metric space is called the
{\it space of directions} at $x$ and denoted 
$S_x(X)$ (see ~\cite[Definition II.3.18]{BH}).

Suppose $x$ is a vertex $v$ of the cubed complex $\XCubing$.
Then the space $S_v(\XCubing)$ is obtained by gluing the spaces
%$\{S_{v_{\lambda}}(I^{n_{\lambda}}\times\{\lambda\})\}_{v_{\lambda}\in p^{-1}(v)}$.
$\{S_{v_{\lambda}}(I^{n_{\lambda}}\times\{\lambda\})\}$,
where $\lambda$ runs over the elements of the suffix set $\Lambda$
such that $(v_{\lambda},\lambda)\in I^{n_\lambda}\times\{\lambda\}
\subset \hat X$
is mapped to $v$ by the projection $\hat X \to X$. 
Here $S_{v_{\lambda}}(I^{n_{\lambda}}\times\{\lambda\})$ is the space   
of directions in the cube $I^{n_{\lambda}}\times\{\lambda\}$ at the vertex $v_{\lambda}$;
so it is an {\it all-right spherical simplex},
a geodesic simplex in the unit sphere $S^{n_{\lambda}-1}$  
all of whose edges have length $\pi/2$.
Hence $S_v(\XCubing)$ has a structure of a finite dimensional {\it all-right spherical complex}, 
namely an $M_{\kappa}$-polyhedral complex with $\kappa=1$
in the sense of~\cite[Definition I.7.37]{BH}
which is made up of all-right spherical simplices.
This complex is called the {\it geometric link} of $v$ in $\XCubing$,
and is denoted by $\Lk(v,\XCubing)$ (see~\cite[(I.7.38)]{BH}).
It is endowed with the length metric $d_{\Lk(v,\XCubing)}$
induced from the spherical metrics of the all-right spherical simplices.
Then the following holds
(cf.~\cite[the second sentence in p.191]{BH}).

\begin{lemma}
\label{lem:distances-link}
The metric $d_{S_v(\XCubing)}$ on $S_v(\XCubing)=\Lk(v,\XCubing)$
determined by the Alexandrov angle
is equal to
the metric $d_{\Lk(v,\XCubing)}^{\pi}$ defined by
\[
d_{\Lk(v,\XCubing)}^{\pi}(g_1,g_2):=\min\{d_{\Lk(v,\XCubing)}(g_1,g_2),\pi\}.
\]
\end{lemma}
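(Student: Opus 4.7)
The plan is to reduce the computation of the Alexandrov angle at $v$ to the definition of the Euclidean cone metric on the link by exhibiting a local isometric cone model of $\XCubing$ near $v$.

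First, I would establish the following local cone model: for all sufficiently small $\epsilon>0$, the closed $\epsilon$-ball $\bar B(v,\epsilon)\subset|\XCubing|$ is isometric to the closed $\epsilon$-ball about the apex of the Euclidean cone $C_0\Lk(v,\XCubing)$. Inside each cube $I^{n_\lambda}\times\{\lambda\}$ containing a corner $v_\lambda$ that maps to $v$, the Euclidean $\epsilon$-neighborhood of $v_\lambda$ is precisely the truncated Euclidean cone of radius $\epsilon$ over the all-right spherical simplex $S_{v_\lambda}(I^{n_\lambda}\times\{\lambda\})$. Because the face identifications defining $\XCubing$ send corners at $v$ to corners at $v$ via face isometries, these truncated cones glue compatibly into a single truncated Euclidean cone over $\Lk(v,\XCubing)$. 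Choosing $\epsilon$ small enough that no length-minimising path in $\bar B(v,\epsilon)$ can leave $\bar B(v,\epsilon)$, the induced length metric on $\bar B(v,\epsilon)$ coincides with the cone metric.

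Second, I would apply the explicit formula for the metric on a Euclidean cone $C_0 L$ over a metric space $L$ (cf.\ Bridson--Haefliger I.5):
\[
d_{C_0 L}\bigl((\ell_1,s),(\ell_2,t)\bigr)^2=s^2+t^2-2st\cos\bigl(\min\{d_L(\ell_1,\ell_2),\pi\}\bigr).
\]
Under the local isometry of Step~1 and the natural identification of directions at $v$ with points of $\Lk(v,\XCubing)$, the points $\gamma_i(s)$ on a geodesic $\gamma_i$ issuing from $v$ with direction $g_i$ correspond to $(g_i,s)$ in the cone. The Euclidean comparison angle at $v$ of the triangle with vertices $v,\gamma_1(s),\gamma_2(t)$ is therefore, by the Euclidean law of cosines, exactly $\min\{d_{\Lk(v,\XCubing)}(g_1,g_2),\pi\}$, independently of $s$ and $t$. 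Passing to the $\limsup$ as $s,t\to 0^+$ in the definition of the Alexandrov angle yields
\[
\angle_v(\gamma_1,\gamma_2)=\min\{d_{\Lk(v,\XCubing)}(g_1,g_2),\pi\}=d_{\Lk(v,\XCubing)}^{\pi}(g_1,g_2),
\]
which is the desired equality of metrics.

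The hard part will be justifying Step~1. While the corner-cube picture is intuitively clear, one must check that the face gluings defining $\XCubing$ respect the cone structure near $v$ and that the resulting length metric on the $\epsilon$-neighborhood really coincides with the cone metric rather than being strictly smaller. This hinges on Bridson--Haefliger's framework for $M_\kappa$-polyhedral complexes: distances are computed as infima of lengths of piecewise-geodesic paths passing through successive cubes, and for $\epsilon$ small enough, every such infimising sequence stays within the star of $v$ and can be matched, cube by cube, with an infimising sequence for the cone metric.
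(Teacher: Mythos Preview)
Your proposal is correct and follows essentially the same approach as the paper. The paper's proof is more terse: it invokes \cite[Theorem~I.7.39]{BH} directly for your Step~1 (the local isometry between $B_X(v,\epsilon)$ and the $\epsilon$-ball about the cone point of $C_0(\Lk(v,\XCubing))$), and then cites \cite[Remark~I.5.7]{BH} for the recovery of $d_{\Lk(v,\XCubing)}^{\pi}$ from the cone metric, which is your Step~2; your ``hard part'' is thus exactly what Bridson--Haefliger have already packaged, so you need not rederive it.
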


\begin{proof}
By~\cite[Theorem I.7.39]{BH},
there is a natural isometry $f$
from the open ball $B_X(v,\epsilon)=\{x\in X \ | \ d(v,x)<\epsilon\}$,
for some $\epsilon>0$, 
onto the open ball of the same radius $\epsilon$ about the cone point of
the Euclidean cone  $C_0(\Lk(v,\XCubing))$ over the metric space 
$\Lk(v,\XCubing)$.
(See~\cite[Definition I.5.6]{BH} for the definition of the Euclidean cone
($\kappa$-cone with $\kappa=0$) and its cone point.)
The metric $d_{\Lk(v,\XCubing)}^{\pi}$ is recovered from the metric 
of the open ball about the cone point of
the Euclidean cone  $C_0(\Lk(v,\XCubing))$ (see~\cite[Remark I.5.7]{BH}),
whereas the metric $d_{S_v(\XCubing)}$ is determined by 
the metric on $B_X(v,\epsilon)$.
Hence, by the naturality of the isometry $f$,
we obtain the desired result.
\end{proof}

We recall the following well-known 
criterion for a cubed complex 
to be non-positively curved \cite[Theorem II.5.20]{BH},
where a {\it flag complex} is a {\it simplicial} complex
in which every finite set of vertices that is pairwise joined by an edge
spans a simplex.

\begin{proposition}[Gromov's flag criterion]
\label{prop:Gromov-LC}
A finite dimensional cubed complex $\XCubing$ is non-positively curved
if and only if the geometric link of each vertex is a flag complex.
\end{proposition}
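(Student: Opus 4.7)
The plan is to reduce Gromov's criterion to two essentially independent ingredients: (i) a general link criterion characterizing local CAT$(\kappa)$ geometry inside $M_\kappa$-polyhedral complexes, and (ii) a combinatorial characterization of the CAT$(1)$ condition for all-right spherical complexes. Together these will give both implications of the proposition, once one observes that the geometric link of a vertex in a cubed complex is always an all-right spherical complex (as recalled immediately before the statement of the proposition).

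For ingredient (i), I would invoke the standard fact that a finite dimensional $M_0$-polyhedral complex made up of finitely many isometry types of cells is non-positively curved if and only if, for every vertex $v$, the geometric link $\Lk(v,\XCubing)$ is a CAT$(1)$ space with respect to its natural spherical metric (which, by Lemma~\ref{lem:distances-link}, coincides with the metric defined via the Alexandrov angle). The underlying mechanism is the conical structure provided by \cite[Theorem I.7.39]{BH}: a small neighborhood of $v$ in $\XCubing$ is isometric to a neighborhood of the cone point of the Euclidean cone $C_0(\Lk(v,\XCubing))$, and a Euclidean cone is CAT$(0)$ precisely when its base is CAT$(1)$. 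Away from the $0$-skeleton, local CAT$(0)$-ness is automatic because interior points of higher-dimensional cells have Euclidean neighborhoods. So the vertex condition is the only nontrivial one.

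For ingredient (ii), I would prove that an all-right spherical complex $L$ is CAT$(1)$ if and only if, viewed as a simplicial complex, it is flag. The easy direction is \emph{CAT$(1)$ $\Rightarrow$ flag}: if $L$ contained pairwise adjacent vertices $v_0,\dots,v_k$ that did not span a simplex, one could concatenate three edges among them to obtain a locally geodesic loop of length $3\pi/2 < 2\pi$ (or invoke a corresponding loop inside the boundary of a missing simplex), contradicting the fact that in a CAT$(1)$ space every closed geodesic has length at least $2\pi$. The harder direction is \emph{flag $\Rightarrow$ CAT$(1)$}, which I would prove by induction on $\dim L$. The inductive step uses Bowditch's criterion, which reduces CAT$(1)$-ness to the conjunction of (a) all closed local geodesics in $L$ have length $\ge 2\pi$, and (b) for every vertex $w\in L$ the link $\Lk(w,L)$ is CAT$(1)$. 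Condition (b) is immediate by induction, since the link of a vertex in an all-right spherical flag complex is again an all-right spherical flag complex of strictly lower dimension. Condition (a) is the combinatorial core: one shows that a hypothetical closed geodesic of length $<2\pi$ in $L$ would either produce, via the flag property and the length-$\pi/2$ constraint on edges, a short closed geodesic inside some vertex link (handled by induction), or force the existence of a configuration of pairwise adjacent vertices spanning no simplex, contradicting flagness.

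The main obstacle I anticipate is condition (a) in the inductive step, where one must analyze the combinatorics of geodesics that pass through top-dimensional simplices and bookkeep how the flag hypothesis interacts with corners at which the geodesic changes simplex. Everything else is either a direct appeal to \cite{BH} or straightforward bookkeeping; the real work lies in translating the flag property into a uniform lower bound on the length of closed local geodesics.
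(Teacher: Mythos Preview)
The paper does not prove this proposition; it simply records it with a citation to \cite[Theorem~II.5.20]{BH}. Your proposal is exactly the argument given there, so in that sense it matches the paper's (deferred) proof.

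One point in your outline deserves more care than you give it. In the direction you call ``easy'' (CAT$(1)\Rightarrow$ flag), you assert that an empty triangle $v_0v_1v_2$ is a locally geodesic loop of length $3\pi/2$. Local geodesicity at $v_i$ is the condition $d_{\Lk(v_i,L)}(v_{i-1},v_{i+1})\ge\pi$, and all you know a priori is that $v_{i-1}$ and $v_{i+1}$ are \emph{non-adjacent} vertices of the all-right spherical complex $\Lk(v_i,L)$; it is not immediate that non-adjacent vertices in an arbitrary all-right spherical complex lie at distance $\ge\pi$. The standard fix (and the one in \cite{BH}) is to run both directions in a single induction on $\dim L$: assuming $L$ is CAT$(1)$, each $\Lk(v_i,L)$ is CAT$(1)$ and hence, by the inductive hypothesis, flag; then the geodesic-in-a-star lemma you already need for condition~(a)---that a local geodesic meeting the open $\pi/2$-ball about a vertex spends length $\ge\pi$ there---forces $d_{\Lk(v_i,L)}(v_{i-1},v_{i+1})\ge\pi$. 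The same inductive step also reduces any higher-dimensional missing simplex to an empty triangle (a missing $k$-simplex with $k\ge 3$ would make some link non-flag). So the ``easy'' direction genuinely leans on the same combinatorial core you identify in~(a). With that adjustment your plan is complete.
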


\section{Non-positively curved cubed decompositions of alternating link exteriors}
\label{sec:NPC}

The proof of Proposition \ref{prop:disjoint-half-space}, as well as 
that of Proposition~\ref{prop:qf} given by 
\cite{Adams07, Agol2001, Aitchson-Rubinstein_90a},
is based on non-positively curved cubed decompositions of 
prime alternating link exteriors 
in which the checkerboard surfaces are {\it hyperplanes}, 
i.e., consist of midsquares of the cubes.
Here a {\it midsquare} of a cube $I^3$ is 
a square properly embedded in $I^3$
which is parallel to a face of $\partial I^3$
and passes through the center $(\frac{1}{2},\frac{1}{2},\frac{1}{2})$.
(See \cite[Definition 2.2]{Haglund-Wise}
for a precise definition of a hyperplane.)
%\cite[Section 2.4]{Sageev} or \cite[Definition 4.5]{Farley}
%for a precise definition
%of a hyperplane.)  
In this section, we quickly describe the cubed decompositions
following the construction by D. Thurston~\cite{DThurston} 
and the detailed description by Yokota~\cite{Yokota2011}
(cf.~\cite{ALR, Sakuma-Yokota, Sakai}).

For each crossing of a prime alternating diagram $\Diagram$
of a prime alternating link $L$, consider an octahedron 
that contains the corresponding crossing arc as a vertical central axis
(see Figure~\ref{fig:Aitchison}).
Truncating each octahedron at its top and bottom vertices and splitting along 
the horizontal square containing the remaining four vertices,
we obtain a pair of cubes in the link exterior $M$,
each of which intersects $\partial M$ along the top or bottom face
and intersects the checkerboard surfaces in the vertical midsquares
(see Figure~\ref{fig:Aitchison2}).
We can expand the cubes in $M$ so as to obtain the desired cubed decomposition of $M$
(see Figure~\ref{fig:Aitchison} and its caption).
Thus we obtain the following proposition.

\begin{figure}
  \centering
  \begin{tabular}{c}
    \begin{subfigure}{0.45\columnwidth}
      \begin{overpic}[width=\columnwidth]{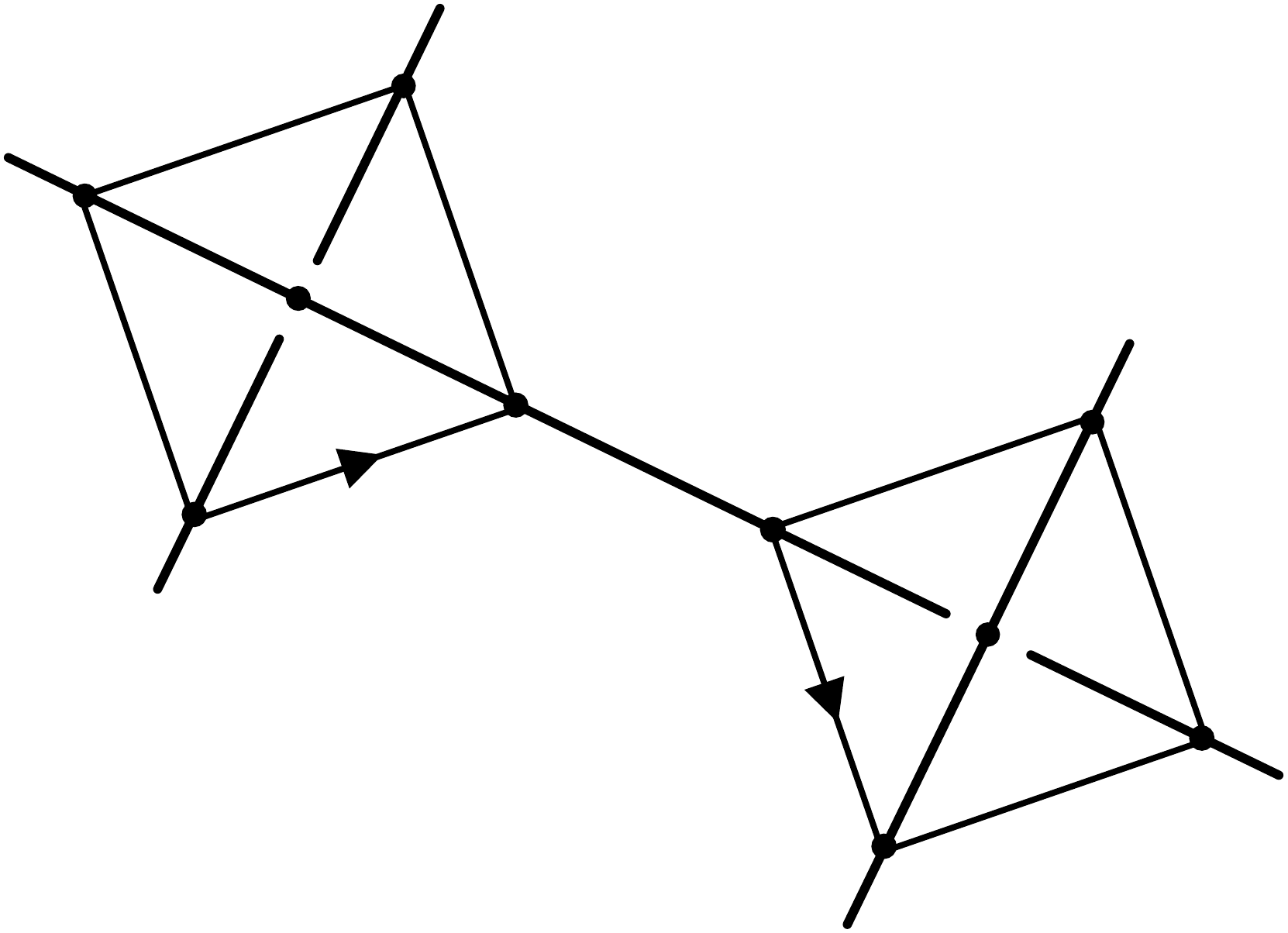}
        \put(41,76){$c$}
        \put(135.5,27){$c'$}
        \put(70,33){$R$}
%        \put(-15,110){$(1)$}
      \end{overpic}
      \vspace{-2mm}
      %\subcaption{}
    \end{subfigure}\\
    \begin{subfigure}{0.7\columnwidth}
      \begin{overpic}[width=\columnwidth]{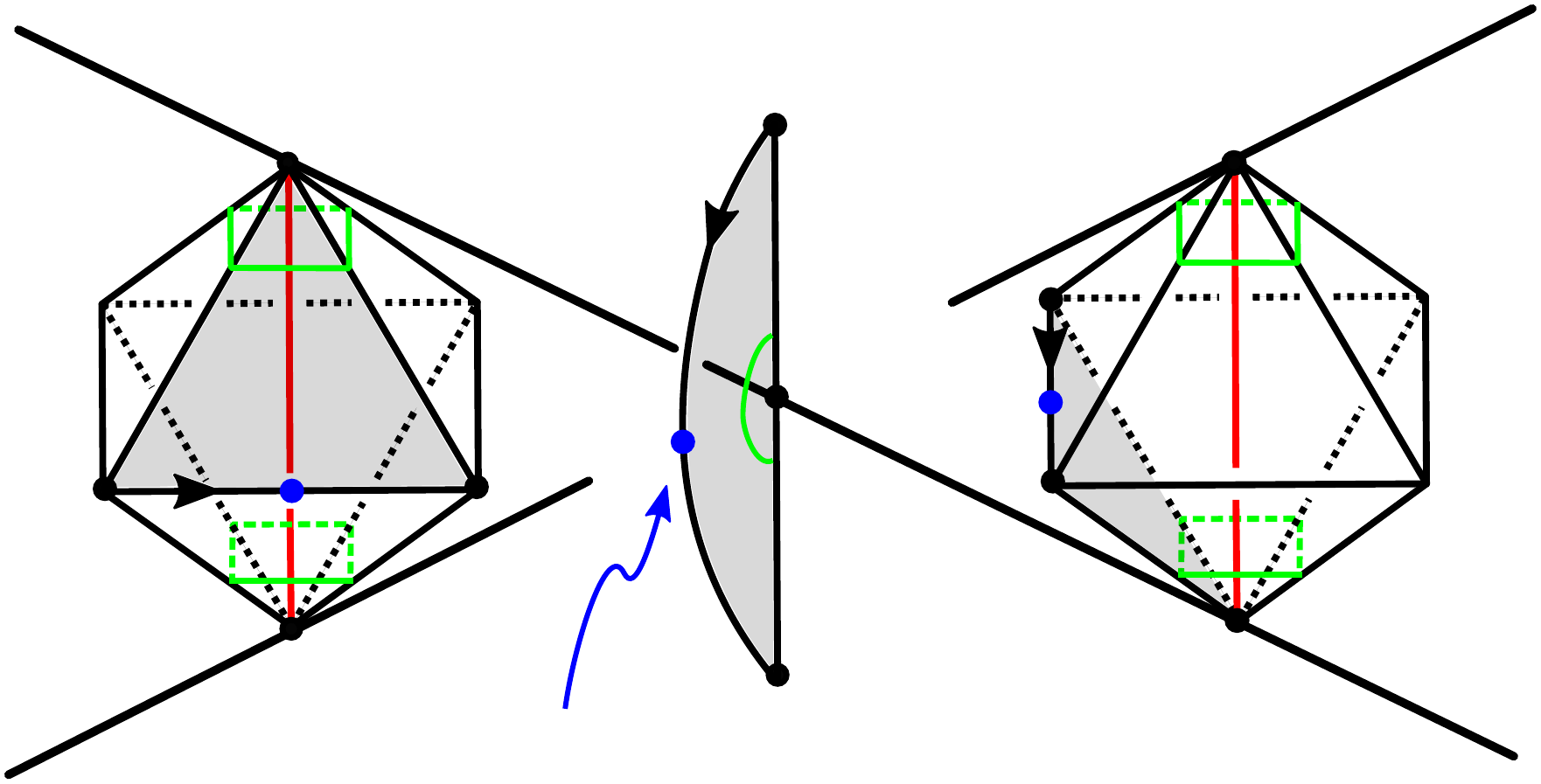}
        \put(4,55){$v_+$}
        \put(90,55){$v_-$}
        \put(91,4){\textcolor{blue}{$m(R)$}}
        \put(136,125){$v_+$}
        \put(136,10){$v_-$}
        \put(174,81){$v_+$}
        \put(174,56){$v_-$}
%        \put(-5,130){$(2)$}
      \end{overpic}
      %\subcaption{}
    \end{subfigure}
  \end{tabular}
  \caption{Local picture of the cubed complex $\Cubing$. The partially truncated octahedra in $M$ are expanded so that they cover the whole $M$.
  The shaded faces of the octahedra at the crossings $c$ and $c^\prime$ are identified with the central bow-shaped face.
In particular, the pair of the horizontal arrowed edges of the octahedra are identified with the arrowed edge of the bow-shaped face joining
monotonically  the top vertex $v_+$ and the bottom vertex $v_-$, passing through the center $m(R)$ of the region $R$.}
  \label{fig:Aitchison}
\end{figure}

\begin{figure}
  \centering
  \begin{subfigure}{0.24\columnwidth}
    \centering
    \begin{overpic}[width=\columnwidth]{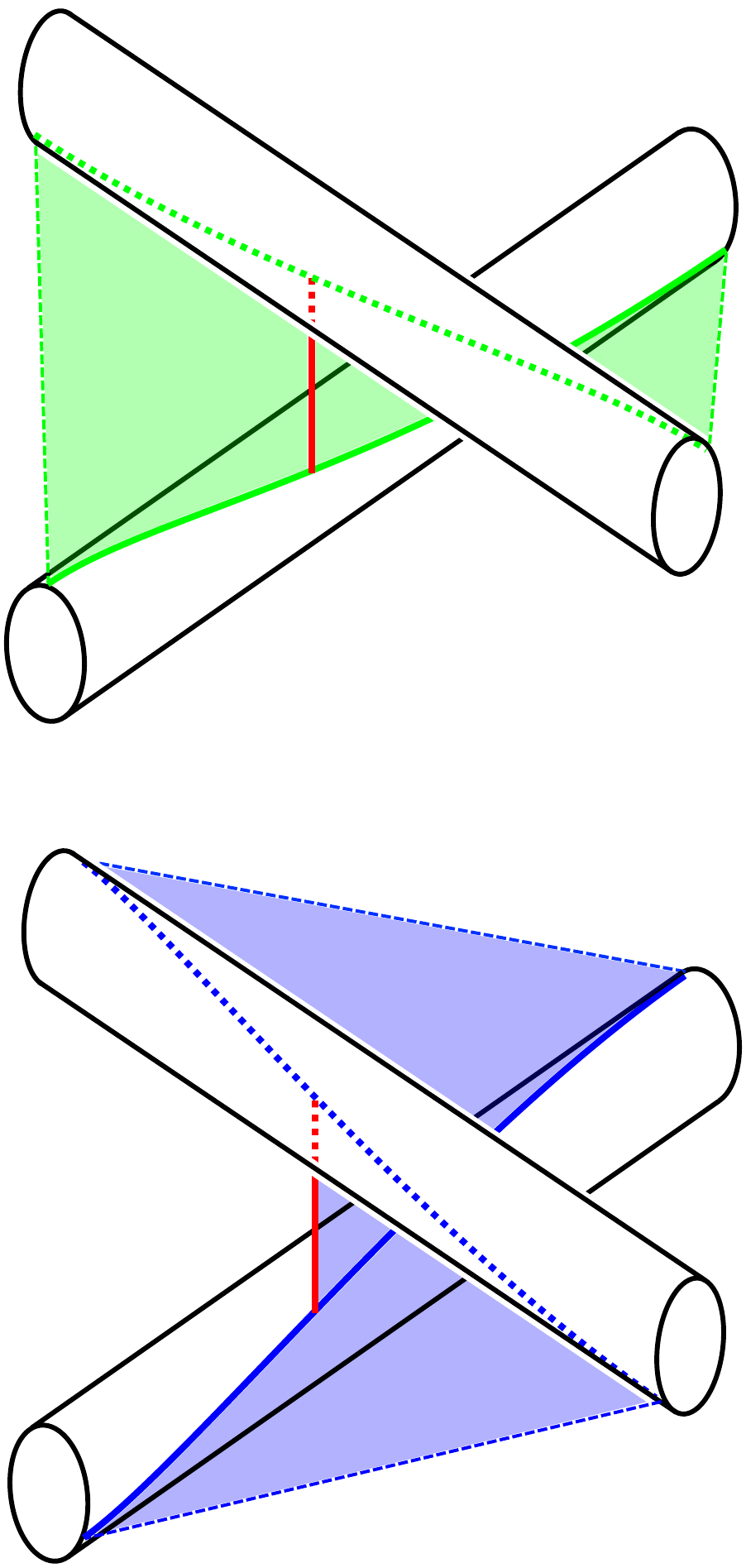}
      \put(94,151){\textcolor{Green}{$S_w$}}
      \put(10,151){\textcolor{Green}{$R_2$}}
      \put(47,3){\textcolor{blue}{$R_1$}}
      \put(47,87){\textcolor{blue}{$S_b$}}
    \end{overpic}
    \subcaption{}
    \label{fig:Aitchison2_c}
  \end{subfigure}
  \hspace{8mm}
  \begin{subfigure}{0.35\columnwidth}
    \centering
    \begin{overpic}[width=\columnwidth]{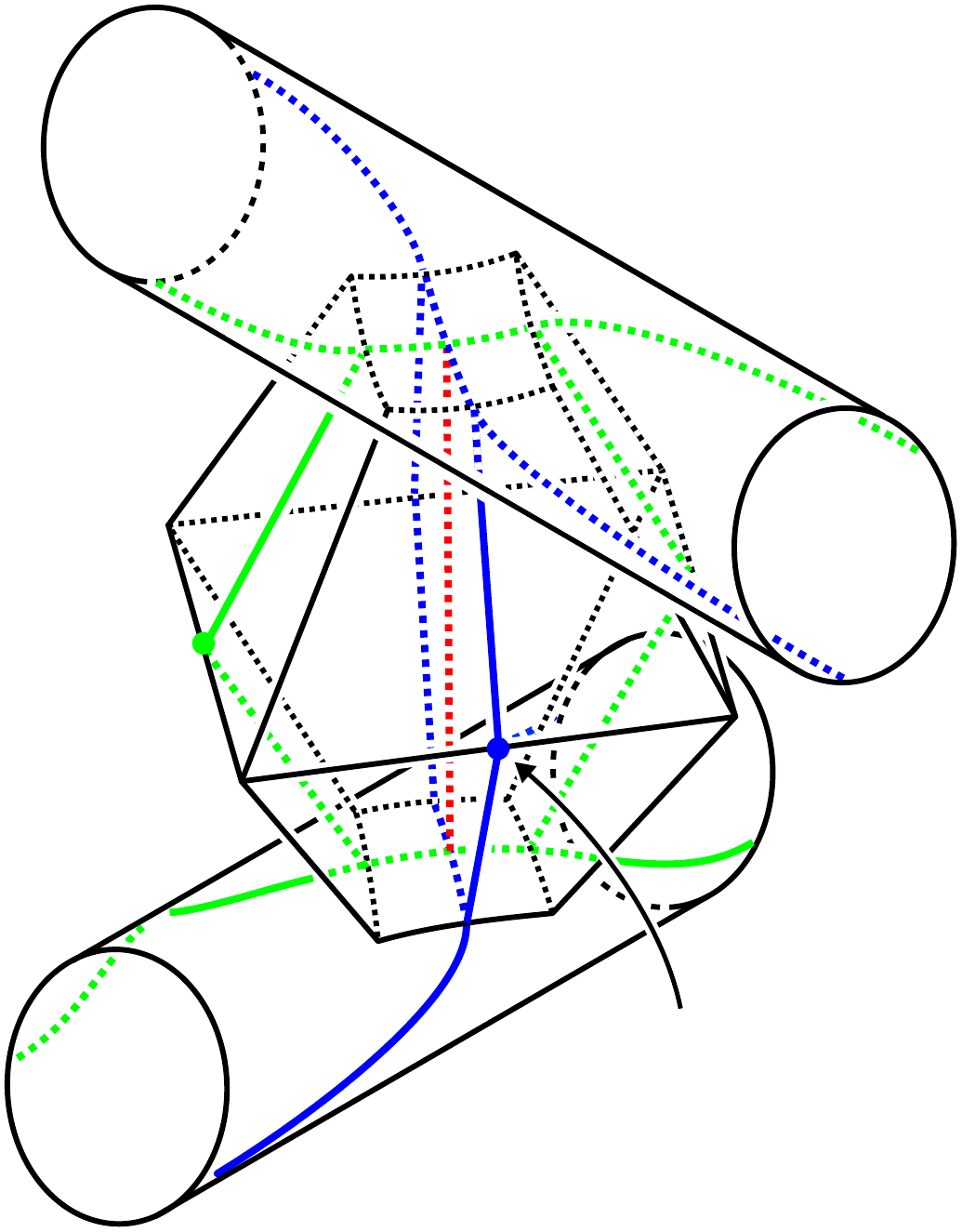}
      \put(10,100){$v_-$}
      \put(-3,78){\textcolor{Green}{$m(R_2)$}}
      \put(21,64){$v_+$}
      \put(113,71){$v_-$}
      \put(84,21){\textcolor{blue}{$m(R_1)$}}
      \put(-23,23){\textcolor{Green}{$\partial S_w$}}
      \put(28,-3){\textcolor{blue}{$\partial S_b$}}
    \end{overpic}
  \vspace{2mm}
  \subcaption{}
  \label{fig:Aitchison2_b}
  \end{subfigure}
  \hspace{5mm}
  \begin{subfigure}{0.15\columnwidth}
    \centering
    \begin{overpic}[width=\columnwidth]{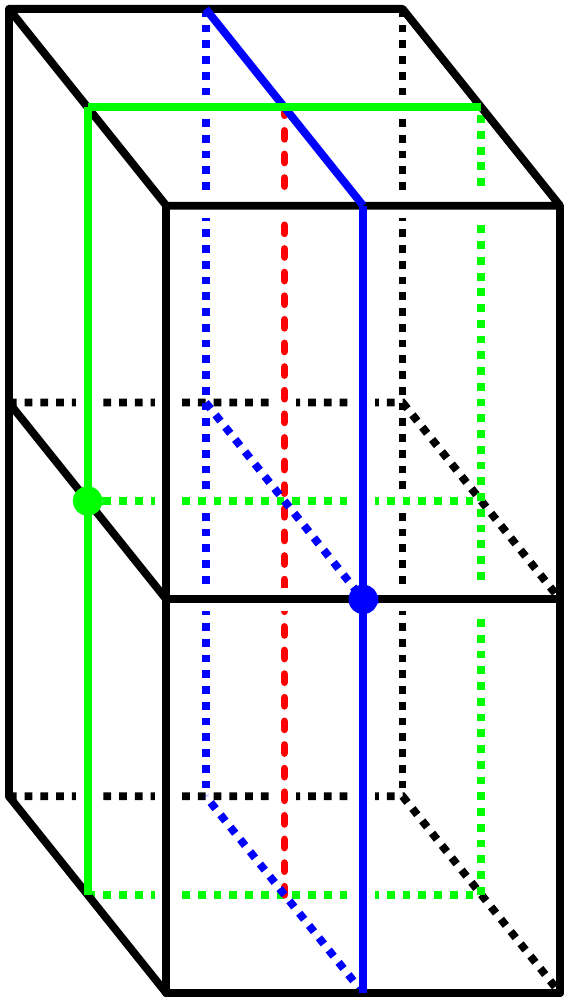}
      \put(32,-10){\textcolor{blue}{$S_b$}}
      \put(-5,1){\textcolor{Green}{$S_w$}}
%      \put(-22,38){\large$\cong$}
    \end{overpic}
    \vspace{13mm}
    \subcaption{}
    \label{fig:Aitchison2_a}
  \end{subfigure}
  \caption{
  At each crossing, 
  (a) $S_b$ and $S_w$ intersect transversely along the crossing arc,
 (b) each of $S_b$ and $S_w$ intersects 
 the partially truncated octahedron in a vertical middle plate containing 
 the crossing arc, and 
 (c) each of the middle plate determines a pair of midsquares in the pair of cubes, and the two pairs of midsquares intersect orthogonally
 along the vertical axes of the cubes.
  }
  \label{fig:Aitchison2}
\end{figure}

\begin{proposition}
\label{prop:cubing}
Let $L$ be a prime alternating link and $\Diagram$ 
a prime alternating diagram of $L$.
Then there is a complete, non-positively curved, cubed complex $\Cubing$ 
whose underlying space is the exterior $M$ of $L$,
which satisfies the following conditions.
\begin{enumerate}
\item[\rm(1)]
Each cube $I^3$ intersects $\partial M$ in the top face $I^2\times\{1\}$
or the bottom face $I^2\times\{0\}$.
\item[\rm(2)]
There are hyperplanes $\Sb$ and $\Sw$ in $\Cubing$
that represent the isotopy classes of 
the black and white surfaces, respectively,
and satisfy the following conditions.
\begin{enumerate}
\item[\rm(a)]
Each of $\Sb$ and $\Sw$ intersects each cube
in one of the two vertical midsquares
$\{\frac{1}{2}\}\times I^2$ and $I\times \{\frac{1}{2}\}\times I$.
\item[\rm(b)]
$\Sb$ and $\Sw$ intersects 
\lq\lq orthogonally'' along $\crossing:=\Sb\cap \Sw$,
the disjoint union of geodesic segments representing crossing arcs.
\end{enumerate}
\item[\rm(3)]
$\Cubing$ has precisely two inner vertices $v_+$ and $v_-$.
\item[\rm(4)]
There is a bijective correspondence between the inner edges 
(edges contained in $\interior\Cubing$) 
of $\Cubing$
and the regions of $\Diagram$:
the inner edge $e(R)$ corresponding to a region $R$
is a monotone path joining $v_+$ with $v_-$
that intersects $\Sb\cup\Sw$ \lq\lq orthogonally'' 
at a single point $m(R)$ which is
contained in 
the component of
$(\Sb\cup\Sw)\setminus\crossing$ corresponding to $R$.
We call $m(R)$ the center of $R$. 

\item[\rm(5)]
For each $\epsilon\in\{+,-\}$,
the geometric link $\Lk(v_{\epsilon},\Cubing)$ is 
the all-right spherical complex whose combinatorial structure is
obtained from the cell decomposition of $S^2$
determined by the dual graph $\Diagram^*$ of $\Diagram$,
by subdividing each region of $\Diagram^*$ as follows.
Each region of $\Diagram^*$ contains a unique vertex, say $c$, of $D$.
Subdivide the region by taking the join of $c$ and 
the edge cycle of $\Diagram^*$ forming the boundary of the region
(see Figure~\ref{fig:A-Link}).
Here the vertex $m^*(R)$ of $\Diagram^*\subset \Lk(v_{\pm},\Cubing)$ 
dual to the region $R$
corresponds to the direction at $v_{\epsilon}$ 
determined by the geodesic $[v_{\epsilon},m(R)]$.
\end{enumerate}
\end{proposition}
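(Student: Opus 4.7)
The plan is to construct $\Cubing$ directly from the diagram $\Diagram$ by inserting an octahedron at each crossing, truncating its top and bottom vertices, splitting along its equatorial square, and then gluing the resulting pairs of cubes according to a recipe dictated by the incidences of crossings and regions of $\Diagram$. First, at each crossing $c$ I would place an octahedron $O_c$ so that the crossing arc $c\times[-1,1]$ is its vertical axis, with top and bottom vertices $c_+$ and $c_-$; truncate a small neighborhood of each of $c_{\pm}$ (which lie on $L$) to produce a top and bottom quadrilateral face destined to sit in $\partial M$; then cut along the horizontal square through the remaining four equatorial vertices to split $O_c$ into a top cube $C_c^+$ and a bottom cube $C_c^-$. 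In each of these cubes, the two vertical midsquares are precisely the rectangles cut out of $S_b$ and $S_w$ by the crossing ball, so collectively they will assemble into $\Sb$ and $\Sw$, proving (2) once the identifications are correct. By construction each cube meets $\partial M$ exactly along its truncated top or bottom face, yielding (1).

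Next I would describe the face identifications. Each side face of $C_c^{\epsilon}$ is a bow-shaped quadrilateral sitting inside a single quadrant at the crossing $c$, and that quadrant belongs to a unique region $R$ of $\Diagram$. The gluing rule is that, for each region $R$, the collection of side faces of top (resp.\ bottom) cubes coming from the corners of $R$ are all identified around a single vertical edge $e(R)$: concretely they share the arrowed edge joining $c_{\pm}$ to a common point $m(R)$, as illustrated in Figure~\ref{fig:Aitchison}. Performing these identifications simultaneously for all regions and colors collapses all the points $c_+$ (resp.\ $c_-$) to a single inner vertex $v_+$ (resp.\ $v_-$), and the edges $[v_{\epsilon},m(R)]$ become the inner edges, giving (3) and (4). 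A direct inspection of the quotient, using that each region of $\Diagram$ contributes a single inner edge and each crossing contributes a pair of cubes glued around the crossing arc, shows that the underlying space is homeomorphic to $M$; this is the content of the Thurston--Yokota description cited in~\cite{DThurston,Yokota2011}.

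It remains to verify (5) and the non-positive curvature, which I expect to be the main obstacle. Completeness of $\Cubing$ follows immediately from Proposition~\ref{prop:complete-metric} once the complex is finite-dimensional. For the curvature I would apply Gromov's flag criterion (Proposition~\ref{prop:Gromov-LC}). The links of vertices on $\partial \Cubing$ or on $\crossing$ are easily seen to be flag (they are subdivided triangles or squares coming from at most one cube corner per shape), so the real work is at the two inner vertices $v_{\pm}$. I would compute $\Lk(v_{\epsilon},\Cubing)$ by enumerating: its vertices correspond to inner edges, hence to regions of $\Diagram$; its edges correspond to midsquares incident to $v_{\epsilon}$, hence to corners (crossing-region incidences); and its $2$-simplices correspond to cube-corners at $v_{\epsilon}$, hence to pairs (crossing, adjacent region). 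This is precisely the subdivided cell structure on $S^2$ described in (5), yielding that statement.

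Finally, for the flag condition at $v_{\epsilon}$ one must show that any three pairwise-adjacent vertices of $\Lk(v_{\epsilon},\Cubing)$ already span a $2$-simplex. Translating back through the dictionary, this amounts to: three regions of $\Diagram$ that pairwise share a crossing must all share a common crossing, and any loop in the $1$-skeleton of $\Lk(v_{\epsilon},\Cubing)$ of length less than $2\pi$ must bound such a $2$-simplex. The obstruction to this is precisely the existence of a short essential loop in $\Diagram\cup\Diagram^{*}$, i.e.\ a bigon or monogon crossing-sequence, and this is exactly what is ruled out by the hypothesis that $\Diagram$ is a \emph{prime} alternating diagram (in particular reduced and without nugatory crossings). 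Combining Proposition~\ref{prop:Gromov-LC} with the resulting flag property at $v_{\pm}$ finishes the proof.
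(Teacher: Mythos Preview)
Your overall strategy---octahedra at crossings, truncate at the over/under points, split along the equator, glue side faces around each region, then verify non-positive curvature via Gromov's flag criterion using primeness of $\Diagram$---is exactly the construction the paper sketches before stating the proposition (and whose flag verification it defers to Proposition~\ref{prop:nonpoitively-curved-cubing} and \cite{Sakuma-Yokota}). So the route is correct.

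There is, however, a concrete error in your description of the vertices. You write that the identifications ``collapse all the points $c_+$ (resp.\ $c_-$) to a single inner vertex $v_+$ (resp.\ $v_-$)'', and that the arrowed edge joins $c_{\pm}$ to $m(R)$. This contradicts your own earlier (correct) statement that $c_{\pm}$ lie on $L$ and are truncated away. In the actual construction the two inner vertices $v_{\pm}$ are the poles of $S^3\setminus S^2$ (see the opening of Section~\ref{sec:checkerboard-surface}); after the octahedra are expanded to fill $M$, it is the \emph{equatorial} vertices of each octahedron that get identified, alternately, to $v_+$ and $v_-$ (cf.\ Figures~\ref{fig:Aitchison} and~\ref{fig:Aitchison2}), and the inner edge $e(R)$ runs from $v_+$ to $v_-$ through $m(R)$.

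This mislabeling propagates into your link computation. You assert that the vertices of $\Lk(v_{\epsilon},\Cubing)$ correspond to inner edges and hence only to regions of $\Diagram$. But in the correct picture $v_{\epsilon}$ is also the endpoint of edges running to $\partial M$ (the vertical edges of each cube at an equatorial corner), and these contribute an additional vertex of the link for each crossing $c$. That is precisely why statement~(5) describes $\Lk(v_{\epsilon},\Cubing)$ as the cell structure coming from $\Diagram^*$ \emph{together with} a cone vertex $c$ inside each dual region: the link has two kinds of vertices, $m^*(R)$ and $c$, and its $2$-simplices have vertex sets $\{c,\,m^*(R_1),\,m^*(R_2)\}$ with $R_1,R_2$ adjacent at $c$. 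Your enumeration (``$2$-simplices $\leftrightarrow$ pairs (crossing, adjacent region)'') is off by one vertex, and the flag check you outline (``three regions pairwise sharing a crossing must share a common crossing'') addresses only the $m^*(R)$-type vertices. The genuine content of the flag argument is that any $3$-cycle in this mixed $1$-skeleton bounds, and this is where primeness and reducedness of $\Diagram$ are used; once you correct the vertex set, your final paragraph can be made to work along the lines of \cite[Proof of Proposition~3.3]{Sakuma-Yokota}.
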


\begin{figure}
  \centering
  \includegraphics[width=0.35\columnwidth]{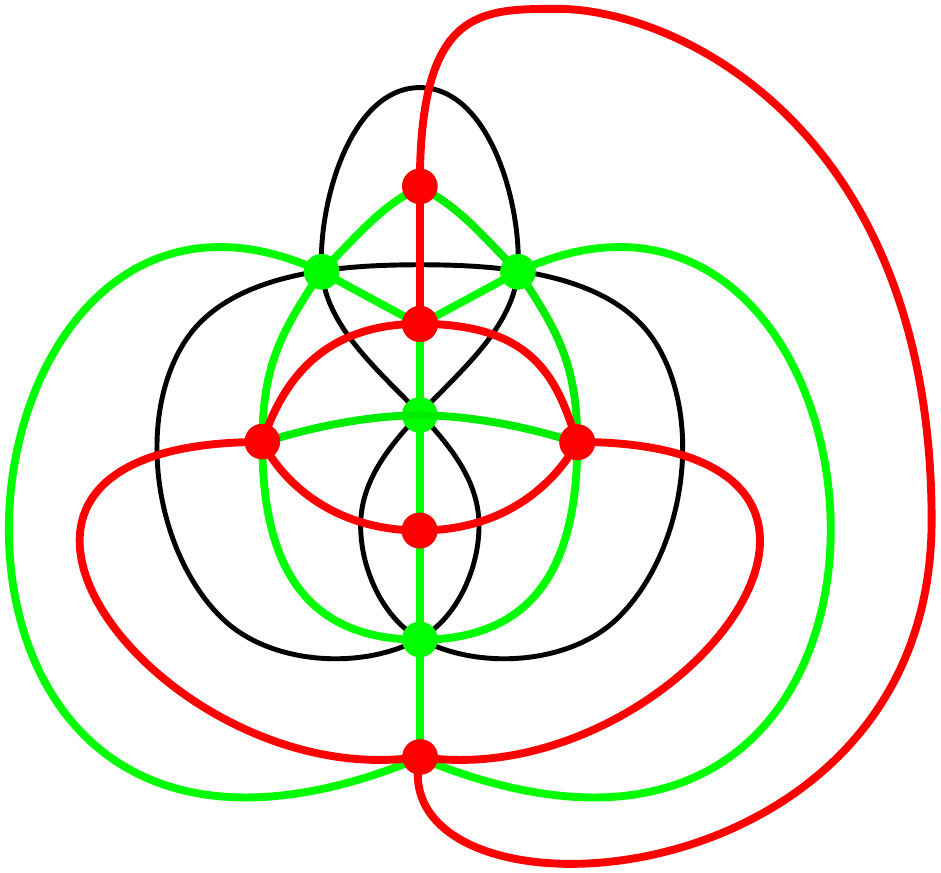}
 \caption{The geometric link $\Lk(v_{\epsilon},\Cubing)$:
 The union of the thick red graph and the thick green graph
 forms 
 the $1$-skeleton of $\Lk(v_{\epsilon},\Cubing)$.}
  \label{fig:A-Link}
\end{figure}

\begin{remark}
\label{remark:cubing}
{\rm
(1) In the statement (2-b), the adjective 
\lq\lq orthogonally'' means that 
every interior point of $\crossing$ 
has a neighborhood $U$ in $\Cubing$
such that the triple $(U, U\cap\Sb, U\cap\Sw)$
is isometric to a neighborhood of the origin
in $(\RR^3, 0\times \RR^2, \RR\times 0\times \RR)$.

(2) In the statement (4), 
the adjective 
\lq\lq orthogonally'' means that
there is a CAT(0) neighborhood $U$ of $m(R)$ in $\Cubing$,
such that for any point $x\in (e(R)\setminus \{m(R)\})\cap U$
and $y\in ((\Sb\cup\Sw)\setminus \{m(R)\})\cap U$,
we have $\angle_{m(R)}(x,y)=\pi/2$.

(3) For each component $T$ of $\partial M$, the restriction $\Cubing|_T$ of $\Cubing$ 
to $T$
gives a cubed decomposition of $T$,
whose $1$-skeleton is the union of two longitudes $\ell_b$ and $\ell_w$,
where $\ell_b$ and $\ell_w$ are parallel to $S_b\cap T$ and $S_w\cap T$, respectively.
In particular, for each square $I^2$ of $\Cubing|_T$,
exactly one of the two diagonals of the square projects to a meridional loop (cf.~Figure~\ref{fig:Butterfly}(a)).

(4) The assertion (5) implies the following key fact.
For distinct regions $R_1$ and $R_2$ of $\Diagram$,
the distance $d_{\Lk(v_{\epsilon},\Cubing)}(m^*(R_1),m^*(R_2))$
is $\pi/2$ or $\ge \pi$
according to whether $R_1$ and $R_2$ are adjacent or not.
By Lemma~\ref{lem:distances-link},
this implies that the Alexandrov angle
$\angle_{v_{\epsilon}}(m(R_1),m(R_2))$ is equal to $\pi/2$ or $\pi$
according to whether $R_1$ and $R_2$ are adjacent or not.
This fact plays a key role in the proof of Proposition~\ref{prop:disjoint-half-space}.
}
\end{remark}

Let $\XCubing$ be the cubed complex obtained 
by attaching the cubed complex $\bigcup_{n=1}^{\infty} \partial\Cubing \times [n,n+1]$
to $\Cubing$ along $\partial\Cubing$.
The the underlying space is the link complement $X$, and 
we have the following key proposition.

\begin{proposition}
\label{prop:nonpoitively-curved-cubing}
The cubed complexes $\Cubing$ and $\XCubing$ are complete and non-positively curved.
\end{proposition}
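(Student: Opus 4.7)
The plan has two parts. Completeness is immediate from Proposition~\ref{prop:complete-metric}: both $\Cubing$ and $\XCubing$ are three-dimensional cubed complexes, and for $\XCubing$ one only observes that any Cauchy sequence is bounded and hence meets only finitely many cubes. For non-positive curvature I would apply Gromov's flag criterion (Proposition~\ref{prop:Gromov-LC}), reducing the statement to showing that the geometric link of every vertex is a flag complex.

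The vertices split into three types, checked independently. Type~I consists of the interior vertices $v_{\pm}$, whose links in $\Cubing$ and $\XCubing$ coincide and are the triangulation of $S^2$ described in Proposition~\ref{prop:cubing}(5). Type~II consists of the vertices on $\partial \Cubing$, which are boundary vertices of $\Cubing$ but become interior vertices of $\XCubing$ once the collar $\partial \Cubing \times [0,\infty)$ is attached. Type~III consists of the vertices of $\XCubing$ lying strictly inside that collar. For Type~III, the local cubical picture is a Euclidean product, so the link is the spherical suspension of a lower-dimensional flag link and is itself flag. For Type~II, the link in $\Cubing$ is a closed hemisphere that is completed in $\XCubing$ by the dual hemisphere contributed by the attached collar cubes; flagness in both cases follows by direct inspection from the square cell structure induced on each boundary torus recorded in Proposition~\ref{prop:cubing}(2) and Remark~\ref{remark:cubing}(3).

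The substance of the argument is the flag verification at the Type~I link $\Lk(v_{\epsilon},\Cubing)$. Its vertex set is the disjoint union of the regions and the crossings of $\Diagram$; its $1$-cells are the edges of $\Diagram^*$ together with the spokes from each crossing to its four surrounding regions; and its $2$-cells are the four join triangles $(c,R,R')$ at each crossing $c$, where $R$ and $R'$ are the two regions flanking an arc of $\Diagram$ ending at $c$. Since no two crossing-vertices are adjacent in the link, every $3$-clique is either (a) three region-vertices pairwise adjacent in $\Diagram^*$, or (b) one crossing-vertex together with two region-vertices that are adjacent to it and to each other. In case (b), I would use the alternating hypothesis on $\Diagram$ to show that the shared arc between the two regions must terminate at the given crossing, which produces the required join triangle. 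The main obstacle is case (a): one must show that in a prime alternating diagram no three regions of $\Diagram$ are pairwise adjacent in $\Diagram^*$, because no $2$-simplex of the link contains three region-vertices. This is the combinatorial heart of the proof; a pairwise adjacent triple would produce a short cycle in $\Diagram^*$ separating crossings onto both sides of a simple loop in $S^2$ transverse to $\Diagram$, and ruling this out requires combining primeness of $\Diagram$ (which controls short separating transverse loops) with the alternating structure (which constrains the local arc configurations at crossings), following the approach initiated by Aitchison and Thurston.
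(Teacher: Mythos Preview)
Your overall strategy matches the paper's: completeness via Proposition~\ref{prop:complete-metric}, non-positive curvature via Gromov's flag criterion, and a case split over vertex types. The paper's own proof is terse---it cites \cite{Sakuma-Yokota} for the flag check at $v_\pm$ and says the other vertices are easy---so your attempt to flesh out the link analysis is reasonable. However, the detailed flag verification you sketch misidentifies where the actual content lies.

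Your case~(a), three pairwise-adjacent region-vertices, is \emph{vacuous}, not the ``combinatorial heart''. Two regions adjacent in $\Diagram^*$ share an edge of $\Diagram$ and therefore receive opposite checkerboard colors; thus $\Diagram^*$ is bipartite and contains no $3$-cycles at all. No appeal to primeness or to the alternating hypothesis is needed here, and the argument you outline (a $3$-cycle in $\Diagram^*$ giving a loop transverse to $\Diagram$ in two points) does not work as stated, since such a cycle would give a loop meeting $\Diagram$ in three points, to which the primeness condition does not directly apply.

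What you are missing is the preliminary check that $\Lk(v_\epsilon,\Cubing)$ is \emph{simplicial}: one must rule out multi-edges in $\Diagram^*$ (two regions sharing two distinct edges of $\Diagram$) and verify that the four regions around each crossing are distinct. The first uses primeness---two shared edges yield a simple loop meeting $\Diagram$ transversely in two points with crossings on both sides---and the second uses only connectedness of $\Diagram$. Once simpliciality is in hand, your case~(b) closes immediately: the two region-vertices $R,R'$ adjacent to the crossing-vertex $c$ must have opposite colors (bipartiteness), hence are consecutive around $c$, hence share the edge of $\Diagram$ incident to $c$; by simpliciality this is the \emph{unique} $RR'$-edge in the link, so the triple spans one of the four join triangles at $c$. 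The alternating hypothesis plays no direct role in the flag check itself; it is encoded earlier, in the existence and form of the cubed decomposition described in Proposition~\ref{prop:cubing}.
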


\begin{proof}
By Proposition~\ref{prop:complete-metric},
$\Cubing$ and $\XCubing$ are complete.
From the description of $\Lk(v_{\pm},\Cubing)$
($=\Lk(v_{\pm},\XCubing)$)
given by Proposition~\ref{prop:cubing}(5),
we can check that they are flag complexes 
as in \cite[Proof of Proposition 3.3]{Sakuma-Yokota}.
For any other vertex $v$ of $\Cubing$ and $\XCubing$, 
we can easily see that 
the geometric link of $v$ in $\Cubing$ and $\XCubing$, respectively,
is a flag complex.
Hence, $\Cubing$ and $\XCubing$ are non-positively curved
by Gromov's flag criterion (Proposition~\ref{prop:Gromov-LC}). 
\end{proof}

%By attaching the cubed complex $\bigcup_{n=1}^{\infty} \partial\Cubing \times [n,n+1]$
%to $\Cubing$ along $\partial\Cubing$,
%we obtain a complete, non-positively curved, cubed complex $\XCubing$
%whose underlying space is the link complement $X$.
%(The completeness follows from 
%Proposition~\ref{prop:complete-metric}, and 
%the non-positively curvedness follows from
%Proposition~\ref{prop:Gromov-LC}(Gromov's flag criterion) and Proposition~\ref{prop:cubing}(5).) 
%As in Proposition~\ref{prop:cubing}(2),
%the open checkerboard surfaces  $S_b$ and $S_w$ in $X$
%are isotopic to hyperplanes in $\XCubing$,
%which we also denote by $\Sb$ and $\Sw$, respectively.
%Then $\Sb$ and $\Sw$ intersects 
%orthogonally along $\crossing:=\Sb\cap \Sw$,
%the disjoint union of geodesic lines representing 
%open crossing arcs.

Let $\tXCubing$ (resp.~$\tCubing$) be the cubed decomposition
of the universal covering space $\tilde X$ (resp.~$\tilde M$)
obtained by pulling back the cubed decompositions $\XCubing$ of $X$
(resp.~$\Cubing$ of $M$)
through the covering projection $\pu:\tilde X\to X$.
Then $\tXCubing$ and $\tCubing$ are complete CAT(0) cubed complexes
by Proposition~\ref{prop:nonpoitively-curved-cubing}
and the Cartan-Hadanard theorem (Proposition~\ref{prop:CH0}).

As in Proposition~\ref{prop:cubing}(2),
the open checkerboard surfaces  $S_b$ and $S_w$ in $X$
are isotopic to hyperplanes in $\XCubing$,
which we also denote by $\Sb$ and $\Sw$, respectively.
Then $\Sb$ and $\Sw$ intersects 
orthogonally along $\crossing:=\Sb\cap \Sw$,
the disjoint union of geodesic lines representing 
open crossing arcs.
Set $\tSb:=\pu^{-1}(\Sb)$ and $\tSw:=\pu^{-1}(\Sw)$.
Then every component $\Sigma$ of $\tSb$ (resp.~$\tSw$) is a hyperplane in $\tXCubing$,
and it is regarded as the universal covering of $\Sb$ (resp.~$\Sw$):
we call $\Sigma$ a {\it checkerboard hyperplane} in $\tXCubing$.
Of course, a checkerboard hyperplane is a checkerboard plane
defined in Section~\ref{sec:checkerboard-surface}.
%Note also that  
%$\tSb$ and $\tSw$ (and so all checkerboard hyperplanes) 
%are regarded as subcomplexes
%of the cubical subdivision $\tXCubing'$ of $\tXCubing$,
%the cubed complex
%obtained from $\tXCubing$ by subdividing each cube $I^3$ into $8$ cubes
%by cutting along the three midsquares.

\begin{proposition}
\label{prop:cubing2X}
Every checkerboard hyperplane
$\Sigma$ is convex in the CAT(0) space $\tXCubing$.
Moreover, $\Sigma$ divides $\tXCubing$ into two closed convex subspaces,
namely, there are convex subspaces 
$\Half$ and $\Half^{\mathfrak{c}}$ of $\tXCubing$
such that $\tXCubing=\Half\cup\Half^{\mathfrak{c}}$
and $\Sigma=\Half\cap\Half^{\mathfrak{c}}$.
\end{proposition}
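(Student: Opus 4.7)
The plan is to derive this proposition from the general theory of hyperplanes in CAT(0) cube complexes developed by Sageev and Haglund-Wise. By Proposition~\ref{prop:nonpoitively-curved-cubing} together with the Cartan-Hadamard theorem (Proposition~\ref{prop:CH0}), the cubed complex $\tXCubing$ is a complete CAT(0) cube complex. Moreover, each checkerboard hyperplane $\Sigma$ is a hyperplane in Sageev's combinatorial sense, being a connected union of midsquares closed under the parallelism relation (two midsquares are parallel when they occur as opposite midsquares of a common cube of $\tXCubing$).

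The first step is to establish that $\Sigma$ separates $\tXCubing$ into two closed subspaces $\Half$ and $\Half^{\mathfrak{c}}$ with $\tXCubing = \Half\cup\Half^{\mathfrak{c}}$ and $\Sigma = \Half\cap\Half^{\mathfrak{c}}$. Locally, the midsquare $\Sigma\cap C$ separates each cube $C$ that it meets into two half-cubes, so $\tXCubing \setminus \Sigma$ is locally disconnected by $\Sigma$. Globally, that there are exactly two components follows from the two-sidedness of hyperplanes in a simply connected cubed complex, combined with the connectedness of $\Sigma$; one defines $\Half$ and $\Half^{\mathfrak{c}}$ as the closures of these two components.

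The second step is to establish the convexity of $\Sigma$, $\Half$, and $\Half^{\mathfrak{c}}$. The key fact needed is that any CAT(0) geodesic in $\tXCubing$ crosses $\Sigma$ at most once. Granted this, if two points $x,y$ lie in the same closed halfspace, then the geodesic $[x,y]$ stays in that halfspace, since otherwise it would cross $\Sigma$ at least twice; similarly, any geodesic whose endpoints lie in $\Sigma$ is itself contained in $\Sigma$.

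The main obstacle is proving the key fact that a CAT(0) geodesic crosses each hyperplane at most once. This standard result is shown by a local perturbation argument relying on Gromov's flag criterion (Proposition~\ref{prop:Gromov-LC}): if a geodesic segment $[a,b]$ crossed $\Sigma$ at two distinct points $p_1, p_2$, then one could replace the subsegment $[p_1,p_2]$ by a shorter path pushed off $\Sigma$ on one side, contradicting that $[a,b]$ is a geodesic. The detailed verification is carried out in the literature on CAT(0) cube complexes, so for the proof I would appeal to those references and deduce the proposition directly.
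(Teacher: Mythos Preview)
Your approach is essentially the same as the paper's: both defer to the established theory of hyperplanes in CAT(0) cube complexes, and the paper's own proof is in fact even terser, consisting of a direct citation of Farley~\cite[Theorem~4.4]{Farley} (with \cite[Section~4.3]{SS} offered as an alternative). One small remark: your heuristic that a geodesic crossing $\Sigma$ at two points could be shortened by ``pushing the subsegment off $\Sigma$ on one side'' is not really the mechanism of the proof---pushing a geodesic segment with endpoints on $\Sigma$ sideways does not by itself make it shorter; the standard argument (as in Farley) instead shows that the cubical carrier of $\Sigma$ is convex and splits isometrically as $\Sigma\times[0,1]$, from which convexity of $\Sigma$ and of the halfspaces follows. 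Since you explicitly defer to the literature for the details, this does not affect the validity of your proposal, but it would be worth replacing that informal sketch with the correct outline or simply citing Farley as the paper does.
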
 

\begin{proof}
This follows from
Farley's result \cite[Theorem 4.4]{Farley},
motivated by Sageev's
combinatorial study of hyperplanes in \cite{Sageev}.
See \cite[Section 4.3]{SS} for another proof.
\end{proof}

We call each of the subspaces
$\Half$ and $\Half^{\mathfrak{c}}$ of $\tXCubing$ in the above proposition
a {\it checkerboard hyper-half-space} bounded by 
the checkerboard hyperplane $\Sigma$. 
Of course, every checkerboard hyper-half-space is a
checkerboard half-space defined in Section~\ref{sec:checkerboard-surface}.

By a {\it peripheral plane}, we mean a component of 
$\partial\tCubing \subset \tXCubing$.
Then we have the following proposition, 
which is easily proved   
by using \cite[Theorem 1.1]{SS}.

\begin{proposition}
\label{prop:cubing2XE}
Under the above setting, every peripheral plane $\Bplane\subset \partial\tCubing$
is convex in the CAT(0) space $\tXCubing$.
\end{proposition}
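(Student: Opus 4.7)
The plan is to invoke \cite[Theorem 1.1]{SS}, which I expect to provide a criterion for a subcomplex of a CAT(0) cubed complex to be convex, most likely in the form: a simply connected subcomplex which is locally convex at every vertex (i.e., whose geometric link at each vertex is a full subcomplex of the ambient geometric link) is convex. Under this expectation, the proof reduces to verifying two things for $\Bplane$ as a subcomplex of the CAT(0) cubed complex $\tXCubing$ (which is CAT(0) by Proposition~\ref{prop:nonpoitively-curved-cubing} and Proposition~\ref{prop:CH0}): simple connectivity of $\Bplane$, and local convexity at each of its vertices.

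First I would observe that $\Bplane$ is genuinely a subcomplex of $\tXCubing$, sitting as $\partial\tCubing\times\{1\}$ inside the cubed collar $\partial\tCubing\times[1,\infty)$ used to build $\XCubing$ from $\Cubing$. Simple connectivity of $\Bplane$ follows because it is a component of the preimage of a torus component $T\subset\partial M$ under the universal covering $\tilde X\to X$; since $\pi_1(T)\cong\ZZ^2$ injects into $\pi_1(X)$, we get $\Bplane\cong\RR^2$.

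Next I would verify local convexity at each vertex $v\in\Bplane$, i.e., that $\Lk(v,\Bplane)$ is a full subcomplex of $\Lk(v,\tXCubing)$. The key observation is Proposition~\ref{prop:cubing}(1): every cube of $\Cubing$ meets $\partial\Cubing$ in either its top face $I^2\times\{1\}$ or its bottom face $I^2\times\{0\}$. Consequently, each cube of $\tXCubing$ containing $v$ either lies in the attached collar (and is a product of a square in $\Bplane$ with a transverse interval) or lies in $\tCubing$ and meets $\Bplane$ in one of its boundary faces. In both cases, the cube decomposes as a product with one factor tangent to $\Bplane$, so the directions at $v$ tangent to $\Bplane$ span a full all-right spherical subcomplex of $\Lk(v,\tXCubing)$, as needed. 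At vertices $v\in\partial\tCubing\times\{n\}$ with $n\ge 2$, the argument is even simpler: a neighborhood of $v$ is a metric product with an open interval transverse to $\Bplane$, and fullness is immediate.

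The main obstacle, though modest, is the combinatorial bookkeeping at the vertices $v\in\partial\tCubing\times\{1\}$, which are truncation vertices of the octahedra in Figure~\ref{fig:Aitchison}; there one must carefully describe $\Lk(v,\tXCubing)$ as a join of links contributed by the collar cube and by the cubes of $\tCubing$ adjacent to $v$, and then identify $\Lk(v,\Bplane)$ inside it. Once local convexity is verified, the criterion \cite[Theorem 1.1]{SS} applies to the simply connected subcomplex $\Bplane$ and yields convexity of $\Bplane$ in $\tXCubing$, completing the proof.
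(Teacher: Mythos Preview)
Your approach is correct and matches the paper's: the paper's entire proof is the one-line remark that the proposition ``is easily proved by using \cite[Theorem 1.1]{SS}'', and you have spelled out the verification of local convexity (fullness of $\Lk(v,\Bplane)$ in $\Lk(v,\tXCubing)$, via Proposition~\ref{prop:cubing}(1)) that the paper leaves implicit. One small slip: vertices $v\in\partial\tCubing\times\{n\}$ with $n\ge 2$ do not lie in $\Bplane$ at all, so that sentence is superfluous---all vertices of $\Bplane$ sit at level $1$ and are handled by your preceding paragraph.
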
 

Propositions~\ref{prop:cubing2X} and~\ref{prop:cubing2XE} imply the following proposition.

\begin{proposition}
\label{pro:connected-intersection}
{\rm (1)} Let $\Sigma_1$ and $\Sigma_2$ be distinct checkerboard hyperplanes
in $\tXCubing$.
Then one of the following holds.
\begin{enumerate}
\item[{\rm (a)}] 
$\Sigma_1\cap\Sigma_2=\emptyset$.
\item[{\rm (b)}]
$\Sigma_1\cap\Sigma_2$ is a geodesic line. 
Moreover, $\Sigma_1$ and $\Sigma_2$ intersect orthogonally along $\Sigma_1\cap\Sigma_2$,
in the sense defined in Remark~\ref{remark:cubing}(1).
Furthermore, $\Sigma_1\cap\Sigma_2$ divides each of $\Sigma_1$ and $\Sigma_2$
into two convex subspaces.
\end{enumerate}

{\rm (2)} Let $\Sigma$ be a checkerboard hyperplane
and $\Bplane$ a peripheral plane in $\tXCubing$.
Then one of the following holds.
\begin{enumerate}
\item[{\rm (a)}] 
$\Sigma\cap\Bplane=\emptyset$.
\item[{\rm (b)}]
$\Sigma\cap\Bplane$ is a geodesic line. 
Moreover, $\Sigma$ and $\Bplane$ intersect orthogonally along $\Sigma\cap\Bplane$,
in the sense defined in Remark~\ref{remark:cubing}(1).
Furthermore, $\Sigma\cap\Bplane$ divides each of 
$\Sigma$ and $\Bplane$
into two convex subspaces.
\end{enumerate}
\end{proposition}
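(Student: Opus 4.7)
The plan is to prove both (1) and (2) by a single template, exploiting convexity of each factor together with the structure of the singular locus of Proposition~\ref{prop:cubing}. The key observation is that, by Proposition~\ref{prop:cubing}(2-b) and Remark~\ref{remark:cubing}(3), both $\Sb\cap\Sw=\crossing$ and $(\Sb\cup\Sw)\cap\partial\Cubing$ are disjoint unions of geodesics in the cubed complex $\XCubing$; pulling back through the covering $\pu$, their preimages in $\tXCubing$ are disjoint unions of geodesic lines. A key consequence is that any convex (hence connected) subset of $\tXCubing$ contained in such a preimage must lie in a single one of these lines.

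For (1), I would first dispose of the same-color case: if $\Sigma_1$ and $\Sigma_2$ are distinct components of $\tSb$ (or of $\tSw$), then since $S_b$ (resp.~$S_w$) is embedded in $X$, the components of its preimage under $\pu$ are pairwise disjoint, so $\Sigma_1\cap\Sigma_2=\emptyset$ and conclusion (a) holds. In the cross-color case $\Sigma_1\subset\tSb$, $\Sigma_2\subset\tSw$, suppose $x\in\Sigma_1\cap\Sigma_2$. Then $\pu(x)\in\crossing$, and the unique geodesic line $\gamma\subset\tSb\cap\tSw$ through $x$ is connected, so lies entirely in the unique components of $\tSb$ and $\tSw$ that contain $x$; thus $\gamma\subset\Sigma_1\cap\Sigma_2$. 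Conversely, by Proposition~\ref{prop:cubing2X} each $\Sigma_i$ is convex in the CAT(0) space $\tXCubing$, so $\Sigma_1\cap\Sigma_2$ is convex and in particular connected. Being contained in the disjoint union $\tSb\cap\tSw$ of geodesic lines, it must lie in a single such line, necessarily $\gamma$. This yields $\Sigma_1\cap\Sigma_2=\gamma$, a geodesic line.

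For (2), the same argument applies, replacing $\Sigma_2$ by $\Bplane$ (convex by Proposition~\ref{prop:cubing2XE}) and $\crossing$ by $(\Sb\cup\Sw)\cap\partial\Cubing$, a union of closed geodesic longitudes on the flat tori of $\partial M$ whose lifts in $\tXCubing$ are disjoint geodesic lines. Orthogonality in both cases is a local assertion at a point $y$ of the intersection line: in the situation of (1), one chooses a cube of $\Cubing$ containing $\pu(y)$, inside which $\Sb$ and $\Sw$ appear as the two vertical midsquares of Proposition~\ref{prop:cubing}(2-a) meeting orthogonally along the central axis, and lifts this local model through the local isometry $\pu$; in the situation of (2), one uses a cube in which $\Sb$ (or $\Sw$) is a vertical midsquare and $\partial\Cubing$ is a top or bottom face, which meet orthogonally. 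The convex splitting of $\Sigma_1$ (respectively, $\Sigma$) is then obtained by invoking Proposition~\ref{prop:cubing2X} (respectively,~\ref{prop:cubing2XE}) on the other factor: writing $\tXCubing=\Half\cup\Half^{\mathfrak{c}}$ with $\Sigma_2=\Half\cap\Half^{\mathfrak{c}}$, the sets $\Sigma_1\cap\Half$ and $\Sigma_1\cap\Half^{\mathfrak{c}}$ are convex, cover $\Sigma_1$, and meet precisely in $\Sigma_1\cap\Sigma_2$, as required. The main subtlety I anticipate is verifying that the cube-wise orthogonality from Proposition~\ref{prop:cubing} really gives the isometric local neighborhood demanded by Remark~\ref{remark:cubing}(1) in $\tXCubing$; this should reduce to the fact that $\pu$ is a local isometry of cubed complexes and that the orthogonality statement is purely local.
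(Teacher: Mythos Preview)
Your approach is essentially the same as the paper's, and in fact you spell out more than the paper does: the paper's proof simply says the non-orthogonality assertions follow from Propositions~\ref{prop:cubing2X} and~\ref{prop:cubing2XE} together with the fact that an intersection of convex sets is convex, leaving to the reader the point you make explicit (that a nonempty convex subset of a disjoint union of geodesic lines must equal one entire line, since any such line through a point of the intersection is connected and hence lies in the relevant components). The orthogonality and the same-color case are handled identically.

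One small imprecision in your last paragraph: for the splitting assertion in~(2) you write ``invoking Proposition~\ref{prop:cubing2X} (respectively,~\ref{prop:cubing2XE}) on the other factor'', but Proposition~\ref{prop:cubing2XE} only asserts that $\Bplane$ is convex; it does not furnish convex half-spaces bounded by $\Bplane$. Your argument as written therefore gives the convex splitting of $\Bplane$ (using the half-spaces of $\Sigma$ from Proposition~\ref{prop:cubing2X}) but not directly of $\Sigma$. The paper handles this by remarking that the splitting follows ``by a (much simpler) argument similar to the proof of Proposition~\ref{prop:cubing2X}'', i.e., one observes that $\Sigma\cap\Bplane$ is itself a hyperplane in the induced cubical structure on $\Sigma$ (or on $\Bplane$) and applies the same Farley/Sageev-type reasoning there. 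Alternatively, for $\Sigma$ one can use that the closures of the two components of $\tXCubing\setminus\Bplane$ are convex (one is a Euclidean half-space $\Bplane\times[0,\infty)$, the other is handled as in Proposition~\ref{prop:cubing2X}). Either patch is routine, so this is a wording fix rather than a genuine gap.
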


\begin{proof}
The assertions except for the orthogonalities are consequences of
Propositions~\ref{prop:cubing2X},~\ref{prop:cubing2XE}
and the fact that the intersection of two convex sets is again convex.
The orthogonality of $\Sigma_1$ and $\Sigma_2$ in (1-b)
follows from the fact that 
$\Sigma_1$ and $\Sigma_2$ are hyperplanes,
and so their relative positions are as explained in
Proposition~\ref{prop:cubing}(2) and 
illustrated in
Figure~\ref{fig:Aitchison2}(c).
The orthogonality of $\Sigma$ and $H$ in (2-b)
follows similarly from 
Proposition~\ref{prop:cubing}(1,2) and 
Figure~\ref{fig:Aitchison2}(c).
The additional assertions in (1-b) and (2-b)
are proved by a (much simpler) argument 
similar to the proof of Proposition~\ref{prop:cubing2X}.
\end{proof}

The following technical corollary is used in Section~\ref{sec:proof-maintheorem}.

\begin{corollary}
\label{cor:no-branching}
Let $\Sigma_1$ and $\Sigma_2$ be distinct checkerboard hyperplanes
such that $\ell:=\Sigma_1\cap\Sigma_2$ is a geodesic line.
Then the following hold.
\begin{enumerate}
\item[{\rm (1)}]
If $\ell'$ is a geodesic in $\tXCubing$ such that 
$\ell\cap \ell'\ne \emptyset$,
then either $\ell'\subset \ell$ or $\ell\cap\ell'$ is a singleton.
\item[{\rm (2)}]
If $\ell'$ is a geodesic in $\Sigma_i$ ($i=1$ or $2$)
such that 
$\ell\cap \ell'$ is a singleton $\{y\}$ in $\interior \ell'$,
then $y$ is a transversal intersection point of $\ell$ and $\ell'$, and
the two components of $\ell'\setminus\{y\}$ are 
contained in distinct components of $\Sigma_i\setminus\ell$.

\item[{\rm (3)}]
Let $\Bplane$ be a peripheral hyperplane in $\tXCubing$,
such that $\ell\cap \Bplane\ne\emptyset$.
Then $\ell\cap \Bplane$ consists of a single point, $w$,
and $\pi_{\Bplane}^{-1}(w)=\ell$,
where $\pi_{\Bplane}:\tXCubing \to \Bplane$ is the projection.
\end{enumerate}
\end{corollary}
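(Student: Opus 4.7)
The unifying idea is that every point $y\in\ell$ admits a \emph{Euclidean neighborhood} in $\tXCubing$, in which $\Sigma_1$, $\Sigma_2$ appear as orthogonal Euclidean $2$-planes and $\ell$ as their intersection line; this will be the main technical input and the main obstacle. To see it, note that by Proposition~\ref{prop:cubing}(2), $\ell$ is a concatenation of midsegments of cubes (intersections of two orthogonal midsquares), so $\ell$ passes only through cube interiors and through centers of $2$-faces, avoiding the $1$-skeleton. Since $\tXCubing$ is a $3$-manifold, every $2$-face is shared by exactly two cubes, which supplies an $\RR^3$ neighborhood at each face center; Remark~\ref{remark:cubing}(1) together with Proposition~\ref{pro:connected-intersection}(1b) then provides the orthogonal local model along all of $\ell$. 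Once this is in hand, each of the three statements reduces to planar or spatial Euclidean geometry combined with CAT(0) uniqueness of geodesics and Lemma~\ref{lem:GR2b}.

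For (1), if $\ell\cap\ell'$ contains distinct points $a,b$, CAT(0) uniqueness forces $\ell$ and $\ell'$ to coincide on $[a,b]$; hence the set $S:=\{t\in\RR:\ell'(t)\in\ell\}$ is closed and, by the same uniqueness, a convex interval. If $t_0:=\sup S<\infty$, then in a Euclidean neighborhood of $\ell'(t_0)$ both $\ell$ and $\ell'$ are straight lines agreeing on a segment terminating at $\ell'(t_0)$, so $\ell'$ must continue along $\ell$ past $t_0$, contradicting the supremum. Hence $S=\RR$, i.e., $\ell'\subset\ell$. For (2), in a Euclidean neighborhood $U$ of $y$, $\Sigma_i$ is a $2$-plane while $\ell\cap U$ and $\ell'\cap U$ are distinct straight lines (distinct because $\ell\cap\ell'=\{y\}$) meeting only at $y$, so they cross transversally there. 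The two local halves of $\ell'\setminus\{y\}$ therefore lie on opposite sides of $\ell$ in $\Sigma_i$; since $\Sigma_i\setminus\ell$ has exactly two components (Proposition~\ref{pro:connected-intersection}(1b)) and each half of $\ell'\setminus\{y\}$ is connected, the two halves lie globally in distinct components.

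For (3), at $w\in\ell\cap\Bplane$ the peripheral plane $\Bplane$ is locally a $2$-face of a cube while $\Sigma_1,\Sigma_2$ are two orthogonal midsquares of that cube, so in the local Euclidean model the three surfaces are pairwise orthogonal and $\ell$ is normal to $\Bplane$ at $w$. If $w'\in\ell\cap\Bplane\setminus\{w\}$, convexity of $\ell$ and of $\Bplane$ (Proposition~\ref{prop:cubing2XE}) places $[w,w']$ in $\ell\cap\Bplane$; its tangent direction at $w$ would then lie simultaneously in $\ell$ and in $\Bplane$, contradicting orthogonality, so $\ell\cap\Bplane=\{w\}$. For the identification $\pi_{\Bplane}^{-1}(w)=\ell$, the local Euclidean picture together with Remark~\ref{rem:localness-angle} gives $\angle_w(z,\Bplane)=\pi/2$ for every $z\in\ell$, whence Lemma~\ref{lem:GR2b} yields $\pi_{\Bplane}(z)=w$; conversely, if $\pi_{\Bplane}(z)=w$ then Lemma~\ref{lem:GR2b} forces $\angle_w(z,\Bplane)\ge\pi/2$, and in the Euclidean model the only directions at $w$ satisfying this inequality are those along $\ell$, so the geodesic $[w,z]$ starts along $\ell$, after which the $\sup$-of-a-closed-interval argument from (1) shows $[w,z]\subset\ell$ and hence $z\in\ell$.
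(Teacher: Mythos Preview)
Your proof is correct and follows essentially the same approach as the paper: establish that every point of $\ell$ has a Euclidean neighborhood in $\tXCubing$ (via Proposition~\ref{pro:connected-intersection}(1b) and Remark~\ref{remark:cubing}(1)), then reduce each assertion to the no-branching property of Euclidean geodesics together with Lemma~\ref{lem:GR2b}. Your treatment is somewhat more explicit than the paper's---in particular you spell out the $\sup$-argument for (1) and, in (3), you derive the single-point intersection $\ell\cap\Bplane=\{w\}$ from convexity and orthogonality rather than citing Proposition~\ref{prop:cubing}(1,2) directly---but these are minor variations on the same line of reasoning.
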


\begin{proof}
(1) 
Since $\ell\cap\ell'$ is a convex subset of $\ell$,
$\ell\cap\ell'$ is either a singleton or a non-degenerate geodesic
(a geodesic strictly bigger than a singleton).
If $\ell\cap\ell'$ is a non-degenerate geodesic,
then $\ell'$ must be contained in the geodesic line $\ell$,
because every point in $\ell$ has a Euclidean neighborhood 
in $\tXCubing$
by Proposition~\ref{pro:connected-intersection}(1-b)
and Remark~\ref{remark:cubing}(1),
and because, in the Euclidean space, every geodesic has no branching
(see Figure~\ref{fig:branching}(a)).

(2) This follows from 
the fact that the point $y\in\ell\subset \Sigma_i$ has a Euclidean neighborhood in $\Sigma_i$
(by Proposition~\ref{pro:connected-intersection}(1-b)
and Remark~\ref{remark:cubing}(1))
and the fact that 
every geodesic has no branching in the Euclidean plane
(see Figure~\ref{fig:branching}(b)).

(3) It follows from Proposition~\ref{prop:cubing}(1,2)
that $\ell$ intersects $\Bplane$ orthogonally at a single point, $w$,
and that $\ell\subset \pi_{\Bplane}^{-1}(w)$.
To see the converse inclusion, pick a point $z$ $(\ne w)$ of $\pi_{\Bplane}^{-1}(w)$.
Then the geodesic segment of $[z,w]$ is orthogonal to $\Bplane$ at $w$
(cf.~Lemma~\ref{lem:GR2b}).
Since $w$ has a Euclidean neighborhood in $\tXCubing$ 
by Proposition~\ref{pro:connected-intersection}(2-b), 
this implies that
a small neighborhood of $w$ in $[z,w]$ is contained in $\ell$.
Hence $[z,w]\subset \ell$ by the assertion (1).
Thus $z\in \ell$ and so $\pi_{\Bplane}^{-1}(w)=\ell$.
\end{proof}

\begin{figure}
  \centering
  \begin{subfigure}{0.25\columnwidth}
    \begin{overpic}[width=\columnwidth]{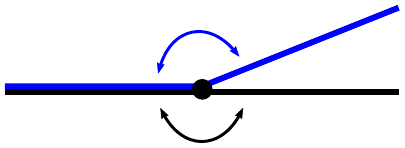}
      \put(46.5,32){\textcolor{blue}{$\pi$}}
      \put(100,33){\textcolor{blue}{$\ell^{\prime}$}}
      \put(46.5,-6){$\pi$}
      \put(100,10){$\ell$}
    \end{overpic}
    \vspace{-2mm}
    \subcaption{}
    \label{fig:branching_a}
  \end{subfigure}
  \hspace{10mm}
  \begin{subfigure}{0.25\columnwidth}
    \begin{overpic}[width=\columnwidth]{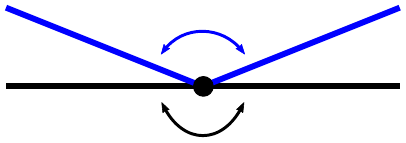}
      \put(46.5,31){\textcolor{blue}{$\pi$}}
      \put(100,33){\textcolor{blue}{$\ell^{\prime}$}}
      \put(46.5,-6){$\pi$}
      \put(100,10){$\ell$}
    \end{overpic}
    \vspace{-2mm}
    \subcaption{}
    \label{fig:branching_b}
  \end{subfigure}
  \caption{Branching of geodesics. 
  Though branching of geodesic can occur in CAT$(0)$ spaces,
  it never occurs in Euclidean spaces.
  }
  \label{fig:branching}
\end{figure}

The following lemma
is used in Sections~\ref{sec:Butterflies-checkerboard} and~\ref{sec:proof-maintheorem}.

\begin{lemma}
\label{lemma:three-components}
Let $\Sigma_1$ and $\Sigma_2$ be disjoint checkerboard hyperplanes in $\tXCubing$.
Then $\Sigma_1\cup\Sigma_2$ divides $\tXCubing$ into three convex subspaces.
To be precise,
there are three closed convex subspaces 
$\Half_1$, $\Half_{1,2}$ and $\Half_2$,
such that 
\[
\tXCubing = \Half_1\cup \Half_{1,2}\cup \Half_2, \quad
\Half_1\cap \Half_{1,2}=\Sigma_1,\quad
\Half_{1,2}\cap \Half_2=\Sigma_2, \quad 
\Half_1\cap \Half_2=\emptyset.
\]
\end{lemma}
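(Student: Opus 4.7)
The plan is to bootstrap from Proposition~\ref{prop:cubing2X} applied separately to $\Sigma_1$ and $\Sigma_2$, and then to organize the resulting four ``quadrants'' using a short connectedness argument.

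First, I would apply Proposition~\ref{prop:cubing2X} to obtain closed convex hyper-half-spaces $A,A'$ bounded by $\Sigma_1$, with $A\cup A'=\tXCubing$ and $A\cap A'=\Sigma_1$, and similarly $B,B'$ bounded by $\Sigma_2$. Since $\Sigma_1,\Sigma_2$ are closed and convex, each of the intersections $A\cap B$, $A\cap B'$, $A'\cap B$, $A'\cap B'$ is closed and convex. Because $\Sigma_1\cap\Sigma_2=\emptyset$ and $\Sigma_2$ is connected, $\Sigma_2$ lies entirely in one of the two disjoint open sets $A\setminus\Sigma_1$ and $A'\setminus\Sigma_1$; after possibly swapping the names $A\leftrightarrow A'$ I may assume $\Sigma_2\subset A\setminus\Sigma_1$, in particular $\Sigma_2\subset A$. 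Symmetrically, I may assume $\Sigma_1\subset B$.

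Next I would set $\Half_1:=A'$, $\Half_{1,2}:=A\cap B$, $\Half_2:=B'$, all closed and convex. The identities
\[
\Half_1\cap\Half_{1,2}=A'\cap A\cap B=\Sigma_1\cap B=\Sigma_1,\qquad \Half_{1,2}\cap\Half_2=A\cap B\cap B'=A\cap\Sigma_2=\Sigma_2
\]
are immediate from $\Sigma_1\subset B$ and $\Sigma_2\subset A$. The covering $\tXCubing=\Half_1\cup\Half_{1,2}\cup\Half_2$ is also straightforward: a point not in $\Half_1=A'$ lies in $A$, and then lies in $B$ or in $B'=\Half_2$.

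The only nontrivial step is to show $\Half_1\cap\Half_2=A'\cap B'=\emptyset$, and this is where I expect a tiny amount of work rather than a real obstacle. Suppose for contradiction there exists $q\in A'\cap B'$. Pick any $p\in\Sigma_1$; then $p\in A'\cap B$ since $\Sigma_1\subset A'$ and $\Sigma_1\subset B$. The CAT(0) geodesic segment $[p,q]$ lies in $A'$ by convexity. On the other hand, $[p,q]=([p,q]\cap B)\cup([p,q]\cap B')$ is a union of two closed subsets of the connected segment $[p,q]$, each nonempty (containing $p$ and $q$ respectively), so by connectedness they meet at some point $r\in B\cap B'=\Sigma_2$. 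Then $r\in[p,q]\subset A'$ and $r\in\Sigma_2\subset A$, giving $r\in A\cap A'=\Sigma_1$, which contradicts $\Sigma_1\cap\Sigma_2=\emptyset$. This completes the plan; the essential input is just Proposition~\ref{prop:cubing2X} plus the convexity and connectedness observations above.
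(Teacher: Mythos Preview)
Your proof is correct and follows essentially the same approach as the paper: apply Proposition~\ref{prop:cubing2X} to each hyperplane, arrange the labels so that each $\Sigma_i$ lies in the appropriate half-space, and set $\Half_{1,2}$ to be the intersection of the two ``inner'' half-spaces. The only minor difference is in the verification that $\Half_1\cap\Half_2=\emptyset$: the paper observes that this set is closed, open (being disjoint from $\partial\Half_1\cup\partial\Half_2$), and proper in the connected space $\tXCubing$, whereas you run a geodesic from $\Sigma_1$ to a hypothetical point of $A'\cap B'$ and use convexity; both are short connectedness arguments.
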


\begin{proof}
By Proposition~\ref{prop:cubing2X},
there are closed convex subspaces 
$\Half_i$ and $\Half_i^{\mathfrak{c}}$ such that
$\tXCubing=\Half_i\cup\Half^{\mathfrak{c}}_i$
and $\Sigma_i=\Half_i\cap\Half_i^{\mathfrak{c}}$
($i=1,2$).
Since $\Sigma_1\subset \tXCubing \setminus \Sigma_2
=\interior \Half_2 \sqcup \interior \Half_2^{\mathfrak{c}}$,
we may assume 
$\Sigma_1\subset \interior \Half_2^{\mathfrak{c}}$ and $\Sigma_1\cap \Half_2=\emptyset$.
Similarly, we may assume
$\Sigma_2\subset \interior \Half_1^{\mathfrak{c}}$ and $\Sigma_2\cap \Half_1=\emptyset$. 
Then $\Half_1\cap\Half_2$ is disjoint from
$\Sigma_1\cup\Sigma_2=\partial\Half_1\cup \partial\Half_2$,
and therefore
$\Half_1\cap\Half_2=\interior \Half_1\cap \interior\Half_2$.
Hence $\Half_1\cap\Half_2$ is a closed, open, proper subset of 
$\tXCubing$.
Since $\tXCubing$ is connected, this implies 
$\Half_1\cap\Half_2=\emptyset$.
Thus, by setting $\Half_{1,2}:=\Half_1^{\mathfrak{c}}\cap\Half_2^{\mathfrak{c}}$,
we obtain the desired result.
\end{proof}

\section{Decompositions of alternating link complements
into checkerboard ideal polyhedra}
\label{sec:chekerboard-decomposition}

We recall the (topological) ideal polyhedral decomposition
of the complement $X$ of a prime alternating link $L$
associated with its prime alternating diagram $\Diagram$, 
due to Thurston~\cite{Thurston}, Menasco~\cite{Menasco},
Takahashi \cite{Takahashi} and others,
following the description by
Aitchison-Rubinstein~\cite{Aitchson-Rubinstein_90}
(see also~\cite[Theorem 11.6]{Purcell}).

Regard the prime alternating diagram $\Diagram$
as a $4$-valent graph on the boundary of the $3$-ball $B^3$.
Then $(B^3,\Diagram)$ is regarded as a (topological) polyhedron
(cf.~\cite[Definition 1.1]{Purcell}).
By removing the vertices from  $(B^3,\Diagram)$,
we obtain a (topological) ideal polyhedron,
which we denote by $\PPoly(\Diagram)$. 
Each region $R$ of $\Diagram$ determines the (ideal) face 
$\check R:=R\setminus\{\mbox{vertices}\}$ of $\PPoly(\Diagram)$, 
and each edge
$e$ of $\Diagram$ determines the (ideal) edge $\check e:=\interior e$
of $\PPoly(\Diagram)$.
Prepare two disjoint copies $\PPoly_+(\Diagram)$ and $\PPoly_-(\Diagram)$
of $\PPoly(\Diagram)$,
and glue them together by the following \lq\lq gear rule'':
For each region $R$ of $\Diagram$,
the face $\check R$ of $\PPoly_+(\Diagram)$ is identified with
the face $\check R$ of $\PPoly_-(\Diagram)$
through rotation by one edge 
in the clockwise or anti-clockwise direction
according to whether $R$ is black or white
(see Figure~\ref{fig:CB1}).
(Here, 
we employ the convention that
the twisted bands in the black (resp.~white) surface are 
left-handed (resp.~right-handed)
as in Figure~\ref{fig:Aitchison2}(a).)

\begin{proposition}
\label{prop:polyhedral-decomposition}
Under the above setting, the resulting space is
naturally homeomorphic to the complement $X$ of $L$.
Moreover, the following hold.
\begin{enumerate}
\item[\rm(1)]
The image in $X$ of an edge of $\PPoly_{\pm}(\Diagram)$
is an open crossing arc.
Moreover, the inverse image of each crossing arc in each of 
$\PPoly_{\pm}(\Diagram)$ consists of two edges.
\item[\rm(2)]
If $R$ is a black region of $\Diagram$, then 
the image in $X$ of the face $\check R$ of $\PPoly_{\pm}(\Diagram)$
is equal to the closure of the component
of $S_b\setminus(S_b\cap S_w)$
corresponding to $R$.
Parallel assertions also hold when $R$ is a white region.
\item[\rm(3)]
For each $\epsilon\in\{+,-\}$,
the image of $\PPoly_{\epsilon}(\Diagram)$ in $X$
is equal to the closure of the component of
$X\setminus(S_b\cup S_w)$ containing the point $v_{\epsilon}$.
\end{enumerate}
\end{proposition}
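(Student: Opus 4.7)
The plan is to realize the decomposition intrinsically by cutting $X$ along the union $S_b\cup S_w$ of the checkerboard surfaces, identifying each of the two resulting pieces with a copy of $\PPoly(\Diagram)$, and then analysing the required identification on the common boundary to derive the gear rule. I would first show that $X\setminus(S_b\cup S_w)$ has exactly two connected components $X_+$ and $X_-$. Away from the crossing arcs $\crossing=S_b\cap S_w$, the subspace $S_b\cup S_w$ is a properly embedded $2$-manifold that locally separates $X$ into two sides; near $\crossing$, Proposition~\ref{prop:cubing}(2) and Figure~\ref{fig:Aitchison2}(c) give the local model of two transverse midsquares in a cube, and the local four-sector picture glues globally into two sides thanks to the bipartite two-colouring of $\Cubing\setminus(\Sb\cup\Sw)$ by the two inner vertices $v_+$ and $v_-$.

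Next, I would show that for each $\epsilon\in\{+,-\}$ the closure of $X_\epsilon$ in $X$ is homeomorphic to $\PPoly(\Diagram)$ via a homeomorphism sending the ideal face indexed by a region $R$ of $\Diagram$ to the closure of the component of $(S_b\cup S_w)\setminus\crossing$ corresponding to $R$, sending the ideal edge indexed by an edge of $\Diagram$ to the open crossing arc at the corresponding crossing, and sending the ideal vertices to the ends on the cusps of $L$. Using the cubed description of Proposition~\ref{prop:cubing}, the component $X_\epsilon$ deformation retracts onto a cone neighbourhood of $v_\epsilon$ in $\XCubing$, whose link at $v_\epsilon$ is the triangulated $2$-sphere described in Proposition~\ref{prop:cubing}(5); the combinatorial structure of this link is a refinement of the dual graph $\Diagram^*$, and the cone on it has exactly the combinatorial type of $\PPoly(\Diagram)$ once $v_\epsilon$ and the ideal points on the cusps are properly distinguished.

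The third step is to compute how the two copies $\PPoly_+(\Diagram)$ and $\PPoly_-(\Diagram)$ glue along the common face $\check R$ associated to a region $R$ of $\Diagram$. Pick a crossing $c$ on $\partial R$: locally the component of $(S_b\cup S_w)\setminus\crossing$ containing $R$ meets the crossing arc at $c$ along a twisted band as in Figure~\ref{fig:Aitchison2}(a,b), and the endpoint of the crossing arc that lies on the boundary of $\check R\subset\PPoly_+(\Diagram)$ is opposite, relative to the crossing arc, to the endpoint bounding $\check R\subset\PPoly_-(\Diagram)$. Tracking this endpoint-swap around the whole boundary of $\check R$ translates into a one-edge cyclic shift of $\check R\subset\PPoly_-(\Diagram)$ relative to $\check R\subset\PPoly_+(\Diagram)$; the direction of the shift is fixed by the handedness convention on twisted bands recalled before the proposition---left-handed on black regions and right-handed on white---producing exactly the gear rule. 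Conclusions (1)--(3) then follow immediately from the identification of the previous step.

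The main obstacle is the precise verification of the gear rule in the third step. One has to verify both that the relative rotation between $\check R\subset\PPoly_+(\Diagram)$ and $\check R\subset\PPoly_-(\Diagram)$ is by exactly one edge, and not some other combinatorial identification, and that its direction matches the stated black/white convention. This reduces to a careful local analysis at a single crossing with the handedness convention of Figure~\ref{fig:Aitchison2}(a) fixed, after which the global gear rule is obtained by assembling the local data around $\partial R$. The remaining steps mostly unwind data already established in Sections~\ref{sec:checkerboard-surface} and~\ref{sec:NPC}.
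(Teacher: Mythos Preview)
The paper does not actually prove this proposition: it is stated as a known result, attributed to Thurston, Menasco, Takahashi, and Aitchison--Rubinstein, with a pointer to \cite[Theorem~11.6]{Purcell} for a detailed account. Your outline is essentially the standard argument found in those references, with the pleasant twist that you organise the identification of the two pieces through the cubed decomposition of Section~\ref{sec:NPC} and the link description in Proposition~\ref{prop:cubing}(5); this anticipates what the paper does immediately afterwards in passing to Proposition~\ref{prop:checkerboard-polyhedron}.

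One genuine imprecision to fix: in your second step you assert that the closure of $X_\epsilon$ in $X$ is homeomorphic to $\PPoly(\Diagram)$. This is not correct. As the paper notes just after the proposition, the natural map $\PPoly_\epsilon(\Diagram)\to X$ is \emph{not} injective on the $1$-skeleton: at each crossing, two opposite ideal edges of $\PPoly(\Diagram)$ are sent to the same open crossing arc---this is precisely the content of statement~(1). Hence the closure of $X_\epsilon$ in $X$ (what the paper later calls $\Poly_\epsilon$) is a quotient of $\PPoly(\Diagram)$ with those edge-pairs identified, not $\PPoly(\Diagram)$ itself. What you want is the path-metric completion of $X_\epsilon$, or equivalently the piece obtained by cutting $X$ open along $S_b\cup S_w$; that object is $\PPoly(\Diagram)$, and the natural map to $X$ then exhibits the two-to-one behaviour on edges required for~(1). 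With this correction the rest of your plan goes through, and your local analysis of the twisted band at a crossing is the right mechanism for producing the one-edge shift in the gear rule.
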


\begin{figure}
  \centering
  \begin{overpic}[width=0.7\columnwidth]{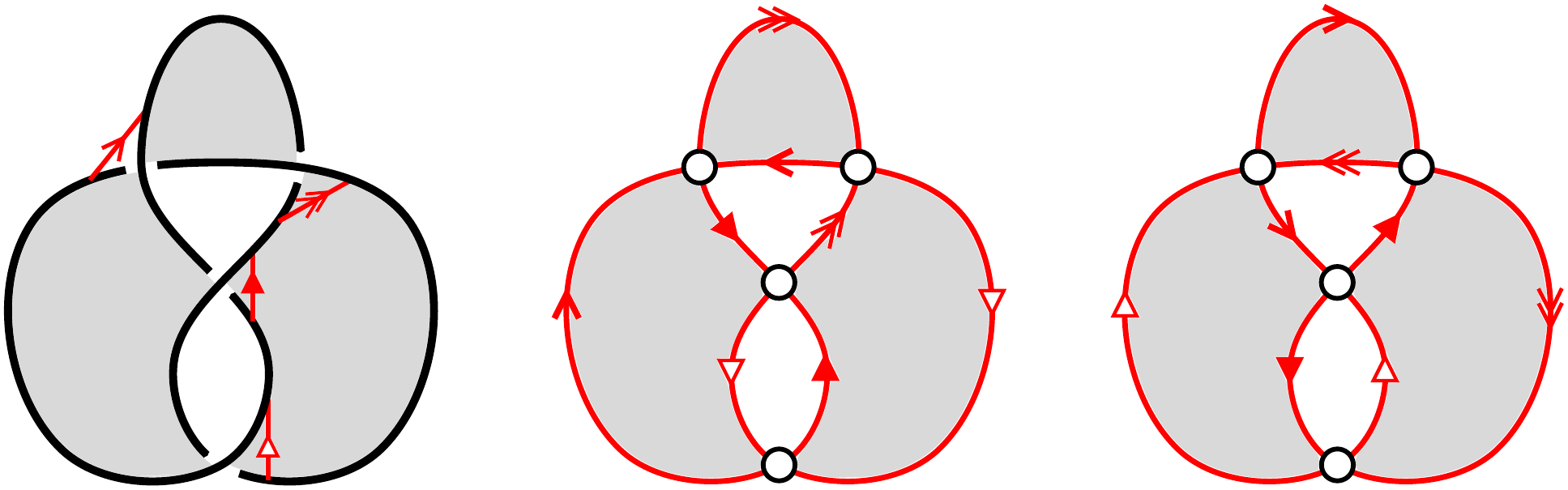}
    \put(33,-15){$D_L$}
    \put(128,-15){$\PPoly_+(\Diagram)$}
    \put(228,-15){$\PPoly_-(\Diagram)$}
  \end{overpic}
  \vspace{4mm}
  \caption{The decomposition of the figure-eight knot complement
  into a pair of checkerboard ideal polyhedra:
  The shaded regions are black regions and the unshaded regions are white regions.}
  \label{fig:CB1}
\end{figure}

Though the natural maps from $\PPoly_{\pm}(\Diagram)$ to $X$
are not injective on the $1$-skeletons,
their lifts to the universal cover $\tilde X$ are
homeomorphisms onto their images, 
each of which is equal to the closure
of a component of $\tilde X\setminus \pu^{-1}(S_b\cup S_w)$.
Thus we obtain a tessellation of $\tilde X$ by copies of
$\PPoly_{+}(\Diagram)$
and $\PPoly_{-}(\Diagram)$,
where the wall is $\pu^{-1}(S_b\cup S_w)$,
the union of all checkerboard planes.

\medskip

By working in the non-positively curved cubed decomposition
$\XCubing$ of $X$
and the CAT(0) cubed decomposition $\tXCubing$ of $\tilde X$,
we can refine the above topological picture
into the geometric picture explained below.

Recall that the open checkerboard surfaces $S_b$ and $S_w$ in $X$
are isotopic to the hyperplanes $\Sb$ and $\Sw$
in the non-positively curved cubed complex $\XCubing$. 
They intersect orthogonally along $\crossing=\Sb\cap \Sw$,
the disjoint union of geodesic lines representing open crossing arcs. 
The union $\Sbw:=\Sb\cup\Sw$ cuts $\XCubing$ into two connected components.
We denote 
by $\Poly_+$ and $\Poly_-$, the closures
of the components of $\XCubing\setminus \Sbw$ containing the 
vertices $v_+$ and $v_-$, respectively.
Then $\Poly_{\pm}$ is naturally homeomorphic to
the image of $\PPoly_{\pm}(\Diagram)$ in $X$. 

In the universal cover $\tXCubing$,
both $\tSb=\pu^{-1}(\Sb)$ and $\tSw=\pu^{-1}(\Sw)$ are disjoint unions of checkerboard hyperplanes,
and they intersect orthogonally along $\tilde\crossing:=\tSb\cap\tSw$.
The union $\tSbw=\tSb\cup\tSw$ 
of all checkerboard hyperplanes divides $\tXCubing$
into infinitely many \lq\lq right-angled, cubed, ideal polyhedra'',
and we obtain the following proposition.

\begin{proposition}
\label{prop:checkerboard-polyhedron}
Let $\tPoly_{\epsilon}$ be the closure of a component of 
$\tXCubing\setminus \tSbw$ 
which projects to $\Poly_{\epsilon}\subset \XCubing$
($\epsilon\in\{+,-\}$).
Then $\tPoly_{\epsilon}$ admits a natural structure
of a (topological) ideal polyhedron
with respect to which
there is an isomorphism $\varphi_{\epsilon}:\PPoly(\Diagram)\to \tPoly_{\epsilon}$
satisfying the following conditions.
\begin{enumerate}
\item[\rm(1)]
For each region $R$ of $\Diagram$,
there is a checkerboard hyperplane, 
$\Sigma_R=\Sigma_R(\tPoly_{\epsilon})$,
satisfying the following conditions.
\begin{enumerate}[\rm (a)]
\item 
$\varphi_{\epsilon}(\check R)=\partial \tPoly_{\epsilon}\cap \Sigma_R$.
\item
If $R$ is a black region, 
then $\pu(\Sigma_R)=\Sb$,
and the restriction of the universal covering projection 
$\pu|_{\Sigma_R}:\Sigma_R \to\Sb$ to the face 
$\varphi_{\epsilon}(\check R)$ is a homeomorphism
onto the closure of the component of $\Sb\setminus\crossing$
corresponding to $R$.
Parallel assertions also hold when $R$ is a white region.
\end{enumerate}
\item[\rm(2)]
For each region $R$ of $\Diagram$,
let $\Half^{\mathfrak{c}}_{R}=\Half^{\mathfrak{c}}_{R}(\tPoly_{\epsilon})$ be the checkerboard hyper-half-space bounded by
$\Sigma_R$ that contains $\tPoly_{\epsilon}$.
Then $\tPoly_{\epsilon}=\bigcap_R \Half^{\mathfrak{c}}_{R}$,
where $R$ runs over the regions of $\Diagram$.
In particular, $\tPoly_{\epsilon}$ is convex in the CAT(0) space 
$\tXCubing$.
\item[\rm(3)]
Let $e$ be an edge of $\Diagram$ and let $R_1$ and $R_2$ be 
the regions of $\Diagram$
sharing $e$.
Then the two faces $\varphi_{\epsilon}(\check R_1)$ and
$\varphi_{\epsilon}(\check R_2)$ intersect orthogonally along
the edge $\varphi_{\epsilon}(\check e)$.
The edge $\varphi_{\epsilon}(\check e)$ projects 
to a geodesic line in $\XCubing$ representing an open crossing arc.

\end{enumerate}

Moreover, $\varphi_+$ and $\varphi_-$ are related as explained below.
Note that,
for each region $R$ of $\Diagram$, 
$(\pu\circ\varphi_{\pm})|_{\check R}$ are homeomorphisms 
with the same image,
and so the composition 
$(\pu\circ\varphi_{-})|_{\check R} \circ 
{(\pu\circ\varphi_{+})|_{\check R}}^{-1}$
is a well-defined automorphism of 
the ideal polygon $\check R$.
This automorphism is 
a rotation by one edge 
in the clockwise or anti-clockwise direction
according to whether $R$ is black or white.
\end{proposition}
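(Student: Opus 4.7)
My plan is to deduce the proposition from the topological decomposition of Proposition~\ref{prop:polyhedral-decomposition} by exploiting the convexity results in Section~\ref{sec:NPC}. Since $\PPoly(\Diagram)$ is simply connected, the map $\PPoly(\Diagram)=\PPoly_\epsilon(\Diagram)\to \Poly_\epsilon \hookrightarrow X$ lifts to $\tilde X$; choosing the lift whose image is $\tPoly_\epsilon$ defines $\varphi_\epsilon$. It is a homeomorphism because the pairs of edges identified by the downstairs map lift to distinct edges of adjacent tiles of the universal tessellation (via Proposition~\ref{prop:polyhedral-decomposition}(1)). For each region $R$, the image $\varphi_\epsilon(\check R)$ is a connected $2$-dimensional subset of $\partial\tPoly_\epsilon\subset \tSbw$ disjoint from $\tilde\crossing$, hence lies in a unique checkerboard hyperplane which I denote $\Sigma_R$. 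Item (1-a) is then immediate, since $\varphi_\epsilon(\check R)$ is already a full face of the polyhedron and any further component of $\partial\tPoly_\epsilon\cap\Sigma_R$ would force an extra identification not present in $\PPoly(\Diagram)$. Item (1-b) is the combination of Proposition~\ref{prop:polyhedral-decomposition}(2) with the fact that $\pu|_{\Sigma_R}\colon \Sigma_R\to \Sb$ (or $\Sw$) is a covering projection.

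For statement (2), the inclusion $\tPoly_\epsilon\subset \bigcap_R \Half^{\mathfrak{c}}_R$ follows from Proposition~\ref{prop:cubing2X}: each $\Sigma_R$ separates $\tXCubing$ into two closed convex half-spaces, and since the connected set $\tPoly_\epsilon$ touches $\Sigma_R$ only along its face $\varphi_\epsilon(\check R)$, all of $\tPoly_\epsilon$ lies on a single side, which I label $\Half^{\mathfrak{c}}_R$. For the reverse inclusion I would fix $x\notin \tPoly_\epsilon$ and $y\in \interior \tPoly_\epsilon$ and pick a path $\gamma$ from $x$ to $y$ placed in general position with respect to the arrangement $\tSbw$, so that $\gamma$ meets $\tSbw$ transversely only in relative interiors of $2$-dimensional faces of the tessellation, avoiding $\tilde\crossing$. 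Letting $z$ be the last point of $\gamma\cap\tSbw$ as one travels from $x$ toward $y$, the terminal subsegment $(z,y]$ lies in $\interior \tPoly_\epsilon$, so $z$ lies on a face of $\tPoly_\epsilon$ itself, hence on some $\Sigma_{R_0}$. Then $x$ is on the opposite side of $\Sigma_{R_0}$ from $\tPoly_\epsilon$, i.e.\ $x\notin \Half^{\mathfrak{c}}_{R_0}$. The convexity statement follows, $\tPoly_\epsilon$ being an intersection of the convex half-spaces $\Half^{\mathfrak{c}}_R$.

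For statement (3), if $R_1$ and $R_2$ share an edge $e$ then $\varphi_\epsilon(\check e)\subset \Sigma_{R_1}\cap \Sigma_{R_2}$ is nonempty, so Proposition~\ref{pro:connected-intersection}(1-b) applies: $\Sigma_{R_1}\cap\Sigma_{R_2}$ is a geodesic line along which the two hyperplanes meet orthogonally. That $\varphi_\epsilon(\check e)$ projects to a geodesic line in $\XCubing$ representing an open crossing arc is a consequence of Propositions~\ref{prop:cubing}(2-b) and~\ref{prop:polyhedral-decomposition}(1). Finally, the gear-rule comparison of $\varphi_+$ and $\varphi_-$ is inherited from the topological construction recalled before Proposition~\ref{prop:polyhedral-decomposition}: the two maps descend via $\pu$ to the same face identification on $\check R\subset X$ after the prescribed one-edge rotation, and since $\pu|_{\Sigma_R}$ is a homeomorphism on each face by item (1-b), the gear rule transfers verbatim to $\varphi_+$ and $\varphi_-$. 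The step I expect to require the most care is the reverse inclusion in (2): one must justify that $\gamma$ can be placed in general position (so that it meets $\tSbw$ only in the $2$-dimensional strata) and then argue that the last crossing $z$ lies on a face of $\tPoly_\epsilon$ itself rather than of an adjacent tile; this is routine given the local product structure of hyperplane arrangements supplied by Proposition~\ref{pro:connected-intersection}, but it is the one spot where the transition from the topological to the geometric tessellation is not entirely automatic.
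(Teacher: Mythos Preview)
Your approach coincides with the paper's, which in fact gives essentially no proof beyond the sentence ``The proposition is obtained by looking Proposition~\ref{prop:polyhedral-decomposition} in the setting of Proposition~\ref{prop:cubing}.  The convexity of $\tPoly_{\epsilon}$ in (2) is a consequence of Proposition~\ref{prop:cubing2X}.''  So you are supplying the details the paper omits, and in the same spirit.

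There is, however, a genuine gap in your argument for the reverse inclusion in~(2), and it is not the one you flag.  Your worry that the last crossing $z$ might lie on a face of an adjacent tile rather than of $\tPoly_\epsilon$ is unfounded: since $(z,y]\subset\interior\tPoly_\epsilon$ and $z\in\tSbw$, necessarily $z\in\partial\tPoly_\epsilon$, which is exactly the union of the faces $\varphi_\epsilon(\check R)$.  The actual problem is the sentence ``Then $x$ is on the opposite side of $\Sigma_{R_0}$.''  All you know is that \emph{just before} $z$ the path lies in $\interior\Half_{R_0}$; the portion of $\gamma$ from $x$ to that point may cross $\Sigma_{R_0}$ an odd number of additional times (these are earlier crossings with $\tSbw$, hence not excluded by $z$ being the last one), so $x$ could still lie in $\Half^{\mathfrak c}_{R_0}$.

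The fix is to replace the arbitrary transversal path by the geodesic $[y,x]$ and exploit convexity.  If $x\in\bigcap_R\Half^{\mathfrak c}_R$, then $[y,x]\subset\Half^{\mathfrak c}_R$ for every $R$ since each $\Half^{\mathfrak c}_R$ is convex.  Let $z$ be the first point at which $[y,x]$ exits $\tPoly_\epsilon$.  Near an interior point of a face $\varphi_\epsilon(\check R_0)$ one has locally $\tPoly_\epsilon=\Half^{\mathfrak c}_{R_0}$, and near a point of an edge $\varphi_\epsilon(\check e)=\Sigma_{R_1}\cap\Sigma_{R_2}$ one has locally $\tPoly_\epsilon=\Half^{\mathfrak c}_{R_1}\cap\Half^{\mathfrak c}_{R_2}$ by the orthogonal product structure of Proposition~\ref{pro:connected-intersection}(1-b).  (There are no lower strata since the polyhedron is ideal.)  In either case, exiting $\tPoly_\epsilon$ at $z$ forces the geodesic into $\interior\Half_{R_0}$ for some $R_0$, contradicting $[y,x]\subset\Half^{\mathfrak c}_{R_0}$.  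This closes the gap and uses exactly the ingredients you already invoke.
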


\begin{figure}
  \centering
  \begin{tabular}{c}
    \begin{subfigure}{0.33\columnwidth}
      \begin{overpic}[width=\columnwidth]{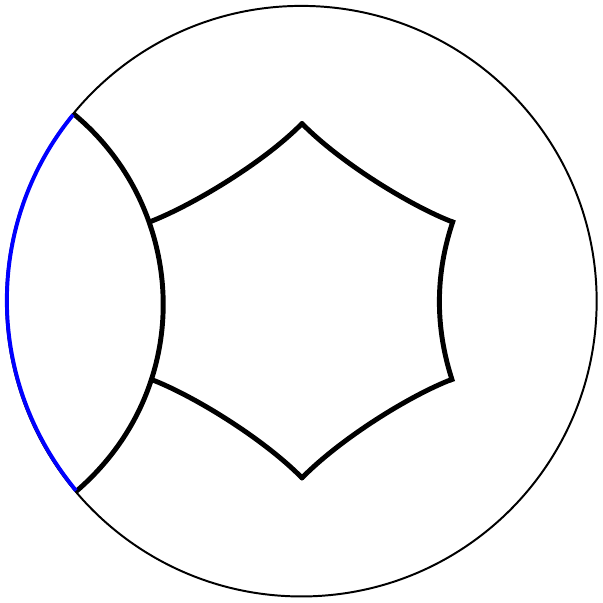}
        \put(-34,64){\textcolor{blue}{$\Delta_R(\tPoly)$}}
        \put(-17,110){$\Sigma_R(\tPoly)$}
        \put(3,64){\small $\Half_R(\tPoly)$}
        \put(61.5,64){$\tPoly$}
      \end{overpic}
      \subcaption{}
      \label{fig:disjoint-halfspace_a}
    \end{subfigure}
    \hspace{10mm}
    \begin{subfigure}{0.33\columnwidth}
      \begin{overpic}[width=\columnwidth]{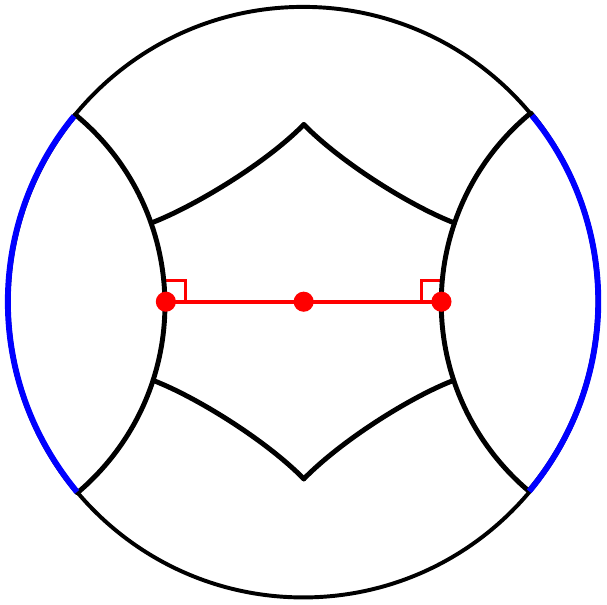}
        \put(-20,60){\textcolor{blue}{$\Delta_{R_1}$}}
        \put(132,60){\textcolor{blue}{$\Delta_{R_2}$}}
        \put(-4,108){$\Sigma_{R_1}$}
        \put(116,108){$\Sigma_{R_2}$}
        \put(10,78){$\Half_{R_1}$}
        \put(104,78){$\Half_{R_2}$}
        \put(5,56){\small \textcolor{red}{$\tilde{m}(R_1)$}}
        \put(98,56){\small \textcolor{red}{$\tilde{m}(R_2)$}}
        \put(61.5,78){$\tPoly$}
        \put(62.5,53){\textcolor{red}{$\tilde{v}$}}
      \end{overpic}
      \subcaption{}
      \label{fig:disjoint-halfspace_b}
    \end{subfigure}
  \end{tabular}
  \caption{
    (a) $\Half_{R}(\tPoly)$ is the checkerboard hyper-half-space in $\tXCubing$ bounded by 
    the hyperplane $\Sigma_R(\tPoly)$ which is {\it disjoint} from 
    $\interior\tPoly$. 
     (This $2$-dimensional figure does not reflect the fact that $\tPoly$ is an {\it ideal} polyhedron.)
    (b) If $R_1$ and $R_2$ are not adjacent, then 
    $\angle_{\tilde v}(\tilde m(R_1),\tilde m(R_2))=\pi$,
    and hence $[\tilde m(R_1),\tilde v]\cup [\tilde v, \tilde m(R_2)]$ 
    is a common perpendicular to  $\Sigma_{R_1}$ and $\Sigma_{R_2}$.
    This implies $\Half_{R_1}(\tPoly)\cap \Half_{R_2}(\tPoly)=\emptyset$.
     }
  \label{fig:disjoint-halfspace}
\end{figure}

The proposition is obtained by looking 
Proposition~\ref{prop:polyhedral-decomposition}
in the setting of Proposition~\ref{prop:cubing}.
The convexity of $\tPoly_{\epsilon}$ in (2) 
is a consequence of Proposition~\ref{prop:cubing2X}.

\begin{definition-notation}
\label{def:checkerboard-polyhedron}
{\rm
We call $\tPoly_{\epsilon}$ a {\it checkerboard ideal polyhedron} in $\tXCubing$.
The unique point $\tilde v_{\epsilon}\in \pu^{-1}(v_{\epsilon})$
contained in $\tPoly_{\epsilon}$ is called 
the {\it center} of $\tPoly_{\epsilon}$.

When we do not mind the sign $\epsilon$,
we drop it from the symbols, such as
$\tPoly_{\epsilon}$ 
and $\varphi_{\epsilon}$.
For a fixed checkerboard ideal polyhedron $\tPoly$
and for a region $R$ of $\Diagram$, 
we use the following terminology and notation.
\begin{enumerate}
\item[\rm(1)]
The face $\varphi(\check R)$ of $\tPoly$
is called the {\it face $R$ of $\tPoly$}.
\item[\rm(2)]
The {\it center} $\tilde m(R)$ of the face $R$ of $\tPoly$
is defined as follows.
By Proposition~\ref{prop:checkerboard-polyhedron}(1-b), 
$\pu$ determines a homeomorphism 
from $\varphi(\check R)$ to the closure 
of the component of $\Sbw\setminus\crossing$
containing the center $m(R)$.
Then $\tilde m(R)\in \varphi(\check R)$ is the 
inverse image of $m(R)$.
\item[\rm(3)]
$\Sigma_R=\Sigma_R(\tPoly)$ denotes the checkerboard hyperplane in $\tXCubing$
containing the face $R$ of $\tPoly$.
\item[\rm(4)]
$\Half_R=\Half_R(\tPoly)$ and 
$\Half_R^{\mathfrak{c}}=\Half_R^{\mathfrak{c}}(\tPoly)$ denote
the checkerboard hyper-half-spaces in $\tXCubing$
bounded by $\Sigma_R(\tPoly)$, such that
$\tPoly\subset \Half_R^{\mathfrak{c}}(\tPoly)$ and 
$\Half_R(\tPoly)\cap \Half_R^{\mathfrak{c}}(\tPoly)=\Sigma_R(\tPoly)$
(see Figure~\ref{fig:disjoint-halfspace}(a)).
\end{enumerate}
}
\end{definition-notation}

Then we have the following proposition, which plays a key role in the proof of Theorem~\ref{Theorem1-0}.

\begin{proposition}
\label{prop:disjoint-half-space}
Let $\tPoly \subset \tXCubing$ be a checkerboard ideal polyhedron, and
let $R_1$ and $R_2$ be distinct regions of $\Diagram$.
Then $\Half_{R_1}=\Half_{R_1}(\tPoly)$ and $\Half_{R_1}=\Half_{R_2}(\tPoly)$
are disjoint if and only if
$R_1$ and $R_2$ are not adjacent.
\end{proposition}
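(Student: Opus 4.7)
The plan is to reduce both directions of the biconditional to the geometry at the center $\tilde v$ of $\tPoly$, using the angle information in Remark~\ref{remark:cubing}(4). Since the covering projection $\pu:\tXCubing\to\XCubing$ is a local isometry, the geodesic $[\tilde v,\tilde m(R)]$ projects to the inner edge $e(R)$, and by Remark~\ref{remark:cubing}(2) it meets $\Sigma_R$ orthogonally at $\tilde m(R)$, so $\tilde m(R)=\pi_{\Sigma_R}(\tilde v)$ by Lemma~\ref{lem:GR2b}. The local isometry also transports the angle identity of Remark~\ref{remark:cubing}(4) to $\tilde v$, giving
\[
\angle_{\tilde v}\bigl(\tilde m(R_1),\tilde m(R_2)\bigr)=\angle_{v_\epsilon}\bigl(m(R_1),m(R_2)\bigr),
\]
which equals $\pi/2$ when $R_1,R_2$ are adjacent and $\pi$ otherwise.

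For the easy (\textbf{if}) direction, suppose $R_1$ and $R_2$ are adjacent with common edge $e$. By Proposition~\ref{prop:checkerboard-polyhedron}(3), the faces of $\tPoly$ corresponding to $R_1$ and $R_2$ meet along $\varphi(\check e)$, so $\Sigma_{R_1}\cap\Sigma_{R_2}\supset\varphi(\check e)\ne\emptyset$. Because $\Sigma_{R_i}\subset\Half_{R_i}$, we immediately obtain $\Half_{R_1}\cap\Half_{R_2}\ne\emptyset$.

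For the harder (\textbf{only if}) direction, assume $R_1$ and $R_2$ are not adjacent, so the angle at $\tilde v$ is $\pi$. Then the concatenation $[\tilde m(R_1),\tilde v]\cup[\tilde v,\tilde m(R_2)]$ is a single geodesic segment (Figure~\ref{fig:disjoint-halfspace}(b)); together with the orthogonalities of its two halves with $\Sigma_{R_1}$ and $\Sigma_{R_2}$, this segment is a common perpendicular to these two hyperplanes. Lemma~\ref{lem:GR2} then gives $\Sigma_{R_1}\cap\Sigma_{R_2}=\emptyset$. Applying Lemma~\ref{lemma:three-components}, decompose $\tXCubing=\Half_1\cup\Half_{1,2}\cup\Half_2$ with $\Half_i\cap\Half_{1,2}=\Sigma_{R_i}$ and $\Half_1\cap\Half_2=\emptyset$. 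Since the interior of $\tPoly$ is disjoint from $\Sigma_{R_1}\cup\Sigma_{R_2}$, the point $\tilde v$ must lie in $\Half_{1,2}$, so $\Half_{R_i}^{\mathfrak{c}}$ is forced to be the union $\Half_{1,2}\cup\Half_{3-i}$, and hence $\Half_{R_i}=\Half_i$. Therefore $\Half_{R_1}\cap\Half_{R_2}=\Half_1\cap\Half_2=\emptyset$.

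The main obstacle is twofold: first, one must rigorously transport the combinatorial angle statement of Remark~\ref{remark:cubing}(4) from $v_\epsilon$ to $\tilde v$, which requires that $\pu$ be a local isometry near the vertex and that Alexandrov angles are local (Remark~\ref{rem:localness-angle}); second, one must cleanly identify the far-side half-spaces $\Half_{R_i}$ with the outer pieces $\Half_i$ in Lemma~\ref{lemma:three-components}. The latter rests on Proposition~\ref{prop:cubing2X}, which asserts that each $\Sigma_{R_i}$ cuts $\tXCubing$ into exactly two closed convex half-spaces; the one not containing $\tilde v$ is then forced to be $\Half_i$.
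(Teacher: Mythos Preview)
Your proof is correct and follows essentially the same route as the paper: orthogonality of $[\tilde v,\tilde m(R_i)]$ to $\Sigma_{R_i}$, the angle $\pi$ at $\tilde v$ via Remark~\ref{remark:cubing}(4) (transported through the link isomorphism $\Lk(\tilde v,\tXCubing)\cong\Lk(v_\epsilon,\Cubing)$), hence a common perpendicular, then Lemma~\ref{lem:GR2} and Lemma~\ref{lemma:three-components} to conclude. Two minor remarks: you have the labels ``if'' and ``only if'' swapped (the statement is ``disjoint $\Leftrightarrow$ not adjacent'', so the adjacent case is the contrapositive of ``only if''); and the passage ``angle $\pi$ at $\tilde v$ implies the concatenation is a geodesic'' deserves a one-line justification---the paper cites \cite[I.5.7, I.7.39, II.1.4(2)]{BH} to pass from the link condition to a local geodesic to a global geodesic in CAT(0).
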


\begin{proof}
Let $\tilde v$ be the center of $\tPoly$,
and let $\tilde m(R_i)$ be the center of the face $R_i$ of $\tPoly$ ($i=1,2$).
Then the geodesic segment $[\tilde v, \tilde m(R_i)]$ is perpendicular to 
$\Sigma_{R_i}=\Sigma_{R_i}(\tPoly)$ by 
Proposition~\ref{prop:cubing}(4) 
(cf.~Remark~\ref{remark:cubing}(2))
and Remark~\ref{rem:localness-angle}.
Note that there is a natural isomorphism
$\Lk(\tilde v,\tXCubing)\cong \Lk(v,\Cubing)$,
where $v=\pu(\tilde v)$.
Thus, by Remark~\ref{remark:cubing}(4), 
$\angle_{\tilde v}(\tilde m(R_1),\tilde m(R_2))$
is equal to $\pi/2$ or $\pi$
according to whether $R_1$ and $R_2$ are adjacent or not.
Hence, if $R_1$ and $R_2$ are not adjacent,
then $[\tilde m(R_1),\tilde v]\cup [\tilde v, \tilde m(R_2)]$ 
is a geodesic
which is perpendicular to the checkerboard hyperplanes 
$\Sigma_{R_1}$ and $\Sigma_{R_2}$
at their endpoints.
(In fact, it is a local geodesic 
by~\cite[Remark I.5.7 and Theorem I.7.39]{BH} 
and so it is a geodesic by~\cite[Proposition II.1.4(2)]{BH}.)
Hence, by Lemma~\ref{lem:GR2}, it is a shortest path between
the hyperplanes, and  
in particular, $\Sigma_{R_1}$ and $\Sigma_{R_2}$ are disjoint.
By Lemma~\ref{lemma:three-components},
$\Sigma_{R_1}\cup \Sigma_{R_2}$ divides $\tXCubing$ into
three closed convex subspaces 
$\Half_1$, $\Half_{1,2}$ and $\Half_2$,
that satisfy the condition in the lemma.
Since $\tPoly$ intersects both $\Sigma_{R_1}$ and $\Sigma_{R_2}$,
we have $\tPoly\subset \Half_{1,2}$.
This implies that 
$\Half_i=\Half_{R_i}$ ($i=1,2$).
Hence $\Half_{R_1}$ and $\Half_{R_2}$ are disjoint
(see Figure~\ref{fig:disjoint-halfspace}(b)).

On the other hand, if the regions $R_1$ and $R_2$ are adjacent
in $D$, 
then the faces $R_1$ and $R_2$ of $\tPoly$ are adjacent.
Thus
$\Sigma_{R_1}\cap \Sigma_{R_2}$
is a geodesic line 
(cf.~Proposition~\ref{pro:connected-intersection}(1))
and hence $\Half_{R_1}$ and $\Half_{R_2}$ are not disjoint.
\end{proof}

\begin{figure}
  \centering
  \begin{tabular}{c}
    \begin{subfigure}{0.28\columnwidth}
      \begin{overpic}[width=\columnwidth]{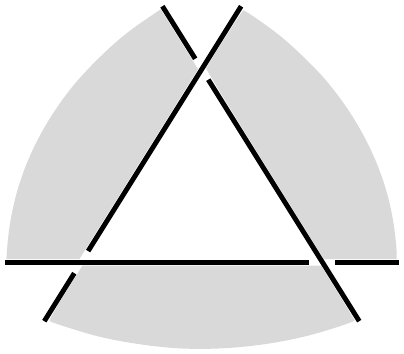}
        \put(80,52){$R_{b,1}$}
        \put(13,52){$R_{b,2}$}
        \put(47,9){$R_{b,3}$}
        \put(49,39){$R_{w}$}
      \end{overpic}
      \vspace{1mm}
      \subcaption{}
      \label{fig:adjacent-faces_a}
    \end{subfigure}
    \hspace{15mm}
    \begin{subfigure}{0.33\columnwidth}
      \begin{overpic}[width=\columnwidth]{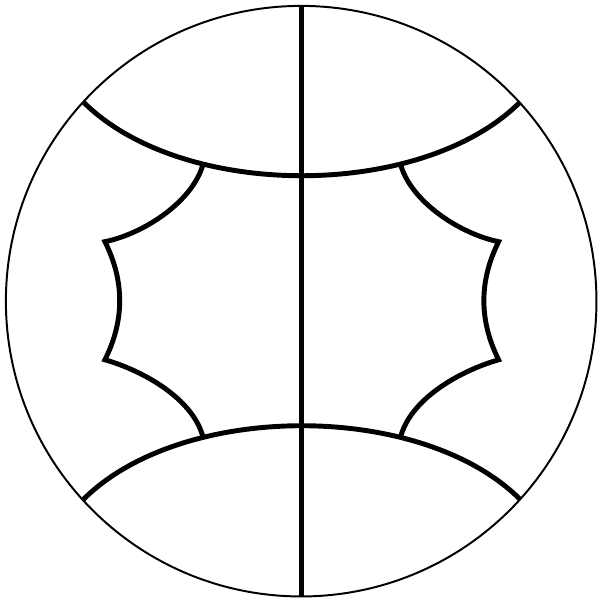}
        \put(-31,112){$\Sigma_{R_{b,i}}(\tPoly_+)$}
        \put(114,112){$\Sigma_{R_{b,i+1}}(\tPoly_-)$}
        \put(-30,12){$\Sigma_{R_{b,j}}(\tPoly_+)$}
        \put(114,12){$\Sigma_{R_{b,j+1}}(\tPoly_-)$}
        \put(14,133){$\Sigma_{R_{w}}(\tPoly_+) = \Sigma_{R_{w}}(\tPoly_-)$}
        \put(40,60){$\tPoly_+$}
        \put(78,60){$\tPoly_-$}
      \end{overpic}
      \subcaption{}
      \label{fig:adjacent-faces_b}
    \end{subfigure}
  \end{tabular}
  \caption{The checkerboard polyhedra $\tPoly_{+}$ and $\tPoly_{-}$ 
share a face that is contained in the checkerboard hyperplane
$\Sigma_{R_w}(\tPoly_{+})=\Sigma_{R_w}(\tPoly_{-})$.
Then $\Sigma_{R_{b,i}}(\tPoly_+)=\Sigma_{R_{b,i+1}}(\tPoly_-)$.
  }
  \label{fig:adjacent-faces}
\end{figure}

At the end of this section, 
we note the following observation (see Figure~\ref{fig:adjacent-faces}),
which is used in Section~\ref{sec:proof-maintheorem}.

\begin{lemma}
\label{lem:common-plane}
Let $R_w$ be a white region of $\Diagram$
and $R_{b,i}$ ($1\le i\le n$) be the black regions of $\Diagram$
which are adjacent to $R_w$ and which are arranged around $R_w$
in this cyclic order with respect to the anti-clockwise orientation of 
$\partial R_w$.
Let $\tPoly_{\pm} \subset \tXCubing$ be the checkerboard ideal polyhedra,
such that $\tPoly_{+}\cap \tPoly_{-}$ is 
the face $R_w$ of both $\tPoly_{+}$ and $\tPoly_{-}$.
Then we have $\Sigma_{R_{b,i}}(\tPoly_+)=\Sigma_{R_{b,i+1}}(\tPoly_-)$
and $\Half_{R_{b,i}}(\tPoly_+)=\Half_{R_{b,i+1}}(\tPoly_-)$,
where the index $i$ is considered with modulo $n$.
When the colors black and white are interchanged,
similar assertion holds.

\end{lemma}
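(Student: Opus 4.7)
The plan is to deduce the lemma from three ingredients: (i) the gear rule from Proposition~\ref{prop:polyhedral-decomposition}, lifted to the universal cover by Proposition~\ref{prop:checkerboard-polyhedron}, which prescribes how edges of the $R_w$ face on $\tPoly_+$ and on $\tPoly_-$ are matched; (ii) the fact that any interior point of $\tilde\crossing$ is contained in a unique black (respectively white) checkerboard hyperplane; and (iii) a local Euclidean analysis near an interior point of the shared edge, supplied by Proposition~\ref{pro:connected-intersection}(1-b). For each $i$, let $e^i_+$ denote the edge of $\tPoly_+$ between its faces $R_w$ and $R_{b,i}$, and let $e^j_-$ denote the edge of $\tPoly_-$ between its faces $R_w$ and $R_{b,j}$. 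Since $\tPoly_+\cap\tPoly_-$ is the whole face $R_w$ of both polyhedra, the gluing recipe in Proposition~\ref{prop:polyhedral-decomposition} (a one-edge anti-clockwise rotation, since $R_w$ is white) sends the $i$-th boundary edge of $R_w$ in $\tPoly_+$ to the $(i+1)$-st boundary edge of $R_w$ in $\tPoly_-$, with indices taken modulo $n$ in the cyclic order stipulated in the statement. Hence $e^i_+$ and $e^{i+1}_-$ coincide as subsets of $\tXCubing$; I denote this common edge by $e^i$. Observe that $e^i$ is contained in the black checkerboard hyperplanes $\Sigma_{R_{b,i}}(\tPoly_+)$ and $\Sigma_{R_{b,i+1}}(\tPoly_-)$, and that $e^i$ lies in $\tilde\crossing=\tSb\cap\tSw$ by Proposition~\ref{prop:checkerboard-polyhedron}(3).

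Next I would establish ingredient (ii). Pick an interior point $p$ of $e^i$; the projection $\pu(p)$ is an interior point of $\crossing$, so by the local structure described in Remark~\ref{remark:cubing}(1) together with the fact that $\pu$ is a covering map, there is an open neighborhood $U\ni p$ on which $\pu|_U$ is an isometry onto its image, and $\tSb\cap U$ is a single $2$-dimensional Euclidean plane. If $\Sigma_b$ and $\Sigma_b'$ are two black hyperplanes both containing $p$, then each of $\Sigma_b\cap U$ and $\Sigma_b'\cap U$ must coincide with this single local plane, so $\Sigma_b\cap\Sigma_b'$ contains a $2$-dimensional open set; this is incompatible with the dichotomy of Proposition~\ref{pro:connected-intersection}(1) unless $\Sigma_b=\Sigma_b'$. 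Applied to $e^i$, this yields $\Sigma_{R_{b,i}}(\tPoly_+)=\Sigma_{R_{b,i+1}}(\tPoly_-)=:\Sigma$.

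For the half-space equality, I would work in a Euclidean neighborhood $V$ of an interior point of $e^i$, available by Proposition~\ref{pro:connected-intersection}(1-b) applied to $\Sigma$ and $\Sigma_{R_w}(\tPoly_+)=\Sigma_{R_w}(\tPoly_-)$, which meet orthogonally along $e^i$. Inside $V$ these two hyperplanes cut a small ball into four quadrants. Because $\tPoly_+$ and $\tPoly_-$ share the face $R_w$, the pieces $\tPoly_+\cap V$ and $\tPoly_-\cap V$ lie in the two quadrants whose $\Sigma_{R_w}$-boundaries are on the same side of $e^i$ in $\Sigma_{R_w}$; these two quadrants sit on opposite sides of $\Sigma_{R_w}$ but automatically on the same side of $\Sigma$. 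Therefore the hyper-half-space bounded by $\Sigma$ that contains $\tPoly_+$ equals the hyper-half-space bounded by $\Sigma$ that contains $\tPoly_-$, and passing to complements in Definition~\ref{def:checkerboard-polyhedron}(4) yields $\Half_{R_{b,i}}(\tPoly_+)=\Half_{R_{b,i+1}}(\tPoly_-)$. The case in which $R_w$ is black is identical, with clockwise rotation in place of anti-clockwise. The main obstacle is the orientation bookkeeping in step (i): verifying that the rotation of Proposition~\ref{prop:polyhedral-decomposition} sends $e^i_+$ to $e^{i+1}_-$ rather than to $e^{i-1}_-$ requires careful matching of the cyclic edge orderings around $R_w$ as seen from inside $\tPoly_+$ versus $\tPoly_-$ against the anti-clockwise labelling of the $R_{b,i}$ fixed in the statement; the rest of the argument is essentially a local picture.
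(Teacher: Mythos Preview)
Your argument is correct and follows essentially the same route as the paper: both use the gear rule (the last assertion of Proposition~\ref{prop:checkerboard-polyhedron}) to identify the edge $\varphi_+(\check e_i)$ with $\varphi_-(\check e_{i+1})$, deduce that the adjacent black faces lie in a common checkerboard hyperplane, and then use the shared face $R_w$ to match the half-spaces. The paper compresses your ingredient~(ii) into the phrase ``since $\tPoly_{\pm}$ are right-angled cubed polyhedra'' and your quadrant analysis into a one-line appeal to the shared face, but the logical skeleton is the same; your honest flag about the orientation bookkeeping in step~(i) is well taken, and the paper handles it at the same level of rigor by simply citing the rotation direction in Proposition~\ref{prop:checkerboard-polyhedron}.
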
 

\begin{proof}
For $\epsilon\in \{+,-\}$,
let $\varphi_{\epsilon}$ be the isomorphism from 
$\PPoly(\Diagram)$ to the checkerboard ideal polyhedron $\tPoly_{\epsilon}$.
Then $\varphi_{+}(\check R_w)=\varphi_{-}(\check R_w)$ by the assumption.
Consider the edge $e_i:=R_w \cap R_{b,i}$ of $\Diagram$.
Then, by 
the last assertion of Proposition~\ref{prop:checkerboard-polyhedron},
we see that $\varphi_{+}(\check e_i)=\varphi_{-}(\check e_{i+1})$ and that
it is a common edge of the faces 
$\varphi_{+}(\check R_{b,i})$
and $\varphi_{-}(\check R_{b,i+1})$.
Since $\tPoly_{\pm}$ are right-angled cubed polyhedra
(cf.~Proposition ~\ref{prop:checkerboard-polyhedron}(3)),
this implies that the two faces are contained in a single checkerboard hyperplane,
which is equal to $\Sigma_{R_{b,i}}(\tPoly_+)=\Sigma_{R_{b,i+1}}(\tPoly_-)$.
Since $\tPoly_{+}$ and $\tPoly_{-}$ share the common face
$\varphi_{+}(\check R_w)=\varphi_{-}(\check R_w)$,
we also have $\Half_{R_{b,i}}(\tPoly_+)=\Half_{R_{b,i+1}}(\tPoly_-)$.
\end{proof}

%\begin{figure}
%  \centering
%  \begin{tabular}{c}
%    \begin{subfigure}{0.28\columnwidth}
%      \begin{overpic}[width=\columnwidth]{adjacent-faces_a.pdf}
%        \put(80,52){$R_{b,1}$}
%        \put(13,52){$R_{b,2}$}
%        \put(47,9){$R_{b,3}$}
%        \put(49,39){$R_{w}$}
%      \end{overpic}
%      \vspace{1mm}
%      \subcaption{}
%      \label{fig:adjacent-faces_a}
%    \end{subfigure}
%    \hspace{15mm}
%    \begin{subfigure}{0.33\columnwidth}
%      \begin{overpic}[width=\columnwidth]{adjacent-faces_b.pdf}
%        \put(-31,112){$\Sigma_{R_{b,i}}(\tPoly_+)$}
%        \put(114,112){$\Sigma_{R_{b,i+1}}(\tPoly_-)$}
%        \put(-30,12){$\Sigma_{R_{b,j}}(\tPoly_+)$}
%        \put(114,12){$\Sigma_{R_{b,j+1}}(\tPoly_-)$}
%        \put(14,133){$\Sigma_{R_{w}}(\tPoly_+) = \Sigma_{R_{w}}(\tPoly_-)$}
%        \put(40,60){$\tPoly_+$}
%        \put(78,60){$\tPoly_-$}
%      \end{overpic}
%      \subcaption{}
%      \label{fig:adjacent-faces_b}
%    \end{subfigure}
%  \end{tabular}
%  \caption{The checkerboard polyhedra $\tPoly_{+}$ and $\tPoly_{-}$ 
%share a face that is contained in the checkerboard hyperplane
%$\Sigma_{R_w}(\tPoly_{+})=\Sigma_{R_w}(\tPoly_{-})$.
%Then $\Sigma_{R_{b,i}}(\tPoly_+)=\Sigma_{R_{b,i+1}}(\tPoly_-)$.
%  }
%  \label{fig:adjacent-faces}
%\end{figure}

\section{Butterflies and checkerboard ideal polyhedra}
\label{sec:Butterflies-checkerboard}

The complement $X$ of a hyperbolic alternating link $L$
with a prescribed prime alternating diagram $\Diagram$
admits two distinct geometric structures given as:
\begin{itemize}
\item[-]
the complete hyperbolic manifold $\HH^3/G$, and
\item[-]
the underlying space of the non-positively curved cubed complex $\XCubing$ 
that is constructed from a prime alternating diagram $\Diagram$ of $L$.
\end{itemize}
We fix homeomorphisms 
\[
(X,M)\cong(\HH^3/G,(\HH^3\setminus\mathcal{Q})/G)\cong 
(|\XCubing|,|\Cubing|),
\]
and identify the relevant spaces through the homeomorphisms.
Here $\mathcal{Q}$ is the disjoint union of 
the open horoballs bounded by the horospheres $\{H_p\}_{p\in\PFix(G)}$
introduced in Section~\ref{sec:meridian}
(the paragraph after Lemma~\ref{lem:non-commutative}).
This identification induces the following $G$-equivariant identifications
of the universal covering spaces
\[
(\tilde X,\tilde M)=(\HH^3,\HH^3\setminus\mathcal{Q})=
(|\tXCubing|,|\tCubing|).
\]
In particular,
each horosphere $\Bplane_p\subset \HH^3$ is regarded as a peripheral plane 
contained in $\partial\tCubing$ in the CAT(0) space $\tXCubing$;
so we call it the {\it peripheral plane centered at $p$}.

We also assume that the quasi-fuchsian checkerboard surfaces $S_b$ and $S_w$
in the hyperbolic manifold $X=\HH^3/G$
(cf.~Sections~\ref{sec:checkerboard-surface} and~\ref{sec:meridian})
are the hyperplanes $\Sb$ and $\Sw$, respectively, in 
the non-positively curved cubed complex $\XCubing$ 
(cf.~
Sections~\ref{sec:NPC} and~\ref{sec:chekerboard-decomposition}).
Thus each checkerboard plane $\Sigma\subset \HH^3$
is a checkerboard hyperplane in the CAT(0) cubed complex $\tXCubing$.

For a checkerboard ideal polyhedron 
$\tPoly\subset \tXCubing=\HH^3$,
let $\hPoly$ be 
the closure of $\tPoly$ in 
$\HHH^3=\HH^3\cup \CCC$.
Then the isomorphism $\varphi:\PPoly(\Diagram)\to \tPoly$
(between topological ideal polyhedra)
extends to an isomorphism
$\hat\varphi:(B^3,\Diagram)\to \hPoly$
(between topological polyhedra),
because $\tilde X\setminus \tilde M$ is identified with
the disjoint family of open horoballs $\mathcal{Q}$ centered at
points in $\PFix(G)$. 
For each vertex $c$ of $\Diagram$,
the ideal point $p:=\hat\varphi(c)$ belongs to $\PFix(G)$,
and we call $p$ the {\it ideal vertex of $\tPoly$ corresponding to $c$}.
We also call $c$ the {\it vertex of $\Diagram$ 
corresponding to the ideal vertex $p$ of $\tPoly$}.

We introduce the following notation for objects in 
the closure $\HHH^3$ of the hyperbolic space,
building on Definition and Notation~\ref{def:checkerboard-polyhedron}
for objects in the CAT(0) cubed complex $\tXCubing$
(cf.~Figure~\ref{fig:disjoint-halfspace}(a)).

\begin{notation}
\label{notation:idealpolyhedron2}
{\rm
Let $\tPoly\subset \tXCubing$ be a checkerboard ideal polyhedron,
and $R$ a region of the diagram $\Diagram$.
\begin{enumerate}
\item
$\bSigma_R=\bSigma_R(\tPoly)$ denotes the checkerboard disk properly embedded in $\HHH^3$
obtained as the closure of $\Sigma_R(\tPoly)\subset \tXCubing=\HH^3$.
\item
$\bHalf_R=\bHalf_R(\tPoly)$ and 
$\bHalf^{\mathfrak{c}}_R=\bHalf^{\mathfrak{c}}_R(\tPoly)$ denote the $3$-balls in $\HHH^3$
obtained as the closures of the checkerboard hyper-half-spaces
$\Half_R(\tPoly)$ and $\Half_R^{\mathfrak{c}}(\tPoly)$.
Note that $\tPoly\subset \bHalf^{\mathfrak{c}}_R(\tPoly)$ and 
$\bHalf_R(\tPoly)\cap \bHalf^{\mathfrak{c}}_R(\tPoly)=\bSigma_R(\tPoly)$.
\item 
$\Delta_R(\tPoly)$ denotes the disk in $\CCC$ defined by
$\Delta_R(\tPoly):=\bHalf_R(\tPoly)\cap\CCC$. 
\end{enumerate}
}
\end{notation}

Then we have the following lemma.

\begin{lemma}
\label{lemma:Ideal-Halfspace}
Let $\tPoly_1$ and $\tPoly_2$ be checkerboard ideal polyhedra,
and let $R_1$ and $R_2$ be regions of $D$.
If the checkerboard hyper-half-spaces 
$\Half_{R_1}(\tPoly_1)$ and $\Half_{R_2}(\tPoly_2)$ in $\HH^3$
are disjoint, then
the two disks $\Delta_{R_1}(\tPoly_1)$ and $\Delta_{R_2}(\tPoly_2)$
have disjoint interiors in $\CCC$.
\end{lemma}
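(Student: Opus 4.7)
The plan is a short point-set topology argument: the conclusion should follow almost immediately from the standard ball-pair structure $(\HHH^3, \bSigma_R(\tPoly)) \cong (B^3,B^2)$ furnished by Corollary~\ref{cor:qf}, applied to each of the two checkerboard hyperplanes. The idea is that if a common point $z$ existed in the interiors of the two disks at infinity, then a small enough neighborhood of $z$ in $\HHH^3$ would sit on the correct side of both checkerboard disks, and its intersection with $\HH^3$ would contradict the disjointness of $\Half_{R_1}(\tPoly_1)$ and $\Half_{R_2}(\tPoly_2)$.

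More precisely, I argue by contradiction: suppose there exists $z \in \interior \Delta_{R_1}(\tPoly_1) \cap \interior \Delta_{R_2}(\tPoly_2)$. For $i=1,2$, abbreviate $\bSigma_i := \bSigma_{R_i}(\tPoly_i)$, $\bHalf_i := \bHalf_{R_i}(\tPoly_i)$, $\bHalf^{\mathfrak{c}}_i := \bHalf^{\mathfrak{c}}_{R_i}(\tPoly_i)$, and $\Delta_i := \Delta_{R_i}(\tPoly_i)$. By Corollary~\ref{cor:qf}, $\bSigma_i$ is a properly embedded closed disk in the closed ball $\HHH^3$, $\HHH^3 = \bHalf_i \cup \bHalf^{\mathfrak{c}}_i$, $\bSigma_i = \bHalf_i \cap \bHalf^{\mathfrak{c}}_i$, and the boundary circle $\partial \bSigma_i$ coincides with $\partial \Delta_i$; in particular $\bSigma_i \cap \Delta_i = \partial \Delta_i$. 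Since $z$ lies in the interior of $\Delta_i$ relative to $\CCC$, we have $z \notin \bSigma_i$. As $\bSigma_i$ is closed in $\HHH^3$, there is a connected open neighborhood $U_i$ of $z$ in $\HHH^3$ with $U_i \cap \bSigma_i = \emptyset$. By connectedness, $U_i$ is contained in either $\bHalf_i \setminus \bSigma_i$ or $\bHalf^{\mathfrak{c}}_i \setminus \bSigma_i$; since $z \in \bHalf_i$ this forces $U_i \subset \bHalf_i \setminus \bSigma_i$, and therefore $U_i \cap \HH^3 \subset \Half_i \setminus \Sigma_i$.

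Setting $U := U_1 \cap U_2$, I obtain an open neighborhood of $z$ in $\HHH^3$. Because $z$ lies in $\CCC$ and is a limit point of $\HH^3$ in $\HHH^3$, the intersection $U \cap \HH^3$ is nonempty. But any point of $U \cap \HH^3$ lies in $(\Half_1 \setminus \Sigma_1) \cap (\Half_2 \setminus \Sigma_2) \subset \Half_1 \cap \Half_2$, contradicting the assumption that $\Half_{R_1}(\tPoly_1) \cap \Half_{R_2}(\tPoly_2) = \emptyset$. I do not foresee any real obstacle in this argument; the only essential input is the ball-pair structure recorded in Corollary~\ref{cor:qf}, which guarantees that the half-space $\bHalf_i$ genuinely looks like a standard closed half-ball near every interior point of $\Delta_i$, so that the transfer of disjointness from $\HH^3$ to $\CCC$ is automatic.
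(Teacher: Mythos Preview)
Your proof is correct and is essentially the same point-set argument as the paper's own proof: both use the standard ball-pair structure from Corollary~\ref{cor:qf} to observe that a point of $\CCC$ lies in $\interior\Delta_{R_i}(\tPoly_i)$ if and only if it admits a neighborhood in $\HHH^3$ whose intersection with $\HH^3$ lies in $\Half_{R_i}(\tPoly_i)$, and then conclude. The only cosmetic difference is that the paper states this neighborhood characterization as an ``if and only if'' and reads off the implication directly, whereas you unfold the same idea as a proof by contradiction.
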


\begin{proof}
By Corollary~\ref{cor:qf},
the pair
$(\HHH^3,\bHalf_{R_i})$, with
$\Half_{R_i}=\Half_{R_i}(\tPoly_i)$,
is homeomorphic to the standard pair
$(B^3,B^3_+)$ of the unit $3$-ball $B^3$ in $\RR^3$ and
the closed upper half-ball
$B^3_+=\{(x,y,z)\in B^3 \ | \ z\ge 0\}$
($i=1,2$).
Thus a point $x\in \CCC$ belongs to the interior of 
$\Delta_{R_i}:=\Delta_{R_i}(\tPoly_i)$
if and only if there is a neighborhood $U$
of $x$ in $\HHH^3$ such that $U\cap\HH^3 \subset \Half_{R_i}$
($i=1,2$).
So $x$ belongs to 
$\interior\Delta_{R_1}\cap \interior\Delta_{R_2}$
if and only if 
there is a neighborhood $U$
of $x$ in $\HHH^3$ such that $U\cap\HH^3 \subset \Half_{R_1}\cap \Half_{R_2}$.
Hence, if $\Half_{R_1}\cap \Half_{R_2}= \emptyset$
then $\interior\Delta_{R_1}\cap \interior\Delta_{R_2}= \emptyset$.
\end{proof}

Proposition~\ref{prop:disjoint-half-space} together with 
Lemma~\ref{lemma:Ideal-Halfspace} implies 
the following proposition, which plays a key role 
in the proof of Theorem~\ref{Theorem1}.

\begin{proposition}
\label{prop:disjoint-open-disk}
Let $\tPoly\subset \tXCubing$ be a checkerboard ideal polyhedron, and
let $R_1$ and $R_2$ be distinct regions of $\Diagram$.
If $R_1$ and $R_2$ are not adjacent, then
$\Delta_{R_1}(\tPoly)$ and $\Delta_{R_2}(\tPoly)$
have disjoint interiors in $\CCC$.
\end{proposition}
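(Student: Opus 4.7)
The plan is to deduce Proposition~\ref{prop:disjoint-open-disk} as a direct combination of the two preceding results, namely Proposition~\ref{prop:disjoint-half-space} (the half-space separation in the CAT(0) cubed complex) and Lemma~\ref{lemma:Ideal-Halfspace} (passage from disjointness of hyper-half-spaces in $\HH^3$ to disjointness of interiors of disks in $\CCC$). The proposition is the \emph{ideal-boundary shadow} of Proposition~\ref{prop:disjoint-half-space} applied to the single polyhedron $\tPoly$, so no new geometric input should be required beyond what has already been built up.

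First I would specialize Proposition~\ref{prop:disjoint-half-space} to the case $\tPoly_1 = \tPoly_2 = \tPoly$ with the two given non-adjacent regions $R_1$ and $R_2$: by the hypothesis that $R_1$ and $R_2$ are not adjacent in $\Diagram$, the proposition yields
\[
\Half_{R_1}(\tPoly) \cap \Half_{R_2}(\tPoly) = \emptyset
\]
as subsets of $\tXCubing = \HH^3$. Next, I would feed this disjointness into Lemma~\ref{lemma:Ideal-Halfspace}, again applied to the single polyhedron $\tPoly$. The lemma is stated for possibly distinct polyhedra $\tPoly_1, \tPoly_2$, so it applies verbatim to the diagonal case $\tPoly_1 = \tPoly_2 = \tPoly$, and gives
\[
\interior \Delta_{R_1}(\tPoly) \cap \interior \Delta_{R_2}(\tPoly) = \emptyset
\]
in $\CCC$, which is precisely the desired conclusion.

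There is essentially no obstacle here: the only thing to verify is that the symbols $\Delta_R(\tPoly)$, $\bHalf_R(\tPoly)$, and $\Half_R(\tPoly)$ really refer to compatible objects across Sections~\ref{sec:chekerboard-decomposition} and~\ref{sec:Butterflies-checkerboard}, which is guaranteed by the identifications $(\tilde X, \tilde M) = (\HH^3, \HH^3 \setminus \mathcal{Q}) = (|\tXCubing|, |\tCubing|)$ fixed at the start of Section~\ref{sec:Butterflies-checkerboard} and by Notation~\ref{notation:idealpolyhedron2}. The content of the proposition therefore reduces to one line of bookkeeping once Proposition~\ref{prop:disjoint-half-space} and Lemma~\ref{lemma:Ideal-Halfspace} are both in hand; all of the genuine geometric work (the CAT(0) common perpendicular argument using the link structure at $\tilde v$, and the standard ball pair structure from quasi-fuchsianness of checkerboard surfaces) has already been carried out in those two preparatory statements.
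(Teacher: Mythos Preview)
Your proposal is correct and matches the paper's approach exactly: the paper simply states that Proposition~\ref{prop:disjoint-open-disk} follows from Proposition~\ref{prop:disjoint-half-space} together with Lemma~\ref{lemma:Ideal-Halfspace}, without giving a separate proof. One small slip in wording: Proposition~\ref{prop:disjoint-half-space} is already stated for a single polyhedron $\tPoly$, so there is nothing to ``specialize'' there; it is only Lemma~\ref{lemma:Ideal-Halfspace} that is stated for possibly distinct $\tPoly_1,\tPoly_2$ and then applied with $\tPoly_1=\tPoly_2=\tPoly$.
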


\begin{remark}
\label{rem:disjoint-open-disk}
{\rm
In Lemma~\ref{lemma:Ideal-Halfspace} and 
Proposition~\ref{prop:disjoint-open-disk},
the converses also hold.
Actually,
Proposition~\ref{prop:disjoint-open-disk} reflects only a small
part of a very interesting statement in Agol's slide~\cite{Agol2001},
which we read as follows.
Aitchison and Rubinstein (cf.~\cite{Aitchson-Rubinstein_90a}) 
studied patterns of the intersections of the limit circles
$\{\partial_{\infty}\Sigma\}$
of the checkerboard hyperplanes 
in the ideal boundary 
$\partial_{\infty}\tXCubing$ of the CAT(0) space $\tXCubing$:
Put a circle around each region of $\Diagram$,
then the limit circles
$\{\partial_{\infty}(\Sigma_{R}(\tPoly))\}_R$
\lq\lq have this intersection pattern'' in $\partial_{\infty}\tXCubing$
(see Figure~\ref{fig:intersection-pattern}).
We hope to give more detailed interpretation of 
this statement
in a subsequent paper.
}
\end{remark}

\begin{figure}
  \centering
  \begin{overpic}[width=0.7\columnwidth]{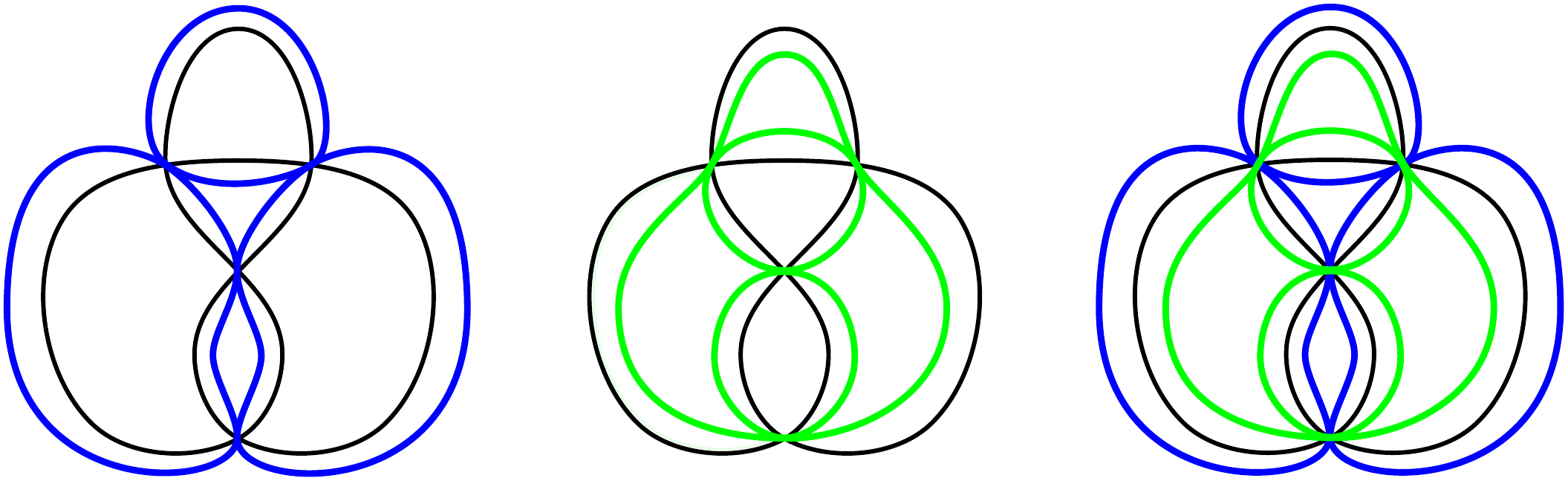}
    \put(35,-13){(a)}
    \put(132,-13){(b)}
    \put(230,-13){(c)}
  \end{overpic}
  \vspace{4mm}
  \caption{Put a circle $C_R$ around each region $R$
  of the diagram $\Diagram$, so that $C_R$ bounds a disk containing $R$
  and passes though the vertices on $\partial R$.
  The figures (a) and (b) illustrates the circles $\{C_R\}$
  where $R$ runs over the black or white regions, respectively.
  By overlaying these two figures, we obtain the figure (c).
  According to \cite{Agol2001}, the figure (c) \lq\lq illustrates''
  the intersection pattern of the limit circles $\{\partial_{\infty}(\Sigma_{R}(\tPoly))\}_R$, where $R$ runs over the regions of $\Diagram$.}
  \label{fig:intersection-pattern}
\end{figure}

The following characterization of butterflies
is used repeatedly
in the proof of Theorem~\ref{Theorem1}.

\begin{figure}
  \centering
  \begin{overpic}[width=0.3\columnwidth]{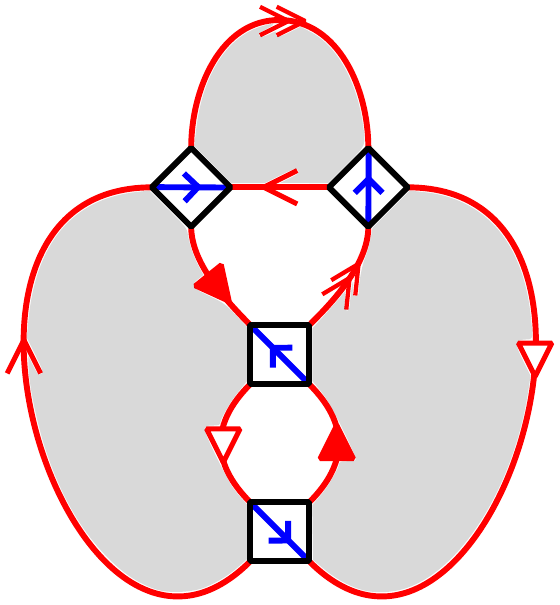}
  \end{overpic}
  \caption{The truncation $\tPoly_0$ of the checkerboard ideal polyhedron 
  $\tPoly=\tPoly_+$.
  The blue diagonal arcs in the squares project to meridians.}
  \label{fig:CB2}
\end{figure}

\begin{lemma}
\label{lem:charaterizing-butterfly}
Let $p\in \PFix(G)$ be a parabolic fixed point and
$\tPoly$ an ideal checkerboard polyhedron which has $p$ as an ideal vertex.
Let $c$ be the vertex of $\Diagram$ corresponding to $p$,
and let $\{R^-,R^+\}$ be a pair of regions
that contain $c$ and have the same color. 
Then, after replacing $R^{\pm}$ with $R^{\mp}$ if necessary, 
the pair $\{\Delta_{R^-}(\tPoly), \Delta_{R^+}(\tPoly)\}$ forms a butterfly $\BF(p)$ at $p$ (in the sense of Notation~$\ref{notation:butterfly}$).
Conversely, every butterfly is obtained in this way.
\end{lemma}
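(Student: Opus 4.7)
My plan is to analyze the local structure of $\tPoly$ at the ideal vertex $p$ by slicing with the horosphere $\Bplane_p$ and exploiting the parallel families $\{\ell_j(p)\}_{j\in\ZZ}$ introduced in Section~\ref{sec:meridian}. Since $\Diagram$ is prime alternating, the four regions incident to $c$ are distinct and alternate in color around $c$; by Proposition~\ref{prop:checkerboard-polyhedron} they correspond to the four faces of $\tPoly$ at $p$, so the cross-section $Q:=\tPoly\cap\Bplane_p$ is a convex quadrilateral whose four edges lie in the four geodesic lines $\Sigma_R\cap\Bplane_p$ (each a single geodesic line by Proposition~\ref{pro:connected-intersection}(2)). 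The two black edges lie in parallel members of $\{\ell_j^b(p)\}_{j\in\ZZ}$, and the two white edges in parallel members of $\{\ell_k^w(p)\}_{k\in\ZZ}$.

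Assume $R^{\pm}$ are black, and write $\Sigma_{R^-}(\tPoly)=\Sigma_{j_1}^b$ and $\Sigma_{R^+}(\tPoly)=\Sigma_{j_2}^b$ with $j_1<j_2$. The key claim is $j_2=j_1+1$. Suppose otherwise and pick $j$ with $j_1<j<j_2$. The line $\ell_j^b=\Sigma_j^b\cap\Bplane_p$ is parallel to and strictly between the two black edges of $Q$, so it crosses the relative interior of $Q$. Any such crossing point $x$ lies in $\Sigma_j^b\subset\tSbw$ on the one hand, and in $\interior_{\tXCubing}\tPoly$ on the other, since $\Bplane_p$ passes transversally through the open $3$-dimensional ideal cone of $\tPoly$ at $p$, so the relative interior of $Q$ lies in $\interior\tPoly$. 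This contradicts the fact that $\tPoly$ is the closure of a connected component of $\tXCubing\setminus\tSbw$; thus $j_2=j_1+1$.

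Once $\{\Sigma_{R^-}(\tPoly),\Sigma_{R^+}(\tPoly)\}=\{\Sigma_{j_1}^b,\Sigma_{j_1+1}^b\}$ is established, the butterfly identification follows from Lemma~\ref{lem:balls}(1-b): $\tPoly$ being squeezed between $\Sigma_{j_1}^b$ and $\Sigma_{j_1+1}^b$ gives $\tPoly\subset\bar B_{j_1}^+\cap\bar B_{j_1+1}^-$, so the hyper-half-spaces opposite $\tPoly$ across these hyperplanes are $B_{j_1}^-$ and $B_{j_1+1}^+$. After relabeling $R^{\pm}$ if necessary, this yields $\{\Delta_{R^-}(\tPoly),\Delta_{R^+}(\tPoly)\}=\{\Delta_{j_1}^-,\Delta_{j_1+1}^+\}$, which is the butterfly of color black determined by $\Sigma_{j_1}^b$. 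For the converse, a black butterfly $\{\Delta_{j_0}^-,\Delta_{j_0+1}^+\}$ comes from two consecutive black hyperplanes $\Sigma_{j_0}^b,\Sigma_{j_0+1}^b$ through $p$; the parallel strip on $\Bplane_p$ between $\ell_{j_0}^b$ and $\ell_{j_0+1}^b$ is subdivided by the transverse white lines into unit squares of $\Cubing|_{\Bplane_p}$ (cf.~Remark~\ref{remark:cubing}(3)), and by Proposition~\ref{prop:polyhedral-decomposition} each such square arises as $\tPoly\cap\Bplane_p$ for some checkerboard ideal polyhedron $\tPoly$ having $p$ as an ideal vertex with black faces on $\Sigma_{j_0}^b$ and $\Sigma_{j_0+1}^b$; applying the forward direction to this $\tPoly$ recovers the given butterfly. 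The white case is entirely analogous.

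The main technical point is the containment $\interior_{\Bplane_p}Q\subseteq\interior_{\tXCubing}\tPoly$ used in the key step. Intuitively $\Bplane_p$ is a transverse cross-section through the ideal cusp of $\tPoly$ at $p$, but one must carefully distinguish the ambient cubed complex $\tXCubing$ (in which $\Bplane_p$ is merely a peripheral hyperplane and $\tPoly$ extends through it toward $p$) from the exterior $\tCubing$ (in which $\Bplane_p$ is a boundary component). Only in $\tXCubing$ are the relative-interior points of $Q$ bona fide interior points of $\tPoly$, and this is what makes the contradiction argument work.
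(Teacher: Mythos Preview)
Your proof is correct and follows essentially the same approach as the paper: both work with the square cross-section $Q=\tPoly\cap\Bplane_p$ and the line family $\{\ell_j(p)\}$ on $\Bplane_p$, and the converse arguments are virtually identical. The only difference is a minor technical variation in the forward direction: the paper observes directly that a diagonal of $Q$ projects to a meridional loop, so $\mu_p$ carries $\Sigma_{R^-}(\tPoly)$ to $\Sigma_{R^+}(\tPoly)$ (and the corresponding half-spaces appropriately), whereas you instead show the two same-colored hyperplanes are consecutive members $\Sigma_{j_1},\Sigma_{j_1+1}$ by a separation argument and then invoke Lemma~\ref{lem:balls}.
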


\begin{proof}
Consider the compact right-angled polyhedron $\tPoly_0$ obtained from $\tPoly$
through truncation along the peripheral planes $\{\Bplane_p\}$
(see Figure~\ref{fig:CB2}).
Then, for each ideal vertex $p$ of $\tPoly$,
the intersection $\tPoly\cap\Bplane_p$ forms a square in $\partial \tPoly_0$,
one of whose diagonals projects to a meridian (see Figure~\ref{fig:CB2}).
By using this fact, we can see that 
the meridian $\mu_p\in G$ maps 
the checkerboard hyperplane $\Sigma_{R^-}(\tPoly)$ to 
the checkerboard hyperplane $\Sigma_{R^+}(\tPoly)$,
if necessary after replacing $R^{\pm}$ with $R^{\mp}$.
We can further see that 
$\mu_p$ maps he ball pair $(\bHalf_{R^-}(\tPoly),\bHalf_{R^-}^{\mathfrak{c}}(\tPoly))$
to the ball pair $(\bHalf_{R^+}^{\mathfrak{c}}(\tPoly),\bHalf_{R^+}(\tPoly))$.
This implies that
$\{\Delta_{R^-}(\tPoly), \Delta_{R^+}(\tPoly)\}$ is a butterfly at $p$.

To see the converse,
let  $\BF(p)=\{\Delta_j^-, \Delta_{j+1}^+\}=\{\Delta_j^-(p), \Delta_{j+1}^+(p)\}$ be a butterfly,
where we use notations in Definition~\ref{def:butterfly}.
Consider the infinite strip in the peripheral plane (or the horosphere) 
$\Bplane_p\subset \pu^{-1}(\partial \Cubing)$
bounded by the lines 
$\ell_j(p)=\Sigma_j(p)\cap \Bplane_p$ and 
$\ell_{j+1}(p)=\Sigma_{j+1}(p)\cap \Bplane_p$
(see Figure~\ref{fig:Butterfly}).
Let $\tPoly$ be a checkerboard ideal polyhedron 
which has $p$ as an ideal vertex, such that
$\tPoly\cap\Bplane_p$ is a square contained in the strip.
Then there are regions $R^-$ and $R^+$ of $\Diagram$
containing the vertex $c$ of $\Diagram$ corresponding to the ideal vertex $p$
of $\tPoly$,
such that $\Sigma_j(p)=\Sigma_{R^-}(\tPoly)$
and $\Sigma_{j+1}(p)=\Sigma_{R^+}(\tPoly)$.
Then we see $\BF(p)=\{\Delta_j^-, \Delta_{j+1}^+\}=
\{\Delta_{R^-}(\tPoly), \Delta_{R^+}(\tPoly)\}$.
\end{proof}

At the end of this subsection,
we prove the following elementary lemma
concerning prime alternating diagrams
of hyperbolic alternating links,
which is used in Section~\ref{sec:proof-maintheorem}.

%\begin{figure}
%  \centering
%  \begin{tabular}{c}
%  \begin{subfigure}{0.17\columnwidth}
%    \begin{overpic}[width=\columnwidth]{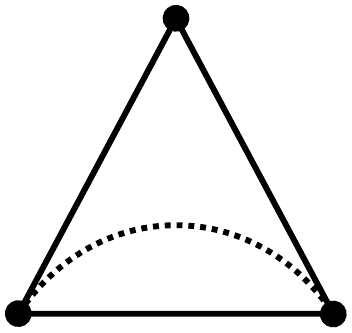}
%      \put(27,9){$R_w$}
%    \end{overpic}
%    \subcaption{}
%    \label{fig:graph_a}
%  \end{subfigure}
%  \hspace{8mm}
%  \begin{subfigure}{0.17\columnwidth}
%    \begin{overpic}[width=\columnwidth]{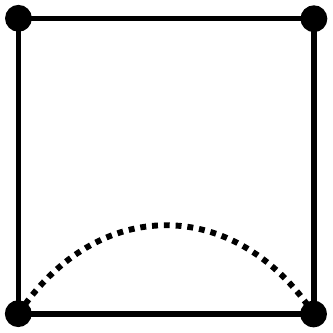}
%      \put(27,9){$R_w$}
%    \end{overpic}
%    \subcaption{}
%    \label{fig:graph_b}
%  \end{subfigure}
%  \hspace{8mm}
%  \begin{subfigure}{0.17\columnwidth}
%    \begin{overpic}[width=\columnwidth]{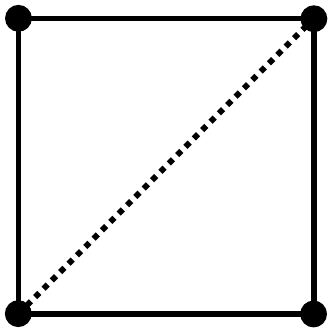}
%      \put(37,19){$R_w$}
%    \end{overpic}
%    \subcaption{}
%    \label{fig:graph_c}
%  \end{subfigure}
%  \end{tabular}
%  \caption{The plane graph $\Graph$ dual to the black regions.
%  The complementary region of $\Graph$ labeled $R_w$
%  determines the desired white region $R_w$.
%  }
%  \label{fig:graph}
%\end{figure}

\begin{lemma}
\label{lem:link-diagram}
Let $\Diagram$ be a prime alternating diagram of a hyperbolic 
alternating link $L\subset S^3$.
Then the following hold.
\begin{enumerate}
\item[{\rm (1)}]
$\Diagram$ has at least three black regions.
\item[{\rm (2)}]
Suppose $\Diagram$ has precisely three black regions.
Then there is a white region $R_w$,
such that $R_w$ is a bigon
and the black regions adjacent to $R_w$ are distinct
(to be precise, the black regions that contain one of the two edges of $R_w$
are distinct). 
\item[{\rm (3)}]
Suppose $\Diagram$ has precisely four black regions.
Then there is a white region $R_w$,
such that either
(a) $R_w$ is a bigon and the black regions 
adjacent to $R_w$ are distinct,
or (b) $R_w$ is a $3$-gon and the black regions
adjacent to $R_w$ are all distinct.
\end{enumerate}
Parallel statements also hold when black and white are interchanged.
\end{lemma}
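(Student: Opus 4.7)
The plan is a combinatorial analysis of $\Diagram$ based on Euler's formula, reducedness, and the exclusion of $(2,n)$-torus link diagrams by the hyperbolicity of $L$. Let $v$ be the number of crossings; since $\Diagram$ is a connected $4$-valent plane graph it has $e = 2v$ edges and $b + w = v + 2$ regions. At each crossing the two black corners are diagonally opposite, so the sum of boundary-edge counts over the black (resp.\ white) regions equals $2v$. Reducedness is equivalent to every region having at least two boundary edges, and also to the Tait graph $G_b$ (resp.\ $G_w$), whose vertices are the black (resp.\ white) regions and whose edges are the crossings, being loopless. For assertion (1), if $b = 1$ then every crossing is a loop of $G_b$ based at the unique vertex, contradicting reducedness; and if $b = 2$ then $G_b$ consists of two vertices joined by $v$ parallel edges, corresponding to the standard diagram of the $(2, v)$-torus link, which is not hyperbolic. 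Hence $b \ge 3$, and by symmetry $w \ge 3$.

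For assertion (2), suppose $b = 3$, so $w = v - 1$ and the boundary-edge counts of the white regions sum to $2v$. If every white region has at least three boundary edges, then $3(v-1) \le 2v$, forcing $v \le 3$. The analogous counting over the three black regions gives $3 \cdot 2 \le 2v$, hence $v \ge 3$, so $v = 3$, $w = 2$, both white regions are triangles, all three black regions are bigons, and $\Diagram$ is the standard diagram of the trefoil --- excluded by hyperbolicity. Hence some white region is a bigon, and its two boundary edges lie on distinct black regions because otherwise $G_b$ would contain a loop. For assertion (3), suppose $b = 4$, so $w = v - 2$. If every white region has at least four boundary edges, then $4(v-2) \le 2v$, i.e.\ $v \le 4$; the black-region count gives $v \ge 4$, so $v = 4$, all four black regions are bigons, and $G_b$ is a connected loopless multigraph on $4$ vertices with $4$ edges in which every vertex has degree exactly $2$ --- hence the $4$-cycle. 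This produces the standard diagram of the $(2,4)$-torus link, again excluded by hyperbolicity. Therefore some white region $R_w$ has at most three boundary edges, yielding either a bigon (case (a)) or a triangle (case (b)). At each vertex of $R_w$ the two incident edges lie on distinct black regions; otherwise both black corners at that vertex would belong to the same black region, producing a loop in $G_b$. For a bigon this gives two distinct adjacent black regions; for a triangle, applying the observation at each of its three vertices shows that no two of the three adjacent black regions coincide, so all three are pairwise distinct. The parallel statements with colors swapped follow at once by the color-symmetry of the argument.

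The main (and only nontrivial) step is the identification of the extremal configurations $(b, v) = (3, 3)$ and $(4, 4)$ with the trefoil and $(2,4)$-torus link diagrams respectively, which rests on the elementary observation that a connected loopless multigraph on $n$ vertices with $n$ edges in which every vertex has degree $2$ must be the $n$-cycle.
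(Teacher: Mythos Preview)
Your proof is correct and rests on the same core ingredients as the paper's: the Tait graph $G_b$ (the paper calls it $\Graph$), looplessness from reducedness, and the identification of the extremal configurations as diagrams of $(2,n)$-torus links, which hyperbolicity rules out.

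The organisation differs, however, and yours is arguably cleaner. The paper argues structurally: with $b=3$ (resp.\ $b=4$) it asserts that $\Graph$ contains a $3$-cycle (resp.\ $4$-cycle), then adds an edge and reads off a small complementary face from a figure. You instead run a uniform Euler/handshake count: with $b+w=v+2$ and $\sum_{\text{white}}|\partial R|=2v$, assuming all white regions large forces $v$ down to $b$, pinning $G_b$ to a single $b$-cycle and hence a torus link. This avoids the somewhat informal ``$\Graph$ has a $4$-cycle'' step and makes the argument work for any small white region rather than a specifically chosen innermost one. Your justification that the black regions adjacent to a white bigon or $3$-gon are pairwise distinct --- because consecutive edges of $R_w$ meet at a crossing whose two black corners are exactly those regions, so equality would give a loop in $G_b$ --- is also more explicit than the paper's reference to figures. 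One small remark: your claimed equivalence ``reducedness $\Leftrightarrow$ every region has at least two edges'' is not literally an equivalence in general, but the implication you actually use (reduced $\Rightarrow$ no monogons and $G_b$, $G_w$ loopless) is correct and is all the argument needs.
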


\begin{figure}
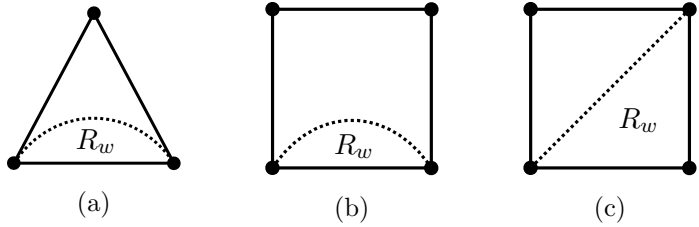

  \centering
  \begin{tabular}{c}
  \begin{subfigure}{0.17\columnwidth}
    \begin{overpic}[width=\columnwidth]{graph_a.pdf}
      \put(27,9){$R_w$}
    \end{overpic}
    \subcaption{}
    \label{fig:graph_a}
  \end{subfigure}
  \hspace{8mm}
  \begin{subfigure}{0.17\columnwidth}
    \begin{overpic}[width=\columnwidth]{graph_b.pdf}
      \put(27,9){$R_w$}
    \end{overpic}
    \subcaption{}
    \label{fig:graph_b}
  \end{subfigure}
  \hspace{8mm}
  \begin{subfigure}{0.17\columnwidth}
    \begin{overpic}[width=\columnwidth]{graph_c.pdf}
      \put(37,19){$R_w$}
    \end{overpic}
    \subcaption{}
    \label{fig:graph_c}
  \end{subfigure}
  \end{tabular}
  \caption{The plane graph $\Graph$ dual to the black regions.
  The complementary region of $\Graph$ labeled $R_w$
  determines the desired white region $R_w$.
  }
  \label{fig:graph}
\end{figure}

\begin{proof}
Let $\Graph$ be the plane graph 
whose vertices are the black regions and 
whose edges correspond to the crossings.
Observe that $\Graph$ is connected and has no loop edge nor a cut edge,
because the diagram $\Diagram$ is connected and prime.

(1)
By using the above observation, we see that $\Diagram$ has at least two black regions.
If $\Diagram$ has only two black regions, then  
$L$ is the $(2,\pm n)$-torus link, where $n$ is the number of the edges of $\Graph$,
a contradiction.
Hence $\Diagram$ has at least three black regions.

(2)
Suppose $\Diagram$ has precisely three black regions.
Then, by using the observation above,
we see
that $\Graph$ has a $3$-cycle.
If $\Graph$ is equal to the $3$-cycle then $L$ is the $(2,\pm 3)$-torus knot, a contradiction.
Hence, there is an additional edge and so $\Graph$ has multiple edges.
Then we see that the white region, $R_w$, determined by an innermost pair
of multiple edges satisfies the desired condition
(see Figure~\ref{fig:graph_a}).

(3)
Suppose $\Diagram$ has precisely four black regions.
Then, as in (2),
we see that $\Graph$ has a $4$-cycle.
Since $L$ is hyperbolic, $\Graph$ is strictly bigger than the
$4$-cycle. 
Thus we see that there is a complementary region of $\Graph$
that is either a bigon or a triangle.
Then the white region, $R_w$, determined by 
a complementary bigon or triangle
satisfies the desired condition (a) or (b), accordingly
(see Figure~\ref{fig:graph} (b,c)).
\end{proof}

\section{Proof of Theorem~\ref{Theorem1} and~\ref{Theorem1-g}}
\label{sec:proof-maintheorem}

In this section, we first prove Theorem~\ref{Theorem1-g}
and then prove Theorem~\ref{Theorem1}.

\begin{proof}[Proof of Theorem~\ref{Theorem1-g}]
Let $L$, $\Diagram$, $\{\mu_1,\mu_2\}$ and $\gamma$ be as in the setting of 
the theorem, and 
let $\{p_1,p_2\}$ be the pair
of parabolic fixed points corresponding to $\{\mu_1,\mu_2\}$
(cf.~Lemma~\ref{lem:meridian-parabolic2}(1)).
We regard $\gamma$ living in the non-positively curved cubed complex 
$\Cubing\subset \XCubing$.
Then there is a lift $\tilde \gamma$ of $\gamma$
in the universal cover 
$\tCubing \subset \tXCubing$
which joins the peripheral planes
$\Bplane_{p_1}$ and $\Bplane_{p_2}$
centered at $p_1$ and $p_2$, respectively  
(cf.~Lemma~\ref{lem:meridian-parabolic2}(2)).
We may assume $\tilde\gamma$ satisfies the following conditions.
\begin{enumerate}
\item[(A1)]
$\tilde\gamma$ is an arc properly embedded in $\tCubing \subset \tXCubing$
that is disjoint from $\tilde\crossing=\tSb\cap\tSw$
and transversal to $\tSbw=\tSb\cup\tSw$.
Moreover, for $i=1,2$, the endpoint $x_i:= \partial \tilde\gamma\cap \Bplane_{p_i}$ 
is disjoint from the family of lines $\tSbw\cap \Bplane_{p_i}$
(see Figure~\ref{fig:Butterfly}(a)).
\item[(A2)]
The cardinality $\iota(\tilde\gamma)$ of 
$\tilde\gamma\cap \tSbw=\tilde\gamma\cap (\tSbw\setminus \tilde\crossing)$ is minimal among all arcs properly embedded
in $\tCubing$ joining the boundary components
$\Bplane_{p_1}$ and $\Bplane_{p_2}$ of $\tCubing$
and satisfying the condition (A1).
\end{enumerate}
We orient $\tilde\gamma$ so that
$x_1\in \Bplane_{p_1}$ and 
$x_2\in \Bplane_{p_2}$ are
the initial point and the terminal point, respectively.

In the remainder of the paper, we use the following terminology.
For a connected topological space $Y$ and its 
connected  subspaces $Y_1$, $Y_2$ and $Z$,
we say that $Z$ 
{\it separates} $Y_1$ and $Y_2$ (in $Y$),
if $Y_1$ and $Y_2$ are contained in distinct components of 
$Y\setminus Z$.
We say that $Z$ {\it weakly separates} 
$Y_1$ and $Y_2$ (in $Y$),
if $Y_1$ and $Y_2$ are contained in the closures of distinct components of 
$Y\setminus Z$.

\medskip

Case I. $\iota(\tilde\gamma)>0$.
Throughout the treatment of this case, 
geodesics are those with respect to the CAT(0) metric
of the cubed complex $\tXCubing$.  

\begin{lemma}
\label{lem:replacing}
Any checkerboard hyperplane intersects $\tilde\gamma$ in at most one point.
\end{lemma}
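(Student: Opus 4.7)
The plan is a bigon-elimination argument by contradiction with the minimality condition (A2). Suppose some checkerboard hyperplane $\Sigma$ meets $\tilde\gamma$ in at least two points, and choose $y_1, y_2 \in \Sigma \cap \tilde\gamma$ that are consecutive along $\tilde\gamma$ (i.e.\ the open subarc of $\tilde\gamma$ between them is disjoint from $\Sigma$). By Proposition~\ref{prop:cubing2X}, $\Sigma$ is convex in the CAT(0) cubed complex $\tXCubing$ and bounds two closed convex hyper-half-spaces $\Half$ and $\Half^{\mathfrak{c}}$. The transversality in (A1) together with the consecutiveness of $y_1, y_2$ forces the interior of the subarc $\tilde\gamma[y_1,y_2]$ to lie in exactly one of $\interior\Half$ or $\interior\Half^{\mathfrak{c}}$, say $\interior\Half$.

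Next I would construct a replacement arc $\tilde\gamma'$ by modifying $\tilde\gamma$ only on a small neighborhood of $\tilde\gamma[y_1, y_2]$. Using the convex projection $\pi_\Sigma:\tXCubing \to \Sigma$, project $\tilde\gamma[y_1,y_2]$ to a path $\alpha \subset \Sigma$ from $y_1$ to $y_2$; then push $\alpha$ slightly into $\interior\Half^{\mathfrak{c}}$ using the product structure of a hyperplane neighborhood in the cubed complex to obtain a path $\alpha'$ with interior disjoint from $\Sigma$; and splice $\alpha'$ into $\tilde\gamma$ by small connecting arcs near $y_1, y_2$ that stay in $\Half^{\mathfrak{c}}$. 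After a further small transversalization to $\tSbw$, keeping the endpoints $x_1, x_2$ and disjointness from $\tilde\crossing$, I obtain an arc $\tilde\gamma'$ in $\tCubing$ satisfying (A1) whose intersections with $\Sigma$ have lost the two points $y_1, y_2$ of the original.

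The crucial verification is that no new $\tSbw$-crossings are introduced, so that $\iota(\tilde\gamma') < \iota(\tilde\gamma)$ strictly, contradicting (A2). Any checkerboard hyperplane $\Sigma'$ met by $\tilde\gamma'$ in the modified region must either already be met by $\tilde\gamma[y_1, y_2]$ itself or must intersect the thin slab between $\alpha$ and $\alpha'$; for a sufficiently small push-off, the latter forces $\Sigma \cap \Sigma' \neq \emptyset$. In that case Proposition~\ref{pro:connected-intersection}(1-b) ensures that $\Sigma \cap \Sigma'$ is a geodesic line $\ell$ meeting $\Sigma$ orthogonally, and Corollary~\ref{cor:no-branching}(2) ensures that each crossing of $\alpha$ with $\ell$ is locally separating inside $\Sigma$; this gives an injective correspondence between new crossings of $\tilde\gamma'$ with $\Sigma'$ and old crossings of $\tilde\gamma[y_1, y_2]$ with $\Sigma'$.

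The main obstacle I expect is the last bookkeeping step, namely making precise this correspondence between crossings of $\alpha$ (together with its push-off) and crossings of $\tilde\gamma[y_1,y_2]$. In a general CAT(0) space such a comparison would be delicate, but the cubed structure renders it tractable: one can localize to the interior of single cubes, where the hyperplane arrangement is Euclidean and the orthogonality guaranteed by Proposition~\ref{pro:connected-intersection} makes the slab-crossing analysis transparent. Once this is established, the strict decrease $\iota(\tilde\gamma') < \iota(\tilde\gamma)$ contradicts (A2), so no such $\Sigma$ can exist.
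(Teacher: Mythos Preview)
Your overall strategy---bigon elimination via the minimality hypothesis (A2)---is exactly the paper's, and the structure of the argument is sound. The difference lies in the choice of replacement arc: you project $\tilde\gamma[y_1,y_2]$ to $\Sigma$ via $\pi_\Sigma$ to obtain $\alpha$, whereas the paper simply takes the geodesic segment $[y_1,y_2]\subset\Sigma$ (which exists since $\Sigma$ is convex). This single change makes the bookkeeping transparent: for any other hyperplane $\Sigma'$ meeting $[y_1,y_2]$, one has $\ell=\Sigma\cap\Sigma'$ a geodesic line, and Corollary~\ref{cor:no-branching}(1),(2) applied to the \emph{geodesic} $[y_1,y_2]$ shows that $\ell\cap[y_1,y_2]$ is a single transversal point, so $\Sigma'$ separates $y_1$ from $y_2$ in $\tXCubing$ and therefore must meet the original subarc $\tilde\gamma[y_1,y_2]$. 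One then gets $\iota(\tilde\gamma')\le\iota(\tilde\gamma)-2$ immediately.

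Your projection route can be made to work, but the argument as written has a gap. The invocation of Corollary~\ref{cor:no-branching}(2) for ``each crossing of $\alpha$ with $\ell$'' is not justified: that corollary applies to a geodesic $\ell'$ in $\Sigma$, and your $\alpha=\pi_\Sigma(\tilde\gamma[y_1,y_2])$ is merely a continuous path, which could a priori touch $\ell$ tangentially or wiggle across it repeatedly. To rescue the correspondence you would need to prove that $\pi_\Sigma^{-1}(\ell)=\Sigma'$ in $\tXCubing$ (so that $\alpha(t)\in\ell$ exactly when $\tilde\gamma(t)\in\Sigma'$); this is true in the present $3$-dimensional cubed setting because of the orthogonality in Proposition~\ref{pro:connected-intersection}(1-b), but it requires an argument analogous to Corollary~\ref{cor:no-branching}(3) and is not the statement you cite. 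Replacing $\alpha$ by the geodesic $[y_1,y_2]$ eliminates this issue entirely and is what the paper does.
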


\begin{proof}
Assume that there is a checkerboard hyperplane $\Sigma$ 
which intersects $\tilde \gamma$
in more than one points.
Pick two successive intersection points $z_1$ and $z_2$ of
$\tilde\gamma$ with $\Sigma$,
and let $\tilde\gamma_{0}$ be the subarc of $\tilde\gamma$ 
bounded by $z_1$ and $z_2$.
Since $\Sigma$ is convex (Proposition~\ref{prop:cubing2X}),
the geodesic segment $[z_1,z_2]$ is
contained in $\Sigma$.

\begin{claim}
\label{claim:pair-intersection}
If a checkerboard hyperplane $\Sigma'$
different from $\Sigma$
intersects $[z_1,z_2]$, then 
(i) $\Sigma'\cap [z_1,z_2]$ consists of a single transversal intersection point in $(z_1,z_2)$
and (ii) $\Sigma'\cap \interior\tilde\gamma_{0}\ne \emptyset$.
\end{claim}

\begin{proof}
Let $\Sigma'\ne\Sigma$ be a checkerboard hyperplane 
which intersects $[z_1,z_2]$.
Then $\ell:=\Sigma\cap\Sigma' \supset [z_1,z_2]\cap\Sigma'\ne \emptyset$,
and so $\ell$ is a geodesic line 
(cf.~Proposition~\ref{pro:connected-intersection}(1))
which intersects $[z_1,z_2]$.
Since $z_1,z_2 \not\in \ell$ by the condition (A1), 
$\Sigma'\cap [z_1,z_2]=\ell\cap [z_1,z_2]$ is a singleton
$\{y\}$ for some $y\in (z_1,z_2)$
by Corollary~\ref{cor:no-branching}(1).
By Corollary~\ref{cor:no-branching}(2),
the two components of $[z_1,z_2]\setminus \{y\}$
is contained in distinct components of $\Sigma\setminus\ell$.
Hence the condition (i) holds.
This also implies that $\Sigma'$ separates 
the endpoints $z_1$ and $z_2$ of $\tilde\gamma_{0}$.
Hence (ii) also holds.
\end{proof}

Let $\tilde\gamma'$ be an arc obtained from $\tilde\gamma$ by replacing 
$\tilde\gamma_{0}$ with $[z_1,z_2]$
and then pushing (a neighborhood in the resulting arc of) $[z_1,z_2]$
off $\Sigma$,
by using a regular neighborhood of $\Sigma$.
Then $\tilde\gamma'$ is properly homotopic to $\tilde\gamma$,
and we may assume $\tilde\gamma'$ satisfies the condition (A1).
Moreover, Claim~\ref{claim:pair-intersection} implies that 
$\iota(\tilde\gamma')\le \iota(\tilde\gamma)-2$, a contradiction.
\end{proof}

\begin{figure}
  \centering
  \begin{overpic}[width=0.4\columnwidth]{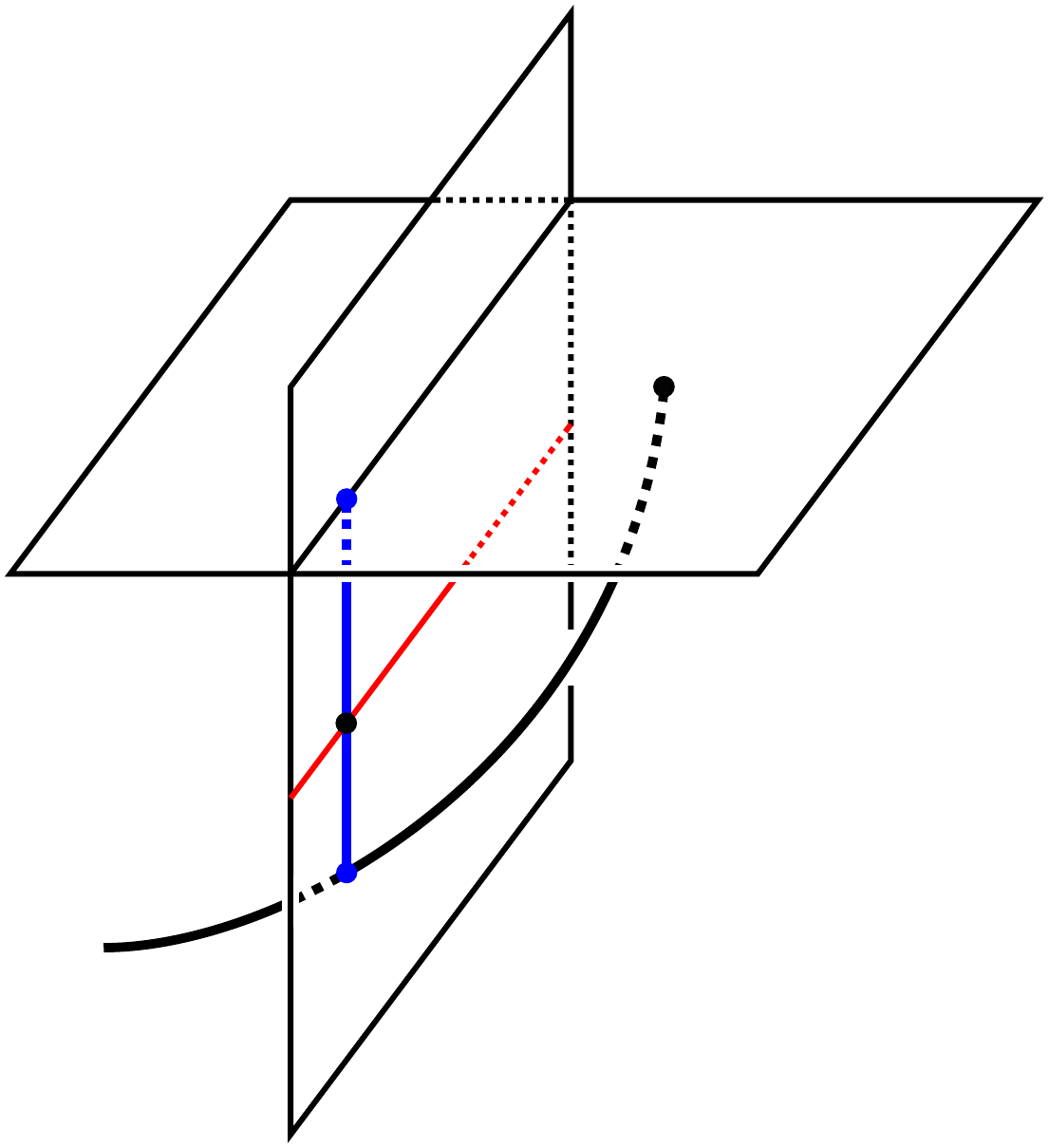}
    \put(7,28){$\tilde{\gamma}$}
    \put(-8,46){\textcolor{red}{$\ell = \Sigma \cap \Sigma^\prime$}}
    \put(56,59){$y$}
    \put(50,32){\textcolor{blue}{$z$}}
    \put(47,108){\small \textcolor{blue}{$x_1^\prime$}}
    \put(90,70){$\tilde{\gamma}_0$}
    \put(89,153){$\Sigma$}
    \put(104,113){$x_1$}
    \put(123,130){$H_{p_1}$}
  \end{overpic}
  \caption{If a checkerboard hyperplane $\Sigma^\prime \neq \Sigma$ intersects $[x_1^\prime,z]$, then it separates $H_{p_1}$ and $z$, and hence intersects $\tilde{\gamma}_0$.}
  \label{fig:near-boundary-plane}
\end{figure}

We now prove a key lemma for the treatment of Case 1.

\begin{lemma}
\label{lem:sliding}
Any checkerboard hyperplane which intersects $\tilde\gamma$ 
separates $\Bplane_{p_1}$ and $\Bplane_{p_2}$ in $\tXCubing$.
\end{lemma}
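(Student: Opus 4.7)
The plan is to reduce the statement to showing that $\Sigma \cap \Bplane_{p_i} = \emptyset$ for $i=1,2$. Granted this, the separation is immediate: by Proposition~\ref{prop:cubing2X}, $\Sigma$ splits $\tXCubing$ into two closed convex half-spaces, so $\tXCubing \setminus \Sigma$ has exactly two components. By Lemma~\ref{lem:replacing}, $\tilde\gamma \cap \Sigma$ consists of the single point $z$; by condition (A1) the crossing is transverse, so $x_1$ and $x_2$ lie in distinct components of $\tXCubing \setminus \Sigma$. Since each $\Bplane_{p_i}$ is connected and (by the disjointness) disjoint from $\Sigma$, it lies entirely in the component containing $x_i$, and the two horospheres are thus separated.

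For the disjointness I would argue by contradiction using the minimality (A2). Suppose, without loss of generality, that $\Sigma \cap \Bplane_{p_1} \ne \emptyset$. By Proposition~\ref{pro:connected-intersection}(2), this intersection is a geodesic line $\ell_1$ along which $\Sigma$ and $\Bplane_{p_1}$ meet orthogonally. Let $x_1'$ be the projection of $x_1$ onto $\ell_1$ inside the convex set $\Bplane_{p_1}$ (Proposition~\ref{prop:cubing2XE}), and let $z$ be the unique point of $\tilde\gamma \cap \Sigma$ (Lemma~\ref{lem:replacing}). The geodesic segment $[x_1', z]$ is contained in $\Sigma$ by convexity (Proposition~\ref{prop:cubing2X}).

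I would then construct a new properly embedded arc $\tilde\gamma'$ joining $\Bplane_{p_1}$ and $\Bplane_{p_2}$ and satisfying (A1), with $\iota(\tilde\gamma') < \iota(\tilde\gamma)$, as follows. Pick a generic point $\tilde x_1 \in \Bplane_{p_1}$ close to $x_1'$ and on the side of $\ell_1$ opposite to $x_1$, so that $\tilde x_1$ lies in the open half-space of $\Sigma$ containing the tail $\tilde\gamma_1$ of $\tilde\gamma$ from $z$ to $x_2$. Replace the subarc $\tilde\gamma_0$ of $\tilde\gamma$ from $x_1$ to $z$ by a small perturbation of $[x_1', z]$ pushed into that same open half-space, patched to $\tilde x_1$ near the start and to $\tilde\gamma_1$ near $z$. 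The key combinatorial input, supplied by Corollary~\ref{cor:no-branching}(1)--(2), is that any checkerboard hyperplane $\Sigma' \ne \Sigma$ meeting $[x_1', z]$ does so transversely at a single interior point lying on the geodesic line $\Sigma \cap \Sigma'$, with the two components of $[x_1',z] \setminus \{y\}$ landing in distinct components of $\Sigma \setminus (\Sigma \cap \Sigma')$. Combined with the choice of $\tilde x_1$, this implies that every such $\Sigma'$ separates $\tilde x_1$ from $z$ and hence already intersects $\tilde\gamma_0$, as illustrated in Figure~\ref{fig:near-boundary-plane}. Consequently the new part of $\tilde\gamma'$ introduces no new intersections with $\tSbw$ beyond those of $\tilde\gamma_0$, while the crossing at $z$ with $\Sigma$ is removed, giving $\iota(\tilde\gamma') \le \iota(\tilde\gamma) - 1$ and contradicting (A2).

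The main obstacle I expect is the case where an auxiliary hyperplane $\Sigma'$ itself meets $\Bplane_{p_1}$ in a geodesic line, so that the horosphere is not contained in a single half-space of $\Sigma'$ and the caption claim of Figure~\ref{fig:near-boundary-plane} must be re-interpreted. Handling this requires choosing the perturbed endpoint $\tilde x_1$ on the appropriate side of each of the finitely many relevant lines $\Sigma' \cap \Bplane_{p_1}$ (finite because only those $\Sigma'$ crossing the compact segment $[x_1', z]$ are relevant), and then verifying that $\tilde x_1$ and $z$ still lie on opposite sides of each such $\Sigma'$. The remaining verifications---that $\tilde\gamma'$ is properly embedded, transverse to $\tSbw$, and has endpoints missing $\tSbw \cap \Bplane_{p_i}$---are standard general-position perturbations, facilitated by the local Euclidean structure of $\tXCubing$ near points of $\Sigma \setminus \tilde\crossing$ provided by Remark~\ref{remark:cubing}(1).
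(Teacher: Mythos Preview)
Your overall strategy matches the paper's: reduce to showing $\Sigma \cap \Bplane_{p_i} = \emptyset$, argue by contradiction, replace $\tilde\gamma_0$ by a geodesic segment in $\Sigma$ joining a point of $\Sigma \cap \Bplane_{p_1}$ to $z$, and show this does not increase $\iota$. However, there is a genuine gap in how you handle the ``main obstacle,'' and it stems from a suboptimal choice of $x_1'$.

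You take $x_1'$ to be the projection of $x_1$ onto $\ell_1 = \Sigma \cap \Bplane_{p_1}$ inside $\Bplane_{p_1}$. The paper instead takes $x_1'$ to be the projection of $z$ onto $\ell_1$ computed inside $\Sigma$. The payoff is that, by the orthogonality of $\Sigma$ and $\Bplane_{p_1}$ (Proposition~\ref{pro:connected-intersection}(2)), the segment $[x_1', z]$ is then perpendicular to $\Bplane_{p_1}$ at $x_1'$, so $x_1' = \pi_{\Bplane_{p_1}}(z)$ in $\tXCubing$ (Lemma~\ref{lem:GR2b}), and in fact $x_1' = \pi_{\Bplane_{p_1}}(y)$ for every $y \in [x_1', z]$. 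This lets one \emph{prove} that the obstacle never arises: if some $\Sigma' \ne \Sigma$ meeting $[x_1', z]$ at $y \in (x_1',z)$ also met $\Bplane_{p_1}$, then (by the same orthogonality argument applied to $\Sigma'$) the projection of $y$ to $\Bplane_{p_1}$ would lie on $\Sigma' \cap \Bplane_{p_1} \subset \Sigma'$, forcing $x_1' \in \Sigma'$ and contradicting $\Sigma' \cap [x_1', z] = \{y\} \subset (x_1', z)$. Hence every relevant $\Sigma'$ is disjoint from $\Bplane_{p_1}$, separates $\Bplane_{p_1}$ from $z$, and therefore meets $\tilde\gamma_0$; no genericity is needed.

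With your choice of $x_1'$ the segment $[x_1', z]$ need not be perpendicular to $\Bplane_{p_1}$, and your proposed fix does not close the gap. The sentence ``every such $\Sigma'$ separates $\tilde x_1$ from $z$ and hence already intersects $\tilde\gamma_0$'' is a non sequitur: $\tilde\gamma_0$ runs from $x_1$ to $z$, not from $\tilde x_1$, so separating $\tilde x_1$ from $z$ says nothing about $\tilde\gamma_0$. And the plan to place $\tilde x_1$ on the ``appropriate side'' of each line $\Sigma' \cap \Bplane_{p_1}$ is underspecified: to avoid introducing a new crossing with $\Sigma'$ you would need $\tilde x_1$ on the $z$-side of every such line, but since $x_1'$ itself lies on the non-$z$ side of each (because $[x_1',z]$ crosses $\Sigma'$), there is no reason the required half-planes have nonempty intersection near $x_1'$. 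The clean resolution is to redefine $x_1'$ as the paper does.
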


\begin{proof}
Let $\Sigma$ be a checkerboard hyperplane which intersects $\tilde\gamma$.
By Lemma~\ref{lem:replacing} and the condition (A1),
$\Sigma\cap \tilde\gamma$ consists of a single transversal intersection point
$z\in \interior\tilde\gamma$.
Thus we have only to show that $\Sigma$ is disjoint from
$\Bplane_{p_1}$ and $\Bplane_{p_2}$. 
Suppose to the contrary that $\Sigma$
intersects one of $\Bplane_{p_1}$ and $\Bplane_{p_2}$, say, $\Bplane_{p_1}$
(see Figure~\ref{fig:near-boundary-plane}).
Let $\tilde\gamma_0$ be the subarc of $\tilde\gamma$ bounded by the 
initial point $x_1\in \Bplane_{p_1}$ of $\tilde\gamma$ and the intersection point 
$z\in \Sigma\cap \tilde\gamma$.
Let $x_1'\in \Sigma\cap \Bplane_{p_1}$ be the projection, 
in the CAT(0) space $\Sigma$, of $z$ to the geodesic line 
$\Sigma\cap \Bplane_{p_1}$.
Since $\Sigma$ intersects $\Bplane_{p_1}$ orthogonally
(Proposition~\ref{pro:connected-intersection}(2)),
we see that the geodesic segment $[x_1',z]$ 
intersects $\Bplane_{p_1}$ orthogonally.
Thus $x_1'$ is the projection, 
in the CAT(0) space $\tXCubing$, of $z$ to $\Bplane_{p_1}$
by Lemma~\ref{lem:GR2b}.

\begin{claim}
\label{claim:pair-intersection2}
If a checkerboard hyperplane $\Sigma'$
different from $\Sigma$
intersects $[x_1',z]$, then 
(i) $\Sigma'\cap [x_1',z]$ consists of a single transversal intersection point in $(x_1',z)$
and (ii) $\Sigma'\cap \interior\tilde\gamma_{0}\ne \emptyset$.
\end{claim}

\begin{proof}
Let $\Sigma'\ne \Sigma$ be a checkerboard hyperplane 
which intersects $[x_1',z]$.
Then, as in the proof of Claim~\ref{claim:pair-intersection},
$\ell:=\Sigma\cap \Sigma'$ is a geodesic line
which intersects $[x_1',z]$.
Since $z\notin\ell\cap\tilde\gamma$ 
by the condition (A1),
$\ell\cap [x_1',z]$ is 
a singleton $\{y\}$ 
for some $y\in [x_1',z)$ 
by Corollary~\ref{cor:no-branching}(1).
If $y=x_1'$, then 
$\{x_1'\}=H_{p_1}\cap\Sigma\cap\Sigma'$
and so
$[x_1',z]\subset \pi_{H_{p_1}}^{-1}(x_1')=\ell$ 
by Corollary~\ref{cor:no-branching}(3),
a contradiction to the fact that 
$z\notin\ell\cap\tilde\gamma$.
Thus $\ell\cap [x_1',z]$ is a singleton $\{y\}$ 
for some $y\in (x_1',z)$.
So, by Corollary~\ref{cor:no-branching}(2), we obtain the conclusion (i).
This also implies that 
$x_1'$ and $z$ belong to distinct components of $\Sigma\setminus\ell$,
and hence $\Sigma'$ separates $x_1'\in \Bplane_{p_1}$ and $z$.
Moreover, $\Sigma'$ is disjoint from $\Bplane_{p_1}$ as shown below.
Suppose to the contrary that $\Sigma'\cap\Bplane_{p_1}\ne \emptyset$.
Then $\Sigma'$ intersects $\Bplane_{p_1}$ orthogonally
(Proposition~\ref{pro:connected-intersection}(2)),
and we see by the argument preceding Claim~\ref{claim:pair-intersection2}
that the projection, $y_1$, of $y$,
 in the CAT(0) space $\Sigma'$, to $\Sigma'\cap \Bplane_{p_1}$
is equal to the projection of $y$ in
the CAT(0) space $\tXCubing$ to $\Bplane_{p_1}$,
which is equal to $x_1'$.
Hence $x_1'=y_1$ belongs to $\Sigma'$,
and therefore $x_1'\in \Sigma'\cap\Sigma=\ell$, 
a contradiction to the fact that 
$[x_1',z]\cap \Sigma'=\{y\}\subset (x_1',z)$.
Hence $\Sigma'$ is disjoint from $\Bplane_{p_1}$ as desired.
Since $\Sigma'$ separates $x_1'\in \Bplane_{p_1}$ and $z$,
this implies that 
$\Sigma'$ separates $\Bplane_{p_1}$ and $z$.
Since $\tilde\gamma_{0}$ joins the point $x_1\in \Bplane_{p_1}$ and $z$,
$\tilde\gamma_{0}$ must intersect $\Sigma'$.
Thus the conclusion (ii) holds.
\end{proof}

Let $\tilde\gamma'$ be an arc obtained from $\tilde\gamma$ by replacing 
$\tilde\gamma_{0}$ with $[x_1',z]$
and then pushing (a neighborhood in the resulting arc of) $[x_1',z]$
off $\Sigma$.
Then $\tilde\gamma'$ is properly homotopic to $\tilde\gamma$,
and we may assume $\tilde\gamma'$ satisfies the condition (A1).
Moreover, Claim~\ref{claim:pair-intersection2} implies that 
$\iota(\tilde\gamma')\le \iota(\tilde\gamma)-1$, a contradiction.
\end{proof}

%\begin{figure}
%  \centering
%  \begin{overpic}[width=0.4\columnwidth]{near-boundary-plane.pdf}
%    \put(7,28){$\tilde{\gamma}$}
%    \put(-8,46){\textcolor{red}{$\ell = \Sigma \cap \Sigma^\prime$}}
%    \put(56,59){$y$}
%    \put(50,32){\textcolor{blue}{$z$}}
%    \put(47,108){\small \textcolor{blue}{$x_1^\prime$}}
%    \put(90,70){$\tilde{\gamma}_0$}
%    \put(89,153){$\Sigma$}
%    \put(104,113){$x_1$}
%    \put(123,130){$H_{p_1}$}
%  \end{overpic}
%  \caption{If a checkerboard hyperplane $\Sigma^\prime \neq \Sigma$ intersects $[x_1^\prime,z]$, then it separates $H_{p_1}$ and $z$, and hence intersects $\tilde{\gamma}_0$.}
%  \label{fig:near-boundary-plane}
%\end{figure}

Let $y_1$ be the first intersection point of $\tilde\gamma$ 
with $\tSbw$,
and $\tPoly_1$ the checkerboard ideal polyhedron 
that contains the subarc of $\tilde\gamma$
bounded by $x_1$ and $y_1$. 
Similarly, let $y_2$ be the last intersection point of 
$\tilde\gamma$ 
with $\tSbw$,
and $\tPoly_2$ the checkerboard ideal polyhedron 
that contains the subarc of $\tilde\gamma$
bounded by $x_2$ and $y_2$
(see Figure~\ref{fig:nonzero-intersection}(a)).
(If $\iota(\tilde\gamma)=1$
then $y_1=y_2$ but $\tPoly_1\ne \tPoly_2$.) 
For $i=1,2$,
recall the isomorphism
$\hPoly_i\cong (B^3,\Diagram)$, 
and 
let $c_i$ be the vertex of $\Diagram$ corresponding to 
the ideal vertex $p_i$ of $\hPoly_i$,
and let $R_i$ be the region of $\Diagram$ 
such that $\Sigma_{R_i}=\Sigma_{R_i}(\tPoly_i)$
contains $y_i$
(Definition and Notation~\ref{def:checkerboard-polyhedron}).
Note that the region $R_i$ does not contain the vertex $c_i$
by Lemma~\ref{lem:sliding}.

For simplicity, we assume that $R_1$ is a black region.
For $i=1,2$,
let $R_i^{\pm}$ be the black regions of $\Diagram$
that contain the crossing $c_i$, and
consider the disks $\Delta_{R_i^{\pm}}:=\Delta_{R_i^{\pm}}(\tPoly_i)$ in $\CCC$
(see Notation~\ref{notation:idealpolyhedron2}
and Figure~\ref{fig:disjoint-halfspace}(a)).
Then $\BF(p_i):=\{\Delta_{R_i^{-}}, \Delta_{R_i^{+}}\}$ forms a butterfly at $p_i$
by Lemma~\ref{lem:charaterizing-butterfly}
(after replacing $R_i^{\pm}$ with $R_i^{\mp}$ if necessary).
Set $\Sigma_{R_i^{\pm}}:=\Sigma_{R_i^{\pm}}(\tPoly_i)$ and
$\Half_{R_i^{\pm}}:=\Half_{R_i^{\pm}}(\tPoly_i)$
(see Figure~\ref{fig:nonzero-intersection}(a)).
Then we have the following lemma.

\begin{figure}
  \centering
  \begin{tabular}{c}
  \begin{subfigure}{0.35\columnwidth}
    \begin{overpic}[width=\columnwidth]{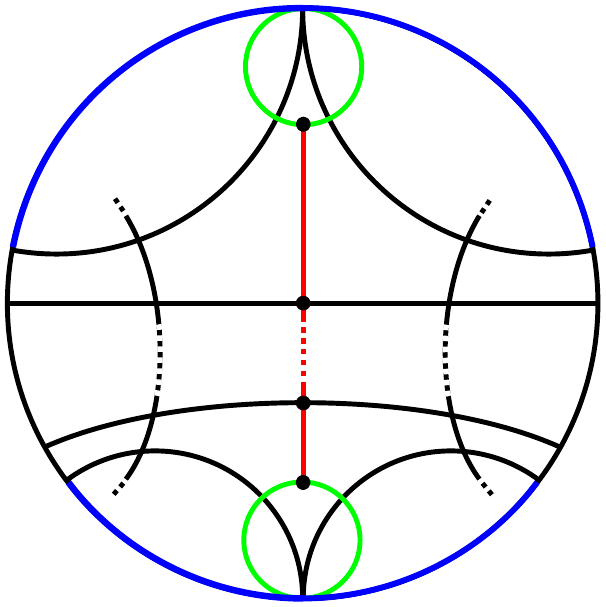}
      \put(-3,120){\textcolor{blue}{$\Delta_{R_1^-}$}}
      \put(119.5,120){\textcolor{blue}{$\Delta_{R_1^+}$}}
      \put(16,5){\textcolor{blue}{$\Delta_{R_2^-}$}}
      \put(109,5){\textcolor{blue}{$\Delta_{R_2^+}$}}
      \put(27,107){$\Half_{R_1^-}$}
      \put(94,107){$\Half_{R_1^+}$}
      \put(33,21.5){$\Half_{R_2^-}$}
      \put(88,21.5){$\Half_{R_2^+}$}
      \put(49,78){$\tPoly_1$}
      \put(49,35){$\tPoly_2$}
      \put(67,144){$p_1$}
      \put(67,-7){$p_2$}
      \put(66,114.5){\scriptsize$x_1$}
      \put(66,21.5){\footnotesize$x_2$}
      \put(72.5,75){$y_1$}
      \put(72.5,40){$y_2$}
      \put(72.5,88){\textcolor{red}{$\tilde{\gamma}$}}
      \put(140,67){\footnotesize $\Sigma_{R_1}$}
      \put(132,31){\footnotesize $\Sigma_{R_2}$}
    \end{overpic}
    \hspace{-0.1mm}
    \subcaption{}
    \label{fig:nonzero-intersection_a}
  \end{subfigure}
  \hspace{15mm}
  \begin{subfigure}{0.35\columnwidth}
    \begin{overpic}[width=\columnwidth]{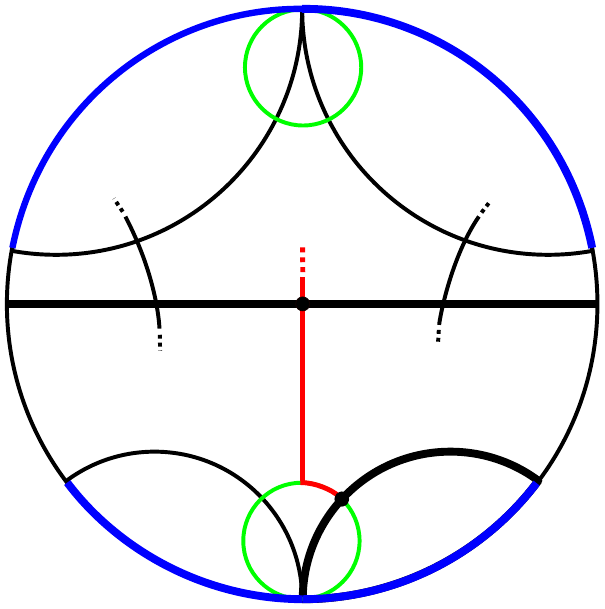}
      \put(-3,120){\textcolor{blue}{$\Delta_{R_1^-}$}}
      \put(119.5,120){\textcolor{blue}{$\Delta_{R_1^+}$}}
      \put(16,5){\textcolor{blue}{$\Delta_{R_2^-}$}}
      \put(109,5){\textcolor{blue}{$\Delta_{R_2^+}$}}
      \put(27,107){$\Half_{R_1^-}$}
      \put(94,107){$\Half_{R_1^+}$}
      \put(35,45){$\Half_{1,2}$}
      \put(98,23){$\Half_{2}$}
      \put(49,78){$\tPoly_1$}
      \put(67,144){$p_1$}
      \put(67,-7){$p_2$}
      \put(66,21.5){\footnotesize$x_2$}
      \put(72.5,75){$y_1$}
      \put(83,22){$z_2$}
      \put(72.5,45){\textcolor{red}{$\tilde{\gamma}_{1,2}$}}
      \put(140,67){\footnotesize $\Sigma_{R_1}$}
%      \put(132,31){$\Sigma_2$}
    \end{overpic}
    \hspace{-0.1mm}
    \subcaption{}
    \label{fig:nonzero-intersection_b}
  \end{subfigure}
  \end{tabular}
  \caption{(a) The checkerboard hyperplane $\Sigma_{R_1}(\tPoly_1)$ separates 
  $\Half_{R_1^-} \cup \Half_{R_1^+}$ and 
  $\Half_{R_2^-} \cup \Half_{R_2^+}$.
  Note that $\Half_{R_1}=\Half_{R_1}(\tPoly_1)$ is the region in
  $\tXCubing=\HH^3$ \lq\lq below'' $\Sigma_{R_1}$.
  (b) The arc $\tilde\gamma_{1,2}$,
  that is the union of the arc $\delta\subset \Bplane_{p_2}$ and 
  the subarc of $\tilde\gamma$ bounded by $y_1$ and $x_2$,
  intersects $\Sigma_{R_1}$ and $\Sigma_{R_2^{\epsilon}}$
  only at the endpoints. Here $\epsilon =+$.
  }
  \label{fig:nonzero-intersection}
\end{figure}

\begin{lemma} 
\label{lem:separating-butterfly} 
{\rm (1)} $\Half_{R_2^{-}}\cup \Half_{R_2^{+}}
\subset \Half_{R_1}$,
where $\Half_{R_1}=\Half_{R_1}(\tPoly_i)$

{\rm (2)} $\Half_{R_1^{-}}\cup \Half_{R_1^{+}}$ and 
$\Half_{R_2^{-}}\cup \Half_{R_2^{+}}$ are disjoint.

{\rm (3)} 
$\Delta_{R_2^{-}}\cup \Delta_{R_2^{+}}\subset \Delta_{R_1}$,
where $\Delta_{R_1}=\Delta_{R_1}(\tPoly_1)$.

{\rm (4)} $|\BF(p_1)|=\Delta_{R_1^{-}}\cup \Delta_{R_1^{+}}$ and $|\BF(p_2)|=\Delta_{R_2^{-}}\cup \Delta_{R_2^{+}}$ have disjoint interiors.
\end{lemma}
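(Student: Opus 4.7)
The plan is to establish part (1) as the core claim and derive (2), (3), (4) from it.

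For (1), I first show $\tPoly_2 \subset \bar\Half_{R_1}$. By Lemma \ref{lem:sliding}, $\Sigma_{R_1}$ is disjoint from $\Bplane_{p_2}$ and separates $\Bplane_{p_1}$ from $\Bplane_{p_2}$, so $\Bplane_{p_2} \subset \interior\Half_{R_1}$. Since $\interior \tPoly_2$ is disjoint from $\tSbw$ (hence from $\Sigma_{R_1}$), the connected convex set $\tPoly_2$ lies on one side of $\Sigma_{R_1}$, and $x_2 \in \tPoly_2 \cap \Bplane_{p_2}$ forces $\tPoly_2 \subset \bar\Half_{R_1}$. The main obstacle is then to establish $\Sigma_{R_1} \cap \Sigma_{R_2^\pm} = \emptyset$. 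By Proposition \ref{pro:connected-intersection}(1), this intersection is either empty or a geodesic line; I would rule out the latter by case analysis on $\iota(\tilde\gamma)$. When $\iota(\tilde\gamma) = 1$, $\Sigma_{R_1} = \Sigma_{R_2}$ is the shared face-hyperplane of $\tPoly_1$ and $\tPoly_2$; under the gluing described in the last part of Proposition \ref{prop:checkerboard-polyhedron}, the two identifications label this face by the same region of $\Diagram$, so $R_2 = R_1$ is black; then $R_2^\pm$ (also black) and $R_2$ are pairwise non-adjacent, and Proposition \ref{prop:disjoint-half-space} applied to $\tPoly_2$ gives the required disjointness. When $\iota(\tilde\gamma) > 1$, $\tPoly_1$ and $\tPoly_2$ are not adjacent in the polyhedral decomposition, so $\Sigma_{R_1}$ is not a face-hyperplane of $\tPoly_2$; if $\ell := \Sigma_{R_1} \cap \Sigma_{R_2^\pm}$ were a geodesic line, then either $\ell$ would meet the face $R_2^\pm$ in its interior---forcing $\Sigma_{R_1}$ to enter $\interior\tPoly_2$ (since the two hyperplanes meet orthogonally along $\ell$ by Proposition \ref{pro:connected-intersection}(1-b)), contradicting $\interior\tPoly_2 \cap \tSbw = \emptyset$---or $\ell$ would coincide with an edge of $\tPoly_2$, identifying $\Sigma_{R_1}$ with a face-hyperplane of $\tPoly_2$, a contradiction.

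Once the disjointness $\Sigma_{R_1} \cap \Sigma_{R_2^\pm} = \emptyset$ is established, the connected $2$-disk $\Sigma_{R_2^\pm}$ (being a face-hyperplane of $\tPoly_2 \subset \bar\Half_{R_1}$) is contained in $\bar\Half_{R_1}$ and, together with disjointness from $\Sigma_{R_1}$, in $\interior\Half_{R_1}$. The convex closed half-space $\Half_{R_2^\pm}$ with boundary $\Sigma_{R_2^\pm} \subset \interior\Half_{R_1}$ is connected and disjoint from $\Sigma_{R_1}$, hence contained in one of $\interior\Half_{R_1}$ or $\interior\Half_{R_1}^{\mathfrak{c}}$; since it contains its boundary $\Sigma_{R_2^\pm} \subset \interior\Half_{R_1}$, we conclude $\Half_{R_2^\pm} \subset \Half_{R_1}$.

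For (2), (3), (4): Part (2) follows from (1) together with Proposition \ref{prop:disjoint-half-space} applied to $\tPoly_1$: since $R_1, R_1^-, R_1^+$ are all black and hence pairwise non-adjacent, $\Half_{R_1} \cap \Half_{R_1^\pm} = \emptyset$, so $\Half_{R_1^\epsilon} \cap \Half_{R_2^\delta} \subset \Half_{R_1^\epsilon} \cap \Half_{R_1} = \emptyset$ for all $\epsilon, \delta \in \{-,+\}$. Part (3) follows from (1) by taking closures in $\HHH^3$ and intersecting with $\CCC$: $\bHalf_{R_2^\pm} \subset \bHalf_{R_1}$ gives $\Delta_{R_2^\pm} \subset \Delta_{R_1}$. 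Part (4) follows from (2) via Lemma \ref{lemma:Ideal-Halfspace}: the pairwise intersections $\interior\Delta_{R_1^\epsilon} \cap \interior\Delta_{R_2^\delta}$ are all empty, so any open neighborhood of a hypothetical point in $\interior|\BF(p_1)| \cap \interior|\BF(p_2)|$ would lie in $\bigcup_\epsilon \partial\Delta_{R_1^\epsilon} \cup \bigcup_\delta \partial\Delta_{R_2^\delta}$, a finite union of circles in $\CCC$ and hence a one-dimensional set---impossible for an open subset of $\CCC \cong S^2$.
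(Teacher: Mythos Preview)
Your argument for part~(1) has a genuine gap in the case $\iota(\tilde\gamma)>1$. You claim that if $\ell:=\Sigma_{R_1}\cap\Sigma_{R_2^{\pm}}$ is a geodesic line, then $\ell$ must either meet the interior of the face $R_2^{\pm}$ of $\tPoly_2$ or coincide with an edge of $\tPoly_2$. This dichotomy is unjustified: the face $R_2^{\pm}$ is only one piece of the hyperplane $\Sigma_{R_2^{\pm}}$, and $\ell$ could lie entirely in a part of $\Sigma_{R_2^{\pm}}$ far from $\tPoly_2$. (Also, the preliminary claim that $\Sigma_{R_1}$ is not a face-hyperplane of $\tPoly_2$ when $\iota>1$ is not obvious: a hyperplane can be a face-hyperplane of many polyhedra.)

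The paper's route bypasses this case analysis entirely with a one-line observation you are missing: since $R_1$ and $R_2^{\pm}$ are all \emph{black} regions, the hyperplanes $\Sigma_{R_1}$ and $\Sigma_{R_2^{\pm}}$ are all connected components of $\pu^{-1}(\Sb)$, and distinct components of the preimage of an embedded surface are automatically disjoint. So disjointness reduces to distinctness, and that follows immediately from Lemma~\ref{lem:sliding}: $\Sigma_{R_2^{\pm}}\cap\Bplane_{p_2}\ne\emptyset$ (since $R_2^{\pm}$ contains $c_2$) while $\Sigma_{R_1}\cap\Bplane_{p_2}=\emptyset$. This handles all values of $\iota(\tilde\gamma)$ uniformly.

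There is also a secondary sketchiness in your final step. After establishing $\Sigma_{R_2^{\pm}}\subset\interior\Half_{R_1}$, you assert that $\Half_{R_2^{\pm}}$ is ``disjoint from $\Sigma_{R_1}$'', but this does not follow from $\Sigma_{R_1}\cap\Sigma_{R_2^{\pm}}=\emptyset$ alone: a~priori $\Sigma_{R_1}$ could lie inside $\interior\Half_{R_2^{\pm}}$ rather than $\interior\Half_{R_2^{\pm}}^{\mathfrak{c}}$, in which case $\Half_{R_2^{\pm}}$ straddles $\Sigma_{R_1}$. The paper resolves this by invoking Lemma~\ref{lemma:three-components} to obtain the three pieces $\Half_1,\Half_{1,2},\Half_2$ and then constructing an explicit arc $\tilde\gamma_{1,2}$ (the sub\-arc of $\tilde\gamma$ from $y_1$ to $x_2$, concatenated with a short arc in $\Bplane_{p_2}\cap\tPoly_2$ to a point of $\Sigma_{R_2^{\epsilon}}$) which meets each hyperplane only at an endpoint; this forces $\tPoly_2\subset\Half_{1,2}$ and $\tPoly_1\subset\Half_1$, and hence identifies $\Half_2=\Half_{R_2^{\epsilon}}$ and $\Half_1=\Half_{R_1}^{\mathfrak{c}}$, yielding $\Half_{R_2^{\epsilon}}\subset\Half_{R_1}$.

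Your derivations of (2), (3), (4) from (1) are correct; in fact your treatment of (4) is more careful than the paper's bare citation of Lemma~\ref{lemma:Ideal-Halfspace}, since you address the subtlety that $\interior(\Delta^-\cup\Delta^+)$ can be strictly larger than $\interior\Delta^-\cup\interior\Delta^+$.
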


\begin{proof}
(1)
For each $\epsilon\in \{-,+\}$,
$\Sigma_{R_1}$ is distinct
from $\Sigma_{R_2^{\epsilon}}$, because
$\Sigma_{R_2^{\epsilon}}\cap\Bplane_{p_2}\ne \emptyset$
whereas 
$\Sigma_{R_1}\cap\Bplane_{p_2}=\emptyset$ by Lemma~\ref{lem:sliding}.
This implies that $\Sigma_{R_1}$ is disjoint from $\Sigma_{R_2^{\epsilon}}$
(because they are distinct components of $\pu^{-1}(\Sb)$).
By Lemma~\ref{lemma:three-components},
the disjoint union $\Sigma_{R_1} \sqcup \Sigma_{R_2^{\epsilon}}$
divides $\tXCubing$ into 
three closed convex subspaces 
$\Half_1$, $\Half_{1,2}$ and $\Half_2$,
such that 
$\Half_1\cap \Half_{1,2}=\Sigma_{R_1}$, 
$\Half_{1,2}\cap \Half_2=\Sigma_{R_2^{\epsilon}}$ and  
$\Half_1\cap \Half_2=\emptyset$.
Let $\delta$ be an arc in the square 
$\Bplane_{p_2}\cap \tPoly_2$ which joins $x_2$ with a point $z_2$ in 
$\Bplane_{p_2}\cap \Sigma_{R_2^{\epsilon}}$
(cf.~Figure~\ref{fig:Butterfly}(a)),
and let $\tilde\gamma_{1,2}$ be the union of $\delta$
and the subarc of $\tilde\gamma$ bounded by $y_1$ and $x_2$
(see Figure~\ref{fig:nonzero-intersection}(b), where $\epsilon$ 
is assumed to be $+$).
Then $\tilde\gamma_{1,2}$ is an arc in $\tXCubing$ 
joining $y_1$ and $z_2$,
such that $\tilde\gamma_{1,2}\cap \Sigma_{R_1}=\{y_1\}$
and $\tilde\gamma_{1,2}\cap \Sigma_{R_2^{\epsilon}}=\{z_2\}$.
Hence $\tilde\gamma_{1,2}$ is contained in $\Half_{1,2}$.
This implies $\tPoly_2\subset \Half_{1,2}$,
because $\interior\tPoly_2\cap(\Sigma_{R_1}\cup\Sigma_{R_2^{\epsilon}})
=\emptyset$ and 
$\interior\tPoly_2 \cap \interior\tilde\gamma_{1,2}\ne\emptyset$.
Hence we have $\Half_2=\Half_{R_2^{\epsilon}}$.
On the other hand, we have $\tPoly_1\subset \Half_{1}$,
because $\tilde\gamma_{1,2}\subset \Half_{1,2}$ and
$\tilde\gamma$ intersects $\Sigma_{R_1}$ transversely at $y_1$;
so
$\Half_1=\Half_{R_1}^{\mathfrak{c}}:=\Half_{R_1}^{\mathfrak{c}}(\tPoly_1)$.
Hence
$\Half_{R_1}^{\mathfrak{c}}\cap \Half_{R_2^{\epsilon}}=\Half_1\cap \Half_2=\emptyset$,
and therefore $\Half_{R_2^{\epsilon}}\subset \Half_{R_1}$.

(2)
Since the black region $R_1$ does not contain $c_1$, 
it is distinct from the black regions $R_1^{\pm}$.
Hence
$\Half_{R_1^{\pm}}$  are disjoint from 
$\Half_{R_1}$ 
by Proposition~\ref{prop:disjoint-half-space}.
Since $\Half_{R_2^{-}}\cup \Half_{R_2^{+}}
\subset \Half_{R_1}$ by (1),
this implies that $\Half_{R_1^{\pm}}$ are disjoint from
$\Half_{R_2^{-}}\cup \Half_{R_2^{+}}$.

(3) By (1), we have
$\Delta_{R_2^{-}}\cup \Delta_{R_2^{+}}
=(\bHalf_{R_2^{-}}\cap \CCC)\cup (\bHalf_{R_2^{+}}\cap \CCC)
\subset 
\bHalf_{R_1}\cap \CCC
= \Delta_{R_1}$.

(4) This follows from (2) and 
Lemma~\ref{lemma:Ideal-Halfspace}.
\end{proof}

\begin{lemma}
\label{lem:non-empty-openset}
The open set $O:=\CCC\setminus (|\BF(p_1)|\cup |\BF(p_2)|)$
is non-empty.
\end{lemma}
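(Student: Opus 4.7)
The plan is to reduce non-emptiness of $O$ to showing that three specific closed disks in $\CCC$ do not cover the entire sphere, and then to exhibit an open disk in their complement. First, by Lemma~\ref{lem:separating-butterfly}(3), $|\BF(p_2)|\subseteq\Delta_{R_1}(\tPoly_1)$, hence
\[
|\BF(p_1)|\cup|\BF(p_2)| \subseteq \Delta_{R_1^-}(\tPoly_1) \cup \Delta_{R_1^+}(\tPoly_1) \cup \Delta_{R_1}(\tPoly_1),
\]
so it suffices to find a point of $\CCC$ outside this union of three black disks. Because an edge of $\Diagram$ always separates regions of opposite colour, the three black regions $R_1^-,R_1^+,R_1$ are pairwise non-adjacent; Propositions~\ref{prop:disjoint-half-space} and~\ref{prop:disjoint-open-disk} then give that the three corresponding half-spaces are pairwise disjoint in $\tXCubing$ and that the three corresponding closed disks have pairwise disjoint interiors in $\CCC$.

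I would then split into two sub-cases. If $\Diagram$ has a fourth black region $R^*$ distinct from $R_1^-,R_1^+,R_1$, apply Proposition~\ref{prop:disjoint-open-disk} once more: $\interior\Delta_{R^*}(\tPoly_1)$ is a non-empty connected open subset of $\CCC$ whose interior is disjoint from that of each of the three black disks. Since every $\partial\Delta_{R_i}(\tPoly_1)$ is a Jordan curve separating $\CCC$ into interior and exterior, the connected open set $\interior\Delta_{R^*}(\tPoly_1)$ must lie in the exterior of each of the three black disks and therefore inside $\CCC\setminus(\Delta_{R_1^-}\cup\Delta_{R_1^+}\cup\Delta_{R_1})$, which is then a non-empty open subset of $O$. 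If instead $\Diagram$ has exactly three black regions (so $\{R_1^-,R_1^+,R_1\}$ is the whole set), Lemma~\ref{lem:link-diagram}(2) yields a white bigon $R_w$ that is adjacent to two of the three black regions (say $B_a,B_b$) but not to the third ($B_c$). By Proposition~\ref{prop:disjoint-open-disk}, $\interior\Delta_{R_w}(\tPoly_1)$ is disjoint from $\interior\Delta_{B_c}$ and so lies in the exterior of $\Delta_{B_c}$; it then remains to show that $\interior\Delta_{R_w}(\tPoly_1)$ is not entirely contained in $\Delta_{B_a}\cup\Delta_{B_b}$. For that, I would invoke Lemma~\ref{lem:common-plane}, which describes how the black hyperplanes $\Sigma_{B_a}(\tPoly_1)$ and $\Sigma_{B_b}(\tPoly_1)$ are exchanged when one crosses the bigon face $R_w$ to the adjacent checkerboard polyhedron; this rigidity, together with the tree-like branching of directions from the centre of $\tPoly_1$ provided by Remark~\ref{remark:cubing}(4), forces the disk $\Delta_{R_w}$ to straddle the gap between $\Delta_{B_a}$ and $\Delta_{B_b}$ rather than being swallowed by their union, producing the required point. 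The case where $R_1$ is white is handled symmetrically, interchanging the roles of the two colours throughout.

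The hardest step will be the three-black-region sub-case. Topologically, three closed Jordan-disks with pairwise disjoint interiors can perfectly cover $\CCC=S^2$ in the \emph{orange-slice} configuration, so non-emptiness cannot be deduced from disjointness of interiors alone; it will require the combinatorial input from Lemma~\ref{lem:link-diagram}(2) together with the rigidity of the cubed decomposition (Lemma~\ref{lem:common-plane}) to rule this degenerate configuration out.
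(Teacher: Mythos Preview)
Your outline matches the paper's proof in structure and in the easy sub-case (more than three black regions). The gap is in the three-black-region sub-case, where your argument stops at a heuristic: invoking Lemma~\ref{lem:common-plane} and Remark~\ref{remark:cubing}(4) to say that $\Delta_{R_w}$ ``straddles the gap'' is not a proof, and Remark~\ref{remark:cubing}(4) concerns Alexandrov angles at the polyhedron centre, not the configuration of limit disks at infinity. As you yourself note, three Jordan disks with pairwise disjoint interiors can tile $\CCC$, so something concrete is needed.

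The paper closes this gap not by analysing the white disk $\Delta_{R_w}(\tPoly_1)$ directly but by passing to the neighbouring checkerboard polyhedron $\tPoly_1'$ across the face $R_w$ and exhibiting a \emph{black} disk there. With your notation ($B_c$ the black region not adjacent to $R_w$, and $B_a,B_b$ the two that are), Lemma~\ref{lem:common-plane} applied to the bigon $R_w$ gives the swap $\Delta_{B_a}(\tPoly_1')=\Delta_{B_b}(\tPoly_1)$ and $\Delta_{B_b}(\tPoly_1')=\Delta_{B_a}(\tPoly_1)$. Applying Proposition~\ref{prop:disjoint-open-disk} \emph{in $\tPoly_1'$} then shows that $\interior\Delta_{B_c}(\tPoly_1')$ is disjoint from $\Delta_{B_a}(\tPoly_1')\cup\Delta_{B_b}(\tPoly_1')=\Delta_{B_a}(\tPoly_1)\cup\Delta_{B_b}(\tPoly_1)$. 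For the remaining disk $\Delta_{B_c}(\tPoly_1)$ one uses that $B_c$ and $R_w$ are non-adjacent: Proposition~\ref{prop:disjoint-half-space} gives $\Half_{B_c}(\tPoly_1)\cap\Half_{R_w}(\tPoly_1)=\emptyset$, hence $\Half_{B_c}(\tPoly_1)\subset\interior\Half_{R_w}^{\mathfrak{c}}(\tPoly_1)=\interior\Half_{R_w}(\tPoly_1')$, and symmetrically $\Half_{B_c}(\tPoly_1')\subset\interior\Half_{R_w}(\tPoly_1)$. These last two half-spaces are disjoint, so $\Half_{B_c}(\tPoly_1)\cap\Half_{B_c}(\tPoly_1')=\emptyset$, and Lemma~\ref{lemma:Ideal-Halfspace} gives $\interior\Delta_{B_c}(\tPoly_1')\cap\Delta_{B_c}(\tPoly_1)=\emptyset$. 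Thus $\interior\Delta_{B_c}(\tPoly_1')\subset O$.

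Incidentally, since $\Half_{B_c}(\tPoly_1')\subset\Half_{R_w}(\tPoly_1)$, this witness disk lies inside $\Delta_{R_w}(\tPoly_1)$, so your intuition that $\Delta_{R_w}$ is not swallowed by $\Delta_{B_a}\cup\Delta_{B_b}$ is correct; the point is that the clean way to certify this is to name the black sub-disk from the adjacent polyhedron rather than to argue abstractly about ``straddling''.
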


\begin{proof}
Suppose first that $\Diagram$ has more than $3$ black regions.
Pick a black region $R_b$ of $\Diagram$ different from
$R_1$ and $R_1^{\pm}$.
Then the interior of the disk $\Delta_{R_b}:=\Delta_{R_b}(\tPoly_1)$ 
is disjoint from the disks $\Delta_{R_1}$ and $\Delta_{R_1^{\pm}}$
by Proposition~\ref{prop:disjoint-open-disk}.
Since 
$\Delta_{R_2^{-}}\cup \Delta_{R_2^{+}}\subset \Delta_{R_1}$
by Lemma~\ref{lem:separating-butterfly}(3),
this implies that the open disk $\interior\Delta_{R_b}$
is disjoint from 
$\Delta_{R_1}\cup \Delta_{R_1^{-}}\cup \Delta_{R_1^{+}}
\supset \Delta_{R_1^{-}}\cup \Delta_{R_1^{+}}\cup \Delta_{R_2^{-}}\cup \Delta_{R_2^{+}}
=|\BF(p_1)|\cup |\BF(p_2)|$
(see Figure~\ref{fig:gap-region_a}).
Hence $\interior\Delta_{R_b}\subset O$ and therefore
$O$ is non-empty, as desired.

Suppose next that $\Diagram$ has at most $3$ black regions.
Then, by Lemma~\ref{lem:link-diagram}(2),
$\Diagram$ has precisely three black regions, 
$\{R_{b,j}\}_{1\le j\le 3}=\{R_1, R_1^{-}, R_1^{+}\}$
and a white bigon $R_w$.
We may assume $R_{b,1}$ is not adjacent to $R_w$.
Let $\tPoly_1'$ be the checkerboard ideal polyhedron
such that $\tPoly_1\cap\tPoly_1'$ is the common face 
corresponding to $R_w$. 
Set $\Delta_{R_{b,j}}:=\Delta_{R_{b,j}}(\tPoly_1)$ and
$\Delta_{R_{b,j}}':=\Delta_{R_{b,j}}(\tPoly_1')$
($1\le j\le 3$).
(See Figure~\ref{fig:gap-region_b}.)

\begin{figure}
  \centering
  \begin{tabular}{c}
    \begin{subfigure}{0.35\columnwidth}
      \begin{overpic}[width=\columnwidth]{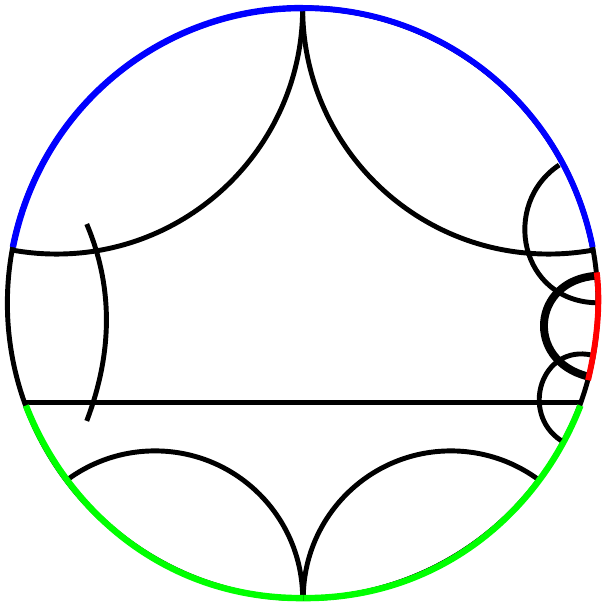}
        \put(-8,43.5){$\Sigma_1$}
        \put(-3,120){\textcolor{blue}{$\Delta_{R_1^-}$}}
        \put(119.5,120){\textcolor{blue}{$\Delta_{R_1^+}$}}
        \put(65,70){$\tPoly_1$}
        \put(139,61){\textcolor{red}{$\Delta_{R_{b}}$}}
        \put(29,0){\textcolor{green}{$\Delta_{R_1}$}}
        \put(67,144){$p_1$}
        \put(67,-7){$p_2$}
      \end{overpic}
      \hspace{-0.1mm}
      \subcaption{}
      \label{fig:gap-region_a}
    \end{subfigure}
    \hspace{12mm}
    \begin{subfigure}{0.35\columnwidth}
      \begin{overpic}[width=\columnwidth]{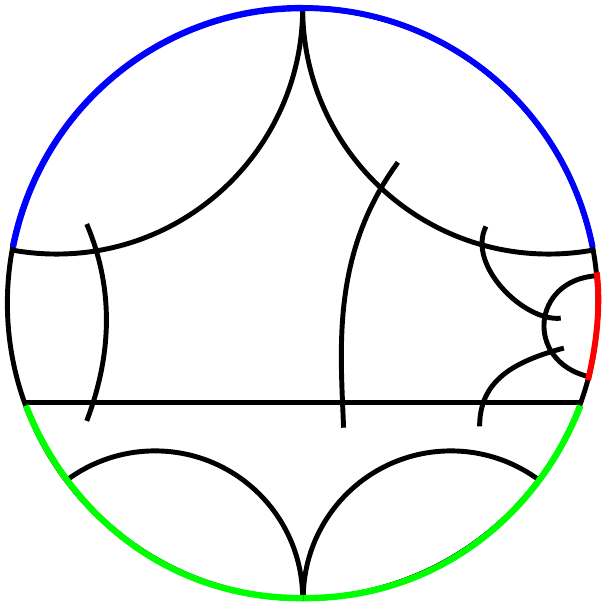}
        \put(-8,43.5){$\Sigma_1$}
        \put(-3,120){\textcolor{blue}{$\Delta_{R_1^-}$}}
        \put(119.5,120){\textcolor{blue}{$\Delta_{R_1^+}$}}
        \put(52,66){$\tPoly_1$}
        \put(92,63){$\tPoly^\prime_1$}
        \put(92,104){$\Sigma_{R_w}$}
        \put(139,61){\textcolor{red}{$\Delta^\prime_{R_{b,1}}$}}
        \put(29,0){\textcolor{green}{$\Delta_{R_1}$}}
        \put(67,144){$p_1$}
        \put(67,-7){$p_2$}
      \end{overpic}
      \hspace{-0.1mm}
      \subcaption{}
      \label{fig:gap-region_b}
    \end{subfigure}
  \end{tabular}
  \caption{
  (a) If $\Diagram$ has more than $3$ black regions,
  then, for a black region $R_b$ distinct from $R_1$ and $R_1^{\pm}$,
  the open disk $\interior\Delta_{R_b}$ is disjoint from $|\BF(p_1)|\cup |\BF(p_2)|$.
  (b) If $\Diagram$ has precisely $3$ black regions,
  then, for the black region $R_{b,1}$ that is not adjacent to 
  the white bigon $R_w$, the open disk 
  $\interior\Delta_{R_{b,1}}'$, where $\Delta_{R_{b,1}}'=\Delta_{R_{b,1}}(\tPoly_1')$,  
  is disjoint from
  $|\BF(p_1)|\cup |\BF(p_2)|$.
  }
  \label{fig:gap-region}
\end{figure}

\begin{claim}
\label{claim:disjoint-disk}
The interior of the disk $\Delta_{R_{b,1}}'$
is disjoint from $\cup_{j=1}^3 \Delta_{R_{b,j}}=\Delta_{R_1}\cup \Delta_{R_1^{-}}\cup \Delta_{R_1^{+}}$.
\end{claim}
\begin{proof}
By Lemma~\ref{lem:common-plane}
and by Notation~\ref{notation:idealpolyhedron2}(3),
we see
$\Delta_{R_{b,2}}'=\Delta_{R_{b,3}}$
and 
$\Delta_{R_{b,3}}'=\Delta_{R_{b,2}}$.
Hence, by Proposition~\ref{prop:disjoint-open-disk},
$\interior\Delta_{R_{b,1}}'$ is
disjoint from $\Delta_{R_{b,3}}'\cup \Delta_{R_{b,2}}'=\Delta_{R_{b,2}}\cup \Delta_{R_{b,3}}$.
Moreover, $\interior\Delta_{R_{b,1}}'$
is also disjoint from $\Delta_{R_{b,1}}$, as explained below. 
Since $R_{b,1}$ and $R_w$ are not adjacent, 
Proposition~\ref{prop:disjoint-half-space} implies that
$\Half_{R_{b,1}}(\tPoly_1) \cap \Half_{R_w}(\tPoly_1)=\emptyset$.
Hence 
$\Half_{R_{b,1}}(\tPoly_1)\subset 
\tXCubing \setminus \Half_{R_w}(\tPoly_1)
=\interior\Half_{R_w}(\tPoly_1')$.
Similarly, $\Half_{R_{b,1}}(\tPoly_1')\subset \interior\Half_{R_w}(\tPoly_1)$.
Since $\interior\Half_{R_w}(\tPoly_1)$ and 
$\interior\Half_{R_w}(\tPoly_1')=\interior\Half_{R_w}^{\mathfrak{c}}(\tPoly_1)$
are disjoint, $\Half_{R_{b,1}}(\tPoly_1)$ and $\Half_{R_{b,1}}(\tPoly_1')$
are disjoint.
By Lemma~\ref{lemma:Ideal-Halfspace},
this implies that $\Delta_{R_{b,1}}$ and $\Delta_{R_{b,1}}'$ have disjoint
interiors,
and hence $\interior\Delta_{R_{b,1}}'$
is disjoint from $\Delta_{R_{b,1}}$.
\end{proof}

Since 
$\Delta_{R_2^{-}}\cup \Delta_{R_2^{+}}\subset \Delta_{R_1}$
by Lemma~\ref{lem:separating-butterfly}(3),
Claim~\ref{claim:disjoint-disk} implies that
the open disk $\interior\Delta_{R_{b,1}}'$
is disjoint from $\Delta_{R_1^{-}}\cup \Delta_{R_1^{+}}\cup \Delta_{R_2^{-}}\cup \Delta_{R_2^{+}} = |\BF(p_1)|\cup |\BF(p_2)|$.
Hence $\interior\Delta_{R_{b,1}}'\subset O$ and therefore
$O$ is non-empty, as desired. 
\end{proof}

Thus we have proved that
the pair of butterflies $\BF(p_1)$ and $\BF(p_2)$
satisfies the conditions in 
Proposition~\ref{prop:ping-pong}.
Hence $\{\mu_1,\mu_2\}$ 
generates a rank $2$ free Kleinian group which is geometrically finite.
This completes the proof of Theorem~\ref{Theorem1-g}
in Case I where $\iota(\tilde\gamma)>0$.

\medskip

Case II. $\iota(\tilde\gamma)=0$.
In this case, the proper arc
$\tilde\gamma\subset \tCubing$ is contained in 
$\tPoly\cap \tCubing$ 
for some ideal checkerboard polyhedron $\tPoly$.
Recall the isomorphism $\hat\varphi:(B^3,\Diagram) \to \hPoly$,
where $\hPoly$ is the closure of $\tPoly$ in $\HHH^3$
(Section~\ref{sec:Butterflies-checkerboard}).
We identify $\hPoly$ with $(B^3,\Diagram)$
through the isomorphism.
Let $c_i$ be the vertex of $\Diagram$ corresponding to the ideal vertex 
$p_i$ of $\hPoly$ ($i=1,2$).
Then the equivalence class of the meridian pair $\{\mu_1,\mu_2\}$
is determined by the pair $\{c_1, c_2\}$.
Let $\hat\gamma$ be an arc in $\partial B^3$ joining $c_1$ and $c_2$,
such that $\hat\gamma$ intersects the vertex set of $\Diagram$
only at their endpoints and that $\interior\hat\gamma$ is transversal to $\Diagram$.
Then the proper homotopy class of $\tilde\gamma$ 
is represented by $\hat\gamma$.
We assume that the cardinality $\omega(\hat\gamma)$
of $\interior\hat\gamma \cap \Diagram$ 
is minimized. 

\smallskip

Subcase II-1.
$\omega(\hat\gamma)>0$.
For $i=1,2$, let $R_{i}^{-}$ and $R_{i}^{+}$ be the black regions
that contain the vertex $c_i$.
Then $\{\Delta_{R_{i}^{-}}, \Delta_{R_{i}^{+}}\}$ forms a butterfly $\BF(p_i)$ at $p_i$
by Lemma~\ref{lem:charaterizing-butterfly}
(see Figure~\ref{fig:omega-positive}).

\begin{figure}
  \centering
  \begin{tabular}{c}
    \begin{subfigure}{0.22\columnwidth}
      \begin{overpic}[width=\columnwidth]{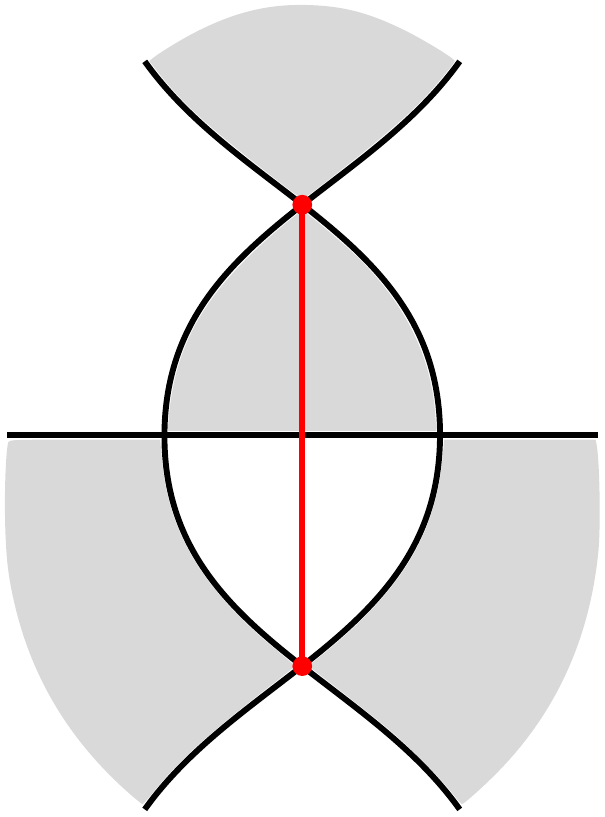}
        \put(28,62){$R_1^+$}
        \put(38,107){$R_1^-$}
        \put(62,28){$R_2^+$}
        \put(10,28){$R_2^-$}
        \put(40,95){$c_1$}
        \put(40,12){$c_2$}
        \put(47,65){\textcolor{red}{$\hat\gamma$}}
      \end{overpic}
      \label{fig:omega-positive_a}
    \end{subfigure}
    \hspace{12mm}
    \begin{subfigure}{0.22\columnwidth}
      \begin{overpic}[width=\columnwidth]{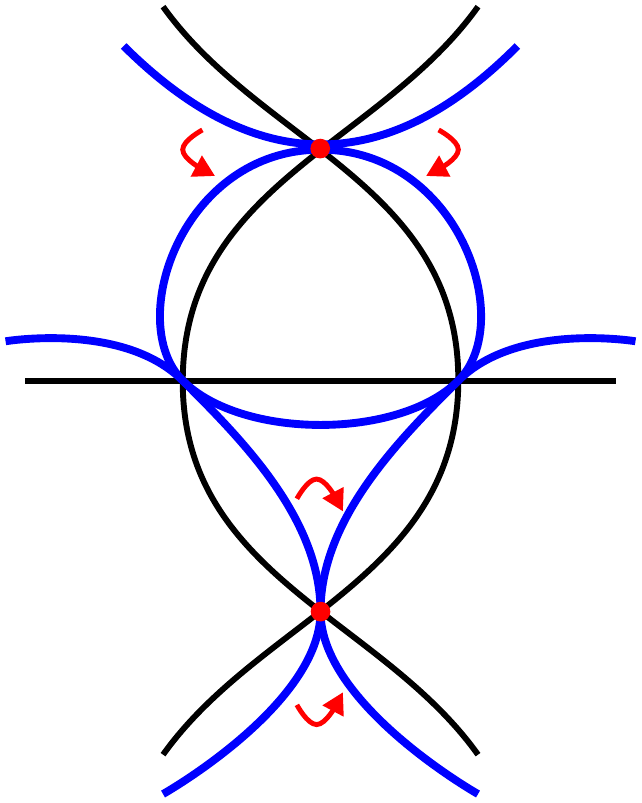}
        \put(37,66){\textcolor{blue}{$\Delta_1^+$}}
        \put(37,107){\textcolor{blue}{$\Delta_1^-$}}
        \put(62,30){\textcolor{blue}{$\Delta_2^+$}}
        \put(8,30){\textcolor{blue}{$\Delta_2^-$}}
        \put(13,87){\textcolor{red}{$\mu_1$}}
        \put(64,87){\textcolor{red}{$\mu_1$}}
        \put(40,3){\textcolor{red}{$\mu_2$}}
        \put(40,96){\small$p_1$}
        \put(30,25.5){\small$p_2$}
      \end{overpic}
      \label{fig:omega-positive_b}
    \end{subfigure}
  \end{tabular}
  \caption{The butterflies $\BF(p_1)$ and $\BF(p_2)$
  and the actions of the meridians $\mu_1$ and $\mu_2$,
  in the case $\omega(\hat\gamma)>0$.
  Here, we employ the model picture of the limit circles
  described in Remark~\ref{rem:disjoint-open-disk} and Figure~\ref{fig:intersection-pattern}.
  }
  \label{fig:omega-positive}
\end{figure}

\begin{claim}
\label{claim:distict-region}
The four black regions $R_1^-$, $R_1^+$, $R_2^-$ and $R_2^+$
are distinct.
\end{claim}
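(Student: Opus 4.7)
My plan is to argue by contradiction and to split into two cases: a \emph{local} coincidence $R_i^-=R_i^+$ at one of the crossings, and a \emph{non-local} coincidence $R_1^\epsilon=R_2^\delta$ linking the two distinct crossings $c_1,c_2$.

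\textbf{Local coincidence.} Suppose $R_i^-=R_i^+$ for some $i\in\{1,2\}$. The two opposite black sectors at $c_i$ would then belong to a single global black region, so in the plane graph $\Graph$ introduced in the proof of Lemma~\ref{lem:link-diagram} (whose vertices are the black regions of $\Diagram$ and whose edges correspond to crossings) the edge associated with $c_i$ would be a loop. But the opening of that proof records that $\Graph$ has no loop edges because $\Diagram$ is connected and prime, so $R_1^-\neq R_1^+$ and $R_2^-\neq R_2^+$.

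\textbf{Non-local coincidence.} Suppose $R_1^\epsilon=R_2^\delta=:R$ for some $\epsilon,\delta\in\{-,+\}$. Both $c_1$ and $c_2$ then lie on $\partial R\subset\partial B^3$, so I can choose an arc $\hat\gamma'$ in $\partial B^3$ joining $c_1$ and $c_2$ whose interior lies entirely inside the open face $R$; then $\interior\hat\gamma'$ is disjoint from $\Diagram$, so $\hat\gamma'$ satisfies all the combinatorial conditions required of $\hat\gamma$ and has $\omega(\hat\gamma')=0$. Since $\hat\gamma'$ is thus a valid competitor in the minimization defining $\hat\gamma$, this forces $\omega(\hat\gamma)=0$, contradicting the Subcase II-1 hypothesis $\omega(\hat\gamma)>0$.

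Combining the two cases yields pairwise distinctness of $R_1^-,R_1^+,R_2^-,R_2^+$. The main obstacle is the non-local case: the key point is that the minimization of $\omega$ ranges over \emph{all} arcs from $c_1$ to $c_2$ on $\partial B^3$ satisfying the combinatorial transversality conditions, which is legitimate because in the simply connected $\tilde M$ any two proper arcs joining the connected horospheres $\Bplane_{p_1}$ and $\Bplane_{p_2}$ are properly homotopic, so every such $\hat\gamma'$ automatically represents the proper homotopy class of $\tilde\gamma$. Once this is in place, the argument reduces to the elementary observation that two vertices on the boundary of an open face of $\Diagram$ can always be joined by an arc whose interior lies inside that face.
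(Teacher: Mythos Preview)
Your proof is correct and follows essentially the same approach as the paper's. The paper's own argument is terser: it simply asserts $R_i^-\ne R_i^+$ (which follows from primeness/absence of nugatory crossings, as you spell out via the graph $\Graph$) and then, for the non-local case, observes exactly as you do that $c_1$ and $c_2$ would lie in a single region and hence be joined by an arc with $\omega=0$, contradicting $\omega(\hat\gamma)>0$.
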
 

\begin{proof}
Suppose to the contrary that there is an overlap among the four regions.
Since $R_i^-\ne R_i^+$ for $i=1,2$,
we have $R_1^{\epsilon_1}=R_2^{\epsilon_2}$ for some $\epsilon_1, \epsilon_2
\in \{-, +\}$.
Then the vertices $c_1$ and $c_2$ are contained in the single region
$R_1^{\epsilon_1}=R_2^{\epsilon_2}$. 
Thus the two vertices are joined by an arc in the region,
a contradiction to the assumption $\omega(\hat\gamma)>0$.
\end{proof}

By Claim~\ref{claim:distict-region} and 
Proposition~\ref{prop:disjoint-open-disk},
the butterflies $\BF(p_1)$ and $\BF(p_2)$ have disjoint interiors.
Moreover, the following lemma holds.

\begin{lemma}
\label{lem:non-empty-openset2}
The open set $O:=\CCC\setminus (|\BF(p_1)|\cup |\BF(p_2)|)$
is non-empty.
\end{lemma}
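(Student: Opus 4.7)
The plan is to exhibit a closed disk in $\CCC$ whose interior is disjoint from all four disks $\Delta_{R_i^{\epsilon}}(\tPoly)$ ($i\in\{1,2\}$, $\epsilon\in\{-,+\}$), in direct analogy with the proof of Lemma~\ref{lem:non-empty-openset}. Claim~\ref{claim:distict-region} already guarantees that $\Diagram$ has at least four black regions, and I split the argument into two cases according as the number of black regions is at least five or exactly four.

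If $\Diagram$ contains a fifth black region $R_b\notin\{R_1^-,R_1^+,R_2^-,R_2^+\}$, then because any two distinct regions of the same color are non-adjacent in $\Diagram$ (adjacent regions having opposite colors under the checkerboard coloring), Proposition~\ref{prop:disjoint-open-disk} applied inside $\tPoly$ yields $\interior\Delta_{R_b}(\tPoly)\cap \Delta_{R_i^{\epsilon}}(\tPoly)=\emptyset$ for all $i,\epsilon$, and so $\interior\Delta_{R_b}(\tPoly)\subset O$.

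Otherwise $\{R_1^-,R_1^+,R_2^-,R_2^+\}$ is exactly the set of black regions of $\Diagram$. Lemma~\ref{lem:link-diagram}(3) then supplies a white region $R_w$ that is either a bigon or a $3$-gon and whose adjacent black regions are all distinct. Since $R_w$ has at most three adjacent black regions, I can pick a black region $R_{b,0}$ not adjacent to $R_w$. Letting $\tPoly'$ denote the checkerboard ideal polyhedron sharing with $\tPoly$ the face corresponding to $R_w$, the plan is to show $\interior\Delta_{R_{b,0}}(\tPoly')\subset O$.

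To verify this, for each $R_b\in\{R_i^{\epsilon}\}$ I distinguish two subcases. If $R_b$ is adjacent to $R_w$, Lemma~\ref{lem:common-plane} identifies $\Delta_{R_b}(\tPoly)$ with $\Delta_{R_b'}(\tPoly')$ for some black region $R_b'$ adjacent to $R_w$, which is necessarily distinct from $R_{b,0}$; Proposition~\ref{prop:disjoint-open-disk} applied inside $\tPoly'$ then delivers the required disjointness. If instead $R_b$ is not adjacent to $R_w$, I follow the pattern of Claim~\ref{claim:disjoint-disk}: Proposition~\ref{prop:disjoint-half-space} forces $\Half_{R_b}(\tPoly)\subset \interior\Half^{\mathfrak{c}}_{R_w}(\tPoly)=\interior\Half_{R_w}(\tPoly')$ and symmetrically $\Half_{R_{b,0}}(\tPoly')\subset \interior\Half_{R_w}(\tPoly)$, and disjointness of these two open half-spaces of $\Sigma_{R_w}(\tPoly)$ forces $\Half_{R_b}(\tPoly)\cap \Half_{R_{b,0}}(\tPoly')=\emptyset$; Lemma~\ref{lemma:Ideal-Halfspace} then converts this into disjointness of the corresponding disk interiors in $\CCC$. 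The main obstacle is the exactly-four-black-regions case: passing to the adjacent polyhedron $\tPoly'$ and carefully tracking the cyclic-shift identification supplied by Lemma~\ref{lem:common-plane} is what makes a single disk $\Delta_{R_{b,0}}(\tPoly')$ simultaneously avoid all four disks coming from $\tPoly$.
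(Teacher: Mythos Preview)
Your proposal is correct and follows essentially the same approach as the paper. The paper's proof is terser—it simply invokes Lemma~\ref{lem:link-diagram}(3), passes to the neighboring polyhedron across the face $R_w$, and says ``as in the proof of Claim~\ref{claim:disjoint-disk}''—whereas you spell out explicitly the two subcases (whether a given $R_i^{\epsilon}$ is adjacent to $R_w$ or not) and how Lemma~\ref{lem:common-plane} and the half-space argument from Claim~\ref{claim:disjoint-disk} handle each; this is exactly what the paper intends by that phrase.
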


\begin{proof}
The proof of this lemma is parallel to that of 
Lemma~\ref{lem:non-empty-openset}.
If $\Diagram$ has more than four black regions, 
then 
a black region $R_b$ different from $R_1^{\pm}$ and $R_2^{\pm}$
gives a non-empty open disk $\interior \Delta_{R_b}$
disjoint from $|\BF(p_1)|\cup |\BF(p_2)|$
by Proposition~\ref{prop:disjoint-open-disk}.
So, we may assume $\Diagram$ has precisely four black regions.
Then, by Lemma~\ref{lem:link-diagram}(3),
there is a white region $R_w$ which is either a bigon or a $3$-gon.
In either case, 
there is a black region, say $R_{b,1}$,
that is not adjacent to $R_w$.
Let $\tPoly_1'$ be the checkerboard ideal polyhedron
such that $\tPoly_1\cap\tPoly_1'$ is the common face 
corresponding to $R_w$. 
Then, as in the proof of Claim~\ref{claim:disjoint-disk},
we see that the open disk $\interior\Delta_{R_{b,1}}(\tPoly_1')$
is disjoint from $|\BF(p_1)|\cup |\BF(p_2)|$.
\end{proof}

Thus 
the pair of butterflies $\BF(p_1)$ and $\BF(p_2)$
satisfies the conditions in 
Proposition~\ref{prop:ping-pong}.
Hence $\{\mu_1,\mu_2\}$ 
generates a rank $2$ free Kleinian group which is geometrically finite.
This completes the proof of Theorem~\ref{Theorem1-g}
in the case where $\iota(\tilde\gamma)=0$ and $\omega(\hat\gamma)>0$.

\medskip

Subcase II-2.
$\omega(\hat\gamma)=0$.
In this case, there is a region $R$ of $\Diagram$
that contains $\hat\gamma$ and the vertices $c_1$ and $c_2$.
Recall that $\gamma$ is not properly homotopic to a crossing arc 
by the assumption of the theorem.
This implies that 
$\hat\gamma$ is not homotopic relative to the endpoints to an edge of $R$
(i.e., the vertices $c_1$ and $c_2$ are not adjacent in $\partial R$),
because, for any edge $e$ of $\Diagram$, the composition
$\pu\circ\varphi:\PPoly(\Diagram) \to \pu(\tPoly)\subset X$
maps the ideal edge $\check e$ to an open crossing arc
(cf.~Proposition~\ref{prop:checkerboard-polyhedron}(3) and 
Figure~\ref{fig:CB1}).

For simplicity, assume that $R$ is a white region.
For $i=1,2$, let $R_{i}^{\pm}$ be the black regions
that contain the crossing $c_i$.
Then $\{\Delta_{R_{i}^{-}}, \Delta_{R_{i}^{+}}\}$ forms a butterfly $\BF(p_i)$ at $p_i$
by Lemma~\ref{lem:charaterizing-butterfly}
(see Figure~\ref{fig:omega-zero-noncrossing}).

\begin{figure}
  \centering
  \begin{overpic}[width=0.7\columnwidth]{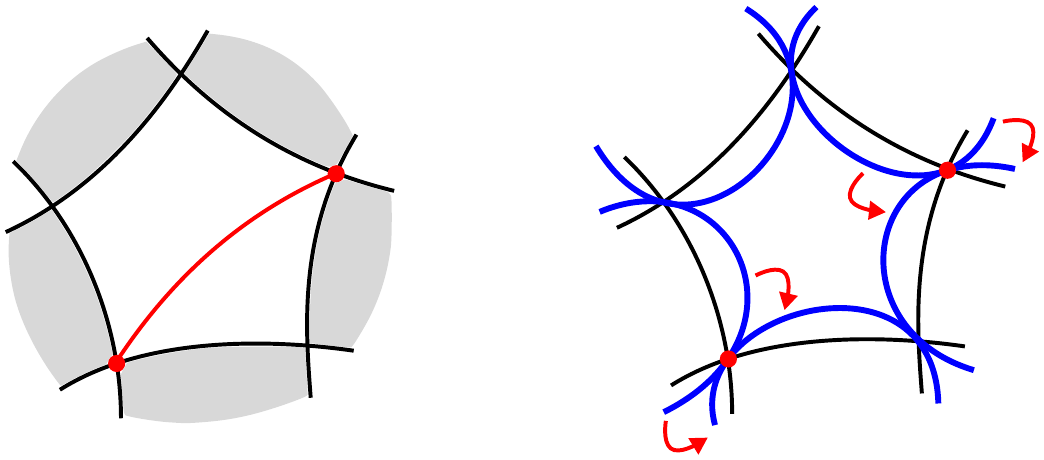}
    \put(21.5,15){$c_1$}
    \put(49,16){$R_1^+$}
    \put(8,39){$R_1^-$}
    \put(95,77){$c_2$}
    \put(86,49){$R_2^+$}
    \put(66,93){$R_2^-$}
    \put(50,59){\textcolor{red}{$\hat\gamma$}}
    \put(167,-1){\textcolor{red}{$\mu_1$}}
    \put(277.5,89){\textcolor{red}{$\mu_2$}}
    \put(214,16){\textcolor{blue}{$\Delta_1^+$}}
    \put(167,39){\textcolor{blue}{$\Delta_1^-$}}
    \put(250,49){\textcolor{blue}{$\Delta_2^+$}}
    \put(230,96){\textcolor{blue}{$\Delta_2^-$}}
    \put(198,19){$p_1$}
    \put(252,67){$p_2$}
  \end{overpic}
  \caption{The butterflies $\BF(p_1)$ and $\BF(p_2)$
  and the actions of the meridians $\mu_1$ and $\mu_2$,
  in the case $\omega(\hat\gamma)=0$ and $\gamma$ is not a crossing arc.
  Here, we employ the model picture of the limit circles
  described in Remark~\ref{rem:disjoint-open-disk} and Figure~\ref{fig:intersection-pattern}.}
  \label{fig:omega-zero-noncrossing}
\end{figure}

\begin{claim}
\label{claim:distict-region2}
The four black regions $R_1^-$, $R_1^+$, $R_2^-$ and $R_2^+$
are distinct.
\end{claim}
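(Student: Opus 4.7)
The plan is to assume for contradiction that some two of the four regions coincide, and derive a contradiction from the primeness of $\Diagram$ by means of a single simple-loop argument. As in the proof of Claim~\ref{claim:distict-region}, since $R_i^{-}\ne R_i^{+}$ for $i=1,2$, any overlap must be of the form $R_1^{\epsilon_1}=R_2^{\epsilon_2}=:R_b$ for some signs $\epsilon_1,\epsilon_2$. Then $R_b$ is a single black region containing both crossings $c_1$ and $c_2$ as vertices on its boundary; and since $\hat\gamma\subset\bar R$, the white region $R$ also contains $c_1$ and $c_2$ on its boundary.

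Next I would read off the combinatorial picture at each $c_i$. Because $R$ is white and $R_b$ is black, they cannot occupy opposite quadrants of the local $\mathrm{X}$ at $c_i$ (which are of the same checkerboard color), and therefore lie in adjacent quadrants. Hence $R$ and $R_b$ share a common edge $e_i\subset\partial R\cap\partial R_b$ incident to $c_i$ for $i=1,2$. The standing hypothesis of Subcase~II-2---that $\gamma$ is not properly homotopic to a crossing arc, and hence $c_1$ and $c_2$ are not joined by a single edge of $\partial R$---then yields $e_1\ne e_2$.

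The heart of the argument is the construction of a simple closed curve $\alpha\subset S^2$ that contradicts primeness. Choosing small, disjoint disk neighborhoods of $c_1$ and $c_2$, I would form $\alpha$ as the concatenation of four arcs: an arc inside $R$ from a point $x_1\in R$ near $c_1$ to a point $x_2\in R$ near $c_2$ (a slight push-off of $\hat\gamma$); a short arc near $c_2$ from $x_2$ to a point $y_2\in R_b$ near $c_2$, crossing $e_2$ transversely in a single point; an arc inside $R_b$ from $y_2$ to a point $y_1\in R_b$ near $c_1$; and a short arc near $c_1$ from $y_1$ to $x_1$, crossing $e_1$ transversely in a single point. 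Keeping the neighborhoods of $c_1,c_2$ small ensures $\alpha$ is simple and meets $\Diagram$ transversely in exactly two points, one on $e_1$ and one on $e_2$.

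Finally, primeness of $\Diagram$ provides a disk $D^{\emptyset}$ bounded by $\alpha$ that contains no crossings of $\Diagram$. Since $e_1$ meets $\alpha$ in exactly one point, the two endpoints $c_1$ and $c_1'$ of $e_1$ lie on opposite sides of $\alpha$, so at least one of them lies in $D^{\emptyset}$; but both are crossings of $\Diagram$, contradicting the defining property of $D^{\emptyset}$. The main obstacle I expect is the fiddly bookkeeping in building $\alpha$---verifying that it is simple and meets $\Diagram$ transversely in exactly the two declared points---but this is routine once the neighborhoods of $c_1$ and $c_2$ are chosen small enough, and everything else is immediate from the alternating checkerboard picture together with the primeness axiom.
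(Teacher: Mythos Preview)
Your proof is correct and follows essentially the same approach as the paper: assume an overlap $R_1^{\epsilon_1}=R_2^{\epsilon_2}=:R_b$, use the non-adjacency of $c_1,c_2$ in $\partial R$ to find distinct edges $e_1\ne e_2$ shared by $R$ and $R_b$, and then build a simple loop in $R\cup R_b$ meeting $\Diagram$ transversely in exactly two points to contradict primeness. Your construction of $\alpha$ and your final step (the endpoints of $e_1$ lie on opposite sides of $\alpha$) are just a more explicit version of what the paper does in one sentence.
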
 

\begin{proof}
Suppose to the contrary that there is an overlap among the $4$ regions.
Then as in the proof of Claim~\ref{claim:distict-region},
we have $R_1^{\epsilon_1}=R_2^{\epsilon_2}$ for some $\epsilon_1, \epsilon_2
\in \{-, +\}$.
Since $c_1$ and $c_2$ are not adjacent in $\partial R$,
the edges $e_i:=R\cap R_i^{\epsilon_i}$ ($i=1,2$) are distinct.
Thus we can find a simple loop $C$ in 
the union of the white region $R$ and 
the black region $R_1^{\epsilon_1}=R_2^{\epsilon_2}$,
which intersects $\Diagram$ transversely
in precisely two points, 
one in  $\interior e_1$
and the other in $\interior e_2$.
Since $e_1\ne e_2$, both disks bounded by $C$
contains a vertex of $\Diagram$. 
This contradicts the primeness of the diagram $\Diagram$.
\end{proof}

The proof of Lemma~\ref{lem:non-empty-openset2}
works in the current setting,
and so, we see that the open set
$O=\CCC\setminus(|\BF(p_1)|\cup |\BF(p_2)|)$
is non-empty.
Thus the pair of the butterflies $\BF(p_1)$
and $\BF(p_2)$
satisfies the conditions in Proposition~\ref{prop:ping-pong}.
Hence $\{\mu_1,\mu_2\}$ 
generates a rank $2$ free Kleinian group which is geometrically finite.

This completes the proof of Theorem~\ref{Theorem1-g}.
\end{proof}

\begin{proof}[Proof of  Theorem~\ref{Theorem1}]
Let $L\subset S^3$ be a hyperbolic $2$-bridge link, 
$\gamma$ an essential proper path in the link exterior $M$,
$\{\mu_1,\mu_2\}$ a non-commuting meridian pair in 
the link group $G$ represented by $\gamma$,
and $\{p_1,p_2\}$ the corresponding pair
of parabolic fixed points.
Assume that $\gamma$ is not 
properly homotopic to the upper or lower tunnel of $L$.
We show that $\{\mu_1,\mu_2\}$ 
generates a rank $2$ free Kleinian group which is geometrically finite.

If necessary by taking the mirror image of $L$,
we may assume that
$L$ admits the 
prime alternating diagram
$\Diagram$ in Figure~\ref{fig:2bridge-diagram},
where $(a_1, a_2, \dots, a_n)$ is a sequence of
positive integers with $n \ge 2$, $a_1 \ge 2$ and $a_n \ge 2$.
$\Diagram$ consists of $n$ twist regions $A_1, A_2, \cdots, A_n$,
where $A_i$ consists of $a_i$ right-hand or left-hand half-twists
according to whether $i$ is odd or even.
By Theorem~\ref{Theorem1-g}, we have only to treat the case where
$\gamma$ is a crossing arc with respect to 
the diagram $\Diagram$.
Let $A_i$ be the twist region that contains the crossing
corresponding to the crossing arc $\gamma$.
If $i=1$ or $n$, then $\gamma$ is isotopic to the upper or lower tunnel accordingly.
So, $2 \le i \le n-1$.

\begin{figure}
  \centering
  \begin{overpic}[width=0.65\columnwidth]{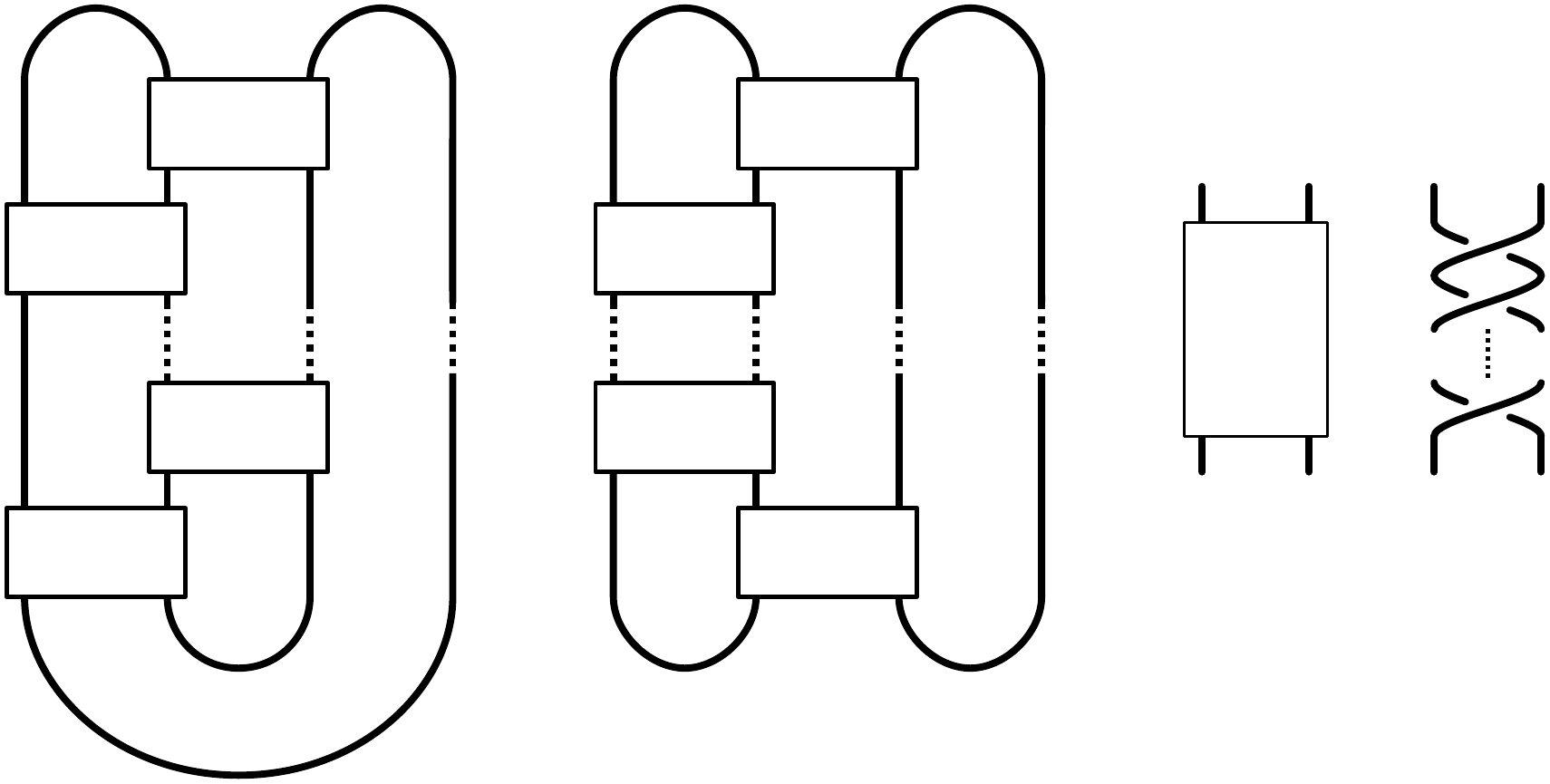}
    \put(36,108){$a_1$}
    \put(7,87){$-a_2$}
    \put(29,57.5){$a_{n-1}$}
    \put(7,36){$-a_n$}

    \put(134,108){$a_1$}
    \put(105,87){$-a_2$}
    \put(100.5,57.5){\small$-a_{n-1}$}
    \put(134,36){$a_n$}

    \put(27,-9){$n$:odd}
    \put(122,-9){$n$:even}
    \put(207,72){$k$}
    \put(226,72){$=$}
    \put(202,38){right-hand}
    \put(205,23){$k$ half-twists}
  \end{overpic}
  \vspace{2mm}
  \caption{The standard prime alternating diagram $\Diagram$ of a hyperbolic $2$-bridge link $L$}
  \label{fig:2bridge-diagram}
\end{figure}

Suppose $i$ is odd.
Apply the flype to $\Diagram$
as illustrated in Figure~\ref{fig:flype1},
and let $\Diagram'$ be the resulting prime alternating diagram.
Then the image of 
the crossing arc $\gamma$ 
by the flype is an arc $\gamma'$ contained in a region $R'$ of $\Diagram'$,
such that 
the corresponding arc $\hat\gamma'$ in the polyhedron 
$(B^3,\Diagram')$
joins crossings $c_1'$ and $c_2'$ of $R'$ which are not adjacent 
in $\partial R'$.
Hence, we can apply the arguments in Subcase II-2
in the proof of Theorem~\ref{Theorem1-g}
to show that
$\{ \mu_1, \mu_2 \}$ generates a rank $2$ free group which is geometrically finite.

\begin{figure}
  \centering
  \begin{overpic}[width=0.45\columnwidth]{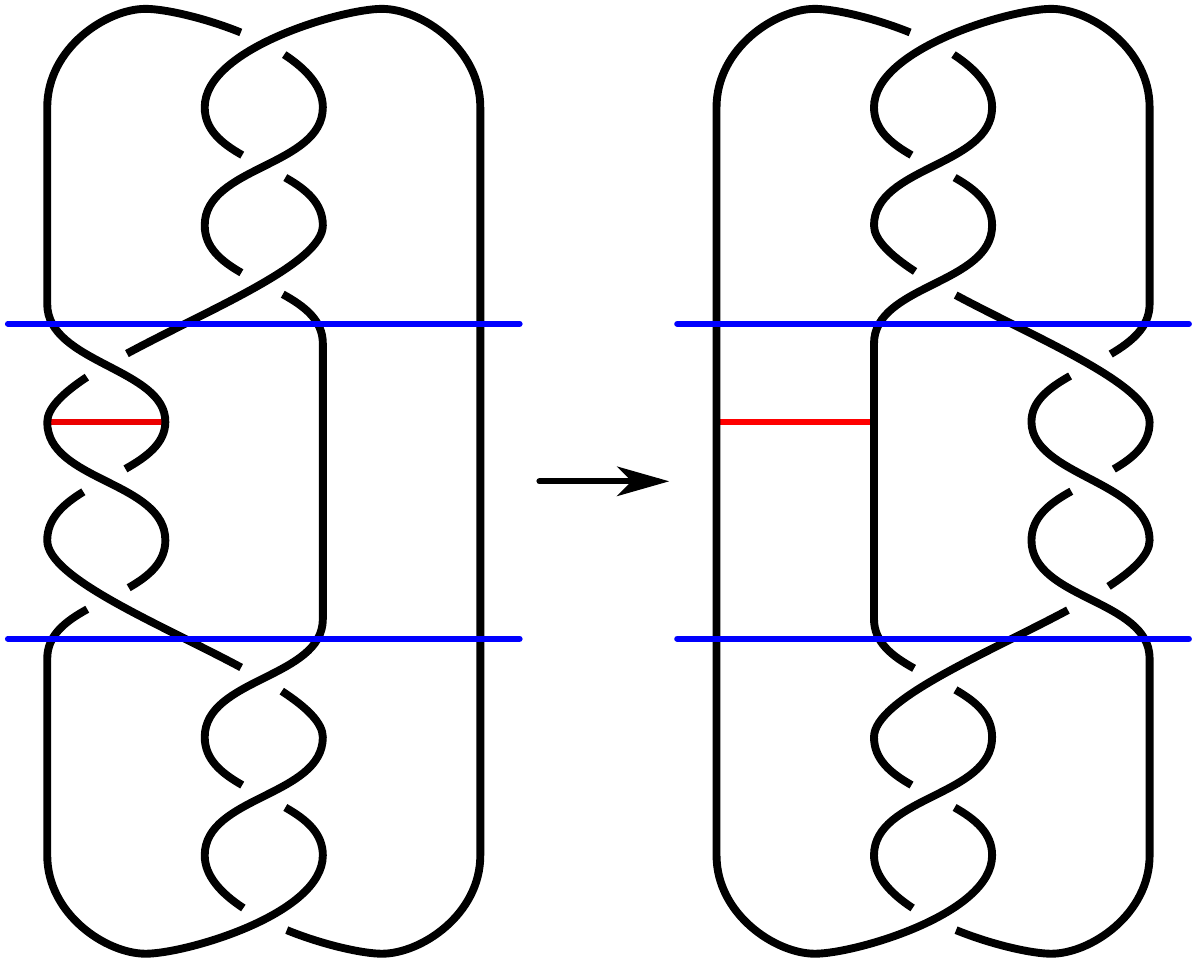}
    \put(-9,133){$\Diagram$}
    \put(90,133){$\Diagram'$}
    \put(-1,79){\color{red}$\gamma$}
    \put(115,70){\color{red}$\gamma'$}
    \put(79,62){\small flype}
  \end{overpic}
  \caption{The flype maps the crossing arc $\gamma$ in the diagram 
  $\Diagram$ to an arc which is not a crossing arc in the new diagram 
  $\Diagram'$.}
  \label{fig:flype1}
\end{figure}

Suppose $i$ is even.
Then we first modify $\Diagram$ by an ambient isotopy in $S^2$ (which is not an ambient isotopy in $\RR^2$) as in Figure~\ref{fig:flype2}, 
and then apply the flype as in Figure~\ref{fig:flype2}.
Then we can again apply the arguments in Subcase II-2
in the proof of Theorem~\ref{Theorem1-g}
and to obtain the same conclusion.

This completes the proof of Theorem~\ref{Theorem1}.
\end{proof}

\begin{figure}
  \centering
  \begin{overpic}[width=0.6\columnwidth]{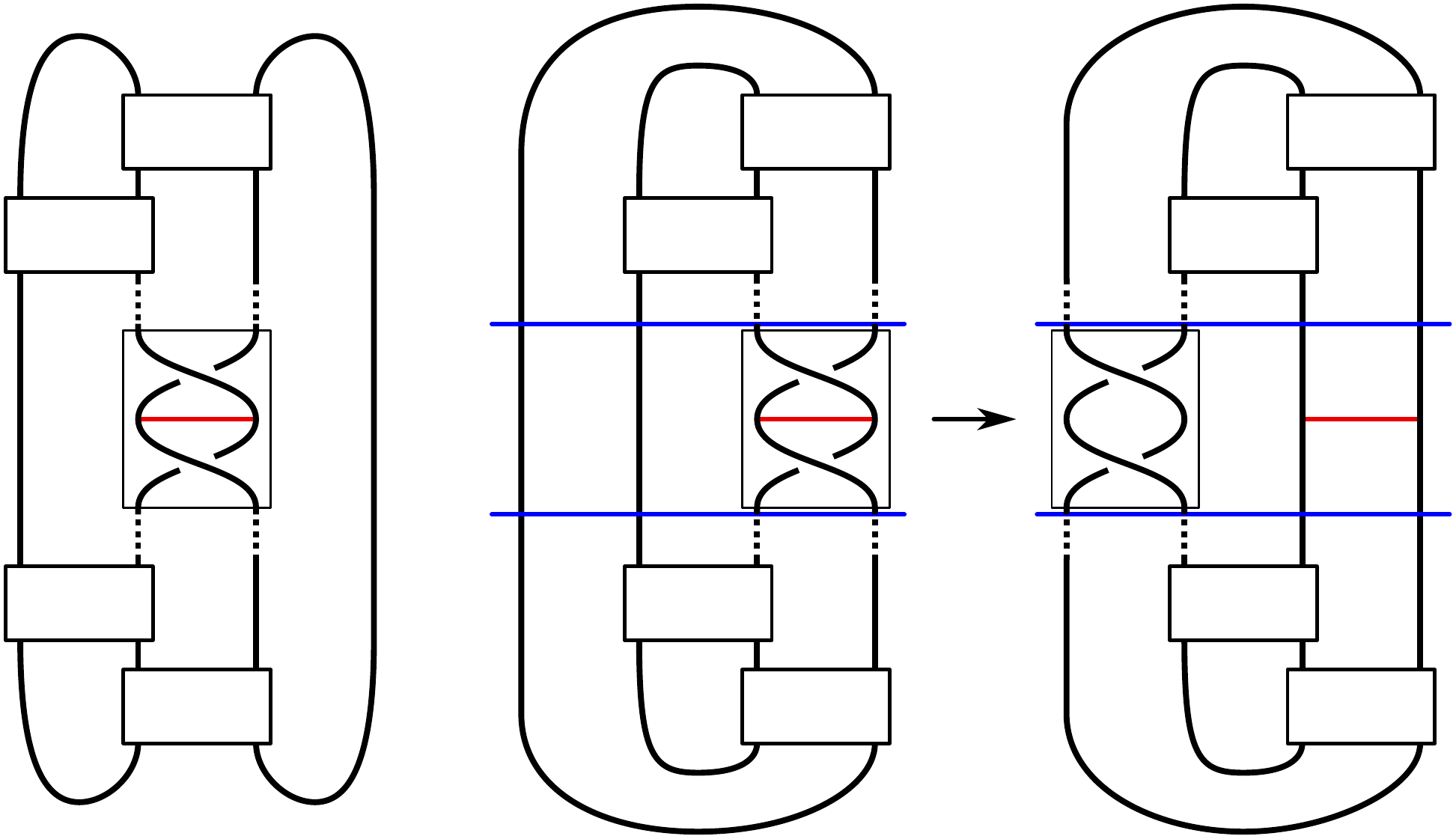}
    \put(28.5,113.5){$a_1$}
    \put(3.5,96.5){$-a_2$}
    \put(10,66){$a_i$}
    \put(1,37){\tiny$-a_{n-1}$}
    \put(28,19.5){$a_n$}
    \put(46,66){\textcolor{red}{$\gamma$}}
    \put(69,65){$\cong$}

    \put(130,113.5){$a_1$}
    \put(105,96.5){$-a_2$}
    \put(111,66){$a_i$}
    \put(102.5,37){\tiny$-a_{n-1}$}
    \put(129,19.5){$a_n$}
    \put(147.5,61){\textcolor{red}{$\gamma$}}
    \put(149,74){\small flype}

    \put(219,113.5){$a_1$}
    \put(194.5,96.5){$-a_2$}
    \put(198,66){$a_i$}
    \put(191.7,37){\tiny$-a_{n-1}$}
    \put(218,19.5){$a_n$}
    \put(235,67){\textcolor{red}{$\gamma'$}}
  \end{overpic}
  \caption{If $i$ is even,
then first modify $\Diagram$ by an ambient isotopy in $S^2$
into the middle diagram and then apply the flype.}
  \label{fig:flype2}
\end{figure}

\section{Rational links in the projective $3$-space and the proof of Theorem~\ref{thm:generalization}}
\label{sec:generalization}

In this section,
we first define the rational links in $P^3$ (Definition~\ref{def:rational-link-P})
and present their basic properties including classification and hyperbolization
(Propositions~\ref{prop:covering-link} and~\ref{prop:classifying-rational-P-link}).
Then we give a detailed description of Theorem~\ref{thm:generalization}(3)
in Remark~\ref{rem:statement-thm-general}, 
and prove the theorem.

We recall the definition of a rational tangle
following~\cite[Chapter 18]{Bonahon-Siebenmann}
and~\cite[Section 2]{AOPSY}.
Let $\Ball:=\{(x,y,z)\in \RR^3 \ | \ x^2+y^2+z^2\le2\}$ be the round $3$-ball
in $\RR^3\subset\RRR^3:=\RR^3\cup\{\infty\}$,
whose boundary contains the set $P^0$ consisting of 
the four marked points
\[
\pSW:=(-1,-1,0),\quad \pSE:=(1,-1,0), \quad
\pNE:=(1,1,0), \quad \pNW:=(-1,1,0).
\]
For $r\in \QQQ:=\QQ\cup\{\infty\}$, 
the {\it rational tangle of slope $r$} is 
the pair $(\Ball, t(r))$,
where $t(r)$ is
a pair of arcs properly embedded in $\Ball$
such that $t(r)\cap\partial\Ball=\partial t(r)= P^0$
as depicted in Figure~\ref{fig:rational-tangle}(b).
Here the \lq\lq pillowcase'' in the figure is 
the quotient space $(\RR^2,\ZZ^2)/\mathcal{J}$,
where $\mathcal{J}$ is the group of isometries of 
the Euclidean plane $\RR^2$ generated by the $\pi$-rotations around the points in $\ZZ^2$, 
and the pair of arcs on the pillowcase
is the image of the lines in $\RR^2$ of slope $r$ passing through 
points in $\ZZ^2$.
We can arrange $t(r)$ so that it is invariant by 
the $\pi$-rotations $h_x$, $h_y$ and $h_z=h_xh_y$
about the $x$-, $y$- and $z$-axis, respectively.

\begin{figure}
  \centering
  \begin{tabular}{c}
    \begin{subfigure}{0.3\columnwidth}
      \begin{overpic}[width=\columnwidth]{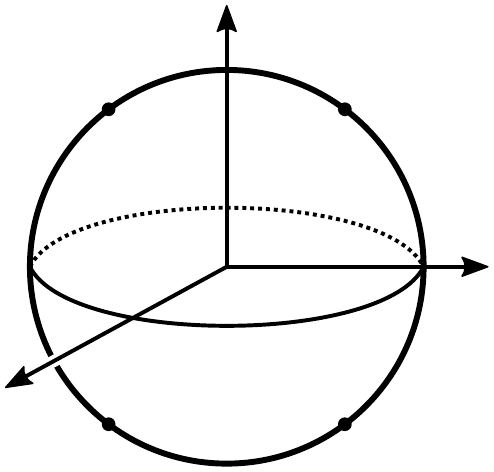}
        \put(120,47.5){$x$}
        \put(52.5,118){$y$}
        \put(-5.5,14.5){$z$}
        \put(7,90){NW}
        \put(86,90){NE}
        \put(7,2){SW}
        \put(85,2){SE}
      \end{overpic}
      \subcaption{}
      \label{fig:rational-tangle_a}
    \end{subfigure}
    \hspace{12mm}
    \begin{subfigure}{0.3\columnwidth}
      \begin{overpic}[width=\columnwidth]{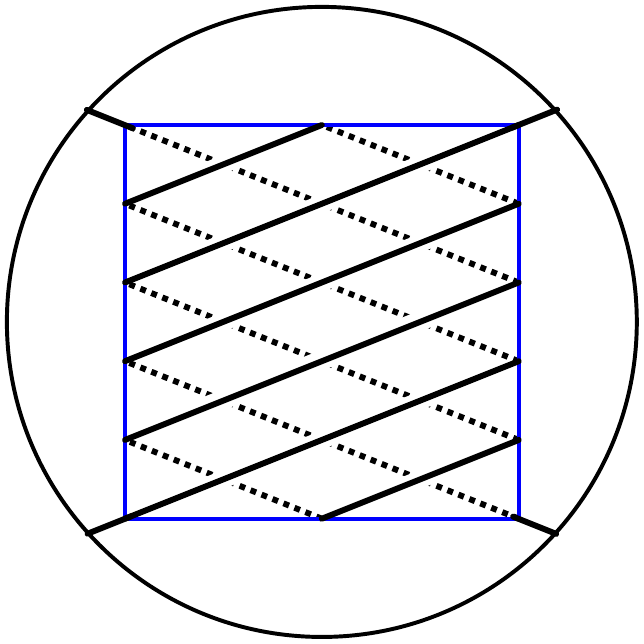}
        \put(-2,100){NW}
        \put(105,100){NE}
        \put(-2,12){SW}
        \put(104,12){SE}
      \end{overpic}
      \subcaption{}
      \label{fig:rational-tangle_b}
    \end{subfigure}
  \end{tabular}
  \caption{(a) The $3$-ball $\Ball$ with the set $P^0$ of the four marked points. 
  (b) The rational tangle $(\Ball, t(r))$ with $r=2/5$.
Note that the vertical axis is the $y$-axis, not the $z$-axis.
}
  \label{fig:rational-tangle}
\end{figure}

The {\it $2$-bridge link
$(S^3,K(r))$
of slope $r$}
is obtained by gluing (disjoint copies of)
$(\Ball, t(r))$ and $(-\Ball, t(\infty))$
via the identity map on $\partial\Ball$.
(Here $\Ball$ inherits the natural orientation of $\RRR^3$.)
Thus we may regard 
\[
K(r)= t(r)\cup \iota(t(\infty))\subset \Ball\cup \iota(\Ball) =\RRR^3,
\]
where $\iota$ is the inversion of $\RRR^3$
in $\partial\Ball$.
Let $\DD$ be the
{\it Farey tessellation},
that is,
the tessellation of the upper half-space 
$\HH^2$ by ideal triangles which are obtained
from the ideal triangle with the ideal vertices $0, 1,
\infty \in \QQQ$ by repeated reflection in the edges.
Let $\Aut(\DD)$ be the automorphism group of $\DD$ and
$\Aut^+(\DD)$ the orientation-preserving subgroup of $\Aut(\DD)$.
The following proposition reformulates
(i) the classification of $2$-bridge links 
established by Schubert~\cite{Schubert}
and (ii) 
the hyperbolization of alternating link complements
proved by Menasco~\cite[Corollary~2]{Menasco}
by using Thurston's uniformization theorem of Haken manifolds~
\cite{Thurston1982}, 
applied to $2$-bridge link complements.

\begin{proposition}
\label{prop:classifying-2bridge}
{\rm (1)}
For two rational numbers $r, r'\in\QQQ$,
there is a homeomorphism $\psi:S^3\to S^3$ 
such that $\psi(K(r))=K(r')$
if and only if 
there is an element $\xi\in \Aut(\DD)$
that maps $\{r, \infty\}$ to $\{r',\infty\}$.
Moreover, $\psi$ can be chosen to be orientation-preserving
if and only if 
either (i) $\xi$ is orientation-preserving and
$(\xi(r),\xi(\infty))=(r',\infty)$ or
(ii) $\xi$ is orientation-reversing and
$(\xi(r),\xi(\infty))=(\infty,r')$.

{\rm (2)}
$K(r)$ is hyperbolic if and only if
$d(\infty, r)\ge 3$,
where $d$ is the edge path distance
in the $1$-skeleton of $\DD$.
\end{proposition}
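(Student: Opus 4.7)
The plan is to derive (1) from Schubert's classification of $2$-bridge links and (2) from Menasco's hyperbolization theorem for alternating links, translating both into the combinatorial language of the Farey tessellation. For (1), I would identify $\Aut^+(\DD)$ with $\PSL(2,\ZZ)$ and $\Aut(\DD)$ with $\operatorname{PGL}(2,\ZZ)$ acting on $\QQQ = \partial \HH^2$ by fractional linear transformations. Schubert's theorem says $K(p/q)$ and $K(p'/q')$ are equivalent up to homeomorphism of $S^3$ if and only if $q = q'$ and $p' \equiv p^{\pm 1} \pmod{q}$. An element $\xi \in \Aut(\DD)$ sending $\{r,\infty\}$ to $\{r',\infty\}$ either fixes $\infty$, in which case $\xi$ acts as $t \mapsto \pm t + n$ and hence $q'=q$, $p' \equiv \pm p \pmod{q}$; or it swaps $r \leftrightarrow \infty$, in which case a short matrix computation using $\det \xi = \pm 1$ yields $q'=q$ and $p' \equiv \pm p^{-1} \pmod{q}$. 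Combining the two cases recovers Schubert's condition exactly.

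For the orientation refinement in (1), I would track which $\xi$ correspond to orientation-preserving $\psi$ on $S^3$: since the inversion $\iota$ used to glue $\Ball$ to $\iota(\Ball)$ is itself orientation-reversing on $S^3$, the swap case of $\xi$ interchanges the two tangle balls and hence reverses the ambient orientation, while the fix-$\infty$ case does not; combining this with the sign of $\det \xi$ on $\HH^2$ yields the asserted alternative between (i) and (ii). For (2), I would combine Menasco's theorem with an explicit description of which slopes produce torus links. For any $r \ne \infty$, the standard alternating diagram of $K(r)$ obtained from a continued-fraction expansion of $r$ is prime and the link is non-split, so $K(r)$ is hyperbolic unless it is a $(2,n)$-torus link. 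By part (1), the slopes whose link $K(r)$ is a $(2,n)$-torus link (including the unknot, Hopf link, and two-component unlink as degenerate cases) are precisely those $\Aut(\DD)$-equivalent to an integer or to $\infty$: namely, the vertices of the Farey tessellation at edge-path distance at most two from $\infty$, since the distance-$1$ vertices are the integers and the distance-$2$ vertices are the slopes $(nq\pm 1)/q$ with $q \ge 2$, Schubert-equivalent to $\pm 1/q$ and giving the $(2,q)$-torus link. The remaining rational slopes, those with $d(\infty,r) \ge 3$, therefore produce prime alternating $2$-bridge links that are not torus links, and hence are hyperbolic by Menasco.

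The main obstacle is bookkeeping rather than conceptual. In (1), the orientation refinement requires carefully tracking how the signs $\det \xi = \pm 1$ in $\operatorname{PGL}(2,\ZZ)$ interact with the orientation-reversing role of $\iota$ in the assembly of $S^3$, and one must verify that the induced action on the four marked points of $\partial\Ball$ extends consistently over the tangle balls so that Schubert's numerical condition correctly lifts to a homeomorphism of pairs. In (2), one must verify via the standard reduction of continued fractions that the alternating diagram arising from $r$ with $d(\infty,r) \ge 3$ is genuinely prime and the link non-split, so that Menasco's theorem is applicable; this is routine but is the one step where the Farey-distance hypothesis is used in an essential way.
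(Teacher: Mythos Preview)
Your approach is exactly what the paper intends: the proposition is stated there without proof, introduced merely as a reformulation of Schubert's classification of $2$-bridge links for part (1) and of Menasco's hyperbolization of alternating link complements (via Thurston's uniformization) for part (2). Your proposal fills in precisely those translations into Farey-tessellation language, so it is correct and aligned with the paper's (implicit) argument; if anything you supply more detail than the paper does.
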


Now, we define the rational links in $P^3$
and state their basic properties.

\begin{definition}
\label{def:rational-link-P}
{\rm
For $r\in\QQQ$,
the {\it rational link of slope $r$ in the projective $3$-space $P^3$} 
is the pair 
$(P^3,K_P(r)):=(\Ball,t(r))/\sim$,
where $\sim$ identifies 
$x$ and $-x$ for every $x\in \partial \Ball$.
The inverse image $\tilde K_P(r)$
of $K_P(r)$ in the universal cover $S^3$ of $P^3$
is called the {\it covering link} of $K_P(r)$. 
}
\end{definition}

\begin{proposition}
\label{prop:covering-link}
The covering link of a rational link $K_P(r)$ in $P^3$
is equivalent to the $2$-bridge link $K(\tilde r)$
with $\tilde r=\eta_r(r)$,
where $\eta_r$ is an element
of $\Aut^+(\DD)$ such that $\eta_r(-r)=\infty$.
(In other words, $\tilde r$ is characterized by the property that
$(\tilde r,\infty)=(\eta_r(r),\eta_r(-r))$ for some
$\eta_r\in \Aut^+(\DD)$.) 
\end{proposition}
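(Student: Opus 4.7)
The plan is to realize the double cover $S^3 \to P^3$ concretely, identify the covering link as a union of two rational tangles with slopes $r$ and $-r$, and then invoke the classification of two-bridge links. First, I would realize $S^3$ as the union of two copies $\Ball_{+}$ and $\Ball_{-}$ of $\Ball$ whose boundaries are glued so that the deck transformation $\tau$ of the cover swaps them and restricts to the antipodal map $-I$ on the shared boundary sphere; the quotient of $S^3$ by $\tau$ is then precisely $P^{3}=\Ball/\sim$. Under this model, the covering link is $\tilde{K}_{P}(r) = t(r) \cup \tau(t(r))$ with $t(r) \subset \Ball_{+}$ and $\tau(t(r)) \subset \Ball_{-}$. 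Reparametrizing $\Ball_{-}$ by $-I$ replaces the gluing map with the identity and changes the tangle in $\Ball_{-}$ to $(-I)(t(r))$, so $\tilde{K}_{P}(r)$ is expressed as the union of the rational tangles $t(r)$ and $(-I)(t(r))$, glued by the identity along $\partial \Ball$.

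The crucial step is the identification $(-I)(t(r)) = t(-r)$. For this I would decompose $-I = h_{z} \circ \rho$, where $h_{z}$ is the $\pi$-rotation about the $z$-axis and $\rho(x,y,z) = (x,y,-z)$ is the reflection across the horizontal plane containing $P^{0}$. By the setup, $t(r)$ is invariant under $h_{z}$, so it suffices to check that $\rho(t(r)) = t(-r)$. The map $\rho$ is an orientation-reversing involution of $(\partial \Ball, P^{0})$ that fixes each point of $P^{0}$; passing to the branched double cover torus $T \to (\partial \Ball, P^{0})$, $\rho$ lifts to an orientation-reversing involution whose action on $H_{1}(T) \cong \ZZ^{2}$ is given by $(q,p) \mapsto (q,-p)$, carrying the slope $r = p/q$ to $-r$. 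Hence $\rho(t(r)) = t(-r)$, and therefore $(-I)(t(r)) = t(-r)$.

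It then follows that $\tilde{K}_{P}(r)$ is the two-bridge link obtained by gluing $(\Ball, t(r))$ and $(\Ball, t(-r))$ via the identity, so that its unordered pair of slopes is $\{r, -r\}$. Choosing $\eta_{r} \in \Aut^{+}(\DD)$ with $\eta_{r}(-r) = \infty$ and extending it to a homeomorphism of $\Ball$ that sends each rational tangle $t(s)$ to $t(\eta_{r}(s))$, I would apply $\eta_{r}$ simultaneously to the two balls: this preserves the identity gluing and converts the pair $\{r, -r\}$ into $\{\eta_{r}(r), \infty\} = \{\tilde{r}, \infty\}$. The resulting link is $K(\tilde{r})$ by definition, and since the simultaneous application is a homeomorphism of $S^{3}$, we conclude $\tilde{K}_{P}(r) = K(\tilde{r})$, as required.

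The main technical obstacle is the slope-reversal identity $(-I)(t(r)) = t(-r)$: it is orientation-sensitive, and its correct form hinges on distinguishing the orientation-reversing involution $-I$ from the orientation-preserving involution $h_{z}$, which induces the same permutation of $P^{0}$ but preserves every slope. The decomposition $-I = h_{z} \circ \rho$ isolates the orientation-reversing factor $\rho$ and makes the slope-reversal transparent.
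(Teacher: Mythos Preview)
Your strategy matches the paper's: both arguments identify the covering link as the ``sum'' of two rational tangles with slopes $r$ and $-r$, and then apply an element of $\Aut^+(\DD)$ to normalize one slope to $\infty$. The paper carries this out via an explicit spherical-join model of $S^3$, identifying the deck involution $f$ as (antipodal map)~$\circ$~(inversion in $\partial\Ball$) and then using the $\pi$-rotation $fh_z$ about the great circle $\partial\Ball\cap\{z=0\}$ to exhibit $(S^3,\tilde K_P(r))\cong(\Ball,t(r))\cup\iota(\Ball,t(-r))$. Your abstract two-ball model with the decomposition $-I=h_z\circ\rho$ is a legitimate and somewhat more streamlined route to the same conclusion; in effect your reflection $\rho$ plays the role that the paper's rotation $fh_z$ plays, and both reduce the problem to the slope-reversal identity.

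There is, however, a small gap in your justification of $\rho(t(r))=t(-r)$. You assert that the lift $\tilde\rho$ acts on $H_1(T)\cong\ZZ^2$ by $(q,p)\mapsto(q,-p)$, but the hypotheses you have used so far (that $\tilde\rho$ is an orientation-reversing involution fixing each $2$-torsion point) do not pin this down: any matrix of the form $\left(\begin{smallmatrix}1&0\\c&-1\end{smallmatrix}\right)$ with $c$ even satisfies those constraints, and such a matrix sends slope $r$ to $c-r$, not $-r$. To close the gap, observe that the trivial tangles $t(0)$ and $t(\infty)$ can be realized in the plane $\{z=0\}=\Fix(\rho)$, so $\rho$ preserves each of them; hence $\tilde\rho$ preserves the slope-$0$ and slope-$\infty$ classes in $H_1(T)$ up to sign. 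Combined with $\det(\tilde\rho_*)=-1$, this forces $\tilde\rho_*$ to be $\pm\left(\begin{smallmatrix}1&0\\0&-1\end{smallmatrix}\right)$, and in either case slope $r$ is sent to $-r$. With this addition your proof is complete and essentially equivalent to the paper's.
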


Here, we assume that $P^3$ inherits the natural orientation of
$\Ball\subset \RRR^3 \cong S^3$, and so the
covering projection $S^3\to P^3$ is orientation-preserving. 
Two links in an oriented $3$-manifold are  
{\it equivalent}
if there is an orientation-preserving homeomorphism of the ambient 
$3$-manifold that maps one to the other.

\begin{proof}[Proof of Proposition~\ref{prop:covering-link}]
Identify $S^3:=\{(z_1,z_2)\in \CC^2 \ | \ |z_1|^2+|z_2|^2=1\}$
with the spherical join 
$S^1_1*S^1_2$ 
of the circles $S^1_1:=S^3\cap (\CC\times 0)$
and $S^1_2:=S^3\cap (0\times \CC)$
(cf.~\cite[Definition I.5.13]{BH}).
Then we can identify $\RRR^3$
with $S^3$
so that the following conditions are satisfied 
(see Figure~\ref{fig:free-symmetry_a}).
\begin{enumerate}
\item
The great circle $\partial\Ball\cap \{y=0\}$ is identified with $S^1_1$,
and the compactified $y$-axis is identified with $S^1_2$.
Moreover $\Ball$ is identified with the spherical join $S^1_1*J_2$,
where $J_2:=\{(0,z_2)\in S^1_2 \ | \ -\pi/2 \le \arg(z_2) \le \pi/2\}$.
\item
The $\pi$-rotations $h_x$, $h_y$, $h_z$ of $\RRR^3$,
respectively, 
are identified with 
the involutions on $S^3$ defined by
\[
\quad h_x(z_1,z_2)=(\bar z_1, \bar z_2), \quad 
h_y(z_1,z_2)=(-z_1, z_2), \quad
h_z(z_1,z_2)=(-\bar z_1, \bar z_2).\]
\item
Let $f$ be the generator of the covering transformation group
of the covering $S^3\to P^3$, given by $f(z_1,z_2)=(-z_1,-z_2)$.
Then $f$ viewed on $\RRR^3$ is the composition
of the antipodal map $(x,y,z)\mapsto (-x,-y,-z)$
and the inversion $\iota$ in $\partial\Ball$.
\end{enumerate}
Then 
the the covering link $\tilde K_P(r)\subset S^3$ of $K_P(r)\subset P^3$
is given by $\tilde K_P(r)=t(r)\cup f(t(r))\subset \Ball \cup  f(\Ball)=S^3$, and it 
is invariant by the action of the subgroup
$\langle h_x,h_y,f\rangle \cong (\ZZ/2\ZZ)^3$ of
$\Isom^+(S^3)$.
Note that $f(t(r))=fh_z(t(r))$,
where $fh_z$, which is given by $fh_z(z_1,z_2)=(\bar z_1, -\bar z_2)$,
is the $\pi$-rotation of $S^3=S^1_1*S^1_2$ whose axis
is the spherical join $S^0_1*iS^0_2$,
where $S^0_1=\{(\pm 1,0)\}$ and $iS^0_2=\{(0,\pm i)\}$.
The axis of $fh_z$ viewed in $\RRR^3$ is the great circle
$\partial\Ball\cap \{z=0\}$,
which passes through the set $P^0$. 
Hence the action of $fh_z$ on $(S^3,\tilde K_P(r))$
is conjugate to the involution illustrated in
Figure~\ref{fig:free-symmetry_b},
where $\tilde K_P(r)$ is represented as the \lq\lq sum'' of the two rational tangles of slope $r$.
Note that the right rational tangle in the figure corresponds to
the image of $(\Ball, t(-r))$ by the inversion $\iota$.
So, we have $(S^3,\tilde K_P(r))\cong 
(\Ball, t(r))\cup \iota(\Ball, t(-r))$.

Now, let $\eta_r\in \Aut^+(\DD)$ and $\tilde r\in\QQQ$ be such that  
$(\tilde r,\infty)=(\eta_r(r),\eta_r(-r))$.
Recall the isomorphism $\Aut^+(\DD) \cong \SL(2,\ZZ)$,
and let $A\in \SL(2,\ZZ)$ be the matrix corresponding to $\eta_r$.
Then the linear map $A:\RR^2\to \RR^2$ maps the lines of slope $r$ (resp.~$-r$)
to the lines of slope $\tilde r$ (resp.~$\infty$).
Thus $A$ induces an orientation-preserving auto-homeomorphism of the pillowcase
$(\RR^2,\ZZ^2)/\mathcal{J}$ which maps 
the pair of proper arcs 
of \lq\lq slope'' $r$ (resp.~$-r$) to the pair of proper arcs 
of slope $\tilde r$ (resp.~$\infty$).
This homeomorphism induces 
an orientation-preserving auto-homeomorphism 
of $(\partial \Ball, P^0)$ via the natural
identification $(\partial \Ball, P^0)\cong(\RR^2,\ZZ^2)/\mathcal{J}$.
By using the fact that $t(s)\subset\Ball$ is boundary parallel 
for every $s\in\QQQ$,
we can
extend the homeomorphism to an orientation-preserving homeomorphism 
from $(S^3,\tilde K_P(r))\cong (\Ball, t(r))\cup \iota(\Ball, t(-r))$ 
to 
$(S^3,K(\tilde r))=(\Ball, t(\tilde r))\cup \iota(\Ball, t(\infty))$.
\end{proof}

\begin{figure}
  \centering
  \begin{tabular}{c}
    \begin{subfigure}{0.35\columnwidth}
      \begin{overpic}[width=\columnwidth]{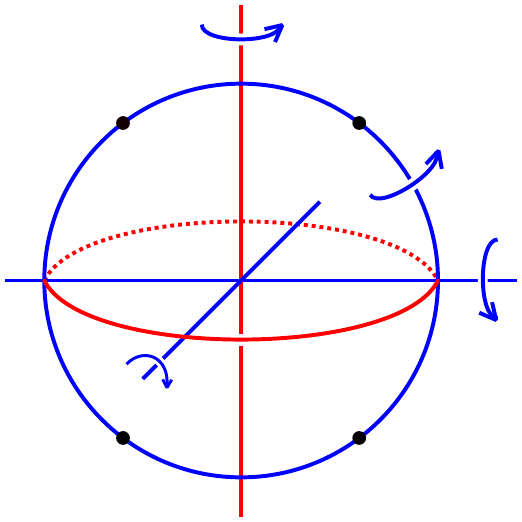}
        \put(12.5,108){NW}
        \put(99,108){NE}
        \put(12.5,14){SW}
        \put(99,14){SE}
        \put(77,128){\textcolor{blue}{$h_y$}}
        \put(134,49){\textcolor{blue}{$h_x$}}
        \put(40,27){\textcolor{blue}{$h_z$}}
        \put(120,95){\textcolor{blue}{$f h_z$}}
        \put(80,39){\textcolor{red}{$S^1_1$}}
        \put(60,143){\textcolor{red}{$S^1_2$}}
      \end{overpic}
      \vspace{2mm}
      \subcaption{}
      \label{fig:free-symmetry_a}
    \end{subfigure}
    \hspace{7mm}
    \begin{subfigure}{0.4\columnwidth}
      \begin{overpic}[width=\columnwidth]{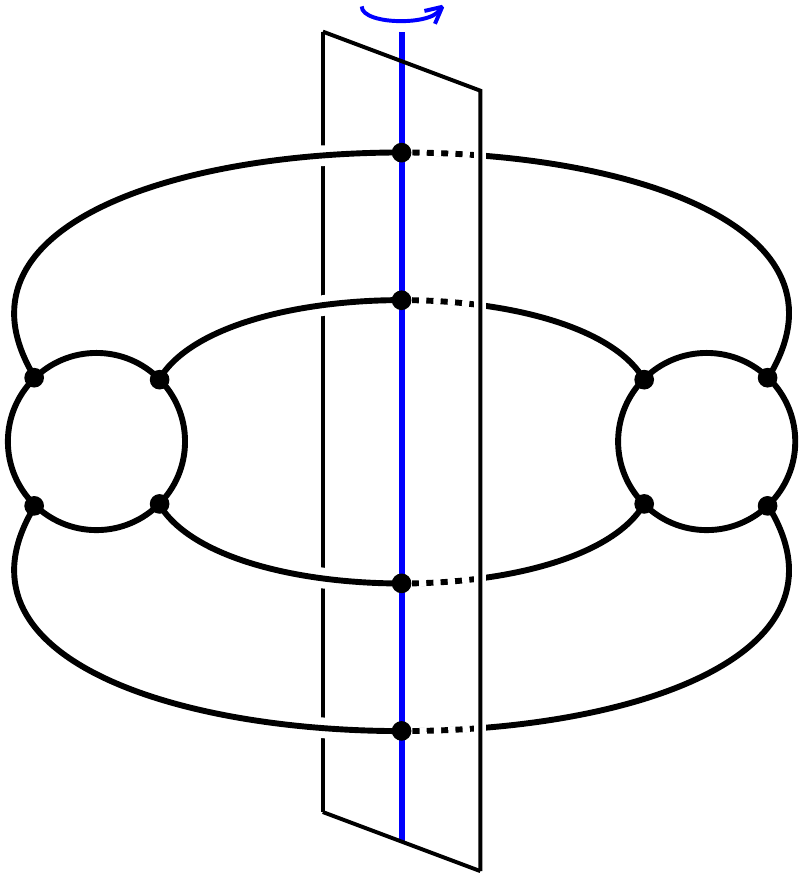}
        \put(65,136){\tiny NW}
        \put(67,107){\tiny NE}
        \put(68,51){\tiny SE}
        \put(66,22){\tiny SW}
        \put(11,84){$t(r)$}
        \put(133,84){$t(r)$}
        \put(20,148){$B^3$}
        \put(110,148){$f(B^3) = f h_z(B^3)$}
        \put(89,170){\textcolor{blue}{$f h_z$}}
      \end{overpic}
      \subcaption{}
      \label{fig:free-symmetry_b}
    \end{subfigure}
  \end{tabular}
  \caption{In (a), 
  the axis of the $\pi$-rotation $fh_z$
  is the great circle
  $\partial\Ball\cap \{z=0\}$,
  and it passes through the four marked points.
  In (b),
  $\partial B^3$ is the central vertical plane and the axis 
  of $fh_z$ is the vertical line.
  The free involution $f$ is the composition of 
  the $\pi$-rotations $fh_z$ and $h_z$,
  where $\Fix(fh_z)\cup \Fix(h_z)$ forms a Hopf link.
  }
  \label{fig:free-symmetry}
\end{figure}

\begin{remark}
\label{rem:continued-fraction}
{\rm
By using~
\cite[Proof of Lemma II.3.3(3) and Figure II.3.4]{Sakuma-Weeks},
we obtain the following expression of $\tilde r$.
Consider a continued fraction expansion
\begin{center}\begin{picture}(230,70)
\put(0,48){$\displaystyle{
r=a_0+[a_1,a_2, \cdots,a_{n}] =
a_0+\cfrac{1}{a_1+
\cfrac{1}{ \raisebox{-5pt}[0pt][0pt]{$a_2 \, + \, $}
\raisebox{-10pt}[0pt][0pt]{$\, \ddots \ $}
\raisebox{-12pt}[0pt][0pt]{$+ \, \cfrac{1}{a_{n}}$}
}} \ .}$}
\end{picture}\end{center}
Then 
\[
\tilde r =
\begin{cases}
(-1)^{n-1} [a_n,\cdots, a_{1}, 2 a_0, a_1, \cdots, a_n]
& \mbox{if $a_0\ne 0$},
\\
(-1)^{n-1} [a_n,\cdots, a_{2}, 2a_{1}, a_2,\cdots, a_n]
& \mbox{if $a_0= 0$.}
\end{cases}
\]
Moreover, if $\tilde r=\tilde q/\tilde p$ with $\gcd(\tilde p,\tilde q)=1$
then $\tilde q^2 \equiv 1 \pmod{2\tilde p}$.
}
\end{remark}

Propositions~\ref{prop:classifying-2bridge} and~\ref{prop:covering-link} imply the following proposition for rational links in $P^3$.

\begin{proposition}
\label{prop:classifying-rational-P-link}
{\rm(1)}
$K_P(r)$ is trivial (i.e., it bounds a disk in $P^3$)
if and only if $r=0$ or $\infty$.

{\rm(2)}
For $r, r'\in \QQQ$,
there is a homeomorphism $\psi:P^3\to P^3$
such that $\psi(K_P(r))=K_P(r')$
if and only if $r'=\pm r$ or $\pm 1/r$.
Moreover, $\psi$ can be chosen to be orientation-preserving
if and only if $r'=r$ or $-1/r$.

{\rm(3)}
$K_P(r)$ is hyperbolic
if and only if 
$\min(d(0,r), d(\infty,r)) \ge 2$,
equivalently, 
$r\not\in \ZZ \cup\{\infty\}\cup \{1/p \ | \ p\in \ZZ\setminus \{0\}\}$.
\end{proposition}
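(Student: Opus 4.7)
The plan is to reduce all three claims to the classification of $2$-bridge links in $S^3$ (Proposition~\ref{prop:classifying-2bridge}) via the double cover $S^3\to P^3$ and the identification $\tilde K_P(r)=K(\tilde r)$ from Proposition~\ref{prop:covering-link}, where $(\tilde r,\infty)=(\eta_r(r),\eta_r(-r))$ for some $\eta_r\in\Aut^+(\DD)$. Throughout, $d$ denotes the edge-path distance in the Farey graph, and I use that $\eta_r$ is a Farey isometry.

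For part (3), I first observe that $K_P(r)$ is hyperbolic if and only if $K(\tilde r)$ is: the complement of $\tilde K_P(r)$ is a double cover of $P^3\setminus K_P(r)$, so hyperbolicity lifts upward by pullback of the metric, and by Mostow--Prasad rigidity the free $\ZZ/2$ deck action is realized by isometries so that the quotient inherits a finite-volume hyperbolic structure. Proposition~\ref{prop:classifying-2bridge}(2) then converts this to $d(\infty,\tilde r)\geq 3$, which by $\eta_r(-r)=\infty$ and the isometry property equals $d(-r,r)\geq 3$. The proof is completed by the combinatorial equivalence $d(r,-r)\geq 3 \Leftrightarrow r\notin \ZZ\cup\{\infty\}\cup\{1/p:p\in\ZZ\setminus\{0\}\}$. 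One direction is immediate, since integers (respectively, reciprocals of nonzero integers) share $\infty$ (respectively, $0$) as a common Farey neighbor with their negatives. For the other, for any common Farey neighbor $a/b$ of $p/q$ and $-p/q$, the equations $|pb-aq|=|pb+aq|=1$ combined with $|u-v|+|u+v|\geq 2\max(|u|,|v|)$ force $|pb|,|aq|\leq 1$; a brief case analysis in the subcases $b=0$, $a=0$, and both nonzero yields $r\in\ZZ$, $r\in\{1/p\}$, and a contradiction, respectively.

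For part (1), the forward direction follows by noting that for $r\in\{0,\infty\}$ we have $-r=r$, so $\tilde r=\infty$ and $\tilde K_P(r)=K(\infty)$ is the $2$-component unlink; taking a pair of disjoint spanning disks chosen to be $f$-equivariant and projecting yields an embedded disk in $P^3$ bounded by $K_P(r)$. For the converse, if $K_P(r)$ bounds an embedded disk $D$ in $P^3$, then $D$ is simply connected, so its preimage in $S^3$ is a pair of disjoint disks bounding $\tilde K_P(r)=K(\tilde r)$; hence $K(\tilde r)$ is the $2$-component unlink, equivalent to $K(\infty)$. Proposition~\ref{prop:classifying-2bridge}(1) then forces $\tilde r=\infty$, and combining with $\eta_r(-r)=\infty$ and the bijectivity of $\eta_r$ gives $r=-r$, i.e., $r\in\{0,\infty\}$.

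For part (2), one direction is realized by explicit $\Ball$-symmetries commuting with the antipodal involution on $\partial\Ball$ and hence descending to $P^3$: the $\pi/2$-rotation about the $z$-axis sends $t(r)$ to $t(-1/r)$ (orientation-preserving on $P^3$), the reflection fixing the $xz$-plane sends $t(r)$ to $t(-r)$ (orientation-reversing), and composition handles $t(1/r)$. For the converse, an orientation-preserving $\psi:(P^3,K_P(r))\to (P^3,K_P(r'))$ lifts to an $f$-equivariant orientation-preserving $\tilde\psi$ on $S^3$; Proposition~\ref{prop:classifying-2bridge}(1) applied to $\tilde\psi$ combined with the coordinate changes $\eta_r,\eta_{r'}$ yields $\tilde\xi\in\Aut(\DD)$ with $\tilde\xi(\{r,-r\})=\{r',-r'\}$. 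The $f$-equivariance of $\tilde\psi$ translates, via the fact that the Farey automorphism corresponding to $f$ in the $K(\tilde r)$-model is conjugate to $\sigma:x\mapsto -x$ by $\eta_r$, to $\tilde\xi$ commuting with $\sigma$ after possibly absorbing a factor of $\sigma$ itself (which preserves every pair $\{r,-r\}$). A direct matrix calculation identifies the centralizer of $\sigma$ in $\Aut(\DD)\cong \operatorname{PGL}(2,\ZZ)$ as the Klein four-group $\{1,\sigma,\tau,\sigma\tau\}$ with $\tau:x\mapsto 1/x$, whose orbit of $r$ is precisely $\{r,-r,1/r,-1/r\}$. The orientation refinement comes from computing determinants: $1$ and $\sigma\tau:x\mapsto -1/x$ are orientation-preserving (orbit $\{r,-1/r\}$), while $\sigma$ and $\tau$ are orientation-reversing (orbit $\{-r,1/r\}$). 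The main obstacle is this equivariance-to-Farey dictionary in part (2): carefully tracing the deck transformation $f$ through the Farey coordinate change of Proposition~\ref{prop:covering-link} and reconciling orientation conventions among $P^3$, $S^3$, and the $\operatorname{PGL}(2,\ZZ)$-matrix representation requires some bookkeeping.
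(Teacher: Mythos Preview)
Your arguments for parts (1) and (3), and for the ``if'' direction of (2), are correct and essentially match the paper's (the paper phrases (3) via the identity $d(-r,r)=2\min(d(0,r),d(\infty,r))$, obtained from the fact that the Farey edge $\overline{0\infty}$ separates $r$ and $-r$, but your direct common-neighbor analysis is a perfectly good substitute).

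The genuine issue is the ``only if'' direction of (2). You correctly reduce to an element $\tilde\xi\in\Aut(\DD)$ carrying $\{r,-r\}$ to $\{r',-r'\}$, and your endgame (the centralizer of $\sigma:x\mapsto -x$ in $\Aut(\DD)$ is $\{x,-x,1/x,-1/x\}$) is right. But your proposed bridge---that $f$-equivariance of $\tilde\psi$ forces $\tilde\xi$ to commute with $\sigma$---is not justified, and the obstacle you flag is real: Proposition~\ref{prop:classifying-2bridge}(1) only asserts the \emph{existence} of a Farey automorphism realizing a given homeomorphism, so there is no canonical dictionary through which to transport equivariance. Setting one up would require a separate analysis of how the symmetries of $(S^3,K(\tilde r))$ act on the pair $\{\tilde r,\infty\}$.

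The paper avoids this entirely by a purely Farey-internal observation: for any $r\in\QQQ\setminus\{0,\infty\}$, the reflection $\nu_0:x\mapsto -x$ is the \emph{unique} reflection of $\DD$ interchanging $r$ and $-r$. Hence $\nu_r:=\eta_r\nu_0\eta_r^{-1}$ is the unique reflection swapping $\tilde r$ and $\infty$, and likewise for $\nu_{r'}$. Since your $\xi$ (from Proposition~\ref{prop:classifying-2bridge}(1)) sends $\{\tilde r,\infty\}$ to $\{\tilde r',\infty\}$, the conjugate $\xi\nu_r\xi^{-1}$ is a reflection swapping $\tilde r'$ and $\infty$, so by uniqueness $\xi\nu_r\xi^{-1}=\nu_{r'}$. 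Unwinding gives that $\xi_0:=\eta_{r'}^{-1}\xi\eta_r$ commutes with $\nu_0=\sigma$, and then your centralizer computation finishes the job. No equivariance of $\tilde\psi$ is needed---only that \emph{some} $\xi$ exists mapping the one pair to the other.
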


\begin{proof}
(1)
Recall that $t(r)$ is boundary parallel in $\Ball$,
namely, there is a pair of disjoint disks $\Delta$ in $\Ball$,
such that $t(r)\subset \partial \Delta$
and $\closure(\partial\Delta\setminus t(r))=\Delta\cap\partial\Ball$.
If $r=0$ or $\infty$,
then the antipodal map interchanges the components of
$\closure(\partial\Delta\setminus t(r))$,
and so $\Delta$ descends to a disk in $P^3$ bounded by $K_P(r)$.
Hence $K_P(r)$ is trivial if $r=0$ or $\infty$.
Conversely, suppose that $K_P(r)$ is trivial.
Then its covering link $K(\tilde r)$ is the $2$-component trivial link,
and so $\tilde r=\infty$.
This implies $r=0$ or $\infty$ by Proposition~\ref{prop:covering-link}.

(2) 
If $r'=-1/r$,
then $(\Ball, t(r'))$ is obtained from 
$(\Ball, t(r))$ by $\pi/2$-rotation about the $z$-axis.
Since its restriction to $\partial\Ball$ is commutative
with the antipodal map,
it induces
an orientation-preserving homeomorphism $\psi:P^3\to P^3$
such that $\psi(K_P(r))=K_P(r')$.
Similarly, if $r'=-r$,
then $(\Ball, t(r'))$ is obtained from 
$(\Ball, t(r))$ by the reflection in the $xy$-plane.
Since its restriction to $\partial\Ball$ is commutative
with the antipodal map,
it induces  
an orientation-reversing homeomorphism $\psi:P^3\to P^3$
such that $\psi(K_P(r))=K_P(r')$.
The if part of (2) follows from these two observations.

Next, we prove the only if part of (2).
By (1), we may assume none of $r$ and $r'$ is equal to $0$ or $\infty$.
Then the following hold.
\begin{enumerate}
\item[(a)]
Let $\nu_0\in \Aut(\DD)$ be the reflection of $\DD$
in the Farey edge $\overline{0\infty}$,
i.e., $\nu_0$ is the element of $\Aut(\DD)$ such that $\nu_0(x)=-x$
for every $x\in\QQQ$.
Then, for any $r\in \QQQ\setminus\{0,\infty\}$, $\nu_0$ is the unique reflection of $\DD$
that interchanges $r$ and $-r$.
\item[(b)]
If $\xi\in \Aut(\DD)$ is commutative with $\nu_0$,
then the action of $\xi$ on $\QQQ$ is given by
$\xi(x)= x$, $-x$, $1/x$ or $-1/x$.
Here $\xi$ is orientation-preserving
if and only if $\xi(x)= x$ or $-1/x$.
\end{enumerate}
The observation (a) implies that,
for any $r\in \QQQ\setminus\{0,\infty\}$, 
if $\eta_r$ is an element of $\Aut^+(\DD)$ 
such that $(\eta_r(r),\eta_r(-r))=(\tilde r,\infty)$,
then 
$\nu_r:=\eta_r\nu_0\eta_r^{-1}$
is the unique reflection of $\DD$
that interchanges $\tilde r$ and $\infty$.

Now suppose that 
there is a homeomorphism $\psi:P^3\to P^3$
such that $\psi(K_P(r))=K_P(r')$,
where $r, r'\in \QQQ\setminus\{0,\infty\}$. 
Then $\psi$ lifts to a homeomorphism 
$\tilde\psi:S^3\to S^3$
which maps the covering link $K(\tilde r)$ of $K_P(r)$
to the covering link $K(\tilde r')$ of $K_P(r')$.
By Proposition~\ref{prop:classifying-2bridge}(1),
there is an automorphism $\xi\in \Aut(\DD)$
which maps $\{\tilde r, \infty\}$ to $\{\tilde r', \infty\}$.
By the uniqueness of the reflections $\nu_r$ and $\nu_{r'}$,
we have $\nu_{r'}=\xi\nu_r\xi^{-1}$.
Again, by the uniqueness of the reflection $\nu_0$,
this in turn implies that the conjugation of $\nu_0$ by
$\xi_0:=\eta_{r'}^{-1}\xi\eta_r$ is $\nu_0$,
i.e., $\nu_0$ and $\xi_0$ are commutative.
Hence, by the observation (b),
the action of $\xi_0$ on $\QQQ$ is given by
$\xi_0(x)= x$, $-x$, $1/x$ or $-1/x$.
On the other hand, $r'=\eta_{r'}^{-1}(\tilde r')$
is equal to either 
$\eta_{r'}^{-1}(\xi(\tilde r))=
\eta_{r'}^{-1}(\xi(\eta_r(r)))=\xi_0(r)$ or
$\eta_{r'}^{-1}(\xi(\infty))=
\eta_{r'}^{-1}(\xi(\eta_r(-r)))=\xi_0(-r)$.
Since $\xi_0$ is equal to one of the four transformations
in the above, we see that
$r'$ is equal to $\pm r$ or $\pm 1/r$ as desired.
This completes the proof of the first assertion of (2).
The second assertion of (2) can be proved by refining the above 
arguments by using the second assertion of 
Proposition~\ref{prop:classifying-2bridge}(1).

(3)
Since $K_P(r)$ is hyperbolic if and only if $K(\tilde r)$ is hyperbolic,
Proposition~\ref{prop:classifying-2bridge}(2) implies that
$K_P(r)$ is hyperbolic 
if and only if $d(\infty, \tilde r)\ge 3$.
On the other hand,
since the Farey edge $\overline{0\infty}$ separates
$-r$ and $r$,
we have 
\[ d(\infty, \tilde r)=
d(-r,r)=2\min(d(\infty, r),d(0, r)).\]
Hence $K_P(r)$ is hyperbolic if and only if
$\min(d(\infty, r),d(0, r))\ge 2$.
It is obvious that the latter condition is 
equivalent to the condition 
$r\not\in \ZZ \cup\{\infty\}\cup \{1/p \ | \ p\in \ZZ\setminus \{0\}\}$.
\end{proof}

By Proposition~\ref{prop:classifying-rational-P-link},
we have the following description of 
the statement (3) of Theorem~\ref{thm:generalization}.

\begin{remark}
\label{rem:statement-thm-general}
{\rm
In the setting of 
Theorem~\ref{thm:generalization}(3), the following hold.
$X=\HH^3/G$ is the complement
of a hyperbolic rational link $K_P(r)$ in $P^3$
for some $r\in\QQQ\setminus (\ZZ \cup\{\infty\}\cup 
\{1/p \ | \ p\in \ZZ\setminus \{0\}\})$,
$\Gamma=\langle \mu_1, \mu_2 \rangle$ is an
index $2$ subgroup of $G$,
and $\HH^3/\Gamma$ is the complement of
the $2$-bridge link $K(\tilde r)$,
where $\tilde r$ is characterized by the property that
$(\tilde r,\infty)=(\eta(r),\eta(-r))$ for some
$\eta\in \Aut^+(\DD)$.
In the group $\Gamma=\pi_1(S^3\setminus K(\tilde r))$,
$\{ \mu_1, \mu_2 \}$ is equivalent to 
the upper or lower meridian pair of the
$2$-bridge link $K(\tilde r)$.
In the group $G= \pi_1(P^3\setminus K_P(r))$,
$\{ \mu_1, \mu_2 \}$ is a meridian pair of 
the rational link $K_P(r)$,
such that $G/\langle \mu_1,\mu_2\rangle \cong \pi_1(P^3)\cong \ZZ/2\ZZ$.
}
\end{remark}

The following proposition, 
obtained by using 
the result of Millichap-Worden~\cite[Corollary 1.2]{Millichap-Worden}
on the commensurable classes of hyperbolic $2$-bridge links, 
plays a key role in the proof of Theorem~\ref{thm:generalization}.

\begin{proposition}
\label{prop:free-peridic-2bridge}
If the complement of a hyperbolic $2$-bridge link $K(\tilde r)$
non-trivially covers an orientable, complete hyperbolic manifold $X$,
then $X$ is the complement of a hyperbolic rational link 
$K_P(r)$ in $P^3$,
and $K(\tilde r)$ is the covering link of $K_P(r)$.
Thus the covering is a double covering, and 
$\tilde r$ is characterized by the property that
$(\tilde r,\infty)=(\eta(r),\eta(-r))$ for some
$\eta\in \Aut^+(\DD)$.
Moreover, the image of the upper and lower meridian pairs 
of the link group of $K(\tilde r)$ in $\pi_1(X)$
are meridian pairs of $K_P(r)$.
\end{proposition}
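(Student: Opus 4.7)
The plan is as follows. Let $M=\HH^3/\Gamma_M$ be the complement of $K(\tilde r)$, with $\Gamma_M$ the $2$-bridge link group. A non-trivial finite cover $M\to X$ with $X=\HH^3/\Gamma_X$ a hyperbolic manifold corresponds, via Mostow rigidity, to a proper finite-index inclusion $\Gamma_M<\Gamma_X$ of torsion-free Kleinian groups, and in particular $\Gamma_M$ and $\Gamma_X$ are commensurable as subgroups of $\PSL(2,\CC)$.

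The first and main step is to invoke Millichap--Worden's description~\cite{Millichap-Worden} of the commensurator $C(\Gamma_M)$ of a hyperbolic $2$-bridge link group: their result implies that $C(\Gamma_M)=N(\Gamma_M)$ and identifies $N(\Gamma_M)/\Gamma_M$ with the orientation-preserving symmetry group of the pair $(S^3,K(\tilde r))$. Hence $\Gamma_M\triangleleft \Gamma_X$ and $\Gamma_X/\Gamma_M$ embeds into this symmetry group as a finite subgroup. Since $\Gamma_X$ must be torsion-free, $\Gamma_X/\Gamma_M$ acts freely on $M$; the only free orientation-preserving symmetries available in the symmetry group of a hyperbolic $2$-bridge link are generated by a single involution $\tau$ whose lift to $S^3$ is itself fixed-point free, all other non-trivial symmetries (strong inversions and the like) having fixed points meeting $M$. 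This forces $[\Gamma_X:\Gamma_M]=2$ and $\Gamma_X=\langle\Gamma_M,\tau\rangle$.

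The second step is to identify $X$ geometrically. By Mostow rigidity $\tau$ is realized by a self-isometry of $M$ that extends to an orientation-preserving involution $\hat\tau$ of the pair $(S^3,K(\tilde r))$. Fixed-point freeness of $\tau|_M$ together with Smith theory forces $\hat\tau$ to be fixed-point free on all of $S^3$, so $S^3/\hat\tau\cong P^3$ and $K^{\ast}:=K(\tilde r)/\hat\tau$ is a link in $P^3$ whose preimage in $S^3$ is $K(\tilde r)$. Using the rational tangle decomposition $(S^3,K(\tilde r))=(\Ball,t(\tilde r))\cup \iota(\Ball,t(\infty))$ and replaying the argument in the proof of Proposition~\ref{prop:covering-link}, the involution $\hat\tau$ is conjugate to the free involution $fh_z$ appearing there; therefore $K^{\ast}$ is equivalent to $K_P(r)$ for some $r\in\QQQ$ with $(\tilde r,\infty)=(\eta(r),\eta(-r))$ for a suitable $\eta\in\Aut^+(\DD)$. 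Hyperbolicity of $K_P(r)$ is then forced by Proposition~\ref{prop:classifying-rational-P-link}(3) since $X$ is hyperbolic by hypothesis.

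The meridian assertion is essentially tautological: the covering $M\to X$ restricts to a covering map of cusp tori, and a meridional loop (the boundary of a compressing disk in the tubular neighborhood of a component) pulls back to a meridional loop. Thus the image in $\pi_1(X)$ of an upper or lower meridian pair of $K(\tilde r)$ is a meridian pair of $K_P(r)$. The main obstacle is the first step, namely extracting from Millichap--Worden the precise statement that the only torsion-free finite extensions of $\Gamma_M$ in $\PSL(2,\CC)$ are the index-two extensions by a free involution; once this is granted, the remainder of the proof is a standard combination of Mostow rigidity, Smith theory, and the explicit Farey-combinatorial description of rational tangles.
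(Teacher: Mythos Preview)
Your approach coincides with the paper's: invoke Millichap--Worden to see that the covering is regular, use the known computation of $\Isom^+(S^3\setminus K(\tilde r))$ to isolate the unique orientation-preserving free symmetry, and then identify the quotient via Proposition~\ref{prop:covering-link}. Two points need correcting, however.

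First, the Smith-theory step does not do what you claim. Freeness of $\tau$ on $M$ only gives $\Fix(\hat\tau)\subset K(\tilde r)$, and Smith theory then allows $\Fix(\hat\tau)$ to be empty \emph{or} a single circle, i.e.\ one component of the link; a $\pi$-rotation about an unknotted component would be free on $M$ but not on $S^3$. Ruling this out is exactly the content of the explicit symmetry-group computation you already invoked in your first step (the paper cites \cite[Proposition~4.1]{ALSS} and \cite{Sakuma1990}, together with \cite[Lemma~15]{Boileau-Weidmann}, to see that among the seven nontrivial elements of $\Isom^+(S^3\setminus K(\tilde r))\cong(\ZZ/2\ZZ)^3$ only $f$ acts freely on $M$, and it is simultaneously free on $S^3$). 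So the Smith-theory sentence is both insufficient and redundant; drop it and rely on the symmetry classification.

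Second, in the proof of Proposition~\ref{prop:covering-link} the free involution is $f(z_1,z_2)=(-z_1,-z_2)$, not $fh_z$; the latter is the $\pi$-rotation whose axis is the great circle through the four marked points, and it is certainly not fixed-point free. Your $\hat\tau$ should be conjugate to $f$.
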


\begin{proof}
By~\cite[Corollary 1.2]{Millichap-Worden},
a hyperbolic $2$-bridge link complement
covers a hyperbolic manifold $X$ non-trivially,
then it is a regular covering. 
The isometry group of hyperbolic $2$-bridge link complements
are calculated by~\cite[Proposition 4.1]{ALSS}
(cf. ~\cite[Theorem 4.1]{Sakuma1990}).
As suggested by Boileau-Weidmann~\cite[Lemma 15]{Boileau-Weidmann},
the calculation implies that 
(i) the complement of the hyperbolic $2$-bridge link 
$K(\tilde r)$ with $\tilde r=\tilde q/\tilde p$ 
admits an orientation-preserving free isometry
if and only if $\tilde q^2 \equiv 1 \pmod{2\tilde p}$ and 
(ii) any such hyperbolic $2$-bridge link complement admits a unique
orientation-preserving free isometry.
In fact, the orientation-preserving isometry group 
$\Isom^+(S^3\setminus K(\tilde r))$ for 
such a $2$-bridge link $K(\tilde r)$
is isomorphic to $(\ZZ/2\ZZ)^3$.
Moreover, it extends to the $(\ZZ/2\ZZ)^3$-action of $(S^3,K(\tilde r))$
generated by $\{h_x, h_y, f\}$ as illustrate
in Figure~\ref{fig:free-symmetry};
we can easily check that $f$ is the unique element 
which acts on the link complement (and also on $S^3$) freely.
(See Bonahon-Siebenmann~\cite[Chapter 18]{Bonahon-Siebenmann}
for nice description of link symmetries as rigid motions of $S^3$.)
This fact together with Proposition~\ref{prop:covering-link} implies the first assertion.
The last assertion is obvious.
\end{proof}

\begin{proof}[Proof of Theorem~\ref{thm:generalization}]
Let $X = \HH^3/G$ and $\{ \mu_1, \mu_2 \}$ be as in Theorem~\ref{thm:generalization},
and let $\Gamma=\langle \mu_1, \mu_2\rangle$ be the subgroup of $G$
generated by $\{ \mu_1, \mu_2 \}$.
Then, since $\Gamma<G$ is torsion-free,
Theorem~\ref{thm:Agol} implies that
$\Gamma$ is either a rank $2$ free group 
or a hyperbolic $2$-bridge link group.
In the former case, the conclusion (1) holds.
In the latter case, 
$X=\HH^3/G$ is covered by
the hyperbolic $2$-bridge link complement $\HH^3/\Gamma$.
Hence, by
Proposition~\ref{prop:free-peridic-2bridge},
either (i) $\Gamma=G$ and the conclusion (2) holds by 
Theorem~\ref{Theorem1-0}
(or Theorem~\ref{thm:Agol}) or
(ii) $\Gamma$ is a proper subgroup of $G$ and the conclusion (3) holds.
This completes the first assertion of 
Theorem~\ref{thm:generalization}.

In order to prove the second assertion,
assume that 
$X=\HH^3/G$ has finite volume
and $\Gamma$ is a rank $2$ free group.
Suppose to the contrary that $\Gamma$ is geometrically infinite.
Since the codomain $X$ of the covering $p : \hat{X}= \HH^3/\Gamma \to X = \HH^3/G$ has finite volume 
  and since $\hat{X}$ is tame by
  the tameness theorem (\cite{Agol2004, Bowditch, Calegari-Gabai, Soma}),  
  the covering theorem of Canary~\cite{Canary}
  implies that $X$ has a finite cover $X'$ which fibers over the circle, 
such that the cover $X_S$ of $X'$ associated to a fiber subgroup
satisfies one of the following conditions.
\begin{enumerate}
\item[(a)] 
$\hat{X} = X_S$.
\item[(b)]
$\hat{X}$ is a twisted $I$-bundle which is doubly covered by $X_S$.
\end{enumerate}

  Suppose first that $\hat{X} = X_S$.
Then there is a fuchsian group $\Gamma_0$ of co-finite volume,
such that 
(i) the hyperbolic surface $\HH^2/\Gamma_0$ is homeomorphic to
the fiber surface $S$ of the bundle $X'$ over $S^1$,
and (ii) there is an isomorphism $\rho:\Gamma_0 \to \Gamma$ which is 
strictly type-preserving, i.e., for $g \in \Gamma_0 < \Isom^+(\HH^2)$, $\rho(g)$ is parabolic if and only if $g$ is parabolic.
Since $\Gamma$ is generated by two parabolic elements, $S$ must be a thrice-punctured sphere.
This contradicts the assumption that $S$ is a fiber surface of $X'$, because a 
thrice-punctured sphere does not admit a pseudo-Anosov homeomorphism.

Suppose next that $\hat{X}$ is a twisted $I$-bundle which is doubly covered by $X_S$.
Then there is a non-orientable
hyperbolic surface 
$F = \HH^2/\Gamma_0$,
where $\pi_1(F) \cong \Gamma_0 < \Isom(\HH^2) < \Isom^+(\HH^3)$, and 
a strictly type-preserving isomorphism $\rho: \Gamma_0 \to \Gamma$.
(Here $F$ is homeomorphic to the base space of the twisted $I$-bundle $\hat X$.)
This contradicts the fact that there is no non-orientable surface whose fundamental group is generated by peripheral elements.
Hence $\Gamma$ is geometrically finite.
\end{proof}

\bibstyle{plain}

\end{document}